\documentclass[reqno]{amsart}
\usepackage{amssymb}
\usepackage{amsfonts}
\usepackage{mathrsfs}
\usepackage{xcolor}
\theoremstyle{plain}
    \newtheorem{theorem}{Theorem}[section]
    \newtheorem*{theorem*}{Theorem}
    \newtheorem{lemma}[theorem]{Lemma}
    \newtheorem{proposition}[theorem]{Proposition}
    \newtheorem{corollary}[theorem]{Corollary}

\theoremstyle{definition}
    \newtheorem{definition}{Definition}[section]
    \newtheorem{remark}{Remark}[section]
    
\numberwithin{equation}{section}

\renewcommand{\r}{\right}
\begin{document}
%%%%%%%%%%%%%%%%%%%%%%%%%%%%%%%%%%%%%%%%%%%%%%%%%%%%%%%%

\title[]{Classification and long-time asymptotics of $(2,2)$-sign solitons for the damped nonlinear Klein-Gordon equation}
\author[K. Ishizuka]{Kenjiro Ishizuka
% \thanks{ORCID: 0000-0001-6308-8984}
}
\address[K. Ishizuka]{Azabu Junior and Senior High School, Tokyo, Japan}
\email{k-ishizuka@azabu-jh.net}
\date{}
\keywords{damped nonlinear Klein-Gordon equation, multi-solitons, long-time asymptotics, interaction dynamics, asymptotic geometry, rigidity}
\subjclass[2020]{35L71, 35B40, 35B44}

\begin{abstract}
We consider the damped nonlinear Klein-Gordon equation
\begin{align*}
\partial_t^2u-\Delta u+2\alpha\partial_tu+u-|u|^{p-1}u=0
\end{align*}
on $\mathbb{R}^d$, where $\alpha>0$, $2\leq d\leq5$, and $p>2$ is in the energy-subcritical range. We classify global solutions that converge to a superposition of two positive and two negative translates of the ground state, without imposing any symmetry, coplanarity, or a priori geometric condition on their centers. We prove that every such four-soliton configuration is asymptotically either an alternating collinear configuration or an expanding rhombus with alternating signs. We further determine the precise long-time asymptotics of all four centers.
\end{abstract}

\maketitle

\tableofcontents
%%%%%%%%%%%%%%%%%%%%%%%%%%%%%%%%%%%%%%%%%%

\section{Introduction}
\subsection{Setting of the problem}
We consider the following damped nonlinear Klein-Gordon equation
\begin{align}
\label{DNKG}
\tag{DNKG}
	\left\{
	\begin{aligned}
		&\partial_{t}^2u-\Delta u+2\alpha \partial_{t}u+u-f(u)=0, & (t,x) \in \mathbb{R} \times \mathbb{R}^d,
		\\
		&\left(u(0,x),{\partial}_tu(0,x)\right)=\left(u_{0}(x), u_1(x)\right)\in \mathcal{H},
	\end{aligned}
	\r.
\end{align}
where $\alpha>0$, $2\leq d\leq 5$,  $\mathcal{H}=H^1(\mathbb{R}^d)\times L^2(\mathbb{R}^d)$, $f(u)=|u|^{p-1}u$, with a power $p$ in  the energy-subcritical range, namely
\begin{align*}
2<p<\infty \ \mbox{for}\ d=2\ \mbox{and}\ 2<p<\frac{d+2}{d-2}\ \mbox{for}\ d=3,4,5.
\end{align*}
It follows from \cite{BRS} that the Cauchy problem for \eqref{DNKG} is locally well-posed in the energy space $\mathcal{H}$. Moreover, defining the energy of a solution $\vec{u}=(u, {\partial}_tu)$ by
\begin{align*}
E(\vec{u}(t))=\frac{1}{2}\|\vec{u}(t)\|_{\mathcal{H}}^2-\frac{1}{p+1}\| u(t)\|_{L^{p+1}}^{p+1},
\end{align*}
we have
\begin{align}\label{energydecay}
E(\vec{u}(t_2))-E(\vec{u}(t_1))=-2\alpha \int_{t_1}^{t_2} \| {\partial}_tu(t)\|_{L^2}^2dt.
\end{align}
In addition, the equation \eqref{DNKG} enjoys several invariances:
\begin{align}\label{invariance}
\begin{aligned}
u(t,x)&\mapsto -u(t,x),
\\
u(t,x)&\mapsto u(t,x+y),
\\
u(t,x)&\mapsto u(t+s,x),
\\
u(t,x)&\mapsto u(t,Rx),
\end{aligned}
\end{align}
where $y\in \mathbb{R}^d$ and $s\in \mathbb{R}$, and 
\begin{align*}
R\in SO(d):=\left\{A\in\mathbb{R}^{d\times d}:A^{\mathsf T}A=I_d,\ \det A=1\right\}.
\end{align*}

\subsection{Long-time asymptotics of multi-soliton solutions}

In this paper, we are interested in the dynamics of multi-soliton solutions related to the ground state $Q$, which is the unique positive radial solution in $H^1(\mathbb{R}^d)$ of
\begin{align}\label{Qode}
-\Delta Q+Q-f(Q)=0.
\end{align}
See \cite{K2}. The ground state generates the stationary solution $(Q,0)$. By the invariances in \eqref{invariance}, $(-Q,0)$ and every spatial translate $(Q(\cdot+y),0)$ are also stationary solutions of \eqref{DNKG}.

There has been intensive study of the global behavior of general, possibly large, solutions to nonlinear dispersive equations. A guiding principle is the \textit{soliton resolution conjecture}, which predicts that generic global solutions are asymptotic to a superposition of solitons for large time. For \eqref{DNKG}, Feireisl \cite{F} established such a decomposition along a sequence of times for every bounded global solution. In one space dimension, where all nonzero stationary solutions are translates of $\pm Q$, C\^{o}te, Martel, and Yuan \cite{CMY} proved soliton resolution in the full-time limit and showed that the solitons must have alternating signs.

Even when a soliton decomposition is known, the decomposition alone does not determine which collections of solitons can actually arise or what constraints the dynamics imposes on their parameters. This naturally leads to the problem of classifying the admissible asymptotic configurations of the constituent solitons. For equations with a scaling symmetry, such a classification may involve both the relative scales and the spatial arrangement of the solitons. In the present setting, however, we consider only superpositions of translates of the fixed ground state $Q$. Thus, the relevant parameters are the fixed signs and the time-dependent centers of the solitons.

The analysis of such superpositions generally requires controlling both the mutual interactions among the solitons and the radiative remainder; see, for example, \cite{CJ,IN}. For \eqref{DNKG}, however, the energy dissipation and the decay of the linear damped flow provide control of the radiative component, while the damping causes the soliton velocities to decay. Consequently, the long-time evolution of the centers is governed, to leading order, by the mutual soliton interactions and is described by an effective first-order system.

This damping-induced reduction has enabled considerable progress in this direction for \eqref{DNKG}. C\^{o}te, Martel, Yuan, and Zhao \cite{CMYZ} classified the case of two solitons in general space dimension: the two solitons must have opposite signs, and their centers separate along a fixed straight line. Their analysis also makes explicit the leading-order interaction law, according to which like-signed solitons attract and oppositely signed solitons repel. Combining the sign restrictions obtained in \cite{CD} with the results of the author \cite{I2}, the case of three solitons is also classified: exactly one soliton has the sign opposite to that of the other two, and the three centers are necessarily asymptotically collinear. More recently, the author \cite{I3} classified the case of four solitons in which one soliton has the sign opposite to that of the other three, proving that the uniquely signed soliton converges to a point while the other three separate in an expanding equilateral-triangle configuration centered at that point. Configurations consisting entirely of solitons of the same sign are excluded by \cite{CD}. The analyses of the three-soliton and $(1,3)$-sign four-soliton cases share a common feature: they exploit the uniquely signed soliton as a natural reference and formulate the geometry in terms of the relative position vectors from its center to the other centers.

The only remaining case among four-soliton superpositions consisting of translates of $Q$ and $-Q$ is that of two copies of $Q$ and two copies of $-Q$. In this case, however, no soliton is distinguished by its sign, so no analogous reference center is available. To overcome these difficulties, we develop an analysis of the full interaction system that does not rely on a distinguished reference soliton. This allows us to complete the classification of four-soliton configurations by determining the asymptotic geometry and precise long-time behavior in this remaining case.

\subsection{Main result}
First, we define solutions that behave as the superposition of ground states.
\begin{definition}\label{defKsoli}
A global solution $\vec{u}$ of \eqref{DNKG} is called a \textit{$K$-soliton solution} for $K\in\mathbb{N}$ if there exist $\sigma \in \{-1,1\}^K$ and $z:[0,\infty)\to (\mathbb{R}^d)^K$ such that 
\begin{align}
\label{Ksoli}
\begin{aligned}
\lim_{t\to \infty} \left(\|u(t)-\sum_{k=1}^K\sigma_k Q(\cdot-z_k(t))\|_{H^1}+\|{\partial}_tu(t)\|_{L^2}\right)&=0,
\\
\lim_{t\to\infty}\left( \min_{j\neq k}|z_j(t)-z_k(t)|\right)&=\infty.
\end{aligned}
\end{align}
\end{definition}

\begin{remark}
Hereafter, unless stated otherwise, whenever we refer to a $K$-soliton solution, we mean a solution of \eqref{DNKG}.
\end{remark}

In this paper, we focus on $4$-soliton solutions consisting of two positive and two negative solitons. Accordingly, in order to distinguish not only the number of solitary waves but also whether each component is a copy of $Q$ or of $-Q$, we introduce the notion of $(m,n)$-sign soliton solutions as follows.
\begin{definition}\label{defm,nsoli}
A global solution $\vec{u}$ of \eqref{DNKG} is called a \textit{$(m,n)$-sign soliton solution} for $m,n\in\mathbb{N}\cup\{0\}$ if there exist $z_+:[0,\infty)\to (\mathbb{R}^d)^m$ and $z_-:[0,\infty)\to(\mathbb{R}^d)^n$ such that 
\begin{align}
\label{mnsoli}
\begin{aligned}
\lim_{t\to \infty} \left(\|u(t)-\sum_{k=1}^mQ(\cdot-z_{+,k}(t))+\sum_{k=1}^n Q(\cdot-z_{-,k}(t))\|_{H^1}+\|{\partial}_tu(t)\|_{L^2}\right)&=0,
\\
\lim_{t\to\infty}\left( \min_{(\circ,j)\neq (\bullet,k)}|z_{\circ,j}(t)-z_{\bullet,k}(t)|\right)&=\infty.
\end{aligned}
\end{align}
\end{definition}

\begin{remark}
Since \eqref{DNKG} is invariant under the sign change $u\mapsto -u$, any property established for $(m,n)$-sign soliton  solutions immediately yields the corresponding property for $(n,m)$-sign soliton solutions.
\end{remark}
\begin{remark}
Hereafter, unless stated otherwise, whenever we refer to a $(m,n)$-sign soliton solution, we mean a solution of \eqref{DNKG}.
\end{remark}

As mentioned above, the results of \cite{CD,CMYZ,I2,I3} determine the long-time behavior of solutions asymptotic to superpositions of between two and four solitons in the following cases.

\begin{theorem}[Known results; \cite{CD,CMYZ,I2,I3}]\label{previoustheorem}
Let $\vec{u}$ be a solution of \eqref{DNKG}. Then, the following statements hold.
\begin{enumerate}
\item If $\vec{u}$ is a $2$-soliton solution, there exist a constant $c_1\in \mathbb{R}$ depending only on $\alpha,d,p$, a sign $\sigma=\pm 1$, functions $z_1,z_2:[0,\infty)\to\mathbb{R}^d$, $z_{\infty}\in \mathbb{R}^d$, and $\omega_{\infty}\in S^{d-1}$ such that
\begin{align*}
&\left\| u(t)-\sigma \sum_{k=1}^2 (-1)^kQ(\cdot-z_k(t))\right\|_{H^1}+\|{\partial}_tu(t)\|_{L^2}\lesssim t^{-1},
\\
&z_1(t)=z_{\infty}+\frac{1}{2}\left(\log{t}-\frac{d-1}{2}\log{(\log{t})}+c_1\right)\omega_{\infty}+O\left(\frac{\log{(\log{t})}}{\log{t}}\right),
\\
&z_2(t)=z_{\infty}-\frac{1}{2}\left(\log{t}-\frac{d-1}{2}\log{(\log{t})}+c_1\right)\omega_{\infty}+O\left(\frac{\log{(\log{t})}}{\log{t}}\right).
\end{align*}

\item If $\vec{u}$ is a $3$-soliton solution, there exist a constant $c_2\in \mathbb{R}$ depending only on $\alpha,d,p$, a sign $\sigma=\pm 1$, functions $z_1,z_2,z_3:[0,\infty)\to\mathbb{R}^d$, $z_{\infty}\in \mathbb{R}^d$, and $\omega_{\infty}\in S^{d-1}$ such that
\begin{align*}
&\left\| u(t)-\sigma \sum_{k=1}^3 (-1)^kQ(\cdot-z_k(t))\right\|_{H^1}+\|{\partial}_tu(t)\|_{L^2}\lesssim t^{-1},
\\
&z_1(t)=z_{\infty}+\left(\log{t}-\frac{d-1}{2}\log{(\log{t})}+c_2\right)\omega_{\infty}+O\left(\frac{\log{(\log{t})}}{\log{t}}\right),
\\
&z_2(t)=z_{\infty}+O\left(\frac{\log{(\log{t})}}{\log{t}}\right),
\\
&z_3(t)=z_{\infty}-\left(\log{t}-\frac{d-1}{2}\log{(\log{t})}+c_2\right)\omega_{\infty}+O\left(\frac{\log{(\log{t})}}{\log{t}}\right).
\end{align*}

\item If $\vec{u}$ is a $(1,3)$-sign soliton solution or a $(3,1)$-sign soliton solution, there exist a constant $c_3\in \mathbb{R}$ depending only on $\alpha,d,p$, a sign $\sigma=\pm 1$, functions $z_0,z_1,z_2,z_3:[0,\infty)\to\mathbb{R}^d$, $z_{\infty}\in \mathbb{R}^d$, and $\omega_1,\omega_2,\omega_3\in S^{d-1}$ such that
\begin{align*}
&\left\| u(t)-\sigma\left(Q(\cdot-z_0(t))-\sum_{k=1}^3Q(\cdot-z_k(t))\right)\right\|_{H^1}+\|{\partial}_tu(t)\|_{L^2}\lesssim t^{-1},
\\
&z_0(t)=z_{\infty}+O\left(\frac{\log{(\log{t})}}{\log{t}}\right),
\\
&z_1(t)=z_{\infty}+\left(\log{t}-\frac{d-1}{2}\log{(\log{t})}+c_3\right)\omega_1+O\left(\frac{\log{(\log{t})}}{\log{t}}\right),
\\
&z_2(t)=z_{\infty}+\left(\log{t}-\frac{d-1}{2}\log{(\log{t})}+c_3\right)\omega_2+O\left(\frac{\log{(\log{t})}}{\log{t}}\right),
\\
&z_3(t)=z_{\infty}+\left(\log{t}-\frac{d-1}{2}\log{(\log{t})}+c_3\right)\omega_3+O\left(\frac{\log{(\log{t})}}{\log{t}}\right),
\\
&\omega_1+\omega_2+\omega_3=0.
\end{align*}

\item There are no $(4,0)$-sign or $(0,4)$-sign soliton solutions.
\end{enumerate}
\end{theorem}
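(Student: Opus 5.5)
This statement collects results from \cite{CD,CMYZ,I2,I3}, so the plan is to reduce each item to the cited works and not to re-derive the dynamics. The one real task is to check that each cited result is stated under our hypotheses: $\alpha>0$, $2\le d\le 5$, energy-subcritical $p>2$, and Definitions \ref{defKsoli} and \ref{defm,nsoli}. Where a cited statement uses a different normalization, we translate it using the invariances \eqref{invariance}.

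For item (1), we cite the two-soliton classification of \cite{CMYZ}. It shows that a 2-soliton solution has opposite signs, and the sign $\sigma$ absorbs the choice of which soliton is positive. It also gives the $t^{-1}$ rate in $\mathcal H$ and the expansion of the centers along a fixed direction $\omega_\infty$. In that expansion the half-distance is $\frac12(\log t-\frac{d-1}{2}\log\log t+c_1)$ and the error is $O(\log\log t/\log t)$. The midpoint $z_\infty$ exists because the equations for the centers are antisymmetric to leading order, so the sum $z_1+z_2$ converges.

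For item (2), the argument has two steps. First, \cite{CD} excludes same-sign configurations for $K=3$, so exactly one soliton has the odd sign. Second, \cite{I2} shows that the uniquely signed soliton converges to $z_\infty$ and that the other two separate collinearly with the stated expansion and constant $c_2$. Ordering the solitons so that the odd-signed one is $z_2$ gives the alternating form $\sigma\sum(-1)^kQ(\cdot-z_k)$.

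Item (3) is the main theorem of \cite{I3} for $(1,3)$-sign solutions. The $(3,1)$ case follows by applying $u\mapsto -u$, as in the remark after Definition \ref{defm,nsoli}. Item (4) is the nonexistence of same-sign multi-soliton solutions proved in \cite{CD}.

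The step I expect to need the most care is checking consistency of constants and hypotheses across the references. Specifically, $c_1,c_2,c_3$ must depend only on $(\alpha,d,p)$, and the dimension ranges and error rates in \cite{I2,I3} must match the setting here. If some reference states weaker error terms, I would rerun the ODE comparison argument of \cite{CMYZ} on the reduced center dynamics to recover the $O(\log\log t/\log t)$ remainder.
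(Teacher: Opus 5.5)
Your proposal is correct and follows the paper's route. The paper gives no independent proof of this theorem: it attributes item (1) to \cite{CMYZ}, item (2) to the sign restriction of \cite{CD} combined with \cite{I2}, item (3) to \cite{I3}, and item (4) to \cite{CD}, which is exactly your reduction, including obtaining the $(3,1)$ case from the $(1,3)$ case via $u\mapsto -u$.
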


In this paper, we also classify the long-time behavior of $(2,2)$-sign soliton solutions, which constitute the remaining sign pattern for $4$-soliton solutions. We prove that every $(2,2)$-sign soliton solution is asymptotically either an alternating collinear configuration or an expanding rhombus with alternating signs.

Before stating the main theorem, we introduce the following terminology.

\begin{definition}
We say that $u,v\in S^{d-1}$ form an \textit{orthogonal pair} if
\begin{align*}
u\cdot v=0.
\end{align*}
\end{definition}

\begin{theorem}\label{maintheorem}
If a solution $\vec{u}$ of \eqref{DNKG} is a $(2,2)$-sign soliton solution, then there exists a constant $c_{\star}\in \mathbb{R}$, depending only on $d,\alpha,p$, such that one of the following alternatives holds.
\begin{enumerate}
\item There exist $\sigma=\pm 1$, $z_1,z_2,z_3,z_4:[0,\infty)\to\mathbb{R}^d$ and $z_{\infty}\in\mathbb{R}^d$ and  $\omega_{\infty}\in S^{d-1}$ such that 
\begin{align}\label{4linetheorem1}
\left\| u(t)-\sigma \sum_{k=1}^4 (-1)^kQ(\cdot-z_k(t))\right\|_{H^1}+\|{\partial}_tu(t)\|_{L^2}\lesssim t^{-1},
\end{align}
\begin{align}\label{4linetheorem2}
\begin{aligned}
z_1(t)&=z_{\infty}-\left\{\frac{3}{2}\left(\log{t}-\frac{d-1}{2}\log{(\log{t})}+c_{\star}\right)-\log{3}+\frac{1}{2}\log{2} \right\}\omega_{\infty}+O\left(\frac{\log{(\log{t})}}{\log{t}}\right),
\\
z_2(t)&=z_{\infty}-\left\{\frac{1}{2}\left(\log{t}-\frac{d-1}{2}\log{(\log{t})}+c_{\star}\right)-\frac{1}{2}\log{2} \right\}\omega_{\infty}+O\left(\frac{\log{(\log{t})}}{\log{t}}\right),
\\
z_3(t)&=z_{\infty}+\left\{\frac{1}{2}\left(\log{t}-\frac{d-1}{2}\log{(\log{t})}+c_{\star}\right)-\frac{1}{2}\log{2} \right\}\omega_{\infty}+O\left(\frac{\log{(\log{t})}}{\log{t}}\right),
\\
z_4(t)&=z_{\infty}+\left\{\frac{3}{2}\left(\log{t}-\frac{d-1}{2}\log{(\log{t})}+c_{\star}\right)-\log{3}+\frac{1}{2}\log{2} \right\}\omega_{\infty}+O\left(\frac{\log{(\log{t})}}{\log{t}}\right).
\end{aligned}
\end{align}

\item There exist $\sigma=\pm 1$, $z_1,z_2,z_3,z_4:[0,\infty)\to\mathbb{R}^d$, $z_{\infty}\in\mathbb{R}^d$, an orthogonal pair $u_{\infty},v_{\infty}\in S^{d-1}$, and a constant $\varphi$ satisfying $\frac{\pi}{6}<\varphi<\frac{\pi}{3}$ such that
\begin{align}\label{4rhombustheorem1}
\left\| u(t)-\sigma \sum_{k=1}^4 (-1)^kQ(\cdot-z_k(t))\right\|_{H^1}+\|{\partial}_tu(t)\|_{L^2}\lesssim t^{-1},
\end{align}
\begin{align}\label{4rhombustheorem2}
\begin{aligned}
z_1(t)&=z_{\infty}+\cos{\varphi}\left\{\log{t}-\frac{d-1}{2}\log{(\log{t})}+c_{\star}+\log{2} \right\}u_{\infty}+O\left(\frac{\log{(\log{t})}}{\log{t}}\right),
\\
z_2(t)&=z_{\infty}+\sin{\varphi}\left\{\log{t}-\frac{d-1}{2}\log{(\log{t})}+c_{\star}+\log{2} \right\}v_{\infty}+O\left(\frac{\log{(\log{t})}}{\log{t}}\right),
\\
z_3(t)&=z_{\infty}-\cos{\varphi}\left\{\log{t}-\frac{d-1}{2}\log{(\log{t})}+c_{\star}+\log{2} \right\}u_{\infty}+O\left(\frac{\log{(\log{t})}}{\log{t}}\right),
\\
z_4(t)&=z_{\infty}-\sin{\varphi}\left\{\log{t}-\frac{d-1}{2}\log{(\log{t})}+c_{\star}+\log{2} \right\}v_{\infty}+O\left(\frac{\log{(\log{t})}}{\log{t}}\right).
\end{aligned}
\end{align}

\item There exist $\sigma=\pm 1$, $z_1,z_2,z_3,z_4:[0,\infty)\to\mathbb{R}^d$, $d_1,d_2:[0,\infty)\to\mathbb{R}$, $z_{\infty}\in\mathbb{R}^d$, and an orthogonal pair $u_{\infty},v_{\infty}\in S^{d-1}$ such that
\begin{align}\label{4rhombustheorem3}
\left\| u(t)-\sigma \sum_{k=1}^4 (-1)^kQ(\cdot-z_k(t))\right\|_{H^1}+\|{\partial}_tu(t)\|_{L^2}\lesssim t^{-1},
\end{align}
\begin{align}\label{4rhombustheorem4}
\begin{aligned}
z_1(t)&=z_{\infty}+d_1(t)u_{\infty}+o(1),
\\
z_2(t)&=z_{\infty}+d_2(t)v_{\infty}+o(1),
\\
z_3(t)&=z_{\infty}-d_1(t)u_{\infty}+o(1),
\\
z_4(t)&=z_{\infty}-d_2(t)v_{\infty}+o(1),
\\
d_1(t)&=\frac{1}{2}\log{t}-\frac{d-3}{4}\log{(\log{t})}-\frac{1}{2}\log{(\log{(\log{t})})}+\frac{c_{\star}}{2}+\frac{1}{2}\log{\frac{3}{2}}+o(1),
\\
d_2(t)&=\frac{\sqrt{3}}{2}\log{t}-\frac{(3d-1)\sqrt{3}}{12}\log{(\log{t})}+\frac{\sqrt{3}}{6}\log{(\log{(\log{t})})}+\frac{\sqrt{3}}{2}c_{\star}
\\
&\quad+\frac{\sqrt{3}}{6}\log{\frac{32}{3}}+o(1).
\end{aligned}
\end{align}

\end{enumerate}
\end{theorem}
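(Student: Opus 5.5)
The plan is to reduce to the effective first-order ODE system for the four centers, as in \cite{CMYZ,I2,I3}, and then analyze that system directly, without a reference soliton.

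\textbf{Step 1: modulation and the reduced system.} Using the decomposition \eqref{mnsoli}, I will modulate the centers $z_k(t)$ by orthogonality conditions against $\partial_j Q(\cdot-z_k)$. Exactly as in \cite{CMYZ,I3}, energy dissipation \eqref{energydecay} together with coercivity of the linearized operator around each soliton should give
\begin{align*}
2\alpha\,\dot z_k=\sum_{j\neq k}\sigma_j\sigma_k\, c_Q\, g(|z_k-z_j|)\frac{z_j-z_k}{|z_j-z_k|}+O\Big(\|\vec\varepsilon\|_{\mathcal H}^2+\max_{i\neq j} g(|z_i-z_j|)^{1+\delta}\Big),
\end{align*}
with $g(r)\sim r^{-\frac{d-1}{2}}e^{-r}$. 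Like signs attract and opposite signs repel. I will also aim for a bound $\|\vec\varepsilon\|^2\lesssim\max g^{2-}$ in terms of the interactions. A key preliminary observation is that the total interaction energy decreases up to errors, which gives $\int_0^\infty\max_{i\neq j} g(|z_i-z_j|)\,dt<\infty$ type information.

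\textbf{Step 2: geometric classification.} Label the centers $z_{+,1},z_{+,2},z_{-,1},z_{-,2}$, and set $r_{\min}(t)=\min|z_i-z_j|$. The dynamics is governed to leading order by the pairs at minimal distance, because all other interactions are exponentially smaller in the gap. I will proceed in three substeps.

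(a) Like-signed pairs cannot be the closest pair for long. Their attraction would make the distance decrease, contradicting $r\to\infty$. This is a lemma in the spirit of \cite{CD}.

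(b) Look at the graph of \emph{nearest} opposite-sign pairs, those whose distance lies within $O(1)$ of $r_{\min}$. Since there are two of each sign, this bipartite graph is a path or a $4$-cycle.

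(c) In the path case $+,-,+,-$, the end solitons only interact through their neighbor. The inner distance then grows at half the rate of the outer ones, forcing collinearity, and balancing the forces yields the $\frac32$, $\frac12$ and $\log 3$, $\log 2$ constants of alternative (1).

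(d) In the $4$-cycle case, every positive soliton is nearest to both negatives, and symmetric force balance gives a rhombus, with orthogonal diagonals $u_\infty,v_\infty$. The side lengths are asymptotically $\log t-\frac{d-1}{2}\log\log t+c_\star+\log 2$, because each vertex feels two repulsions. The diagonal interactions are like-signed and attractive. The angle $\varphi$ is determined by the ratio of the diagonals. If both diagonals exceed the side, the diagonal attraction is negligible at leading order and $\varphi$ freezes. The condition that the diagonals be longer than the sides (so that the cycle, rather than a path, is the nearest graph, and $r_{\min}$ is attained on sides) gives $\frac{\pi}{6}<\varphi<\frac{\pi}{3}$.

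(e) The borderline $\varphi\to\frac{\pi}{6}$ (equivalently $\frac{\pi}{3}$ after relabeling) is alternative (3). Here the short diagonal equals the side length, so the like-sign attraction along it is of the same order. This produces the $\log\log\log t$ corrections, which come from a logarithmically slow drift toward the degenerate equilateral shape.

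\textbf{Step 3: precise asymptotics.} For each alternative, I will write the distances as $r=\log t-\frac{d-1}{2}\log\log t+c+\rho(t)$. Linearizing the reduced ODE around the self-similar profile, I will show $\rho=O(\log\log t/\log t)$ using a Lyapunov argument on the shape variables. The rates $t^{-1}$ in \eqref{4linetheorem1} then follow from $g(r)\sim t^{-1}$ and the bound on $\vec\varepsilon$.

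The main obstacle is Step 2(d)--(e): proving that the angle converges, and ruling out rhombi with $\varphi$ outside $[\frac{\pi}{6},\frac{\pi}{3}]$ or oscillating shapes. I would first try a monotone quantity, the difference of the squared diagonals normalized by $\log t$. I would then show that it has a limit using integrability of the error terms. The delicate case is the degenerate one, which needs a second-order expansion of the interaction including the diagonal terms $g(|z_1-z_3|)$.
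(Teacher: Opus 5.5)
The main gap is in Step 2: nothing in your plan actually produces the limiting geometry.

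\textbf{Step 2(b)--(d).} The ``graph of nearest pairs'' is defined time by time. You give no reason why it is eventually constant. You also do not exclude the one-edge graph, the two-edge path and the perfect matching; you only assert ``path or $4$-cycle''. Nor do you rule out oscillation of the normalized shape. The phrase ``symmetric force balance gives a rhombus'' presupposes the force balance that has to be proved.

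The paper's key idea is a Lagrange--Jacobi type argument. Let $V$ be the interaction potential, $\mathcal{L}=-\log V$, $z_g$ the barycenter and $\mathcal{I}=\sum_i|z_i-z_g|^2$. The normalized moment of inertia $\mathcal{P}=\mathcal{I}/\mathcal{L}^2$ satisfies $\frac{\mathcal{L}}{\dot{\mathcal{L}}}\dot{\mathcal{P}}\leq-c\,\mathcal{J}+O(\frac{\log\mathcal{L}}{\mathcal{L}})$. Here $\mathcal{J}\geq0$ is the normalized Cauchy--Schwarz deficit between the centered positions and $\nabla V$. Hence $\int^{\infty}\mathcal{J}\,\frac{\dot{\mathcal{L}}}{\mathcal{L}}\,dt<\infty$. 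This gives times, close in $\log\mathcal{L}$ to any prescribed sequence, at which $\nabla V$ is asymptotically parallel to the configuration. The limits satisfy $\lambda Y_i=-\alpha_i(Y_{i+1}-Y_i)+\alpha_{i-1}(Y_i-Y_{i-1})$ with $\alpha_i\geq0$, $\sum_i\alpha_i=1$ and $|Y_i-Y_j|\geq1$. The algebraic rigidity of Lemma~\ref{Yhishi} then leaves only the alternating line and the rhombi with $|u\cdot v|\leq\frac12$; this is where the other graphs are excluded. These limit sets are compact and pairwise separated, so continuity upgrades subsequential convergence to full-time convergence (Proposition~\ref{2,2fullconv1}).

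\textbf{Step 3.} Your Step 3 cannot replace this, because a linearization/Lyapunov argument fails for the shape variables, which are unstable or neutral.
In the line regime, the collinearity defects satisfy $\dot{\mathfrak{a}}=\frac{\mathcal{F}(L_2)}{L}M\mathfrak{a}+\dots$ with all eigenvalues of $M$ positive (Lemma~\ref{naisekiodekantan}).
In the rhombus regime, $|z_1+z_3-z_2-z_4|$ is linearly repelled from $0$.
The rhombus aspect ratio is neutral at leading order, with a drift away from the square.

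The paper gets rates for these defects only by feeding in the qualitative convergence and integrating backward from $t=\infty$. It gets convergence of the aspect ratio because $|\log(\hat{\rho}_2/\hat{\rho}_1)|$ (with $\hat{\rho}_i=|z_i-z_\infty|$ the half-diagonals) is nondecreasing up to integrable errors and is bounded. Your squared-diagonal difference over $\log t$ diverges like $\log t$ for every non-square limit. These rates are also what make the angular velocities integrable and exclude slow rotation of the line or of the rhombus diagonals, which your plan does not address.

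\textbf{Step 2(a).} This step is both unjustified and too weak. The repulsions from the two opposite-signed solitons enter $\dot{R}_i$ at the same order as the like-sign attraction. They can cancel it at leading order: take two equilateral triangles sharing the like-signed side. So ``attraction makes the distance decrease'' gives no margin of order $\mathcal{F}(D)$, and $o(\mathcal{F}(D))$ errors are not integrable.

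What the rest of the proof needs is the stronger statement $R-\rho\to\infty$, i.e.\ like-sign interactions are $o(\mathcal{F}(D))$ (Proposition~\ref{samesolitonhanare}). The paper proves $\frac{d}{dt}(R_j-\rho_i)\leq-\kappa_C\mathcal{F}(D)$ at active indices whenever $R-\rho$ is bounded, using the geometric inequality of Lemma~\ref{mathfraklem}. This yields $\liminf V/\mathcal{F}(D)>0$, hence $\|\vec{\varepsilon}\|_{\mathcal{H}}\lesssim\mathcal{F}(D)$ (your $t^{-1}$ rate) and $V\sim t^{-1}$. It also yields the bound $\frac{1}{\sqrt3}\leq\hat{\rho}_2/\hat{\rho}_1\leq\sqrt3$ used above.

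\textbf{Step 1.} Your ``key preliminary observation'' $\int_0^\infty\max g\,dt<\infty$ is false. In fact $\max_{i\neq j}g(|z_i-z_j|)\sim\mathcal{F}(D)\sim t^{-1}$, and the divergence of $\int\mathcal{F}(D)\,dt$ is used essentially. Dissipation only controls $\int\mathcal{F}(D)^2\,dt$.

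\textbf{Step 2(e).} The endpoint mechanism is misidentified. Since $R-\rho\to\infty$, the short diagonal $2a$ exceeds the side $r$ by about $\log\log t$. Its attraction therefore has size $\mathcal{F}(r)\frac{\log r}{r}$, not comparable to the side interactions; otherwise the $\log 2$ in the side length would change. It still matters, because shape variables move at rate $\mathcal{F}(r)/r$. In the equation for $x=2a-r$ it balances the geometric drift $\frac{2x}{r}\mathcal{F}(r)$, giving $x=\log r-\log\log r+\log\frac34+o(1)$. This is the source of the $\log\log\log t$ terms. So ``the pairs within $O(1)$ of $r_{\min}$ govern the dynamics'' fails exactly at the order that fixes the geometry.

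\textbf{Step 2(c).} All three consecutive distances grow like $\log t$. The inner one is shorter by $\log\frac43$; it does not grow at half the rate.
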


\begin{remark}
In the interior rhombus regime, the ratio of the two diagonals satisfies
\begin{align*}
\lim_{t\to\infty}\frac{|z_2(t)-z_4(t)|}{|z_1(t)-z_3(t)|}=\tan{\varphi}.
\end{align*}
Thus, $\varphi=\frac{\pi}{4}$ corresponds to the square configuration, whereas $\varphi=\frac{\pi}{6}$ and $\varphi=\frac{\pi}{3}$ correspond to the two endpoint aspect ratios $\frac{1}{\sqrt3}$ and $\sqrt3$, respectively.
\end{remark}

\begin{remark}
The third alternative in Theorem \ref{maintheorem} describes both endpoint regimes of the rhombus family appearing in the second alternative. Indeed, after a cyclic relabeling of the centers and, if necessary, interchanging $u_{\infty}$ and $v_{\infty}$, the asymptotic formulas in the third alternative correspond to both endpoints.
\end{remark}

\begin{remark}
The constant $c_{\star}$ is universal: it depends only on $d$, $\alpha$, and $p$, and is independent of the particular multi-soliton solution. It is the constant term in the large-time expansion of the scalar reference equation built from the interaction function $\mathcal F$. Its precise definition is given in Subsection 2.2.
\end{remark}

\begin{remark}
C\^{o}te and Du \cite{CD} constructed $(2,2)$-sign multi-soliton solutions whose centers form an expanding square with alternating signs. Hence, the rhombus regime in Theorem \ref{maintheorem} is nonempty. It remains open whether there exist solutions realizing non-square rhombi or either of the endpoint regimes.
\end{remark}

\subsection{Main difficulties and strategy}

The classifications obtained in \cite{I2,I3} rely on relative position vectors based at a dynamically distinguished soliton, namely the unique soliton whose sign differs from all the others. No such distinguished center exists in the $(2,2)$-sign case. Consequently, the interaction system cannot be reduced from the outset to a family of radial variables measured from a fixed reference center.

Our first step is to show that the interactions between like-signed solitons are lower-order compared with the four dominant interactions between oppositely signed solitons. We then introduce a normalized moment of inertia for the center configuration and derive an almost-monotonicity formula whose leading dissipative term is the normalized Cauchy-Schwarz deficit associated with the centered positions and the interaction forces. The resulting weighted integrability allows us to select times along which this deficit tends to zero. Passing to the equality case in the Cauchy--Schwarz inequality then yields an algebraic force-balance relation for every subsequential limit of the normalized center configuration.

The resulting finite-dimensional rigidity problem has precisely two classes of solutions: alternating collinear configurations and rhombus configurations with alternating signs. Since these two classes are separated in the normalized configuration space, a continuity and topological separation argument upgrades the subsequential classification to a full-time classification.

A further difficulty is to rule out slow rotation of the entire configuration as it becomes asymptotically collinear or rhombic. To rule this out, we apply the effective center equations to the relative position vectors between the solitons, as in \cite{I3}, and exploit the fact that their directions change more slowly than their lengths to refine the long-time behavior of the centers. As a result, in the line regime, the centers approach a fixed line, whereas in the rhombus regime, the configuration becomes asymptotically coplanar and its two diagonal directions converge to a fixed orthogonal pair. In particular, neither a persistently rotating line nor a persistently rotating rhombus can occur.

Having established this directional rigidity, we analyze the remaining radial dynamics through separate effective systems for the line and rhombus regimes. The interior aspect ratios of the rhombus family lead to a nondegenerate asymptotic system, whereas the two endpoint ratios exhibit an additional cancellation. Resolving this degeneracy yields the $\log\log\log t$ correction in the endpoint expansions.

\subsection{Previous results}
The present work lies at the intersection of the soliton resolution conjecture and the study of strongly interacting multi-soliton dynamics. For the focusing energy-critical wave equation, soliton resolution has been established along sequences of times for general bounded solutions in dimensions three to five; see \cite{DJKM}. Full-time soliton resolution for radial solutions is now known in a broad range of dimensions; see, in particular, \cite{DKM2,CDKM,JL4}. Related full-time results have also been obtained for equivariant wave maps \cite{DKMM,JL3}, for radial energy-critical wave equations with a potential \cite{JLX,LMZ}, for radial energy-critical wave equations with damping \cite{GZ}, and for the radial energy-critical heat equation \cite{A1}.

More recently, the rigidity of geometric parameters within already-resolved multi-bubble regimes has also been investigated. Kim and Merle \cite{KM2} classified several non-radial multi-bubble regimes for the energy-critical nonlinear heat equation in dimensions $N\geq7$, in which the scales, centers, and signs of the bubbles enter the effective dynamics in a nontrivial manner. For the five-dimensional focusing energy-critical wave equation, Jendrej, Zhang, and Zhao \cite{JZZ} proved that, for spatially separated multi-bubble solutions with comparable scales, all scaling parameters are of order $t^{-2}$ and the renormalized modulation parameters converge to a finite-dimensional algebraic set determined by the limiting centers.

For the nonlinear Klein-Gordon equation without damping, the soliton resolution conjecture remains widely open. Nevertheless, Nakanishi and Schlag \cite{NS} obtained a complete classification of the global dynamics for the three-dimensional focusing cubic equation when the energy is only slightly above that of the ground state. Chen and Jendrej \cite{CJ} also established conditional asymptotic stability for well-separated multi-solitons and classified pure multi-soliton solutions. In the presence of damping, \eqref{energydecay} provides additional compactness and dissipation. Keller \cite{K1} constructed stable and unstable manifolds near stationary solutions, while Feireisl \cite{F} established an asymptotic decomposition along a sequence of times for bounded global solutions. Related subsequential decomposition results for non-radial bounded solutions, under suitable assumptions on the dimension and the nonlinearity, were obtained by Li and Zhao \cite{LZ}. Burq, Raugel, and Schlag \cite{BRS} obtained full-time convergence results for radial solutions. In one space dimension, C\^{o}te, Martel, and Yuan \cite{CMY} proved full-time soliton resolution without any size restriction and showed that the solitons occurring in the decomposition necessarily have alternating signs.

Beyond the existence of a soliton decomposition, one may ask which sign patterns and spatial configurations are compatible with the interaction dynamics. For the damped nonlinear Klein-Gordon equation, C\^{o}te, Martel, Yuan, and Zhao \cite{CMYZ} completely described two-soliton solutions associated with the ground state. They proved that the two solitons must have opposite signs, determined their universal logarithmic separation law, and constructed the corresponding family of solutions as a codimension-two Lipschitz manifold in the energy space. Subsequently, the author and Nakanishi \cite{IN} classified the global dynamics of all solutions starting sufficiently close to a superposition of two well-separated ground states with opposite signs. Results concerning solutions associated with excited stationary states can be found in \cite{CY}.

Multi-soliton solutions have also been constructed for several nonlinear dispersive equations. For the nonlinear Klein-Gordon equation without damping, C\^{o}te and Mu\~{n}oz \cite{CM} constructed multi-solitons based on the ground state, and C\^{o}te and Martel \cite{CM1} extended this construction to more general stationary profiles. These solutions are asymptotic to solitary waves traveling with pairwise distinct velocities of magnitude strictly less than one. Strongly interacting multi-solitons with logarithmic relative distances have been obtained for several models; see, for example, \cite{A,GI,MN,N1,N2}. In the damped setting considered here, the soliton velocities decay because of the damping, and the logarithmic separation is instead generated by the balance between the damping and the exponentially small interactions.

Most directly related to the present work is the construction of C\^{o}te and Du \cite{CD}. By imposing suitable symmetry assumptions, they constructed multi-soliton solutions for \eqref{DNKG} whose centers form expanding regular polygons, regular polyhedra, higher-dimensional regular polytopes, or collinear configurations. In particular, their result provides $(2,2)$-sign soliton solutions whose centers form an expanding square with alternating signs. They also proved that a multi-soliton solution cannot consist entirely of solitons having the same sign.

In contrast, we start with an arbitrary $(2,2)$-sign soliton solution and impose no symmetry, coplanarity, or a priori geometric condition on its centers. We prove that the interaction dynamics themselves force the centers into one of two asymptotic geometries: an alternating collinear configuration or an expanding rhombus with alternating signs. We further determine the precise long-time behavior in both regimes, including the distinct asymptotic law arising at the endpoint aspect ratios. 

\subsection{Notation}

Let $\{e_1,\ldots,e_d\}$ denote the canonical basis of $\mathbb{R}^d$, and let ${\partial}_k$ denote the partial derivative
with respect to $x_k$. For $x,y\in\mathbb{R}^d$, we denote the Euclidean inner product by
\begin{align}\label{Euclideanproduct}
x\cdot y=\sum_{k=1}^d x_k y_k.
\end{align}
For a point $x$ and nonempty sets $A$ and $B$ in a Euclidean space, we define
\begin{align*}
\operatorname{dist}(x,A)
&=\inf_{y\in A}|x-y|,
\\
\operatorname{dist}(A,B)
&=\inf_{\substack{x\in A\\y\in B}}|x-y|.
\end{align*}
Throughout this paper, the comparison notation is used as follows. For nonnegative quantities $X$ and $Y$, we write $X\lesssim Y$ if there exists a constant $C>0$ such that $X\leq CY$. We write $X\ll Y$ if $X\leq cY$ for a sufficiently small constant $c>0$. The precise smallness requirement will be clear from the context or specified when needed.

For real-valued quantities $X$ and $Y$ with $Y\neq0$, we write $X\sim Y$ if there exist constants $0<c\leq C$ such that
\begin{align}\label{signedcomparison}
c\leq\frac{X}{Y}\leq C.
\end{align}
Thus, $X\sim Y$ means that $X$ and $Y$ have the same sign and comparable absolute values; in particular, this notation is also used when both quantities are negative. For nonnegative $X$ and $Y$, this
is equivalent to $X\lesssim Y$ and $Y\lesssim X$. Unless otherwise stated, implicit constants are independent of the variables under consideration and may change from line to line.

\section{Preliminaries}
In this section, we collect several preliminary results on the long-time dynamics of $K$-soliton solutions. Most of the material is taken from \cite[Section 2 and Section 3]{I2} and is included here for the reader's
convenience.

\subsection{Basic properties of the ground state}

Since $Q$ is radial and positive, there exists $q:[0,\infty)\to (0,\infty)$ such that
\begin{align*}
Q(x)=q(|x|).
\end{align*}
Furthermore, $q$ satisfies
\begin{align}\label{qode2}
q^{\prime \prime}+\frac{d-1}{r}q^{\prime}-q+q^p=0.
\end{align}
In addition, there exists a constant $c_q>0$ depending only on $d$ and $p$ such that, for $r\geq 1$,
\begin{align}\label{qes}
|q(r)-c_qr^{-\frac{d-1}{2}}e^{-r}|+|q^{\prime}(r)+c_qr^{-\frac{d-1}{2}}e^{-r}|
\lesssim r^{-\frac{d+1}{2}}e^{-r}.
\end{align}
Next, in order to make the leading interaction term more explicit, we introduce the vector field
$H:\mathbb{R}^d\setminus\{0\}\to\mathbb{R}^d$ defined by
\begin{align}\label{Hdef}
H(z)=-\int_{\mathbb{R}^d}(\nabla Q^p(x))\,Q(x-z)\,dx.
\end{align}
Moreover, we define $g:(0,\infty)\to\mathbb{R}$ as
\begin{align*}
H(z)=\frac{z}{|z|}\,g(|z|).
\end{align*}
We note that this definition is well-defined. Furthermore, we have 
\begin{align*}
g(r+h)-g(r)
=-\int_{\mathbb{R}^d} {\partial}_1(Q^p)(x)\Bigl( Q\bigl(x-(r+h)e_1\bigr)-Q(x-re_1)\Bigr)\,dx.
\end{align*}
Therefore, we have 
\begin{align*}
g^{\prime}(r)
=\int_{\mathbb{R}^d} {\partial}_1(Q^p)(x)\,{\partial}_1Q(x-re_1)\,dx
=-\int_{\mathbb{R}^d}Q^p(x)\,{\partial}_1^2Q(x-re_1)\,dx.
\end{align*}
In addition, by direct computation,
\begin{align*}
{\partial}_1^2Q(x-re_1)
=\frac{\sum_{k=2}^dx_k^2}{|x-re_1|^3}q^{\prime}(|x-re_1|)
+\frac{(x_1-r)^2}{|x-re_1|^2}q^{\prime \prime}(|x-re_1|).
\end{align*}
Using \eqref{qode2} and \eqref{qes}, we obtain, for $r\geq1$,
\begin{align}\label{gnozenkin1}
|g(r)-c_gr^{-\frac{d-1}{2}}e^{-r}|+|g^{\prime}(r)+c_gr^{-\frac{d-1}{2}}e^{-r}|
\lesssim r^{-\frac{d+1}{2}}e^{-r},
\end{align}
where
\begin{align*}
c_g=c_q\int_{\mathbb{R}^d} Q^p(x)e^{-x_1}\,dx.
\end{align*}
Moreover, by \eqref{gnozenkin1}, we have for $R\gg 1$ and $|a|\lesssim 1$
\begin{align}\label{gnozenkin2}
 \left|g(R+a)-e^{-a}g(R)\right|\lesssim \frac{|a|g(R)}{R},
 \end{align}
 and we have for $R\gg 1$ and $0< a\ll R$
 \begin{align}\label{gnozenkin4}
\left|g(R+a)-e^{-a}g(R)\right|\lesssim \frac{(a+1)e^{-a}g(R)}{R},
\end{align}
 and we have for $|a|\ll 1$ and $1\ll R$
 \begin{align}\label{gnozenkin3}
\left|g(R+a)-(1-a)g(R)\right|\lesssim g(R)\left(a^2+\frac{|a|}{R}\right).
\end{align}
These estimates will be used below through the interaction function $\mathcal F$.

\subsection{Basic properties of the dynamics of $K$-soliton solutions}

We now summarize several results concerning the long-time dynamics of $K$-soliton solutions.
Let $\vec{u}$ be a $K$-soliton solution. Then there exist $\sigma\in\{-1,1\}^K$ and a function $z:[0,\infty)\to(\mathbb{R}^d)^K$ such that \eqref{Ksoli} holds.
For such $z$ and $\sigma$, we define
\begin{align*}
Q_k=\sigma_k Q(\cdot-z_k),\qquad
\vec{Q}_k=
\begin{pmatrix}
Q_k\\
0
\end{pmatrix},
\qquad
Q_{\sum}=\sum_{k=1}^KQ_k,\qquad
\vec{Q}_{\sum}=
\begin{pmatrix}
Q_{\sum}\\
0
\end{pmatrix}.
\end{align*}
Furthermore, for a solution $\vec{u}$ of \eqref{DNKG} and $z$ and $\sigma$, we define
\begin{align*}
\vec{\varepsilon}=
\begin{pmatrix}
\varepsilon\\
\eta
\end{pmatrix}
=\vec{u}-\vec{Q}_{\sum}.
\end{align*}
We also introduce the minimal separation
\begin{align*}
D(t)=\min_{i\neq j}|z_i(t)-z_j(t)|.
\end{align*}

In a multi-soliton decomposition, the choice of the center parameters $z_k(t)$ is not unique due to translation invariance.
Following \cite[Lemma 3.2]{I2}, we fix this freedom by imposing orthogonality conditions that eliminate the contribution of the neutral directions associated with translations of the linearized operator around each soliton.

\begin{lemma}\label{modKsoli}
Let $\vec{u}$ be a $K$-soliton solution. Then there exist $\sigma\in\{-1,1\}^K$ and a $C^1$ function
$z:[0,\infty)\to (\mathbb{R}^d)^K$ such that
\begin{align}
\lim_{t\to\infty}\Bigl(\left\| u(t)-\sum_{k=1}^K \sigma_kQ(\cdot-z_k(t))\right\|_{H^1}
+\|{\partial}_tu(t)\|_{L^2}\Bigr)&=0,\label{modlim1yowa}
\\
\lim_{t\to\infty} D(t)&=\infty, \label{modlim2yowa}
\end{align}
and for $1\leq l\leq d$, $1\leq i\leq K$, and $t\gg 1$,
\begin{align}\label{modeqyowa}
\int_{\mathbb{R}^d} \Bigl\{ {\partial}_tu(t)+2\alpha\Bigl(u(t)-\sum_{k=1}^K \sigma_k Q(\cdot-z_k(t))\Bigr)\Bigr\}
\, {\partial}_lQ(\cdot-z_i(t))=0.
\end{align}
\end{lemma}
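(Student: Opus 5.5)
The plan is the standard modulation argument via the implicit function theorem, applied on a time interval $[T_0,\infty)$ where $\vec u(t)$ lies in a small tubular neighborhood of the manifold of well-separated multi-solitons.

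\emph{Setup.} By Definition \ref{defKsoli} there are signs $\sigma$ and centers $\tilde z(t)$ (not necessarily continuous) with
\begin{align*}
\|u(t)-\textstyle\sum_k\sigma_kQ(\cdot-\tilde z_k(t))\|_{H^1}+\|\partial_tu(t)\|_{L^2}\to0,\qquad \min_{j\neq k}|\tilde z_j(t)-\tilde z_k(t)|\to\infty.
\end{align*}
For $\vec v=(v_1,v_2)\in\mathcal H$ and $z\in(\mathbb R^d)^K$, define $\Phi:\mathcal H\times(\mathbb R^d)^K\to\mathbb R^{dK}$ by
\begin{align*}
\Phi_{i,l}(\vec v,z)=\int_{\mathbb R^d}\Bigl\{v_2+2\alpha\Bigl(v_1-\sum_k\sigma_kQ(\cdot-z_k)\Bigr)\Bigr\}\partial_lQ(\cdot-z_i).
\end{align*}
Then $\Phi(\vec Q_{\sum}(z),z)=0$. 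The derivative in $z$ is
\begin{align*}
\partial_{z_{j,m}}\Phi_{i,l}=2\alpha\sigma_j\int\partial_mQ(\cdot-z_j)\,\partial_lQ(\cdot-z_i)+O(\|\vec v-\vec Q_{\sum}(z)\|_{\mathcal H}).
\end{align*}
Its diagonal blocks are $2\alpha\sigma_i\|\partial_1Q\|_{L^2}^2\delta_{lm}$ by radial symmetry. The off-diagonal blocks are $O(e^{-c|z_i-z_j|})$ by the exponential decay \eqref{qes}. Hence the Jacobian is uniformly invertible whenever the separation is large and $\vec v$ is close to $\vec Q_{\sum}(z)$.

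\emph{Local existence and uniqueness.} Apply a quantitative implicit function theorem, uniform in the base point $z$ by translation invariance of all constants. This gives $\delta>0$ and $L\gg1$ with the following property. If $\min|\tilde z_j-\tilde z_k|>L$ and $\|\vec v-\vec Q_{\sum}(\tilde z)\|_{\mathcal H}<\delta$, then there is a unique $z$ with $|z-\tilde z|<C\delta$ and $\Phi(\vec v,z)=0$. Moreover the map $\vec v\mapsto z(\vec v)$ is $C^1$ on this set, and
\begin{align*}
|z(\vec v)-\tilde z|\lesssim\|\vec v-\vec Q_{\sum}(\tilde z)\|_{\mathcal H}.
\end{align*}
Choose $T_0$ so that these hypotheses hold for $\vec v=\vec u(t)$, $t\ge T_0$.

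\emph{Gluing and regularity in time.} Set $z(t)=z(\vec u(t))$. Uniqueness in the small ball makes this definition independent of the chosen $\tilde z(t)$: two admissible choices give centers within $C\delta$ of each other, after reordering indices consistently. Continuity of $t\mapsto\vec u(t)$ in $\mathcal H$ then gives continuity of $z$. Since $|z-\tilde z|\to0$, the convergence \eqref{modlim1yowa} and the separation \eqref{modlim2yowa} are inherited, and \eqref{modeqyowa} holds by construction.

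For $C^1$ regularity, $\vec u$ is only $C^0$ in $\mathcal H$. So I differentiate the identity $\Phi(\vec u(t),z(t))=0$ in the weak sense, using the equation \eqref{DNKG}. Each time derivative falls on $\vec u$ paired against the smooth, rapidly decaying test functions $\partial_lQ(\cdot-z_i)$. This makes $t\mapsto\Phi(\vec u(t),z)$ a $C^1$ function of $(t,z)$, since $\partial_t^2u=\Delta u-2\alpha\partial_tu-u+f(u)\in H^{-1}$ and pairs continuously with Schwartz functions. The finite-dimensional implicit function theorem in $(t,z)$ then gives $z\in C^1$. Finally extend $z$ to $[0,T_0]$ by any $C^1$ function; the statement only concerns $t\gg1$.

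\emph{Main obstacle.} The delicate step is uniformity of the implicit function theorem in the unbounded parameter $z$ together with consistent labeling. I would handle it by obtaining all estimates from translation invariance and the exponential decay of the off-diagonal terms, so that $\delta$, $L$, $C$ are independent of $t$.
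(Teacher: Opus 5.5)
Your proposal is correct and is essentially the standard implicit-function-theorem modulation argument that the paper relies on by citing Lemma 3.2 of \cite{I2}; this includes obtaining $C^1$ regularity from the finite-dimensional implicit function theorem in $(t,z)$, which works because $\partial_t^2u\in C(H^{-1})$ is only ever paired with smooth, exponentially decaying functions. The one step to write out more carefully is the consistent labeling of like-signed centers: you fix it by continuation in $t$, using that zeros belonging to different sign-preserving relabelings are separated by a distance $\gtrsim L\gg\delta$.
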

\begin{proof}
See \cite[Lemma 3.2]{I2}.
\end{proof}
Throughout the remainder of this section, $\sigma$ and $z$ are chosen as in Lemma \ref{modKsoli}, and $Q_k$, $\vec{\varepsilon}$, and $D$ are defined using this same choice. We then define the scalar function $\mathcal{F}:[1,\infty)\to [0,\infty)$ by
\begin{align}\label{Fdef}
\mathcal{F}(r)=\frac{g(r)}{2\alpha\|{\partial}_1Q\|_{L^2}^2}.
\end{align}
Then, by \eqref{gnozenkin1}, there exists
\begin{align*}
c_{\mathcal{F}}=\frac{c_g}{2\alpha\|{\partial}_1Q\|_{L^2}^2}>0
\end{align*}
such that 
\begin{align}\label{mathcalFhyouka1}
\left|\mathcal{F}(r)-c_{\mathcal{F}}r^{-\frac{d-1}{2}}e^{-r}\right|+\left|\mathcal{F}^{\prime}(r)+c_{\mathcal{F}}r^{-\frac{d-1}{2}}e^{-r}\right|
\lesssim r^{-\frac{d+1}{2}}e^{-r}.
\end{align}
Furthermore, by \eqref{gnozenkin2}, \eqref{gnozenkin4}, and \eqref{gnozenkin3}, we have for $R\gg 1$ and $|a|\lesssim 1$
\begin{align}\label{tukaeruF1}
\left|\mathcal{F}(R+a)-e^{-a}\mathcal{F}(R)\right|\lesssim \frac{|a|\mathcal{F}(R)}{R},
\end{align}
and we have for $R\gg 1$ and $0<a\ll R$
\begin{align}\label{tukaeruF3}
\left|\mathcal{F}(R+a)-e^{-a}\mathcal{F}(R)\right|\lesssim \frac{(a+1)e^{-a}\mathcal{F}(R)}{R},
\end{align}
and we have for $R\gg1$ and $|a|\ll 1$
\begin{align}\label{tukaeruF2}
\left|\mathcal{F}(R+a)-(1-a)\mathcal{F}(R)\right|\lesssim \mathcal{F}(R)\left(a^2+\frac{|a|}{R}\right).
\end{align}
As shown in \cite[Section 3]{I2}, the function $\mathcal{F}$ describes the leading-order interaction force between
well-separated solitons.
\begin{lemma}\label{centerdynamics}
Let $\vec{u}$ be a $K$-soliton solution. Furthermore, let $\sigma\in\{-1,1\}^K$ and $z\in C^1([0,\infty);(\mathbb{R}^d)^K)$ satisfy \eqref{modlim1yowa}, \eqref{modlim2yowa}, and \eqref{modeqyowa}. Then, for every $1\leq k\leq K$ and $t\gg1$,
\begin{align}\label{zmodvol2}
\dot{z}_k
=-\sum_{\substack{1\leq i\leq K\\ i\neq k}}\sigma_i\sigma_k\mathcal{F}(|z_k-z_i|)
\frac{z_k-z_i}{|z_k-z_i|}
+o(\mathcal{F}(D)).
\end{align}
\end{lemma}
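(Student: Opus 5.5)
We derive \eqref{zmodvol2} by differentiating the orthogonality conditions \eqref{modeqyowa} in time and substituting the equation satisfied by $\vec{\varepsilon}$. Writing $\vec u=\vec Q_{\sum}+\vec\varepsilon$, the perturbation satisfies
\begin{align*}
\partial_t\varepsilon&=\eta+\sum_{k}\sigma_k\dot z_k\cdot\nabla Q(\cdot-z_k),\\
\partial_t\eta&=\Delta\varepsilon-\varepsilon-2\alpha\eta+f(Q_{\sum}+\varepsilon)-\sum_k f(Q_k).
\end{align*}
The second equation uses $-\Delta Q_k+Q_k=f(Q_k)$. Setting $b=\eta+2\alpha\varepsilon$, the conditions \eqref{modeqyowa} read $\langle b,\partial_lQ(\cdot-z_i)\rangle=0$.

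The time derivative of $b$ is
\[
\partial_t b=-L\varepsilon+2\alpha\sum_k\sigma_k\dot z_k\cdot\nabla Q(\cdot-z_k)+R_{\mathrm{int}}+N(\varepsilon).
\]
Here $L=-\Delta+1-f'(Q_{\sum})$, $N(\varepsilon)$ collects the terms that are quadratic or higher in $\varepsilon$, and $R_{\mathrm{int}}=f(Q_{\sum})-\sum_kf(Q_k)$ is the interaction term. The $\eta$ contributions cancel because $-2\alpha\eta+2\alpha\eta=0$. We pair $\partial_tb$ with $\partial_lQ(\cdot-z_i)$ and use
\[
0=\frac{d}{dt}\langle b,\partial_lQ(\cdot-z_i)\rangle=\langle\partial_tb,\partial_lQ(\cdot-z_i)\rangle-\langle b,\dot z_i\cdot\nabla\partial_lQ(\cdot-z_i)\rangle .
\]
This gives a linear system for $\dot z$ with the following contributions.
\begin{itemize}
\item The main term is $2\alpha\sigma_i\dot z_{i,l}\|\partial_1Q\|_{L^2}^2$. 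Cross terms with $k\ne i$ are $O(e^{-D}\cdot\text{poly})$, and $\langle\partial_jQ,\partial_lQ\rangle=\delta_{jl}\|\partial_1Q\|^2$.
\item The linear term $\langle L\varepsilon,\partial_lQ(\cdot-z_i)\rangle$ equals $\langle\varepsilon,(f'(Q_i)-f'(Q_{\sum}))\partial_lQ(\cdot-z_i)\rangle$, because $\partial_lQ_i$ lies in the kernel of $-\Delta+1-f'(Q_i)$. It is therefore bounded by $\|\varepsilon\|_{H^1}\,o(1)\,e^{-\theta D}$.
\item The nonlinear term is bounded by $\|\varepsilon\|_{H^1}^2$.
\item The term $\langle b,\dot z_i\cdot\nabla\partial_lQ\rangle$ is $O(|\dot z|\|\vec\varepsilon\|)$.
\item The interaction pairing is $\langle R_{\mathrm{int}},\partial_lQ(\cdot-z_i)\rangle$.
\end{itemize}

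To evaluate the interaction term, we localize near $z_i$, where $f(Q_{\sum})-f(Q_i)\approx f'(Q_i)\sum_{k\neq i}Q_k$. Writing $\langle f'(Q_i)\partial_lQ_i,Q_k\rangle=\langle\partial_l(f(Q_i)),Q_k\rangle$ and using the definition \eqref{Hdef} of $H$, this becomes
\[
-\sigma_k\,\sigma_i^{p-1}\ \text{(sign conventions)}\ \,H_l(z_k-z_i)\cdot(\ldots).
\]
Collecting the signs gives the pairing $-\sigma_i\sigma_k\,\sigma_i\,g(|z_i-z_k|)\frac{(z_i-z_k)_l}{|z_i-z_k|}$, up to errors $O(e^{-(1+\theta)D})=o(\mathcal F(D))$, by \eqref{gnozenkin1} and the fact that $p>2$. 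The contribution near the other centers $z_k$ is of higher order because $\partial_lQ(\cdot-z_i)$ is small there. Dividing by $2\alpha\sigma_i\|\partial_1Q\|^2$ yields the leading term of \eqref{zmodvol2}.

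The main obstacle is bounding $\|\vec\varepsilon\|$ by $o(\mathcal F(D))^{1/2}$ or better, so that the error terms are $o(\mathcal F(D))$. To do this I would follow \cite[Section 3]{I2}:
\begin{itemize}
\item Build a localized energy functional $E(\vec u)-KE(Q,0)$ corrected by a damping cross term $\alpha\langle\varepsilon,\eta\rangle$.
\item Use coercivity of $L$ under the orthogonality conditions, together with finite dimensionality of the unstable directions of $L$, which are controlled by exponential dichotomy.
\item Obtain $\|\vec\varepsilon\|_{\mathcal H}\lesssim \mathcal F(D)\cdot\mathrm{poly}(D)+$ something integrable, and conclude that $\|\vec\varepsilon\|^2+|\dot z|\|\vec\varepsilon\|=o(\mathcal F(D))$.
\end{itemize}
Since this is exactly \cite[Section 3]{I2}, in the paper one cites it directly.
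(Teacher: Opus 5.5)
Your proposal takes the same route as the paper. The paper cites \cite[Lemma 3.3]{I2} for the modulation estimate $\bigl|\dot z_k+\sum_{i\neq k}\sigma_i\sigma_k\mathcal F(|z_k-z_i|)\frac{z_k-z_i}{|z_k-z_i|}\bigr|\lesssim e^{-\theta D}+\|\vec\varepsilon\|_{\mathcal H}^2$, which your differentiation of \eqref{modeqyowa} reconstructs, together with \cite[Lemma 3.8]{I2} for $\|\vec\varepsilon\|_{\mathcal H}^2=o(\mathcal F(D))$, and then uses $e^{-\theta D}\lesssim\mathcal F(D)/D$ for $\theta>1$. Two small points: the second input is a pointwise bound that should simply be cited, since your sketch with a merely ``integrable'' remainder would not yield it; and no factor $\sigma_i^{p-1}$ appears, because $f'$ is even and so $f'(Q_i)\partial_lQ(\cdot-z_i)=\partial_l(Q^p)(\cdot-z_i)$.
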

\begin{proof}
For this choice of $\sigma$ and $z$, by \cite[Lemma 3.3]{I2}, for $1<\theta<\min\{p-1,2\}$, we have
\begin{align}
\left|\dot{z}_k+\sum_{\substack{1\leq i\leq K\\ i\neq k}}\sigma_i\sigma_k\mathcal{F}(|z_k-z_i|)
\frac{z_k-z_i}{|z_k-z_i|}\right|
&\lesssim e^{-\theta D}+\|\vec{\varepsilon}\|_{\mathcal{H}}^2.\label{zes}
\end{align}
Moreover, \cite[Lemma 3.8]{I2} gives
\begin{align*}
\|\vec{\varepsilon}(t)\|_{\mathcal H}^2=o(\mathcal F(D(t))).
\end{align*}
Since we have $\theta>1$, \eqref{mathcalFhyouka1} also yields
\begin{align*}
e^{-\theta D}\lesssim \frac{\mathcal F(D)}{D}=o(\mathcal F(D)).
\end{align*}
Substituting these estimates into \eqref{zes}, we obtain \eqref{zmodvol2}.
\end{proof}

As is apparent from Lemma \ref{centerdynamics}, the interaction
function $\mathcal{F}$ determines the leading-order dynamics of the
soliton centers. We therefore introduce a scalar reference function
associated with $\mathcal{F}$.

Fix $L_0\gg1$ sufficiently large. We define $\Phi:[L_0,\infty)\to[0,\infty)$ as 
\begin{align}\label{Phidef}
\Phi(r)=\int_{L_0}^r\frac{ds}{\mathcal{F}(s)}.
\end{align}
By \eqref{mathcalFhyouka1}, we have
\begin{align}\label{Phiasymptotics}
\Phi(r)=\frac{1}{c_{\mathcal{F}}}r^{\frac{d-1}{2}} e^r\left(1+O\left(\frac{1}{r}\right)\right).
\end{align}
In particular, $\Phi$ is a strictly increasing bijection from $[L_0,\infty)$ onto $[0,\infty)$. For $t\geq0$, we define
\begin{align}\label{LdefPhi}
L(t)=\Phi^{-1}(t).
\end{align}
Equivalently, $L$ is the unique solution of
\begin{align}\label{Lnoode}
\begin{aligned}
L(0)&=L_0,
\\
\dot{L}&=\mathcal{F}(L).
\end{aligned}
\end{align}
Taking the logarithm of \eqref{Phiasymptotics}, we obtain, for $t\gg1$,
\begin{align}\label{Lnagasa}
L(t)=\log{t}-\frac{d-1}{2}\log{(\log{t})}+c_{\star}+O\left(\frac{\log{(\log{t})}}{\log{t}}\right),
\end{align}
where
\begin{align*}
c_{\star}=\log c_{\mathcal{F}}.
\end{align*}

\begin{remark}
The constant $c_{\star}$ in \eqref{Lnagasa} is precisely the constant appearing in Theorem \ref{maintheorem}. In particular, it depends only on $d$, $\alpha$, and $p$, and is independent of the particular multi-soliton solution.
\end{remark}

We next record two elementary comparison properties of the reference equation.

\begin{lemma}\label{mathcalFyouode}
Let $T\gg 1$ and let
\begin{align*}
\tilde{L}\in C^1([T,\infty);[L_0,\infty)).
\end{align*}

\begin{enumerate}
\item If $\tilde{L}$ satisfies
\begin{align}\label{Ltildecomparable}
\dot{\tilde{L}}\sim\mathcal{F}(\tilde{L}),
\end{align}
then, we have for $t\gg 1$
\begin{align}\label{LtildeLookisaonaji}
\left|\tilde{L}(t)-L(t)\right|\lesssim1.
\end{align}
\item If there exists a constant $c>0$ such that 
\begin{align}\label{Ltildecass}
\dot{\tilde{L}}=\left(c+o(1)\right)\mathcal{F}(\tilde{L}),
\end{align}
then we have
\begin{align}\label{LtildeLsaga0}
\lim_{t\to\infty}\left(\tilde{L}(t)-L(t)-\log{c}\right)=0.
\end{align}
\end{enumerate}
\end{lemma}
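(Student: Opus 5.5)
The natural tool is the primitive $\Phi$ from \eqref{Phidef}: since $L=\Phi^{-1}$, comparing $\tilde L$ with $L$ amounts to comparing $\Phi(\tilde L(t))$ with $t$. We have
\begin{align*}
\frac{d}{dt}\Phi(\tilde L(t))=\frac{\dot{\tilde L}(t)}{\mathcal F(\tilde L(t))},
\end{align*}
so both hypotheses become statements about the derivative of $\Phi\circ\tilde L$.

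For part (1), \eqref{Ltildecomparable} says this derivative lies in $[c,C]$ for some $0<c\le C$. Integrating from $T$ gives
\begin{align*}
c(t-T)+\Phi(\tilde L(T))\le\Phi(\tilde L(t))\le C(t-T)+\Phi(\tilde L(T)).
\end{align*}
Hence $c't\le\Phi(\tilde L(t))\le C't$ for $t\gg1$, with positive constants $c',C'$. By \eqref{Phiasymptotics}, $\log\Phi(r)=r+\frac{d-1}{2}\log r+O(1)$, so $r=\log\Phi(r)-\frac{d-1}{2}\log\log\Phi(r)+O(1)$ as $r\to\infty$. Applying this with $r=\tilde L(t)$ and with $r=L(t)$ (where $\Phi(L(t))=t$) gives
\begin{align*}
\tilde L(t)=\log t-\tfrac{d-1}{2}\log\log t+O(1)=L(t)+O(1).
\end{align*}
This is \eqref{LtildeLookisaonaji}. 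One remark is needed here: $\tilde L(t)\to\infty$ follows from the lower bound $\Phi(\tilde L)\gtrsim t$, so the asymptotics of $\Phi$ are indeed applicable.

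For part (2), the derivative of $\Phi\circ\tilde L$ equals $c+o(1)$, so $\Phi(\tilde L(t))=ct+o(t)$. I then compare $\Phi(\tilde L)$ with $\Phi(L+\log c)$ using \eqref{Phiasymptotics}:
\begin{align*}
\Phi(L(t)+\log c)=c\,\Phi(L(t))\bigl(1+O(1/L)\bigr)=ct\,(1+o(1)).
\end{align*}
This uses $(L+\log c)^{\frac{d-1}{2}}/L^{\frac{d-1}{2}}\to1$ together with $L\to\infty$. Therefore
\begin{align*}
\frac{\Phi(\tilde L(t))}{\Phi(L(t)+\log c)}\to1.
\end{align*}
By part (1), $\tilde L-L=O(1)$, so the quantity $a(t)=\tilde L(t)-L(t)-\log c$ is bounded. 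Again by \eqref{Phiasymptotics} with $R=L+\log c$ and bounded shift $a$,
\begin{align*}
\frac{\Phi(R+a)}{\Phi(R)}=e^{a}\bigl(1+O(1/R)\bigr).
\end{align*}
Consequently $e^{a(t)}\to1$, that is, $a(t)\to0$, which is \eqref{LtildeLsaga0}.

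The main point requiring care is this last step: converting a multiplicative $1+o(1)$ error on the $\Phi$ side into an additive $o(1)$ error on the $r$ side. It works only because the $O(1/r)$ relative error in \eqref{Phiasymptotics} is uniform over bounded shifts, and the boundedness of $a$ is supplied by part (1). The rest is routine integration.
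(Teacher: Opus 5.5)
Your proof is correct and follows essentially the same route as the paper. Both arguments rewrite the hypotheses as bounds on $\frac{d}{dt}\Phi(\tilde L(t))$, integrate to compare $\Phi(\tilde L(t))$ with $t=\Phi(L(t))$, invert the asymptotics \eqref{Phiasymptotics}, and use part (1) to keep $\tilde L-L$ bounded in part (2). The differences are cosmetic: you invert the expansion of $\log\Phi$ directly where the paper uses $\frac{d}{dr}\log\Phi=1+O(1/r)$, and you compare with $\Phi(L+\log c)$ where the paper takes logarithms of $\Phi(\tilde L)/\Phi(L)$.
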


\begin{proof}
By \eqref{Phidef} and \eqref{Ltildecomparable}, we have
\begin{align}\label{tildeLode0822}
\frac{d}{dt}\Phi(\tilde{L}(t))=\frac{\dot{\tilde{L}}(t)}{\mathcal{F}(\tilde{L}(t))}\sim1.
\end{align}
By \eqref{tildeLode0822}, we have
\begin{align}\label{Phinosim}
\Phi(\tilde{L}(t))\sim t\sim \Phi(L(t)).
\end{align}
Moreover, by \eqref{Phiasymptotics}, we have
\begin{align}\label{logPhiasymptotics}
\log{\Phi(r)}=r+\frac{d-1}{2}\log{r}-\log{c_{\mathcal{F}}}+O\left(\frac{1}{r}\right).
\end{align}
Furthermore, by \eqref{mathcalFhyouka1}, \eqref{Phidef}, and \eqref{Phiasymptotics}, we have
\begin{align}\label{logPhiderivative}
\frac{d}{dr}\log{\Phi(r)}=\frac{1}{\mathcal{F}(r)\Phi(r)}=1+O\left(\frac{1}{r}\right).
\end{align}
By \eqref{Phinosim} and \eqref{logPhiderivative}, we have
\begin{align*}
\left|\log{\Phi(\tilde{L}(t))}-\log{\Phi(L(t))}\right|\lesssim1.
\end{align*}
Since $L(t),\tilde{L}(t)\to\infty$, by \eqref{logPhiasymptotics}, we obtain \eqref{LtildeLookisaonaji}.

Next, we prove \eqref{LtildeLsaga0}. By \eqref{Phidef} and \eqref{Ltildecass}, we have
\begin{align*}
\frac{d}{dt}\Phi(\tilde{L}(t))=\frac{\dot{\tilde{L}}(t)}{\mathcal{F}(\tilde{L}(t))}=c+o(1),
\end{align*}
and hence
\begin{align}\label{(2)notameLes}
\frac{\Phi(\tilde{L}(t))}{\Phi(L(t))}=\frac{ct+o(t)}{t}=c+o(1).
\end{align}
Since \eqref{Ltildecass} implies \eqref{Ltildecomparable}, we obtain \eqref{LtildeLookisaonaji}. Therefore, we obtain
\begin{align}\label{tildeL/L}
\lim_{t\to\infty}\frac{\tilde{L}(t)}{L(t)}=1.
\end{align}
Moreover, by \eqref{LtildeLookisaonaji} and \eqref{logPhiasymptotics}, we have
\begin{align}\label{L-tildeLganbaru}
\tilde{L}(t)-L(t)=\log{{\frac{\Phi(\tilde{L}(t))}{\Phi(L(t))}}}-\frac{d-1}{2}\left(\log{\tilde{L}(t)}-\log{L(t)}\right)+O\left(\frac{1}{L(t)}\right).
\end{align}
By \eqref{(2)notameLes}, \eqref{tildeL/L} and \eqref{L-tildeLganbaru}, we obtain
\begin{align*}
\lim_{t\to\infty}\left(\tilde{L}(t)-L(t)\right)=\lim_{t\to\infty}\log{\frac{\Phi(\tilde{L}(t))}{\Phi(L(t))}}=\log{c},
\end{align*}
which implies \eqref{LtildeLsaga0}.
\end{proof}

Moreover, by exploiting \eqref{energydecay} together with a Taylor expansion of the
energy around $\vec{Q}_{\sum}$, one obtains quantitative estimates both on the interaction terms and on the decay of the
remainder $\vec{\varepsilon}$. We define $V$ as 
\begin{align}\label{Vdef}
V=-\sum_{1\leq i<j\leq K}\sigma_i\sigma_j\mathcal{F}(|z_i-z_j|).
\end{align}
We next record a useful estimate of $V$.

\begin{lemma}\label{limeplem}
Let $\vec{u}$ be a $K$-soliton solution. Let 
\begin{align*}
\sigma\in\{-1,1\}^K\  \mbox{and}\  z\in C^1([0,\infty);(\mathbb{R}^d)^K)
\end{align*}
satisfy \eqref{modlim1yowa}, \eqref{modlim2yowa}, and \eqref{modeqyowa}. Furthermore, let $D$ and $V$ be defined using this same
choice of $\sigma$ and $z$. Then, we have
\begin{align}\label{Ves1}
\liminf_{t\to\infty} \frac{V(t)}{\mathcal{F}(D(t))}\geq 0.
\end{align}
Furthermore, we have for $t\gg 1$
\begin{align}
D(t)-\log{t}+\frac{d-1}{2}\log{(\log{t})}\lesssim1, \label{Dupper}
\\
\mathcal{F}(D)\gtrsim \frac{1}{t}. \label{F(D)lower}
\end{align}
\end{lemma}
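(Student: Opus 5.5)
We plan to derive all three estimates from the energy identity \eqref{energydecay}, combined with a Taylor expansion of $E$ around $\vec{Q}_{\sum}$, and then from the center dynamics of Lemma \ref{centerdynamics}. This is the route of \cite[Section 3]{I2}.

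\textbf{Step 1: expansion of the energy.} Writing $\vec u=\vec Q_{\sum}+\vec\varepsilon$ and using \eqref{Qode} for each $Q_k$, we expand
\begin{align*}
E(\vec u(t))=KE(Q,0)+c_0 V(t)+\frac12\langle \mathcal L\vec\varepsilon,\vec\varepsilon\rangle+o(\mathcal F(D))+O(\|\vec\varepsilon\|_{\mathcal H}^3),
\end{align*}
where $c_0=2\alpha\|\partial_1Q\|_{L^2}^2\cdot(\text{const})>0$. Two facts produce the coefficient. First, the pairwise cross terms $\int(\nabla Q_i\cdot\nabla Q_j+Q_iQ_j-f(Q_{\sum})+\dots)$ reduce to $-\sigma_i\sigma_j\int Q^p(x)Q(x-(z_i-z_j))\,dx$ up to $O(e^{-\theta D})$. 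Second, that integral equals $g(|z_i-z_j|)$ to leading order, by \eqref{gnozenkin1}, because $\int Q^pQ(\cdot-z)$ and $g$ share the same asymptotic $c_g r^{-(d-1)/2}e^{-r}$. The linear term in $\vec\varepsilon$ is $\langle -\Delta Q_{\sum}+Q_{\sum}-f(Q_{\sum}),\varepsilon\rangle$. It is bounded by $e^{-\theta' D}\|\varepsilon\|$ and hence is $o(\mathcal F(D))$, since \cite[Lemma 3.8]{I2} gives $\|\vec\varepsilon\|^2=o(\mathcal F(D))$.

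\textbf{Step 2: proof of \eqref{Ves1}.} By \eqref{energydecay}, $E(\vec u(t))$ is nonincreasing. By \eqref{modlim1yowa} and the decoupling above, $E(\vec u(t))\to KE(Q,0)$. Together these give $E(\vec u(t))\ge KE(Q,0)$ for all $t$, so
\begin{align*}
c_0V\ge -\tfrac12\langle\mathcal L\vec\varepsilon,\vec\varepsilon\rangle-o(\mathcal F(D))\ge -C\|\vec\varepsilon\|_{\mathcal H}^2-o(\mathcal F(D))=-o(\mathcal F(D)).
\end{align*}
Dividing by $\mathcal F(D)$ yields \eqref{Ves1}.

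\textbf{Step 3: proof of \eqref{F(D)lower} and \eqref{Dupper}.} Choose a pair $(i,j)$ realizing $D=|z_i-z_j|$, and let $r=|z_i-z_j|$. We apply \eqref{zmodvol2} to $\frac{d}{dt}r$. The direct term gives $2(-\sigma_i\sigma_j)\mathcal F(r)$. Every other contribution is bounded by $\mathcal F$ of a distance $\ge D$, so $|\dot r|\lesssim\mathcal F(D)$. At the same time, $D$ is a minimum of finitely many $C^1$ functions, hence Lipschitz, and almost everywhere $|\dot D|\lesssim\mathcal F(D)$. This gives $\frac{d}{dt}\Phi(D)\lesssim 1$, and therefore $\Phi(D(t))\lesssim t$. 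By \eqref{logPhiasymptotics} this is exactly \eqref{Dupper}: $D\le\log t-\frac{d-1}{2}\log\log t+C$. Inserting this into \eqref{mathcalFhyouka1}, and using that $\mathcal F$ is decreasing, we get $\mathcal F(D)\gtrsim e^{-D}D^{-(d-1)/2}\gtrsim t^{-1}$, which is \eqref{F(D)lower}.

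\textbf{Main obstacle.} The delicate point is Step 1. There we must check that the cross terms, together with the nonlinear part $F(Q_{\sum})-\sum F(Q_k)-\sum_{i\ne j}f(Q_i)Q_j$, contribute exactly $c_0$ times \eqref{Vdef} with error $o(\mathcal F(D))$. The term $\int|Q_i|^{p-1}|Q_j|^2$-type contributions are $O(e^{-\theta D})$ with $\theta>1$ because $p>2$. I would cite \cite[Lemma 3.4]{I2} for this expansion rather than redo it. A second, milder point is the a.e. differentiability of $D$, which is handled by Lipschitz continuity.
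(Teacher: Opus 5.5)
Your proposal is correct and follows the route the paper delegates to \cite[Lemmas 3.8 and 3.9]{I2}: energy monotonicity plus the expansion of $E$ around $\vec{Q}_{\sum}$ gives \eqref{Ves1}, and the crude bound $|\dot z_k|\lesssim\mathcal{F}(D)$ from \eqref{zmodvol2} gives $\Phi(D(t))\lesssim t$, hence $D(t)\le L(Ct)$, which yields \eqref{Dupper} and \eqref{F(D)lower} via \eqref{Lnagasa}. Note that the input $\|\vec\varepsilon\|_{\mathcal H}^2=o(\mathcal F(D))$ comes from the same \cite[Lemma 3.8]{I2} that the paper cites for \eqref{Ves1}, so your Steps 1--2 unpack that citation rather than replace it; this is legitimate, since the paper already records this bound in the proof of Lemma \ref{centerdynamics}.
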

\begin{proof}
For this choice of the modulation parameters, \eqref{Ves1} follows from \cite[Lemma 3.8]{I2}, while \eqref{Dupper} and \eqref{F(D)lower} follow from \cite[Lemma 3.9]{I2}.
\end{proof}
If, in addition, the interaction functional $V$ dominates $\mathcal{F}(D)$ in the sense that
\begin{align}\label{Vrep}
\liminf_{t\to \infty}\frac{V(t)}{\mathcal{F}(D(t))}> 0,
\end{align}
then one can further improve the control on the remainder and the modulation parameters by \cite[Lemma 3.11]{I2}.
\begin{lemma}\label{epzes}
Let $\vec{u}$ be a $K$-soliton solution and satisfy \eqref{modlim1yowa}, \eqref{modlim2yowa}, and \eqref{modeqyowa} for some $\sigma$ and a $C^1$ function $z$. Furthermore, we assume \eqref{Vrep}. Then there exists $C>0$ depending on $\vec{u}$ such that
\begin{align}\label{newepes}
\|\vec{\varepsilon}(t)\|_{\mathcal{H}}\leq C \mathcal{F}(D(t)).
\end{align}
Furthermore, for any $1<\theta<\min{(p-1,2)}$, there exists $\tilde{C}>0$ such that for any $1\leq k\leq K$,
\begin{align}\label{newzes}
\left|\dot{z}_k+\sum_{\substack{1\leq i\leq K\\ i\neq k}}\sigma_i\sigma_k\mathcal{F}(|z_k-z_i|)
\frac{z_k-z_i}{|z_k-z_i|}\right|
\leq \tilde{C} e^{-\theta D}.
\end{align}
\end{lemma}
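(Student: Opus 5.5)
The approach is to upgrade the qualitative bound $\|\vec{\varepsilon}\|_{\mathcal H}^2=o(\mathcal F(D))$ from \cite[Lemma 3.8]{I2} to the linear bound \eqref{newepes}, using the positivity \eqref{Vrep} to make the interaction energy a genuine decreasing quantity of size $\mathcal F(D)$. Estimate \eqref{newzes} will then follow from \eqref{zes}. Indeed, once $\|\vec{\varepsilon}\|_{\mathcal H}\lesssim \mathcal F(D)$, we have $\|\vec{\varepsilon}\|_{\mathcal H}^2\lesssim \mathcal F(D)^2\lesssim e^{-2D}\le e^{-\theta D}$ for $\theta<2$, and substituting into \eqref{zes} gives \eqref{newzes}.

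\textbf{Step 1: energy expansion.} Expand the energy around $\vec{Q}_{\sum}$ by Taylor expansion, as in \cite[Section 3]{I2}. Up to a positive normalising constant $\kappa$, this gives
\begin{align*}
E(\vec u)=KE(Q,0)+\kappa V+\tfrac12\langle \mathcal L\vec\varepsilon,\vec\varepsilon\rangle+O\bigl(\|\vec\varepsilon\|_{\mathcal H}^{\min(p,3)}+e^{-\theta D}+\mathcal F(D)\|\vec\varepsilon\|_{\mathcal H}\bigr),
\end{align*}
where $\mathcal L$ is the linearised energy around $\sum_k Q_k$. The constant and sign are fixed by the identity $\int Q^pQ(\cdot-z)\approx$ const$\cdot g(|z|)$ and by the definition \eqref{Fdef}.

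\textbf{Step 2: modified Lyapunov functional.} Following the standard damped Klein--Gordon argument, introduce
\begin{align*}
\mathcal G=E(\vec u)-KE(Q,0)+\alpha\int\varepsilon\eta.
\end{align*}
Using the equation for $\vec\varepsilon$ together with the orthogonality \eqref{modeqyowa}, which kills the translation directions, we aim to show
\begin{align*}
\frac{d}{dt}\mathcal G\le -c\|\vec\varepsilon\|_{\mathcal H}^2+C\sum_k a_k^2+C\mathcal F(D)^2 .
\end{align*}
Here $a_k=\int \varepsilon\,Y(\cdot-z_k)$ (suitably combined with $\eta$) are the components along the unstable eigenfunction $Y$ of $-\Delta+1-pQ^{p-1}$, and they satisfy
\begin{align*}
\dot a_k=\nu a_k+O\bigl(\|\vec\varepsilon\|_{\mathcal H}^2+\mathcal F(D)\bigr),\qquad \nu>0 .
\end{align*}
Since the solution is global and $\vec\varepsilon\to0$, integrating this ODE backward from $+\infty$ gives $|a_k|\lesssim \sup_{s\ge t}\bigl(\|\vec\varepsilon(s)\|^2_{\mathcal H}+\mathcal F(D(s))\bigr)$. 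Coercivity of $\mathcal L$ modulo the translation and unstable directions then yields
\begin{align*}
\|\vec\varepsilon\|_{\mathcal H}^2\lesssim \mathcal G-\kappa V+\sum_k a_k^2+o(\mathcal F(D)).
\end{align*}

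\textbf{Step 3: closing the bound (main obstacle).} The difficulty is the term $-\kappa V$: without \eqref{Vrep} the energy cannot distinguish $\vec\varepsilon$ from the interaction. Here I would use \eqref{Vrep}, i.e.\ $V\ge c\mathcal F(D)$, combined with the monotonicity of $\mathcal G$ and $\mathcal G\to0$ (from \eqref{energydecay} and $\vec\varepsilon\to0$). These give $\mathcal G(t)\ge -C\int_t^\infty \mathcal F(D)^2\,ds$. By Lemma \ref{centerdynamics}, $|\dot D|\lesssim \mathcal F(D)$, and by \eqref{F(D)lower}, $\mathcal F(D)\gtrsim 1/t$; together these bound $\int_t^\infty\mathcal F(D)^2\,ds\lesssim \mathcal F(D(t))$. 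Inserting this and the bound on $a_k$ into the coercivity estimate, and running a bootstrap on $\sup_{s\ge t}\|\vec\varepsilon(s)\|_{\mathcal H}/\mathcal F(D(s))$, should give \eqref{newepes}. The delicate point, where I expect to spend the most effort, is to arrange that $V$ enters with the sign that is absorbed rather than amplified. The fallback is to differentiate $V$ along \eqref{zes}: this gives $\dot V\approx -|\nabla V|^2\le0$, so $V$ itself is almost decreasing, and one then compares $\mathcal G$ with $\kappa V$ directly.
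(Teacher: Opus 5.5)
The paper gives no argument here: it imports both estimates from \cite[Lemma 3.11]{I2}. Your deduction of \eqref{newzes} from \eqref{newepes} and \eqref{zes} is correct, since $\|\vec\varepsilon\|_{\mathcal H}^2\lesssim\mathcal F(D)^2\lesssim e^{-2D}\le e^{-\theta D}$. Your proof of \eqref{newepes}, however, has a genuine gap, and it is a gap in orders of magnitude.

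The target $\|\vec\varepsilon\|_{\mathcal H}^2\lesssim\mathcal F(D)^2$ is second order in the interaction. Everything you build from the full energy $E$ is controlled only to first order. Your coercivity bound $\|\vec\varepsilon\|_{\mathcal H}^2\lesssim\mathcal G-\kappa V+\sum_k a_k^2+o(\mathcal F(D))$ carries an error of size $o(\mathcal F(D))$, from two sources:
the remainder $e^{-\theta D}$, $\theta<2$, in your Step 1;
more seriously, the mismatch between the exact pair interaction energy and $\kappa V$. The exact pair energy is $-\sigma_i\sigma_j\int_{|z_i-z_j|}^\infty g(s)\,ds$, because $\nabla_z\int Q^pQ(\cdot-z)=-H(z)$. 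Since $\int_r^\infty g=g(r)(1+O(1/r))$, replacing it by $\kappa V$ costs $O(\mathcal F(D)/D)$.

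An error of that size can at best return $\|\vec\varepsilon\|_{\mathcal H}^2=o(\mathcal F(D))$, which is the bound of \cite[Lemma 3.8]{I2} you started from. Using \eqref{Vrep} to ``absorb'' $-\kappa V$ cannot repair this, and neither can your fallback of comparing $\mathcal G$ with $\kappa V$, because $V$ is itself a first-order quantity.

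Step 3 also fails on its own terms. A lower bound on $\mathcal G$ is the wrong direction for an inequality of the form $\|\vec\varepsilon\|^2\lesssim\mathcal G-\cdots$. Moreover, $\int_t^\infty\mathcal F(D)^2\,ds\lesssim\mathcal F(D(t))$ does not follow from $|\dot D|\lesssim\mathcal F(D)$ and $\mathcal F(D)\gtrsim t^{-1}$. A lower bound on $\mathcal F(D)$ cannot bound that integral from above, and these two facts allow $D$ to dip substantially at later times. The same missing control of the future is hidden in your $\sup_{s\ge t}$ bound for $a_k$.

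What is missing is a functional that is exactly quadratic in $\vec\varepsilon$, with the $z$-dependent energy removed rather than estimated. Take the localized energy $\mathcal E=\frac12\langle L\varepsilon,\varepsilon\rangle+\frac12\|\eta\|_{L^2}^2+\mu\int\varepsilon\eta$, minus the cubic Taylor remainder of the potential term. Here $L=-\Delta+1-f'(Q_{\sum})$ and $\mu>0$ is small. Note that with $\mu=\alpha$, as in your $\mathcal G$, the quadratic part is $\frac12\|\eta+\alpha\varepsilon\|_{L^2}^2+\frac12\langle(L-\alpha^2)\varepsilon,\varepsilon\rangle$, which is not coercive once $\alpha\ge1$.

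With this $\mathcal E$, the interaction enters only through the source $G=f(Q_{\sum})-\sum_kf(Q_k)$, and $\|G\|_{L^2}\lesssim\mathcal F(D)$ because $p>2$. After absorbing terms of size $O(\mathcal F(D)\|\vec\varepsilon\|_{\mathcal H}+\|\vec\varepsilon\|_{\mathcal H}^3)$, you get $\dot{\mathcal E}\le-c\mathcal E+C\sum_ka_k^2+C\mathcal F(D)^2$. Using \eqref{modeqyowa}, you also get $\mathcal E\gtrsim\|\vec\varepsilon\|_{\mathcal H}^2-C\sum_ka_k^2$.

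Gronwall together with $|a_k|\lesssim\mathcal F(D)$ then gives \eqref{newepes}. This requires $\mathcal F(D(s))$ to be controlled by $\mathcal F(D(t))$ on the relevant time ranges, in particular for $s\ge t$, where the unstable modes are integrated. A natural way to get this uses \eqref{Vrep}, but not as you use it. Together with the rough bound, \eqref{Vrep} gives $E(\vec u(t))-KE(Q,0)=\kappa V+o(\mathcal F(D))\sim\mathcal F(D(t))$. The monotonicity \eqref{energydecay} then yields $\mathcal F(D(s))\lesssim\mathcal F(D(t))$ for all $s\ge t$.
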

\begin{proof}
This is a direct consequence of \cite[Lemma 3.11]{I2}.
\end{proof}

\section{Soliton interactions for $(2,2)$-sign soliton solutions}
In this section, we consider $(2,2)$-sign soliton solutions. First, to simplify the notation, we rewrite Lemma \ref{modKsoli} in the following form.
\begin{lemma}\label{Ksolikakikae}
Let $\vec{u}$ be a $(2,2)$-sign soliton solution. Then there exist $C^1$ functions $z_1,z_2,z_3,z_4:[0,\infty)\to \mathbb{R}^d$ such that 
\begin{align}
\lim_{t\to\infty}(\left\| u(t)-\sum_{k=1}^4 (-1)^kQ(\cdot-z_k(t))\right\|_{H^1}+\|{\partial}_tu(t)\|_{L^2})&=0,\label{modlim1}
\\
\lim_{t\to\infty}D(t)=\lim_{t\to\infty} \min_{1\leq i\neq j\leq 4}|z_i(t)-z_j(t)|&=\infty. \label{modlim2}
\end{align}
Furthermore, we have for $1\leq l\leq d$, $1\leq i\leq 4$, and $t\gg 1$,
\begin{align}\label{modeq}
\int_{\mathbb{R}^d} \{ {\partial}_tu(t)+2\alpha(u(t)-\sum_{k=1}^4 (-1)^kQ(\cdot-z_k(t)))\} {\partial}_lQ(\cdot-z_i(t))=0.
\end{align}
\end{lemma}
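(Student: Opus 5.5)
The statement is a relabeling of Lemma \ref{modKsoli} for the special sign pattern $(2,2)$, so I would obtain it by applying that lemma and then renaming the centers. Let $\vec{u}$ be a $(2,2)$-sign soliton solution. By Definition \ref{defm,nsoli}, $\vec{u}$ is in particular a $4$-soliton solution in the sense of Definition \ref{defKsoli}: one takes $\sigma=(1,1,-1,-1)$ and $z=(z_{+,1},z_{+,2},z_{-,1},z_{-,2})$. Lemma \ref{modKsoli} then gives a sign vector $\sigma\in\{-1,1\}^4$ and a $C^1$ map $z$ such that \eqref{modlim1yowa}, \eqref{modlim2yowa} and \eqref{modeqyowa} hold.

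The one point that needs an argument is that the $\sigma$ produced by Lemma \ref{modKsoli} still has exactly two entries equal to $+1$ and two equal to $-1$, since the lemma in principle only asserts existence of some $\sigma$. I would check this in one of two ways.
\begin{itemize}
\item[(i)] Inspect the construction in \cite[Lemma 3.2]{I2}. There the modulated centers are obtained from the given decomposition by an implicit function argument near each $\sigma_kQ(\cdot-z_k)$, and the signs are kept unchanged.
\item[(ii)] Argue intrinsically. Suppose two decompositions $\sum_k\sigma_kQ(\cdot-z_k)$ and $\sum_k\sigma'_kQ(\cdot-z'_k)$, each with separations tending to infinity, differ by $o(1)$ in $H^1$. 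Localizing near each $z_k$, where $Q$ has unit-size mass, shows that each $z_k$ must lie within $o(1)$ of some $z'_j$, that this matching is a bijection, and that $\sigma_k=\sigma'_j$. Otherwise the $H^1$ norm near $z_k$ would be at least $\|Q\|_{H^1}$ (or $2\|Q\|_{H^1}$ when the signs are opposite). Hence the multiset of signs is preserved, and it has two entries of each sign.
\end{itemize}

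With the sign count settled, I would permute the indices so that $\sigma_k=(-1)^k$: put the two negative solitons in slots $1,3$ and the two positive ones in slots $2,4$. A permutation does not affect \eqref{modlim1yowa} or \eqref{modlim2yowa}, nor the family of orthogonality conditions \eqref{modeqyowa}, which is indexed over all $i$. This yields \eqref{modlim1}, \eqref{modlim2} and \eqref{modeq} with $\sum_{k=1}^4(-1)^kQ(\cdot-z_k)$.

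The only mildly nontrivial step is the preservation of signs. I expect option (ii), the localization argument, to settle it in a few lines using the exponential decay of $Q$.
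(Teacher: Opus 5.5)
Your proposal is correct and follows the paper's proof, which applies Lemma \ref{modKsoli} with $K=4$ and relabels so that the two negative centers occupy slots $1,3$ and the two positive ones slots $2,4$. The paper tacitly assumes that the sign vector produced by \cite[Lemma 3.2]{I2} is the original one, with two entries of each sign; your localization argument (ii), which uses $Q>0$ and the exponential decay of $Q$, is a valid way to make that step explicit.
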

\begin{proof}
By Definition \ref{defm,nsoli}, a $(2,2)$-sign soliton solution is a
$4$-soliton solution with two positive signs and two negative signs.
Applying Lemma \ref{modKsoli} with $K=4$ and labeling the two positive
centers by $z_2, z_4$ and the two negative centers by $z_1, z_3$, gives
\eqref{modlim1}, \eqref{modlim2}, and \eqref{modeq}.
\end{proof}

\begin{remark}
In particular, by Lemma \ref{Ksolikakikae}, throughout the rest of this paper, whenever we assume that $\vec{u}$ is a
$(2,2)$-sign soliton solution, we choose $C^1$ functions $z_1,z_2,z_3,z_4$ so that \eqref{modlim1}, \eqref{modlim2},
and \eqref{modeq} hold. In particular, setting $\sigma_k=(-1)^k$, the pair $(\sigma,z)$ satisfies the
assumptions on the modulation parameters in Lemmas \ref{centerdynamics} and \ref{limeplem}. Whenever these lemmas
are used below, they are applied to this fixed choice of $\sigma$ and $z$.
\end{remark}

We now analyze the long-time dynamics of the centers provided by Lemma \ref{Ksolikakikae}. To exploit their cyclic structure, we regard the indices as elements of $\mathbb{Z}/4\mathbb{Z}$; thus, for instance, $z_5=z_1$ and $z_6=z_2$. Furthermore, we define for $j,k\in\mathbb{Z}/4\mathbb{Z}$,
\begin{align}\label{hendef}
Z_{jk}=z_k-z_j,\qquad
\rho_{jk}=|Z_{jk}|,\qquad
u_{jk}=\frac{Z_{jk}}{\rho_{jk}}.
\end{align}
We also introduce, for later use, the following quantities for $i\in\mathbb{Z}/4\mathbb{Z}$:
\begin{align}\label{iroirodef}
\rho_i=|z_{i+1}-z_i|,\qquad
v_i=\frac{z_{i+1}-z_i}{\rho_i},\qquad
R_i=|z_{i+2}-z_i|,\qquad
w_i=\frac{z_{i+2}-z_i}{R_i}.
\end{align}
With this notation, Lemma \ref{centerdynamics} can be reformulated as follows.
\begin{lemma}\label{2,2centerlem}
Let $\vec{u}$ be a $(2,2)$-sign soliton solution. Then the following statements hold:
\begin{enumerate}
\item For every $i\in \mathbb{Z}/4\mathbb{Z}$, we have
\begin{align}\label{2,2solicenterode}
\dot{z}_i&=-\mathcal{F}(\rho_i)v_i+\mathcal{F}(\rho_{i-1})v_{i-1}+\mathcal{F}(R_i)w_i+o\left(\mathcal{F}(D)\right).
\end{align}
\item For every $i\in \mathbb{Z}/4\mathbb{Z}$, we have
\begin{align}\label{saisho2,2nagasaode}
\begin{aligned}
\dot{\rho}_i&=2\mathcal{F}(\rho_i)-\mathcal{F}(\rho_{i+1})v_i\cdot v_{i+1}-\mathcal{F}(\rho_{i-1})v_i\cdot v_{i-1}
\\
&\quad-\mathcal{F}(R_i)v_i\cdot w_i+\mathcal{F}(R_{i+1})v_i\cdot w_{i+1}+o\left(\mathcal{F}(D)\right),
\\
\dot{R}_i&=-2\mathcal{F}(R_i)+\mathcal{F}(\rho_i)w_i\cdot v_i+\mathcal{F}(\rho_{i+1})w_i\cdot v_{i+1}
\\
&\quad -\mathcal{F}(\rho_{i+2})w_i\cdot v_{i+2}-\mathcal{F}(\rho_{i-1})w_i\cdot v_{i-1}+o\left(\mathcal{F}(D)\right).
\end{aligned}
\end{align}
\end{enumerate}
\end{lemma}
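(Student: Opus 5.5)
Part (1) is a direct specialization of Lemma \ref{centerdynamics} with $K=4$ and $\sigma_k=(-1)^k$, so the proof will be a bookkeeping exercise with the cyclic indices. For fixed $i$, the other indices are $i+1$, $i-1$ and $i+2$. The neighbours $i\pm1$ have sign opposite to $i$, so $\sigma_{i\pm1}\sigma_i=-1$, while $\sigma_{i+2}\sigma_i=1$. Lemma \ref{centerdynamics} then gives
\begin{align*}
\dot z_i=\mathcal F(|z_i-z_{i+1}|)\frac{z_i-z_{i+1}}{|z_i-z_{i+1}|}+\mathcal F(|z_i-z_{i-1}|)\frac{z_i-z_{i-1}}{|z_i-z_{i-1}|}-\mathcal F(|z_i-z_{i+2}|)\frac{z_i-z_{i+2}}{|z_i-z_{i+2}|}+o(\mathcal F(D)).
\end{align*}
Using the notation \eqref{iroirodef}, we will have $z_i-z_{i+1}=-\rho_iv_i$ and $z_i-z_{i-1}=\rho_{i-1}v_{i-1}$. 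Also $z_i-z_{i+2}=-R_iw_i$, since $i+2=i-2$ in $\mathbb Z/4\mathbb Z$. Substituting these gives \eqref{2,2solicenterode}.

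For part (2), the plan is to differentiate the lengths. We have $\dot\rho_i=v_i\cdot(\dot z_{i+1}-\dot z_i)$ and $\dot R_i=w_i\cdot(\dot z_{i+2}-\dot z_i)$. These are valid because $z$ is $C^1$ and $\rho_i,R_i\geq D\to\infty$ for $t\gg1$, so the norms are differentiable. Since $|v_i|=|w_i|=1$, the $o(\mathcal F(D))$ errors survive the inner products unchanged.

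Next we insert \eqref{2,2solicenterode} for each index, using the identities $w_{i+2}=-w_i$, $R_{i+2}=R_i$ and $v_{i+4}=v_i$. For $\rho_i$ this gives
\begin{align*}
\dot z_{i+1}-\dot z_i=2\mathcal F(\rho_i)v_i-\mathcal F(\rho_{i+1})v_{i+1}-\mathcal F(\rho_{i-1})v_{i-1}+\mathcal F(R_{i+1})w_{i+1}-\mathcal F(R_i)w_i+o(\mathcal F(D)).
\end{align*}
Dotting with $v_i$ and using $v_i\cdot v_i=1$ yields the first line of \eqref{saisho2,2nagasaode}. For $R_i$ we compute
\begin{align*}
\dot z_{i+2}-\dot z_i=-\mathcal F(\rho_{i+2})v_{i+2}+\mathcal F(\rho_{i+1})v_{i+1}-2\mathcal F(R_i)w_i+\mathcal F(\rho_i)v_i-\mathcal F(\rho_{i-1})v_{i-1}+o(\mathcal F(D)).
\end{align*}
Here $\mathcal F(R_{i+2})w_{i+2}=-\mathcal F(R_i)w_i$ combines with the $-\mathcal F(R_i)w_i$ term from $\dot z_i$. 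Dotting with $w_i$ and using $w_i\cdot w_i=1$ gives the second formula.

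None of these steps is a real obstacle. The only place requiring care is tracking the signs under the cyclic relabeling, in particular the identity $w_{i+2}=-w_i$.
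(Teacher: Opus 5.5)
Your proposal is correct and matches the paper's proof. You specialize Lemma \ref{centerdynamics} to $K=4$ with $\sigma_k=(-1)^k$, applied to the modulated centers of Lemma \ref{Ksolikakikae}, and then obtain the length equations from $\dot\rho_i=(\dot z_{i+1}-\dot z_i)\cdot v_i$ and $\dot R_i=(\dot z_{i+2}-\dot z_i)\cdot w_i$; your sign bookkeeping, including $w_{i+2}=-w_i$ and $R_{i+2}=R_i$, is right.
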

\begin{proof}
The centers fixed in Lemma \ref{Ksolikakikae}, together with $\sigma_i=(-1)^i$, satisfy the assumptions of Lemma
\ref{centerdynamics}. Therefore, applying Lemma \ref{centerdynamics} with $K=4$ gives \eqref{2,2solicenterode}. By \eqref{2,2solicenterode} and
\begin{align*}
\dot{\rho}_i=\left(\dot{z}_{i+1}-\dot{z}_i\right)\cdot v_i,\ \dot{R}_i=\left(\dot{z}_{i+2}-\dot{z}_i\right)\cdot w_i
\end{align*}
for $i\in \mathbb{Z}/4\mathbb{Z}$, we obtain \eqref{saisho2,2nagasaode}.
\end{proof}

\subsection{Estimates on the distances between like-signed solitons}

Our aim in this subsection is to prove the following proposition. First, we define $\rho$ and $R$ as 
\begin{align}\label{rhoRdef}
\rho=\min_{i=1,2,3,4}\rho_i,\ R=\min_{i=1,2}R_i.
\end{align}

By \eqref{mathcalFhyouka1}, after increasing the initial time if necessary, we may assume throughout this subsection that
$\mathcal{F}$ is positive and strictly decreasing at all distances under consideration.

\begin{proposition}\label{samesolitonhanare}
Let $\vec{u}$ be a $(2,2)$-sign soliton solution. Then, we have
\begin{align}\label{taikakusengomi}
\lim_{t\to\infty}\left(R(t)-\rho(t)\right)=\infty.
\end{align}
\end{proposition}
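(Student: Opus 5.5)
The plan is a \emph{lower bound}, a \emph{trapping mechanism}, and an \emph{accumulation argument} driven by the non-integrability of $\mathcal F(D)$.

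\textbf{Step 1 (lower bound).} I first show $R-\rho\geq -C$ for $t\gg1$. With $\sigma_k=(-1)^k$, the definition \eqref{Vdef} reads
\begin{align*}
V=\sum_{i\in\mathbb Z/4\mathbb Z}\mathcal F(\rho_i)-\mathcal F(R_1)-\mathcal F(R_2).
\end{align*}
By \eqref{Ves1}, $\mathcal F(R_1)+\mathcal F(R_2)\leq 4\mathcal F(\rho)+o(\mathcal F(D))$. If $R<\rho$ at some time, then $D=R$ there, and \eqref{tukaeruF1}--\eqref{tukaeruF3} give $\rho-R\leq \log 4+o(1)$. Hence $R-\rho$ is bounded below.

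\textbf{Step 2 (trapping mechanism).} The heart of the proof is a local statement: for every $M>0$ there is $c_M>0$ such that, whenever $t\gg1$ and $R(t)-\rho(t)\leq M$,
\begin{align*}
\frac{d}{dt}\Psi\geq c_M\,\mathcal F(D),\qquad \Psi=-\log\Bigl(\sum_{i=1,2}e^{-R_i}\Bigr)+\log\Bigl(\sum_{i\in\mathbb Z/4\mathbb Z}e^{-\rho_i}\Bigr).
\end{align*}
Here $\Psi$ is a smooth version of $R-\rho$, and $|\Psi-(R-\rho)|\leq \log 4$. I would compute $\dot\Psi$ from \eqref{saisho2,2nagasaode}. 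The weights $e^{-R_i}$ and $e^{-\rho_i}$ localize the derivative onto the near-minimal distances. All terms are $O(\mathcal F(D))$, since every distance not much larger than $D$ carries a comparable $\mathcal F$ and the others are negligible.

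To prove positivity I argue by compactness and contradiction. Suppose there were times $t_n\to\infty$ with $R-\rho\leq M$ and $\dot\Psi\leq o(\mathcal F(D))$. I would translate so that the shifted gaps $\rho_i-D$ and $R_i-D$ either converge or diverge, and renormalize $\mathcal F$ via $\mathcal F(D+a)\approx e^{-a}\mathcal F(D)$ using \eqref{tukaeruF1}. I would also pass to limits of the unit vectors $v_i$ and $w_i$.

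The limits satisfy constraints coming from the triangle inequality. In particular, $z_{i+2}-z_i=\rho_iv_i+\rho_{i+1}v_{i+1}$ together with $R_i\leq \rho_i+M$ forces the angle between $-v_i$ and $v_{i+1}$ to stay bounded away from $0$. This yields explicit lower bounds such as $w_i\cdot v_i\gtrsim 1$ and $w_i\cdot v_{i+1}\gtrsim1$ for the active pairs. Step 1 and $V\geq -o(\mathcal F(D))$ give further limiting constraints. In the resulting finite-dimensional limiting expression I then have to check the following: the repulsive contributions $\mathcal F(\rho_i)w_i\cdot v_i+\mathcal F(\rho_{i+1})w_i\cdot v_{i+1}$ in $\dot R_i$ strictly beat the like-signed attraction $-2\mathcal F(R_i)$ plus the growth $2\mathcal F(\rho_i)$ of the minimal $\rho_i$, up to the signed cross terms.

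\textbf{Step 3 (accumulation).} I conclude using \eqref{F(D)lower}, which gives $\int^\infty\mathcal F(D)\,dt=\infty$. Fix $M$. By Step 2, $\Psi$ cannot stay below $M+\log4$ forever, since otherwise it would grow without bound. Once $\Psi$ exceeds $M+\log 4$, it can never come back down across that level, because at any crossing time $R-\rho\leq M+2\log4$ and $\dot\Psi>0$ by Step 2 applied with $M+2\log 4$ in place of $M$. Thus $\liminf(R-\rho)\geq M$ for every $M$, which is \eqref{taikakusengomi}.

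\textbf{Main obstacle.} I expect the hardest part to be the sign verification in Step 2. The like-signed attraction decreases $R$, and the opposite-signed repulsion increases $\rho$, so both effects push $R-\rho$ the wrong way. Positivity therefore has to come entirely from the geometric cross terms, which requires a careful case analysis of which $\rho_i$ and $R_i$ are near-minimal in the limit. If the direct inequality fails in some degenerate case, I would try replacing $\Psi$ by a weighted combination such as $\Psi+\lambda V/\mathcal F(D)$, using the near-monotone energy structure to absorb the bad configurations.
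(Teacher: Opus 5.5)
There is a genuine gap: the inequality you want in Step 2 has the wrong sign, so Steps 2 and 3 cannot work as designed. Your ``main obstacle'' diagnosis is correct, and the cross terms do \emph{not} rescue you. Take the rhombus made of two equilateral triangles,
\begin{align*}
z_1=0,\qquad z_3=\rho e_1,\qquad z_2=\tfrac{\rho}{2}e_1+\tfrac{\sqrt3}{2}\rho e_2,\qquad z_4=\tfrac{\rho}{2}e_1-\tfrac{\sqrt3}{2}\rho e_2.
\end{align*}
Then $\rho_i=\rho$ for all $i$, $R_1=\rho$, $R_2=\sqrt3\rho$, hence $R-\rho=0$ and $V=(3+o(1))\mathcal F(D)>0$. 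This configuration satisfies every limiting constraint you list. Here $w_1\cdot v_1=w_1\cdot v_2=\frac12$ and $w_1\cdot v_3=w_1\cdot v_4=-\frac12$, and \eqref{saisho2,2nagasaode} gives
\begin{align*}
\dot R_1=o(\mathcal F(D)),\qquad \dot\rho_i=\Bigl(\frac32+o(1)\Bigr)\mathcal F(D)\quad(i\in\mathbb Z/4\mathbb Z).
\end{align*}
Hence $\dot\Psi=-(\frac32+o(1))\mathcal F(D)$. The repulsion of $z_1,z_3$ by $z_2,z_4$ only cancels the attraction between $z_1$ and $z_3$, while nothing opposes the growth of the sides. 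So the limiting expression in your compactness argument can be negative, and the contradiction you aim for is unreachable. Adding $\lambda V/\mathcal F(D)$ does not change this.

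The paper proves the opposite inequality, and it holds throughout the window. For each $C$ there is $\kappa_C>0$ such that, whenever $-C_0\le R-\rho\le C$ (with $C_0$ from Step 1) and $\rho_i=\rho$, $R_j=R$, one has $\frac{d}{dt}(R_j-\rho_i)\le-\kappa_C\mathcal F(D)$.

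After reducing to $j=i$, the leading term $(\mathcal F(\rho_i)+\mathcal F(R_i))(w_i\cdot v_i-2)$ is at most $(1+m_C)(w_i\cdot v_i-2)\mathcal F(D)$, where $m_C>0$ comes from $R-\rho\le C$. Lemma \ref{mathfraklem}, together with $(v_i+w_i)\cdot v_{i+1}\to0$, bounds all cross terms by $(2-w_i\cdot v_i+O(\epsilon))\mathcal F(D)$. The surplus $m_C$ then gives the strict negative sign.

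With this sign, your Step 1 is not a preliminary but the source of the contradiction. Suppose $R-\rho\le M$ at some large time. Then $R-\rho$ can never leave $(-C_0,M]$: not upward, since it decreases at the top level, and not downward, by Step 1. Yet it decreases at rate $\gtrsim\mathcal F(D)\gtrsim t^{-1}$, which by \eqref{F(D)lower} is not integrable, so it would eventually fall below $-C_0$. So keep Step 1 and the use of \eqref{F(D)lower}, but reverse the inequality in Step 2 and rewrite Step 3 as this argument, in which $R-\rho$ is pushed into the region forbidden by Step 1.
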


The proof requires a quantitative geometric estimate valid for arbitrary configurations of the soliton centers. We therefore begin with the following lemma.

\begin{lemma}\label{mathfraklem}
Let $\epsilon>0$ be sufficiently small, and suppose that $\mathfrak{x},\mathfrak{y},\mathfrak{z}\in\mathbb{R}^d$ satisfy
\begin{align*}
|\mathfrak{x}|=1,\qquad
\left||\mathfrak{y}|-1\right|\leq\frac{\epsilon}{100},\qquad
|\mathfrak{z}|\geq1-\frac{\epsilon}{100},
\\
|\mathfrak{x}-\mathfrak{y}|\geq1-\frac{\epsilon}{100},\qquad
|\mathfrak{y}-\mathfrak{z}|\geq1-\frac{\epsilon}{100},\qquad
|\mathfrak{z}-\mathfrak{x}|\geq1-\frac{\epsilon}{100}.
\end{align*}
Set
\begin{align*}
\hat{\mathfrak{x}}=\mathfrak{x},\qquad
\hat{\mathfrak{y}}=\frac{\mathfrak{y}}{|\mathfrak{y}|},\qquad
\hat{\mathfrak{z}}=\frac{\mathfrak{z}}{|\mathfrak{z}|}.
\end{align*}
Then we have
\begin{align}\label{mathfrakLem0}
\hat{\mathfrak{x}}\cdot\hat{\mathfrak{y}}\leq\frac{1}{2}+\frac{\epsilon}{10}.
\end{align}
Moreover, define $\mathfrak{A}$, $\mathfrak{B}$, and $\mathfrak{C}$ by
\begin{align*}
\mathfrak{A}
&=
\begin{cases}
\displaystyle
\max\left\{
\left(\hat{\mathfrak{y}}-\hat{\mathfrak{x}}\right)
\cdot\hat{\mathfrak{z}},0
\right\},
&\text{if }|\mathfrak{z}|\leq1+\dfrac{\epsilon}{100},
\\
\\
0,
&\text{if }|\mathfrak{z}|>1+\dfrac{\epsilon}{100},
\end{cases}
\\
\mathfrak{B}
&=
\begin{cases}
\displaystyle
\max\left\{
-\hat{\mathfrak{y}}\cdot
\frac{\mathfrak{z}-\mathfrak{y}}
{|\mathfrak{z}-\mathfrak{y}|},0
\right\},
&\text{if }|\mathfrak{y}-\mathfrak{z}|
\leq1+\dfrac{\epsilon}{100},
\\
0,
&\text{if }|\mathfrak{y}-\mathfrak{z}|
>1+\dfrac{\epsilon}{100},
\end{cases}
\\
\mathfrak{C}
&=
\begin{cases}
\displaystyle
\max\left\{
\hat{\mathfrak{x}}\cdot
\frac{\mathfrak{x}-\mathfrak{z}}
{|\mathfrak{x}-\mathfrak{z}|},0
\right\},
&\text{if }|\mathfrak{z}-\mathfrak{x}|
\leq1+\dfrac{\epsilon}{100},
\\
0,
&\text{if }|\mathfrak{z}-\mathfrak{x}|
>1+\dfrac{\epsilon}{100}.
\end{cases}
\end{align*}
Then we have
\begin{align}
\mathfrak{A}+\mathfrak{B}+\mathfrak{C}
&\leq
2-\hat{\mathfrak{x}}\cdot\hat{\mathfrak{y}}+\epsilon,
\label{mathfrakLem1}
\\
\mathfrak{B}
&\leq\frac{1}{2}+\epsilon,
\qquad
\mathfrak{C}\leq\frac{1}{2}+\epsilon.
\label{mathfrakLem2}
\end{align}
\end{lemma}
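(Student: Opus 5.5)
The plan is to reduce every quantity to inner products of the unit vectors $\hat{\mathfrak x},\hat{\mathfrak y},\hat{\mathfrak z}$, using the law of cosines, and to track the $O(\epsilon/100)$ errors only at the end. Set $a=\hat{\mathfrak x}\cdot\hat{\mathfrak y}$, $b=\hat{\mathfrak y}\cdot\hat{\mathfrak z}$ and $c=\hat{\mathfrak z}\cdot\hat{\mathfrak x}$.

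For \eqref{mathfrakLem0}, expand $|\mathfrak x-\mathfrak y|^2=1+|\mathfrak y|^2-2|\mathfrak y|\,a\geq(1-\epsilon/100)^2$. This gives
\begin{align*}
a\leq\frac{1+|\mathfrak y|^2-(1-\epsilon/100)^2}{2|\mathfrak y|},
\end{align*}
and since $||\mathfrak y|-1|\leq\epsilon/100$, the right-hand side is $\frac12+O(\epsilon/100)\leq\frac12+\frac{\epsilon}{10}$.

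For \eqref{mathfrakLem2}, we may assume $\mathfrak B$ is active, i.e.\ $|\mathfrak y-\mathfrak z|\leq1+\epsilon/100$. The identity $|\mathfrak z|^2=|\mathfrak y|^2+|\mathfrak z-\mathfrak y|^2+2\mathfrak y\cdot(\mathfrak z-\mathfrak y)$ gives
\begin{align*}
-\hat{\mathfrak y}\cdot\frac{\mathfrak z-\mathfrak y}{|\mathfrak z-\mathfrak y|}=\frac{|\mathfrak y|^2+|\mathfrak z-\mathfrak y|^2-|\mathfrak z|^2}{2|\mathfrak y|\,|\mathfrak z-\mathfrak y|}.
\end{align*}
Using the upper bounds on $|\mathfrak y|$ and $|\mathfrak z-\mathfrak y|$, the lower bound $|\mathfrak z|\geq1-\epsilon/100$, and $|\mathfrak z-\mathfrak y|\geq 1-\epsilon/100$ in the denominator, this is $\leq\frac12+O(\epsilon/100)\leq\frac12+\epsilon$. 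The same computation with $\mathfrak x$ in place of $\mathfrak y$ bounds $\mathfrak C$. Note also that when $\mathfrak B$ is active, $|\mathfrak y-\mathfrak z|$ is within $O(\epsilon)$ of $1$ and $|\mathfrak z|\geq1-\epsilon/100$, so the same identity read the other way forces $b\geq\frac12-O(\epsilon)$ as long as $|\mathfrak z|$ is also close to $1$. Likewise, an active $\mathfrak C$ forces $c\geq\frac12-O(\epsilon)$.

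For \eqref{mathfrakLem1}, I split into cases according to whether $\mathfrak A$ is active.
\begin{enumerate}
\item If $\mathfrak A=0$, then \eqref{mathfrakLem2} gives $\mathfrak B+\mathfrak C\leq1+2\epsilon$. By \eqref{mathfrakLem0}, $2-a+\epsilon\geq\frac32+\frac{9\epsilon}{10}$, and this exceeds $1+2\epsilon$ for small $\epsilon$.
\item If $\mathfrak A>0$, then $|\mathfrak z|\in[1-\epsilon/100,1+\epsilon/100]$, so all three points are nearly unit vectors with pairwise distances at least about $1$. The argument of \eqref{mathfrakLem0} then also gives $b,c\leq\frac12+O(\epsilon)$, and $\mathfrak A=b-c$. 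I distinguish four subcases.
\begin{enumerate}
\item Neither $\mathfrak B$ nor $\mathfrak C$ is active. Then $\mathfrak A+a=b-c+a\leq\frac12+\frac12+1+O(\epsilon)$, using $c\geq-1$.
\item Both are active. Then $b,c=\frac12+O(\epsilon)$, so $\mathfrak A=O(\epsilon)$ and the total is $\leq1+O(\epsilon)$.
\item Only $\mathfrak C$ is active. Then $c\approx\frac12$, so $\mathfrak A\leq O(\epsilon)$ and the total is $\leq\frac12+O(\epsilon)$.
\item Only $\mathfrak B$ is active. This is the genuinely geometric case: $b=\frac12+O(\epsilon)$, i.e.\ $\angle(\hat{\mathfrak y},\hat{\mathfrak z})\approx60^\circ$, and we need $\frac12-c+\frac12\leq2-a+O(\epsilon)$, that is, $a-c\leq1+O(\epsilon)$.
\end{enumerate}
\end{enumerate}

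The main obstacle is subcase (d), which requires the triangle inequality for angles on the sphere. Write $\gamma=\angle(\hat{\mathfrak x},\hat{\mathfrak z})$. Then
\begin{align*}
\angle(\hat{\mathfrak x},\hat{\mathfrak y})\geq\gamma-\tfrac{\pi}{3}-O(\epsilon),
\end{align*}
so, whenever $\gamma\geq\pi/3$, $a\leq\cos(\gamma-\pi/3)+O(\epsilon)$. This gives
\begin{align*}
a-c\leq\cos(\gamma-\tfrac{\pi}{3})-\cos\gamma+O(\epsilon)=\sin(\gamma-\tfrac{\pi}{6})+O(\epsilon)\leq1+O(\epsilon).
\end{align*}
If instead $\gamma<\pi/3$, then $c>\frac12$ and $a-c<a\leq 1$ trivially. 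I expect the only delicate bookkeeping to be making all $O(\epsilon)$ constants explicit enough to land below $\epsilon$. The factor $1/100$ in the hypotheses leaves ample room for this. Alternatively, a compactness argument at $\epsilon=0$ with strict or equality slack could absorb the errors, but I would first try the explicit perturbative bounds above.
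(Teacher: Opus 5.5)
Your proposal is correct and follows the paper's proof essentially step for step: it uses the same law-of-cosines bounds for \eqref{mathfrakLem0} and \eqref{mathfrakLem2}, and the same case split on $\mathfrak A=0$ versus $\mathfrak A>0$ and then on which of $\mathfrak B,\mathfrak C$ are active (the paper merges your subcases (b) and (c), and in your subcase (a) uses $\mathfrak A\le|\hat{\mathfrak y}-\hat{\mathfrak x}|=\sqrt{2-2a}\le 2-a$). The only notable difference is your subcase (d), where the paper avoids the spherical angle inequality and trigonometric identity by combining $\mathfrak B\le 1-\hat{\mathfrak y}\cdot\hat{\mathfrak z}+\frac{\epsilon}{20}$ with the one-line bound $\hat{\mathfrak x}\cdot(\hat{\mathfrak y}-\hat{\mathfrak z})\le|\hat{\mathfrak y}-\hat{\mathfrak z}|\le|\mathfrak y-\mathfrak z|+\bigl||\mathfrak y|-1\bigr|+\bigl||\mathfrak z|-1\bigr|\le 1+\frac{3\epsilon}{100}$, which gives $a-c\le 1+O(\epsilon)$ directly with explicit constants.
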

\begin{proof}
We may assume that $0<\epsilon\ll 1$. We first estimate
$\hat{\mathfrak{x}}\cdot\hat{\mathfrak{y}}$. Since we have
\begin{align*}
|\mathfrak{x}-\mathfrak{y}|^2
=1+|\mathfrak{y}|^2
-2|\mathfrak{y}|
\left(\hat{\mathfrak{x}}\cdot\hat{\mathfrak{y}}\right)
\geq
\left(1-\frac{\epsilon}{100}\right)^2,
\end{align*}
we have
\begin{align*}
\hat{\mathfrak{x}}\cdot\hat{\mathfrak{y}}
&\leq
\frac{|\mathfrak{y}|^2+\frac{\epsilon}{50}}
{2|\mathfrak{y}|}
\\
&\leq
\frac{\left(1+\frac{\epsilon}{100}\right)^2+\frac{\epsilon}{50}}
{2\left(1-\frac{\epsilon}{100}\right)}
\\
&\leq
\frac{1}{2}+\frac{\epsilon}{10},
\end{align*}
and we obtain \eqref{mathfrakLem0}. We next estimate $\mathfrak{B}$. By its definition, it suffices to consider the case
\begin{align*}
1-\frac{\epsilon}{100}\leq |\mathfrak{y}-\mathfrak{z}|\leq 1+\frac{\epsilon}{100}.
\end{align*}
In this case, the law of cosines yields
\begin{align*}
-\frac{\mathfrak{y}\cdot(\mathfrak{z}-\mathfrak{y})}
{|\mathfrak{y}||\mathfrak{y}-\mathfrak{z}|}
&=
\frac{\mathfrak{y}\cdot(\mathfrak{y}-\mathfrak{z})}
{|\mathfrak{y}||\mathfrak{y}-\mathfrak{z}|}
\\
&=
\frac{|\mathfrak{y}|^2+|\mathfrak{y}-\mathfrak{z}|^2
-|\mathfrak{z}|^2}
{2|\mathfrak{y}||\mathfrak{y}-\mathfrak{z}|}.
\end{align*}
Furthermore, we have
\begin{align*}
\frac{|\mathfrak{y}|^2+|\mathfrak{y}-\mathfrak{z}|^2
-|\mathfrak{z}|^2}
{2|\mathfrak{y}||\mathfrak{y}-\mathfrak{z}|}\leq \frac{2(1+\frac{\epsilon}{100})^2-(1-\frac{\epsilon}{100})^2}{2(1-\frac{\epsilon}{100})^2}\leq \frac{1}{2}+\frac{\epsilon}{20},
\end{align*}
and we obtain
\begin{align}\label{mathfrakBes}
\mathfrak{B}\leq \frac{2(1+\frac{\epsilon}{100})^2-(1-\frac{\epsilon}{100})^2}{2(1-\frac{\epsilon}{100})^2}\leq \frac{1}{2}+\frac{\epsilon}{20}.
\end{align}
We next estimate $\mathfrak{C}$. As in the estimate of $\mathfrak{B}$, it suffices to consider the case
\begin{align*}
|\mathfrak{z}-\mathfrak{x}|
\leq 1+\frac{\epsilon}{100}.
\end{align*}
Since $|\mathfrak{x}|=1$, the law of cosines gives
\begin{align*}
\frac{\mathfrak{x}\cdot(\mathfrak{x}-\mathfrak{z})}
{|\mathfrak{x}-\mathfrak{z}|}
=
\frac{1+|\mathfrak{x}-\mathfrak{z}|^2-|\mathfrak{z}|^2}
{2|\mathfrak{x}-\mathfrak{z}|}.
\end{align*}
Moreover,
\begin{align*}
\frac{1+|\mathfrak{x}-\mathfrak{z}|^2-|\mathfrak{z}|^2}
{2|\mathfrak{x}-\mathfrak{z}|}
&\leq
\frac{1+\left(1+\frac{\epsilon}{100}\right)^2
-\left(1-\frac{\epsilon}{100}\right)^2}
{2\left(1-\frac{\epsilon}{100}\right)}
\\
&\leq
\frac{1}{2}+\frac{\epsilon}{25}.
\end{align*}
Therefore,
\begin{align}\label{mathfrakCes}
\mathfrak{C}\leq \frac{1}{2}+\frac{\epsilon}{25}.
\end{align}
By \eqref{mathfrakBes} and \eqref{mathfrakCes}, we obtain \eqref{mathfrakLem2}. We now divide the proof of \eqref{mathfrakLem1} into four cases.

Case 1: $\mathfrak{A}=0$.

By \eqref{mathfrakLem0}, \eqref{mathfrakBes}, \eqref{mathfrakCes}, and the smallness of $\epsilon$, we obtain
\begin{align}\label{mathfrakCase1}
\begin{aligned}
\mathfrak{A}+\mathfrak{B}+\mathfrak{C}&\leq 1+\frac{9\epsilon}{100}
\\
&< 2-\left(\frac{1}{2}+\frac{\epsilon}{10}\right)+\epsilon
\\
&\leq 2-\hat{\mathfrak{x}}\cdot \hat{\mathfrak{y}}+\epsilon.
\end{aligned}
\end{align}

Case 2: $\mathfrak{A}\neq 0$ and $\mathfrak{C}\neq 0$.

By assumption, we have
\begin{align}\label{Case2assump}
1-\frac{\epsilon}{100}\leq |\mathfrak{z}|\leq 1+\frac{\epsilon}{100},\ 1-\frac{\epsilon}{100}\leq |\mathfrak{z}-\mathfrak{x}|\leq 1+\frac{\epsilon}{100}.
\end{align}
First, we estimate $\hat{\mathfrak{x}}\cdot \hat{\mathfrak{z}}$. By \eqref{Case2assump}, we have
\begin{align}\label{Case2es1}
\begin{aligned}
\hat{\mathfrak{x}}\cdot \hat{\mathfrak{z}}&=\frac{ \mathfrak{x}\cdot \mathfrak{z}}{|\mathfrak{z}|}
\\
&=\frac{|\mathfrak{x}|^2+|\mathfrak{z}|^2-|\mathfrak{z}-\mathfrak{x}|^2}{2|\mathfrak{z}|}
\\
&\geq \frac{1-\frac{\epsilon}{25}}{2(1+\frac{\epsilon}{100})}\geq \frac{1}{2}-\frac{3\epsilon}{100}.
\end{aligned}
\end{align}
Next, we estimate $\hat{\mathfrak{y}}\cdot \hat{\mathfrak{z}}$. Since we have
\begin{align*}
|\hat{\mathfrak{y}}-\hat{\mathfrak{z}}|&\geq |\mathfrak{y}-\mathfrak{z}|-|\mathfrak{y}-\hat{\mathfrak{y}}|-|\mathfrak{z}-\hat{\mathfrak{z}}|
\\
&=|\mathfrak{y}-\mathfrak{z}|-||\mathfrak{y}|-1|-||\mathfrak{z}|-1|
\\
&\geq 1-\frac{3\epsilon}{100},
\end{align*}
we obtain 
\begin{align}\label{Case2es2}
\hat{\mathfrak{y}}\cdot \hat{\mathfrak{z}}=1-\frac{|\hat{\mathfrak{y}}-\hat{\mathfrak{z}}|^2}{2}\leq \frac{1}{2}+\frac{3\epsilon}{100}.
\end{align}
By \eqref{Case2es1} and \eqref{Case2es2}, we obtain
\begin{align}\label{Case2es3}
\mathfrak{A}=\hat{\mathfrak{y}}\cdot \hat{\mathfrak{z}}-\hat{\mathfrak{x}}\cdot \hat{\mathfrak{z}}\leq \frac{3\epsilon}{50}.
\end{align}
By \eqref{mathfrakBes}, \eqref{mathfrakCes}, and \eqref{Case2es3}, we obtain
\begin{align}\label{mathfrakCase2}
\begin{aligned}
\mathfrak{A}+\mathfrak{B}+\mathfrak{C}&\leq 1+\frac{3\epsilon}{20}
\\
&< 2-\left(\frac{1}{2}+\frac{\epsilon}{10}\right)+\epsilon
\\
&\leq 2-\hat{\mathfrak{x}}\cdot \hat{\mathfrak{y}}+\epsilon.
\end{aligned}
\end{align}

Case 3: $\mathfrak{A}\neq 0$,\ $\mathfrak{B}\neq 0$,\ and $\mathfrak{C}=0$.

By assumption, we have
\begin{align}\label{Case3assu}
1-\frac{\epsilon}{100}\leq |\mathfrak{z}|\leq 1+\frac{\epsilon}{100},\ 1-\frac{\epsilon}{100}\leq |\mathfrak{z}-\mathfrak{y}|\leq 1+\frac{\epsilon}{100}.
\end{align}
Furthermore, since we have
\begin{align*}
\mathfrak{B}
&=
-\frac{\mathfrak{y}\cdot(\mathfrak{z}-\mathfrak{y})}
{|\mathfrak{y}||\mathfrak{z}-\mathfrak{y}|}
\\
&=
\frac{|\mathfrak{y}|^2-\mathfrak{y}\cdot\mathfrak{z}}
{|\mathfrak{y}||\mathfrak{z}-\mathfrak{y}|}
\\
&=
\frac{|\mathfrak{y}|
-|\mathfrak{z}|
\left(\hat{\mathfrak{y}}\cdot\hat{\mathfrak{z}}\right)}
{|\mathfrak{y}-\mathfrak{z}|},
\end{align*}
we obtain
\begin{align}\label{Case3es1}
\begin{aligned}
\left|
\mathfrak{B}
-\left(1-\hat{\mathfrak{y}}\cdot\hat{\mathfrak{z}}\right)
\right|
&=
\frac{
\left|
|\mathfrak{y}|
-|\mathfrak{z}|
\left(\hat{\mathfrak{y}}\cdot\hat{\mathfrak{z}}\right)
-\left(1-\hat{\mathfrak{y}}\cdot\hat{\mathfrak{z}}\right)
|\mathfrak{y}-\mathfrak{z}|
\right|
}
{|\mathfrak{y}-\mathfrak{z}|}
\\
&=
\frac{
\left|
\left(|\mathfrak{y}|-|\mathfrak{y}-\mathfrak{z}|\right)
+
\left(\hat{\mathfrak{y}}\cdot\hat{\mathfrak{z}}\right)
\left(|\mathfrak{y}-\mathfrak{z}|-|\mathfrak{z}|\right)
\right|
}
{|\mathfrak{y}-\mathfrak{z}|}
\\
&\leq
\frac{
\left||\mathfrak{y}|-|\mathfrak{y}-\mathfrak{z}|\right|
+
\left||\mathfrak{z}|-|\mathfrak{y}-\mathfrak{z}|\right|
}
{|\mathfrak{y}-\mathfrak{z}|}
\\
&\leq
\frac{\frac{\epsilon}{25}}
{1-\frac{\epsilon}{100}}
\leq
\frac{\epsilon}{20}.
\end{aligned}
\end{align}
On the other hand, the triangle inequality gives
\begin{align*}
\left|\hat{\mathfrak{y}}-\hat{\mathfrak{z}}\right|
&\leq
\left|\hat{\mathfrak{y}}-\mathfrak{y}\right|
+
\left|\mathfrak{y}-\mathfrak{z}\right|
+
\left|\hat{\mathfrak{z}}-\mathfrak{z}\right|
\\
&=
|\mathfrak{y}-\mathfrak{z}|
+
\left||\mathfrak{y}|-1\right|
+
\left||\mathfrak{z}|-1\right|
\\
&\leq
1+\frac{3\epsilon}{100}.
\end{align*}
Hence, we have
\begin{align}\label{Case3es2}
\begin{aligned}
\hat{\mathfrak{x}}\cdot\hat{\mathfrak{z}}
-\hat{\mathfrak{x}}\cdot\hat{\mathfrak{y}}
&=
\mathfrak{x}\cdot
\left(\hat{\mathfrak{z}}-\hat{\mathfrak{y}}\right)
\\
&\geq
-\left|\hat{\mathfrak{y}}-\hat{\mathfrak{z}}\right|
\\
&\geq
-1-\frac{3\epsilon}{100}.
\end{aligned}
\end{align}
Combining \eqref{Case3es1} and \eqref{Case3es2}, we obtain
\begin{align}\label{mathfrakCase3}
\begin{aligned}
\mathfrak{A}+\mathfrak{B}+\mathfrak{C}
&=
\mathfrak{A}+\mathfrak{B}
\\
&\leq
\left(\hat{\mathfrak{y}}-\hat{\mathfrak{x}}\right)
\cdot\hat{\mathfrak{z}}
+
\left(1-\hat{\mathfrak{y}}\cdot\hat{\mathfrak{z}}\right)
+\frac{\epsilon}{20}
\\
&=
1-\hat{\mathfrak{x}}\cdot\hat{\mathfrak{z}}
+\frac{\epsilon}{20}
\\
&<
2-\hat{\mathfrak{x}}\cdot\hat{\mathfrak{y}}
+\epsilon.
\end{aligned}
\end{align}

Case 4: $\mathfrak{A}\neq 0$,\ $\mathfrak{B}=\mathfrak{C}=0$.

Since we have $-1\leq \hat{\mathfrak{x}}\cdot \hat{\mathfrak{y}}\leq 1$, we obtain
\begin{align}\label{mathfrakCase4}
\mathfrak{A}+\mathfrak{B}+\mathfrak{C}=\mathfrak{A}=\left(\hat{\mathfrak{y}}-\hat{\mathfrak{x}}\right)\cdot \hat{\mathfrak{z}}\leq \left|\hat{\mathfrak{y}}-\hat{\mathfrak{x}}\right|=\sqrt{2-2\left(\hat{\mathfrak{x}}\cdot \hat{\mathfrak{y}}\right)}\leq 2-\hat{\mathfrak{x}}\cdot \hat{\mathfrak{y}}\leq 2-\hat{\mathfrak{x}}\cdot \hat{\mathfrak{y}}+\epsilon,
\end{align}
since we have for all $-1\leq x\leq 1$, $\sqrt{2-2x}\leq 2-x$.

Combining Cases 1--4, we obtain \eqref{mathfrakLem1}.
\end{proof}

We now use Lemma \ref{mathfraklem} to prove Proposition \ref{samesolitonhanare}. Before turning to its proof, we observe that Lemma \ref{limeplem} yields, for all sufficiently large $t$,
\begin{align}\label{rep2,2}
\begin{aligned}
V(t)&=
\mathcal{F}(\rho_1)+\mathcal{F}(\rho_2)+\mathcal{F}(\rho_3)+\mathcal{F}(\rho_4)-\mathcal{F}(R_1)-\mathcal{F}(R_2)
\\
&>-\frac{1}{100}\mathcal{F}(D).
\end{aligned}
\end{align}
We note that $D=\min{(\rho,R)}$. By \eqref{rep2,2}, we have for $t\gg 1$
\begin{align}\label{subsec3.2detukau1}
4\mathcal{F}(\rho)\geq \mathcal{F}(\rho_1)+\mathcal{F}(\rho_2)+\mathcal{F}(\rho_3)+\mathcal{F}(\rho_4)>\mathcal{F}(R)-\frac{1}{100}\mathcal{F}(D).
\end{align}
Therefore, setting $C_{\heartsuit}=\log{5}$, by \eqref{tukaeruF1}, we obtain for $t\gg 1$
\begin{align}\label{taikakues1}
R-\rho>-C_{\heartsuit}.
\end{align}
We next derive an estimate for $R-\rho$ that will be used in a bootstrap argument. To this end, we define the active index sets
\begin{align*}
\mathcal{A}_{\rho}(t)
&=
\left\{
i\in\mathbb{Z}/4\mathbb{Z}
\;\middle|\;
\rho_i(t)=\rho(t)
\right\},
\\
\mathcal{A}_{R}(t)
&=
\left\{
i\in\mathbb{Z}/4\mathbb{Z}
\;\middle|\;
R_i(t)=R(t)
\right\}.
\end{align*}

\begin{lemma}\label{|R|-rholem}
For every $C>0$, there exist constants $\kappa_C>0$ and $t_C>0$ such that the following holds. If $t>t_C$ and
\begin{align}\label{sameesass}
-C_{\heartsuit}
\leq R(t)-\rho(t)
\leq C,
\end{align}
then, for every $i,j\in\mathbb{Z}/4\mathbb{Z}$ satisfying
\begin{align*}
\rho_i(t)=\rho(t),
\qquad
R_j(t)=R(t),
\end{align*}
we have
\begin{align*}
\frac{d}{dt}\left(R_j(t)-\rho_i(t)\right)
\leq
-\kappa_C\mathcal{F}(D(t)).
\end{align*}
\end{lemma}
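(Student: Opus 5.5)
We will estimate $\frac{d}{dt}(R_j-\rho_i)$ directly from \eqref{saisho2,2nagasaode}, using that under \eqref{sameesass} all pairwise distances that matter are comparable to $\rho$ up to additive constants. Since $D=\min(\rho,R)$ and $|R-\rho|\leq \max(C,C_{\heartsuit})$, \eqref{tukaeruF1} gives $\mathcal F(R)\sim\mathcal F(\rho)\sim\mathcal F(D)$, with constants depending only on $C$. In particular the error $o(\mathcal F(D))$ in \eqref{saisho2,2nagasaode} is $o(\mathcal F(\rho))$ and $o(\mathcal F(R))$. By the cyclic symmetry of the indices (and $i\mapsto -i$ relabelling), the diagonal $j$ shares a vertex with the edge $i$, so we may assume $j=i$. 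We place the origin at $z_i$ and rescale by $\rho=\rho_i$. We set
\begin{align*}
\mathfrak x=\frac{z_{i+1}-z_i}{\rho},\qquad \mathfrak y=\frac{z_{i+2}-z_i}{\rho},\qquad \mathfrak z=\frac{z_{i+3}-z_i}{\rho},
\end{align*}
so that $|\mathfrak x|=1$, $\hat{\mathfrak x}=v_i$ and $\hat{\mathfrak y}=w_i$. Because $\rho\to\infty$ and $|R-\rho|\lesssim_C 1$, we have $||\mathfrak y|-1|\leq\epsilon/100$ for $t>t_C$. The remaining normalized distances are all at least $1-\epsilon/100$, since every edge is at least $\rho$ and the other diagonal is at least $R\geq\rho-C_{\heartsuit}$. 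Hence Lemma \ref{mathfraklem} applies for any fixed small $\epsilon$.

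Next we expand $\dot R_i-\dot\rho_i$ term by term. The two diagonal pieces give
\begin{align*}
-2\mathcal F(R_i)-\bigl(-\mathcal F(R_i)\,v_i\cdot w_i\bigr)=-\mathcal F(R)\bigl(2-\hat{\mathfrak x}\cdot\hat{\mathfrak y}\bigr).
\end{align*}
The edge $\rho_i$ gives $-2\mathcal F(\rho)+\mathcal F(\rho)\,\hat{\mathfrak x}\cdot\hat{\mathfrak y}$. The remaining terms involve $\mathcal F(\rho_{i+1})$, $\mathcal F(\rho_{i+2})$, $\mathcal F(\rho_{i-1})$ and $\mathcal F(R_{i+1})$, multiplied by inner products of $v_i$ or $w_i$ with unit vectors. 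Each of these is one of the quantities $(\hat{\mathfrak y}-\hat{\mathfrak x})\cdot\hat{\mathfrak z}$, $-\hat{\mathfrak y}\cdot\frac{\mathfrak z-\mathfrak y}{|\mathfrak z-\mathfrak y|}$, $\hat{\mathfrak x}\cdot\frac{\mathfrak x-\mathfrak z}{|\mathfrak x-\mathfrak z|}$ or $\hat{\mathfrak x}\cdot\hat{\mathfrak y}$, or is bounded by $1$. For example, the $\mathcal F(\rho_{i-1})$ contributions, namely $-w_i\cdot v_{i-1}$ in $\dot R_i$ and $+v_i\cdot v_{i-1}$ in $\dot\rho_i$, combine into $\mathcal F(|\mathfrak z|\rho)(\hat{\mathfrak y}-\hat{\mathfrak x})\cdot\hat{\mathfrak z}$. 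Two observations reduce these terms to $\mathfrak A,\mathfrak B,\mathfrak C$ of Lemma \ref{mathfraklem}. First, only positive parts matter for an upper bound. Second, a term whose normalized distance exceeds $1+\epsilon/100$ carries a factor $\mathcal F(\cdot)\leq e^{-\epsilon\rho/200}\mathcal F(\rho)=o(\mathcal F(\rho))$. Every other such $\mathcal F$ is bounded by $\mathcal F(\rho)$ by monotonicity, since $\rho$ is the minimal edge and $R_{i+1}\geq R$. Any cross terms involving $\mathcal F(R_{i+1})$ are handled the same way, giving at most $\mathcal F(R)$ times a bounded factor; \eqref{mathfrakLem2} is used for those.

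Collecting the bounds, we aim for
\begin{align*}
\frac{d}{dt}(R_i-\rho_i)\leq -\mathcal F(R)\bigl(2-\hat{\mathfrak x}\cdot\hat{\mathfrak y}\bigr)-\mathcal F(\rho)\bigl(2-\hat{\mathfrak x}\cdot\hat{\mathfrak y}-\mathfrak A-\mathfrak B-\mathfrak C\bigr)+\epsilon\,\mathcal F(\rho)+o(\mathcal F(\rho)).
\end{align*}
By \eqref{mathfrakLem1} the second bracket is at least $-\epsilon$. By \eqref{mathfrakLem0} the first bracket is at least $\frac32-\frac{\epsilon}{10}$. Since $\mathcal F(R)\geq c_C\mathcal F(\rho)$ with $c_C\sim e^{-C}$, choosing $\epsilon\ll c_C$ and then $t_C$ large yields the claim with $\kappa_C\sim c_C$.

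The main obstacle is the bookkeeping in the previous step. Each of the roughly ten interaction terms must be matched to exactly one of $\mathfrak A,\mathfrak B,\mathfrak C$ or $\hat{\mathfrak x}\cdot\hat{\mathfrak y}$ with the correct $\mathcal F$ weight. When a pair of terms shares the weight $\mathcal F(|\mathfrak z|\rho)$ they must be combined before taking positive parts, as for $\mathfrak A$. The argument also uses $\mathcal F(R)$ as the "good" negative term that absorbs the $O(\epsilon)\mathcal F(\rho)$ slack. If some term instead carries a weight between $\mathcal F(R)$ and $\mathcal F(\rho)$, the fallback is to bound that weight by $\mathcal F(\rho)$ and use the sharper margin $2-\hat{\mathfrak x}\cdot\hat{\mathfrak y}\geq\frac32-\frac{\epsilon}{10}$ together with \eqref{mathfrakLem2}.
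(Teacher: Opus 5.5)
Your overall route is the paper's Case~1, run directly with a fixed $\epsilon$ instead of by contradiction and compactness. That is fine, and it even spares you the paper's separate treatment of unbounded $\mathfrak z$, since Lemma~\ref{mathfraklem} only needs lower bounds on $|\mathfrak z|$.

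\textbf{The gap.} Your bookkeeping does not handle the $\mathcal F(\rho_{i+1})$ contributions. These are $+\mathcal F(\rho_{i+1})\,w_i\cdot v_{i+1}$ from $\dot R_i$ and $+\mathcal F(\rho_{i+1})\,v_i\cdot v_{i+1}$ from $-\dot\rho_i$. Together they give $\mathcal F(\rho_{i+1})(v_i+w_i)\cdot v_{i+1}$. This is none of $\mathfrak A,\mathfrak B,\mathfrak C,\hat{\mathfrak x}\cdot\hat{\mathfrak y}$, and its weight can equal $\mathcal F(\rho)$. Any bound of the form $O(1)\,\mathcal F(\rho)$ for it breaks the argument, whether via your ``bounded by $1$'' category or via positive parts of the two pieces separately. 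Your fallback does not rescue it either.

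To see this, take the rhombus with all four sides equal to $\rho$ and $R_i=\rho+C$, essentially two equilateral triangles glued along $z_iz_{i+2}$.
\begin{itemize}
\item It satisfies \eqref{sameesass}.
\item In it, $\mathfrak C=0$ and $\mathfrak A+\mathfrak B=2-\hat{\mathfrak x}\cdot\hat{\mathfrak y}+O(C/\rho)$, so \eqref{mathfrakLem1} is sharp. Every $\mathcal F(\rho)$-weight is attained, and the $\mathcal F(\rho)$-weighted terms you account for cancel to leading order.
\item The only negative quantity left is $-(2-\hat{\mathfrak x}\cdot\hat{\mathfrak y})\mathcal F(R)\approx-\tfrac32 e^{-C}\mathcal F(\rho)$.
\item Meanwhile $\mathcal F(\rho_{i+1})=\mathcal F(\rho)$, with $w_i\cdot v_{i+1}\approx\tfrac12$ and $v_i\cdot v_{i+1}\approx-\tfrac12$.
\end{itemize}
So for large $C$ any crude bound on this term swamps the good term.

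\textbf{The missing idea.} There is an exact cancellation. Since $z_{i+2}-z_{i+1}=R_iw_i-\rho_iv_i$,
\[
(v_i+w_i)\cdot v_{i+1}=\frac{(R_i-\rho_i)(1+v_i\cdot w_i)}{\rho_{i+1}}.
\]
In your variables this is $(|\mathfrak y|-1)(1+\hat{\mathfrak x}\cdot\hat{\mathfrak y})/|\mathfrak y-\mathfrak x|$. Under \eqref{sameesass} it is at most $2\max(C,C_{\heartsuit})/\rho=o(1)$, so the term is $o(\mathcal F(D))$. This is the paper's estimate \eqref{rhoi+1es}. It is also where the upper bound $R-\rho\le C$ is used beyond giving $\mathcal F(R)\gtrsim_C\mathcal F(\rho)$. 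Once you insert it, your displayed inequality holds and the proof closes.

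\textbf{A smaller slip.} You claim $\mathcal F(R_{i+1})\le\mathcal F(\rho)$ ``since $R_{i+1}\ge R$''; this fails when $R<\rho$. Pairing the $\mathfrak C$ term with $\mathcal F(R)$ via \eqref{mathfrakLem2}, as you indicate, repairs it. Alternatively, do as the paper does: bound every weight by $\mathcal F(D)$ and use $\mathcal F(\rho_i)+\mathcal F(R_i)\ge(1+m_C)\mathcal F(D)$.
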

\begin{proof}
We argue by contradiction. Suppose that there exist sequences
\begin{align*}
\{t_n\}_{n\in\mathbb{N}}\subset\mathbb{R},
\qquad
\{i_n\}_{n\in\mathbb{N}},
\{j_n\}_{n\in\mathbb{N}}
\subset
\left(\mathbb{Z}/4\mathbb{Z}\right)^{\mathbb{N}}
\end{align*}
such that
\begin{align*}
&\lim_{n\to\infty}t_n=\infty,
\\
&j_n\in\mathcal{A}_R(t_n),\ i_n\in\mathcal{A}_{\rho}(t_n),
\\
&\frac{d}{dt}R_{j_n}(t_n)
=
\max_{j\in\mathcal{A}_R(t_n)}
\frac{d}{dt}R_j(t_n),\ \ \frac{d}{dt}\rho_{i_n}(t_n)
=
\min_{i\in\mathcal{A}_{\rho}(t_n)}
\frac{d}{dt}\rho_i(t_n),
\\
&-C_{\heartsuit}
\leq
R(t_n)-\rho(t_n)
\leq C,
\\
&\frac{d}{dt}
\left(R_{j_n}(t_n)-\rho_{i_n}(t_n)\right)
>
-\frac{1}{n}\mathcal{F}(D(t_n)).
\end{align*}
Since $i_n$ and $j_n$ take only finitely many values, after passing to a subsequence, we may fix $i,j\in\mathbb{Z}/4\mathbb{Z}$ such that
\begin{align}\label{rhoRlemsaishonokatei}
\begin{aligned}
&R_j(t_n)=R(t_n),\ \ \rho_i(t_n)=\rho(t_n),
\\
&\frac{d}{dt}R_j(t_n)
=
\max_{j'\in\mathcal{A}_R(t_n)}
\frac{d}{dt}R_{j'}(t_n),\ \ \frac{d}{dt}\rho_i(t_n)
=
\min_{i'\in\mathcal{A}_{\rho}(t_n)}
\frac{d}{dt}\rho_{i'}(t_n),
\\
&\left(\dot{R}_j(t_n)-\dot{\rho}_i(t_n)\right)
>
-\frac{1}{n}\mathcal{F}(D(t_n)).
\end{aligned}
\end{align}
Moreover, since we have
\begin{align*}
R_i=R_{i+2},\ R_{i+1}=R_{i+3},
\end{align*}
we may assume without loss of generality that either $j=i$ or $j=i+1$.

Case 1: $j=i$.

If $j=i$, by Lemma \ref{2,2centerlem} we have at $t=t_n$
\begin{align*}
\left(\dot{R}_i-\dot{\rho}_i\right)
&=
\mathcal{F}(\rho_i)(w_i\cdot v_i-2)
+\mathcal{F}(R_i)(w_i\cdot v_i-2)
\\
&\quad
+\mathcal{F}(\rho_{i+1})(v_i+w_i)\cdot v_{i+1}
+\mathcal{F}(\rho_{i-1})(v_i-w_i)\cdot v_{i-1}
\\
&\quad
-\mathcal{F}(\rho_{i+2})w_i\cdot v_{i+2}
-\mathcal{F}(R_{i+1})v_i\cdot w_{i+1}
+o\left(\mathcal{F}(D)\right).
\end{align*}
We first consider the case in which the sequence
\begin{align*}
\left\{
\frac{z_{i-1}(t_n)-z_i(t_n)}{\rho(t_n)}
\right\}_{n\in\mathbb{N}}
\end{align*}
is unbounded. After passing to a further subsequence, we may assume that
\begin{align*}
\lim_{n\to\infty}\left(R_{i+1}(t_n)-\rho(t_n)\right)&=\infty,
\\
\lim_{n\to\infty}\left(\rho_{i-1}(t_n)-\rho(t_n)\right)&=\infty,
\\
\lim_{n\to\infty}\left(\rho_{i+2}(t_n)-\rho(t_n)\right)&=\infty.
\end{align*}
Since we have
\begin{align*}
&\quad \lim_{n\to\infty}\left(v_i(t_n)+w_i(t_n)\right)\cdot v_{i+1}(t_n)
\\
&=\lim_{n\to\infty}\left(\frac{\left(R_i(t_n)-\rho_i(t_n)\right)\left(1+v_i(t_n)\cdot w_i(t_n)\right)}{\rho_{i+1}(t_n)}\right)=0,
\end{align*}
for all sufficiently large $n$, evaluation at $t=t_n$ yields
\begin{align*}
\frac{d}{dt}\left(R_i-\rho_i\right)&=\mathcal{F}(\rho_i)(w_i\cdot v_i-2)+\mathcal{F}(R_i)(w_i\cdot v_i-2)
\\
&\quad -\mathcal{F}(\rho_{i+2})w_i\cdot v_{i+2}+o\left(\mathcal{F}(D)\right)
\\
&\leq -\mathcal{F}(R)+o\left(\mathcal{F}(D)\right).
\end{align*}
By \eqref{tukaeruF1} and \eqref{sameesass}, we have
\begin{align*}
\mathcal{F}(R)\geq \frac{1}{2}e^{-C}\mathcal{F}(D).
\end{align*}
Therefore, we obtain $\left(\dot{R}_i-\dot{\rho}_i\right)\leq -\frac{1}{4}e^{-C}\mathcal{F}(D)$, which contradicts \eqref{rhoRlemsaishonokatei}.

Next, we assume that $\frac{z_{i-1}(t_n)-z_i(t_n)}{\rho(t_n)}$ is bounded. After passing to a subsequence, we may assume that the bounded sequences defined by
\begin{align*}
\mathfrak{x}_n&=v_i(t_n),
\\
\mathfrak{y}_n&=
\frac{z_{i+2}(t_n)-z_i(t_n)}{\rho(t_n)},
\\
\mathfrak{z}_n&=
\frac{z_{i-1}(t_n)-z_i(t_n)}{\rho(t_n)}
\end{align*}
converge to some $\mathfrak{x},\mathfrak{y},\mathfrak{z}\in\mathbb{R}^d$, respectively. For every $\epsilon>0$ and all sufficiently large $n$, we have
\begin{align*}
&|\mathfrak{x}_n|=1,\ \ \left||\mathfrak{y}_n|-1\right|
\leq\frac{\epsilon}{100},\ \ |\mathfrak{z}_n|\geq1,
\\
&|\mathfrak{x}_n-\mathfrak{y}_n|\geq1-\frac{\epsilon}{100},\ \ |\mathfrak{y}_n-\mathfrak{z}_n|\geq1-\frac{\epsilon}{100},\ \ |\mathfrak{z}_n-\mathfrak{x}_n|\geq1-\frac{\epsilon}{100}.
\end{align*}
Consequently, the limiting vectors
$\mathfrak{x},\mathfrak{y},\mathfrak{z}$
satisfy the assumptions of Lemma \ref{mathfraklem}. Keeping track of the orientations of the corresponding vectors, we also obtain, as $n\to\infty$,
\begin{align}\label{kyokugen6ko}
\begin{aligned}
v_i(t_n)
&\to\mathfrak{x},
&
w_i(t_n)
&\to\frac{\mathfrak{y}}{|\mathfrak{y}|},
&
v_{i-1}(t_n)
&\to-\frac{\mathfrak{z}}{|\mathfrak{z}|},
\\
v_{i+1}(t_n)
&\to
\frac{\mathfrak{y}-\mathfrak{x}}
{|\mathfrak{x}-\mathfrak{y}|},
&
v_{i+2}(t_n)
&\to
\frac{\mathfrak{z}-\mathfrak{y}}
{|\mathfrak{y}-\mathfrak{z}|},
&
w_{i+1}(t_n)
&\to
\frac{\mathfrak{z}-\mathfrak{x}}
{|\mathfrak{z}-\mathfrak{x}|}.
\end{aligned}
\end{align}
First, we estimate $\mathcal{F}(\rho_i)(w_i\cdot v_i-2)+\mathcal{F}(R_i)(w_i\cdot v_i-2)$. By \eqref{kyokugen6ko}, we have
\begin{align*}
\lim_{n\to\infty}(w_i(t_n)\cdot v_i(t_n)-2)=\frac{\mathfrak{x}\cdot \mathfrak{y}}{|\mathfrak{y}|}-2<0.
\end{align*}
By \eqref{tukaeruF1} and \eqref{sameesass}, we have
\begin{align*}
\mathcal{F}(\max{(\rho,R)})>m_C\mathcal{F}(D),
\end{align*}
where $m_C>0$ is a constant depending only on $C$. Therefore we have for large $n$
\begin{align*}
\mathcal{F}(\rho_i)(w_i\cdot v_i-2)+\mathcal{F}(R_i)(w_i\cdot v_i-2)<\left(\frac{\mathfrak{x}\cdot \mathfrak{y}}{|\mathfrak{y}|}-2+\epsilon\right)(1+m_C)\mathcal{F}(D).
\end{align*}
Since $\epsilon>0$ can be chosen arbitrarily small, we obtain, for all sufficiently large $n$,
\begin{align}\label{rhoiRies}
\mathcal{F}(\rho_i)(w_i\cdot v_i-2)+\mathcal{F}(R_i)(w_i\cdot v_i-2)<\left(\frac{\mathfrak{x}\cdot\mathfrak{y}}
{|\mathfrak{y}|}-2\right)\left(1+\frac{m_C}{2}\right)
\mathcal{F}(D).
\end{align}
Next, we estimate $\mathcal{F}(\rho_{i+1})(v_i+w_i)\cdot v_{i+1}$. Since we have
\begin{align*}
\lim_{n\to\infty} (v_i(t_n)+w_i(t_n))\cdot v_{i+1}(t_n)=\frac{\left(|\mathfrak{y}|\mathfrak{x}+\mathfrak{y}\right)\cdot \left(\mathfrak{y}-\mathfrak{x}\right)}{|\mathfrak{y}||\mathfrak{x}-\mathfrak{y}|}=\frac{\left(|\mathfrak{y}|-1\right)\left(|\mathfrak{y}|+\mathfrak{x}\cdot \mathfrak{y}\right)}{|\mathfrak{y}||\mathfrak{x}-\mathfrak{y}|},
\end{align*}
we have for large $n$
\begin{align*}
\left|(v_i(t_n)+w_i(t_n))\cdot v_{i+1}(t_n)\right|\leq 2\epsilon.
\end{align*}
Therefore we have for large $n$
\begin{align}\label{rhoi+1es}
\mathcal{F}(\rho_{i+1})(v_i+w_i)\cdot v_{i+1}<2\epsilon\mathcal{F}(D).
\end{align}
Next, we estimate $\mathcal{F}(\rho_{i-1})(v_i-w_i)\cdot v_{i-1}$. By \eqref{kyokugen6ko}, we have
\begin{align*}
\lim_{n\to\infty}(v_i(t_n)-w_i(t_n))\cdot v_{i-1}(t_n)=\left(\frac{\mathfrak{y}}{|\mathfrak{y}|}-\mathfrak{x}\right)\cdot \frac{\mathfrak{z}}{|\mathfrak{z}|}.
\end{align*}
We note that when we have $\frac{\rho_{i-1}}{\rho}>1+\frac{\epsilon}{200}$, we obtain $\mathcal{F}(\rho_{i-1})(v_i-w_i)\cdot v_{i-1}=o\left(\mathcal{F}(D)\right)$. Thus, we have for large $n$
\begin{align}\label{rhoi-1es}
\mathcal{F}(\rho_{i-1})(v_i-w_i)\cdot v_{i-1}< \left(\mathfrak{A}+\epsilon\right)\mathcal{F}(D).
\end{align}
Next, we estimate $ -\mathcal{F}(\rho_{i+2})w_i\cdot v_{i+2}$. By \eqref{kyokugen6ko}, we have
\begin{align*}
\lim_{n\to\infty}-w_i(t_n)\cdot v_{i+2}(t_n)=-\frac{\mathfrak{y}}{|\mathfrak{y}|}\cdot \frac{\mathfrak{z}-\mathfrak{y}}{|\mathfrak{z}-\mathfrak{y}|}.
\end{align*}
When we have $\frac{\rho_{i+2}}{\rho}>1+\frac{\epsilon}{200}$, we obtain $\mathcal{F}(\rho_{i+2})=o(\mathcal{F}(D))$. 
Therefore, we obtain for large $n$
\begin{align}\label{rhoi+2es}
-\mathcal{F}(\rho_{i+2})w_i\cdot v_{i+2}<\left(\mathfrak{B}+\epsilon\right)\mathcal{F}(D).
\end{align}
Next, we estimate $-\mathcal{F}(R_{i+1})v_i\cdot w_{i+1}$. By \eqref{kyokugen6ko}, we have
\begin{align*}
\lim_{n\to\infty}-v_i(t_n)\cdot w_{i+1}(t_n)=\mathfrak{x}\cdot \frac{\mathfrak{x}-\mathfrak{z}}{|\mathfrak{x}-\mathfrak{z}|}.
\end{align*}
When we have $\frac{R_{i+1}}{R}>1+\frac{\epsilon}{200}$, we obtain $\mathcal{F}(R_{i+1})=o(\mathcal{F}(D))$. Therefore, we obtain for large $n$
\begin{align}\label{Ri+1es}
-\mathcal{F}(R_{i+1})v_i\cdot w_{i+1}<\left(\mathfrak{C}+\epsilon\right)\mathcal{F}(D).
\end{align}
By \eqref{rhoiRies}--\eqref{Ri+1es} and Lemma \ref{mathfraklem}, for all sufficiently large $n$, we obtain at $t=t_n$,
\begin{align}\label{i=jcasees}
\begin{aligned}
\frac{d}{dt}\left(R_i-\rho_i\right)&\leq \left\{\left(\frac{\mathfrak{x}\cdot \mathfrak{y}}{|\mathfrak{y}|}-2\right)\left(1+\frac{m_C}{2}\right)+5\epsilon+\mathfrak{A}+\mathfrak{B}+\mathfrak{C}+o(1)\right\} \mathcal{F}(D).
\\
&<\left\{ \frac{m_C}{2}\left(\frac{\mathfrak{x}\cdot\mathfrak{y}}{|\mathfrak{y}|}-2\right)+6\epsilon+o(1)\right\}\mathcal{F}(D).
\end{aligned}
\end{align}
Since $\epsilon>0$ can be chosen arbitrarily small, it follows that, for all sufficiently large $n$,
\begin{align*}
\frac{d}{dt}\left(R_i-\rho_i\right)
<
-\frac{m_C}{4}\mathcal{F}(D).
\end{align*}
This contradicts \eqref{rhoRlemsaishonokatei}.

Case 2: $j=i+1$.

First, we define the transformed variables as follows:
\begin{align}\label{kaitenhanten}
\begin{aligned}
\overline{z}_i=z_{i+1},\ \overline{z}_{i+1}=z_i,\ \overline{z}_{i+2}=z_{i-1},\ \overline{z}_{i+3}=z_{i+2},
\\
\overline{v}_i=-v_i,\ \overline{v}_{i+1}=-v_{i-1},\ \overline{v}_{i+2}=-v_{i+2},\ \overline{v}_{i-1}=-v_{i+1},
\\
\overline{w}_i=w_{i+1},\ \overline{w}_{i+1}=w_i,
\\
\overline{\rho}_i=\rho_i,\ \overline{\rho}_{i+1}=\rho_{i-1},\ \overline{\rho}_{i+2}=\rho_{i+2},\ \overline{\rho}_{i-1}=\rho_{i+1},\\ 
\overline{R}_i=R_{i+1},\ \overline{R}_{i+1}=R_i.
\end{aligned}
\end{align}
Then, we have $\overline{\rho}_i=\rho_i$ and $\overline{R}_i=R_{i+1}$. Furthermore, by Lemma \ref{2,2centerlem}, we have
\begin{align*}
\dot{\overline{\rho}}_i=\dot{\rho}_i&=2\mathcal{F}(\rho_i)-\mathcal{F}(\rho_{i+1})v_i\cdot v_{i+1}-\mathcal{F}(\rho_{i-1})v_i\cdot v_{i-1}
\\
&\quad-\mathcal{F}(R_i)v_i\cdot w_i+\mathcal{F}(R_{i+1})v_i\cdot w_{i+1}+o\left(\mathcal{F}(D)\right)
\\
&=2\mathcal{F}(\overline{\rho}_i)-\mathcal{F}(\overline{\rho}_{i+1})\overline{v}_i\cdot \overline{v}_{i+1}-\mathcal{F}(\overline{\rho}_{i-1})\overline{v}_i\cdot \overline{v}_{i-1}
\\
&\quad-\mathcal{F}(\overline{R}_i)\overline{v}_i\cdot \overline{w}_i+\mathcal{F}(\overline{R}_{i+1})\overline{v}_i\cdot \overline{w}_{i+1}+o\left(\mathcal{F}(D)\right),
\\
\dot{\overline{R}}_i=\dot{R}_{i+1}&=-2\mathcal{F}(R_{i+1})+\mathcal{F}(\rho_{i+1})w_{i+1}\cdot v_{i+1}+\mathcal{F}(\rho_{i+2})w_{i+1}\cdot v_{i+2}
\\
&\quad -\mathcal{F}(\rho_{i-1})w_{i+1}\cdot v_{i-1}-\mathcal{F}(\rho_{i})w_{i+1}\cdot v_i+o\left(\mathcal{F}(D)\right)
\\
&=-2\mathcal{F}(\overline{R}_i)+\mathcal{F}(\overline{\rho}_i)\overline{w}_i\cdot \overline{v}_i+\mathcal{F}(\overline{\rho}_{i+1})\overline{w}_i\cdot \overline{v}_{i+1}
\\
&\quad-\mathcal{F}(\overline{\rho}_{i+2})\overline{w}_i\cdot \overline{v}_{i+2}-\mathcal{F}(\overline{\rho}_{i-1})\overline{w}_i\cdot \overline{v}_{i-1}+o\left(\mathcal{F}(D)\right).
\end{align*}
Therefore we have at $t=t_n$
\begin{align*}
\left(\dot{\overline{R}}_i-\dot{\overline{\rho}}_i\right)&=\mathcal{F}(\overline{\rho}_i)(\overline{w}_i\cdot \overline{v}_i-2)+\mathcal{F}(\overline{R}_i)(\overline{w}_i\cdot \overline{v}_i-2)
\\
&\quad +\mathcal{F}(\overline{\rho}_{i+1})(\overline{v}_i+\overline{w}_i)\cdot \overline{v}_{i+1}+\mathcal{F}(\overline{\rho}_{i-1})(\overline{v}_i-\overline{w}_i)\cdot \overline{v}_{i-1}
\\
&\quad -\mathcal{F}(\overline{\rho}_{i+2})\overline{w}_i\cdot \overline{v}_{i+2}-\mathcal{F}(\overline{R}_{i+1})\overline{v}_i\cdot \overline{w}_{i+1}+o\left(\mathcal{F}(D)\right).
\end{align*}
Moreover, the barred configuration has the same set of side lengths and the same set of diagonal lengths as the original configuration. In particular, at $t=t_n$, we have $\overline{\rho}_i=\rho,\ \overline{R}_i=R$. Hence all the assumptions used in Case 1 remain valid for the barred variables. Since the above expression has the same form as the corresponding expression in Case 1, the same argument yields a contradiction to \eqref{rhoRlemsaishonokatei}.

This completes the proof of Lemma \ref{|R|-rholem}.

\end{proof}

We now use this lemma to prove Proposition \ref{samesolitonhanare}.

\begin{proof}[Proof of Proposition \ref{samesolitonhanare}]
Fix an arbitrary constant $M>0$. First, we assume that there exists $t^{\prime}\gg 1$ such that 
\begin{align*}
-C_{\heartsuit}<R(t^{\prime})-\rho(t^{\prime})\leq M.
\end{align*}
Then, we introduce the following bootstrap estimate
\begin{align}\label{R-rhobs}
-C_{\heartsuit}<R(t)-\rho(t)\leq M.
\end{align}
We define $T_1\in [t^{\prime},\infty]$ by
\begin{align}\label{rho-RnoT1}
T_1=\sup{ \{t\in[t^{\prime},\infty)\ \mbox{such that}\ \eqref{R-rhobs}\ \mbox{holds on}\ [t^{\prime},t] \} }.
\end{align}
Furthermore, we assume $T_1<\infty$. Then, we have $R(T_1)-\rho(T_1)=M$. By Lemma \ref{|R|-rholem}, the estimate \eqref{R-rhobs} also holds for $0<t-T_1\ll 1$, which contradicts the maximality of $T_1$. Hence $T_1=\infty$. Then, by Lemma \ref{|R|-rholem}, we have for $t^{\prime}<T$
\begin{align*}
-\left(R(T)-\rho(T)\right)+\left(R(t^{\prime})-\rho(t^{\prime})\right)\gtrsim\int_{t^{\prime}}^T\mathcal{F}(D(s))ds\gtrsim \int_{t^{\prime}}^T\frac{ds}{s}.
\end{align*}
Since we have
\begin{align*}
\lim_{T\to\infty}\int_{t^{\prime}}^T\frac{ds}{s}=\infty,
\end{align*}
the preceding estimate implies that, for all sufficiently large $T$,
\begin{align*}
R(T)-\rho(T)<-C_{\heartsuit},
\end{align*}
which contradicts \eqref{R-rhobs}. Thus, we complete the proof.
\end{proof}
Proposition \ref{samesolitonhanare} yields \eqref{Vrep} and
$\mathcal{F}(R)=o\left(\mathcal{F}(D)\right)$.
We can therefore sharpen the center dynamics as follows.

\begin{lemma}\label{2,2centerdynamics2}
Let $\vec{u}$ be a $(2,2)$-sign soliton solution. Then, we have for $t\gg 1$
\begin{align}\label{rho=D}
\rho(t)=D(t),
\end{align}
and we have for every $i\in\mathbb{Z}/4\mathbb{Z}$ and every $1<\theta<\min{(p-1,2)}$,
\begin{align}
\dot{z}_i&=-\mathcal{F}(\rho_i)v_i+\mathcal{F}(\rho_{i-1})v_{i-1}+\mathcal{F}(R_i)w_i+O\left(e^{-\theta D}\right) \label{2,2centerrep}
\\
&=-\mathcal{F}(\rho_i)v_i+\mathcal{F}(\rho_{i-1})v_{i-1}+o\left(\mathcal{F}(D)\right). \label{2,2centertaikakunasi}
\end{align}
Furthermore, we have
\begin{align}\label{2,2henchousono2}
\begin{aligned}
\dot{\rho}_i&=2\mathcal{F}(\rho_i)-\mathcal{F}(\rho_{i+1})v_i\cdot v_{i+1}-\mathcal{F}(\rho_{i-1})v_i\cdot v_{i-1}
\\
&\quad-\mathcal{F}(R_i)v_i\cdot w_i+\mathcal{F}(R_{i+1})v_i\cdot w_{i+1}+O\left(e^{-\theta D}\right)
\\
&=2\mathcal{F}(\rho_i)-\mathcal{F}(\rho_{i+1})v_i\cdot v_{i+1}-\mathcal{F}(\rho_{i-1})v_i\cdot v_{i-1}+o\left(\mathcal{F}(D)\right).
\end{aligned}
\end{align}
\end{lemma}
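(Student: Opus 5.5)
The plan is to combine Proposition \ref{samesolitonhanare} with Lemma \ref{epzes}, after first checking that \eqref{Vrep} holds.

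First, \eqref{rho=D}. By definition $D=\min(\rho,R)$. Proposition \ref{samesolitonhanare} gives $R-\rho\to\infty$, so for $t\gg1$ we have $R>\rho$ and hence $D=\rho$.

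Second, \eqref{Vrep}. Using \eqref{rep2,2}, we have
\begin{align*}
V\geq \mathcal{F}(\rho)-2\mathcal{F}(R),
\end{align*}
since the four side terms are nonnegative and at least one of them equals $\mathcal{F}(\rho)=\mathcal{F}(D)$. Because $R-\rho\to\infty$ and $\mathcal{F}$ is asymptotically $c_{\mathcal F}r^{-\frac{d-1}{2}}e^{-r}$, the monotonicity of $\mathcal{F}$ together with \eqref{tukaeruF3} (or directly \eqref{mathcalFhyouka1}) yields $\mathcal{F}(R)/\mathcal{F}(\rho)\lesssim e^{-(R-\rho)}\to0$. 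Hence $\mathcal{F}(R)=o(\mathcal{F}(D))$, and therefore
\begin{align*}
\liminf_{t\to\infty}\frac{V}{\mathcal{F}(D)}\geq1>0,
\end{align*}
which is \eqref{Vrep}.

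With \eqref{Vrep} in hand, Lemma \ref{epzes} applies with $\sigma_k=(-1)^k$. Estimate \eqref{newzes} then gives \eqref{2,2centerrep}, after writing the signed sum in the cyclic notation exactly as in Lemma \ref{2,2centerlem}. To get \eqref{2,2centertaikakunasi}, observe that $e^{-\theta D}\lesssim \mathcal{F}(D)/D=o(\mathcal{F}(D))$ because $\theta>1$, and that $\mathcal{F}(R_i)\leq\mathcal{F}(R)=o(\mathcal{F}(D))$ by the second step. For \eqref{2,2henchousono2}, we differentiate $\rho_i=|z_{i+1}-z_i|$, so that $\dot\rho_i=(\dot z_{i+1}-\dot z_i)\cdot v_i$, and insert \eqref{2,2centerrep}. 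This reproduces the first line with error $O(e^{-\theta D})$. The second line follows by absorbing the $R$-terms and $e^{-\theta D}$ into $o(\mathcal{F}(D))$ as before.

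The only delicate point is quantifying $\mathcal{F}(R)=o(\mathcal{F}(\rho))$ when $R-\rho\to\infty$ but $R-\rho$ may be much smaller than $\rho$. This is settled by \eqref{mathcalFhyouka1}, which gives $\mathcal{F}(R)/\mathcal{F}(\rho)\lesssim (\rho/R)^{\frac{d-1}{2}}e^{-(R-\rho)}\leq e^{-(R-\rho)}$ for large arguments. Everything else is a direct substitution.
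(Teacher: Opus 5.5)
Your proof is correct and follows the same route as the paper: Proposition \ref{samesolitonhanare} gives $D=\rho$ and \eqref{Vrep}, Lemma \ref{epzes} then yields \eqref{2,2centerrep}, and the $R$-terms and $e^{-\theta D}$ are absorbed into $o(\mathcal{F}(D))$. You also spell out two steps the paper leaves implicit, namely the lower bound $V\geq\mathcal{F}(\rho)-2\mathcal{F}(R)$ and the estimate $\mathcal{F}(R)/\mathcal{F}(\rho)\lesssim e^{-(R-\rho)}$, which together justify \eqref{Vrep}.
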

\begin{proof}
By Proposition \ref{samesolitonhanare}, we have \eqref{Vrep} and \eqref{rho=D}. Therefore, by Lemma \ref{epzes} and Lemma \ref{Ksolikakikae}, we obtain \eqref{2,2centerrep}. \eqref{2,2centertaikakunasi} holds obviously by 
\begin{align*}
\mathcal{F}(R_i)+\mathcal{F}(R_{i+1})\lesssim \mathcal{F}(R)\ll \mathcal{F}(D).
\end{align*}
Similarly, \eqref{2,2henchousono2} follows from the same argument.
\end{proof}

\section{Classification of the asymptotic geometry of $(2,2)$-sign soliton solutions}

In this section, we classify the asymptotic geometry of $(2,2)$-sign soliton solutions. For $t\gg 1$, we define
\begin{align}\label{Ldef}
\mathcal{L}(t)=-\log{V(t)}.
\end{align}
We note that $\mathcal{L}$ is well-defined since \eqref{Vrep} holds for all sufficiently large $t$. Then, by Proposition \ref{samesolitonhanare}, we have for $t\gg 1$
\begin{align}\label{VsimmathcalF}
V(t)\sim \mathcal{F}(D(t)),
\end{align}
and we obtain
\begin{align}\label{mathcalLsim}
\left|\mathcal{L}(t)-D(t)\right|\lesssim \log{D(t)}\sim \log{\mathcal{L}(t)}.
\end{align}
Next, we define 
\begin{align}
z_g&=\frac{z_1+z_2+z_3+z_4}{4}, \label{jushindef}
\\
\mathcal{I}&=\sum_{i=1}^4\left|z_i-z_g\right|^2, \label{mathcalI}
\\
\mathcal{S}&=\sum_{i=1}^4\left|\dot{z}_i\right|^2\label{mathcalS},
\\
\mathcal{N}&=\sum_{i=1}^4\rho_i\mathcal{F}(\rho_i)-\sum_{i=1}^2R_i\mathcal{F}(R_i).\label{mathcalN}
\end{align}

\subsection{Equations for subsequential limits of the normalized center configuration}

We first derive differential identities for $V$, $\mathcal{L}$, and $\mathcal{I}$.

\begin{lemma}\label{mathcalbibunlem}
Let $\vec{u}$ be a $(2,2)$-sign soliton solution. Then, we have
\begin{align}
\dot{V}&=-\mathcal{S}+O\left(\frac{V^2}{D}\right),\label{Vbibun}
\\
\dot{\mathcal{L}}&=\frac{\mathcal{S}}{V}+O\left(\frac{V}{D}\right),\label{mathcalLbibun}
\\
\frac{1}{2}\dot{\mathcal{I}}&=\mathcal{N}+O\left(V+\frac{\mathcal{I}^{\frac{1}{2}}V}{D}\right).\label{mathcalIbibun}
\end{align}
\end{lemma}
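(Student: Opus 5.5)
The plan is to treat $V$ as a potential on the configuration space $(\mathbb{R}^d)^4$ and compare its gradient with the effective velocity field from Lemma \ref{2,2centerdynamics2}. Write
\begin{align*}
V=-\sum_{1\leq i<j\leq 4}\sigma_i\sigma_j\mathcal{F}(|z_i-z_j|),\qquad
\Gamma_k=-\sum_{i\neq k}\sigma_i\sigma_k\mathcal{F}(|z_k-z_i|)\frac{z_k-z_i}{|z_k-z_i|},
\end{align*}
with $\sigma_k=(-1)^k$. Then \eqref{2,2centerrep} (equivalently Lemma \ref{epzes}, which applies since \eqref{Vrep} holds by Proposition \ref{samesolitonhanare}) gives
\begin{align*}
\dot z_k=\Gamma_k+O(e^{-\theta D}).
\end{align*}
Since $\theta>1$, \eqref{mathcalFhyouka1} and \eqref{VsimmathcalF} give
\begin{align*}
e^{-\theta D}\lesssim \mathcal{F}(D)/D\lesssim V/D,\qquad |\Gamma_k|\lesssim\mathcal{F}(D)\lesssim V,
\end{align*}
where the latter uses $\rho_i,R_i\geq D$ and the monotonicity of $\mathcal{F}$. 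These two bounds will absorb every error term.

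For \eqref{Vbibun}, I compute $\nabla_{z_k}V=-\sum_{i\neq k}\sigma_i\sigma_k\mathcal{F}'(|z_k-z_i|)\frac{z_k-z_i}{|z_k-z_i|}$. The key input is that \eqref{mathcalFhyouka1} gives $|\mathcal{F}'(r)+\mathcal{F}(r)|\lesssim r^{-\frac{d+1}{2}}e^{-r}\lesssim \mathcal{F}(r)/r$. Hence $\nabla_{z_k}V=-\Gamma_k+O(\mathcal{F}(D)/D)$, including the like-signed terms, whose distances $R_i$ exceed $D$. Therefore
\begin{align*}
\dot V=\sum_k\nabla_{z_k}V\cdot\dot z_k=-\sum_k|\Gamma_k|^2+O\Bigl(V\cdot\frac{V}{D}\Bigr)+O\bigl(V e^{-\theta D}\bigr).
\end{align*}
Likewise $\mathcal{S}=\sum_k|\Gamma_k|^2+O(Ve^{-\theta D})$, which yields \eqref{Vbibun}. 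Then \eqref{mathcalLbibun} follows at once from $\dot{\mathcal L}=-\dot V/V$.

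For \eqref{mathcalIbibun}, I write $\frac12\dot{\mathcal I}=\sum_k(z_k-z_g)\cdot\dot z_k$, using $\sum_k(z_k-z_g)=0$ to dispose of the $\dot z_g$ term. Substituting $\dot z_k=\Gamma_k+O(e^{-\theta D})$, the error is bounded by $\sum_k|z_k-z_g|e^{-\theta D}\lesssim\mathcal I^{1/2}V/D$. For the main term, the pair forces are antisymmetric, so $\sum_k\Gamma_k=0$ and $z_g$ may be replaced by the origin. Symmetrizing over pairs gives
\begin{align*}
\sum_k z_k\cdot\Gamma_k=-\sum_{i<j}\sigma_i\sigma_j\mathcal{F}(|z_i-z_j|)\,|z_i-z_j|=\mathcal N,
\end{align*}
since opposite-sign pairs (the sides $\rho_i$) contribute $+\rho_i\mathcal{F}(\rho_i)$ and like-sign pairs (the diagonals $R_i$) contribute $-R_i\mathcal{F}(R_i)$. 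This gives \eqref{mathcalIbibun}, even without the $O(V)$ term, which is harmless.

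The only delicate point is the first step: the gradient of $V$ must match the force field to relative accuracy $1/D$. This relies on the sharp derivative asymptotics $\mathcal{F}'=-\mathcal{F}(1+O(1/r))$ and on the fact that all distances are at least $D$. Everything else is bookkeeping with $e^{-\theta D}\lesssim V/D$.
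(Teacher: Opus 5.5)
Your proof is correct. For \eqref{Vbibun} and \eqref{mathcalLbibun} it is the paper's argument: both show, via $|\mathcal F'+\mathcal F|\lesssim\mathcal F(r)/r$, that the gradient of $V$ equals minus the velocity field up to $O(V/D)$, and then expand $\dot V=\sum_k\nabla_{z_k}V\cdot\dot z_k$.

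The difference is in \eqref{mathcalIbibun}. The paper substitutes $\dot z_i=-D_iV+O(V/D)$, so its main term is the virial of the gradient, $\mathcal N_V=-\sum_i(z_i-z_g)\cdot D_iV=-\sum_i\rho_i\mathcal F'(\rho_i)+\sum_iR_i\mathcal F'(R_i)$. Converting $\mathcal N_V$ into $\mathcal N$ costs $r|\mathcal F'(r)+\mathcal F(r)|\lesssim\mathcal F(r)$ per pair, which is exactly where the $O(V)$ term comes from. The paper also bounds $\dot z_g=O(e^{-\theta D})$ rather than using $\sum_k(z_k-z_g)=0$.

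You instead pair $z_k-z_g$ with your force field $\Gamma_k$ itself. Its virial equals $\mathcal N$ exactly, by antisymmetry of the pair forces. You therefore obtain the sharper $\frac12\dot{\mathcal I}=\mathcal N+O(\mathcal I^{1/2}V/D)$ directly. The paper reaches this form, \eqref{Ibibun2}, only after first proving $\mathcal I\gtrsim D^2$ in Lemma \ref{mathcalichiyou}.

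What the paper's detour buys is that $\mathcal N_V$ and $\mathcal S_V=\sum_i|D_iV|^2$ are precisely the Cauchy--Schwarz pair it uses in Lemmas \ref{mathcalichiyou} and \ref{tenretulemma1}. Your formulation supports the same downstream argument: $\mathcal N^2\le\mathcal I\sum_k|\Gamma_k|^2$ holds exactly, and $\sum_k|\Gamma_k|^2=\mathcal S+O(Ve^{-\theta D})$.
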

\begin{proof}
First, we regard the collection of the centers as a configuration
\begin{align*}
\boldsymbol{z}
=(z_1,z_2,z_3,z_4)\in(\mathbb{R}^d)^4.
\end{align*}
For a differentiable function $V:(\mathbb{R}^d)^4\to\mathbb{R}$ and $i\in\mathbb{Z}/4\mathbb{Z}$, we denote by $D_iV(\boldsymbol{z})\in\mathbb{R}^d$ the derivative of $V$ with respect to its $i$-th block variable. More precisely, $D_iV(\boldsymbol{z})$ is characterized by
\begin{align}\label{DiVdef}
D_iV(\boldsymbol{z})\cdot h
=
\left.
\frac{d}{d\varepsilon}
V(z_1,\ldots,z_i+\varepsilon h,\ldots,z_4)
\right|_{\varepsilon=0}
\end{align}
for every $h\in\mathbb{R}^d$. Using \eqref{mathcalFhyouka1}, Proposition \ref{samesolitonhanare},
and Lemma \ref{2,2centerdynamics2}, we obtain
\begin{align}
\dot{z}_i=-D_iV+O\left(\frac{V}{D}\right).
\end{align}
Then, we have
\begin{align*}
\dot{V}=\sum_{i=1}^4D_iV\cdot \dot{z}_i=\sum_{i=1}^4 \left(-\dot{z}_i+O\left(\frac{V}{D}\right)\right)\cdot \dot{z}_i=-\mathcal{S}+O\left(\frac{V^2}{D}\right),
\end{align*}
and we obtain \eqref{Vbibun}. Next, we prove \eqref{mathcalLbibun}. We note that we have $V(t)\to 0$ and that $V(t)>0$ for all sufficiently large $t$. Then, by \eqref{Vbibun}, we have
\begin{align*}
\dot{\mathcal{L}}=-\frac{\dot{V}}{V}=\frac{\mathcal{S}}{V}+O\left(\frac{V}{D}\right),
\end{align*}
which proves \eqref{mathcalLbibun}. We next consider $\mathcal{I}$. First, by Lemma \ref{2,2centerdynamics2}, there exists $\theta>1$ such that 
\begin{align*}
\dot{z}_g
=\frac{\dot{z}_1+\dot{z}_2+\dot{z}_3+\dot{z}_4}{4}
=O\left(e^{-\theta D}\right).
\end{align*}
Then, we have
\begin{align}\label{mathcalIkeisan1}
\begin{aligned}
\frac{1}{2}\dot{\mathcal{I}}&=\sum_{i=1}^4\left(z_i-z_g\right)\cdot \left(\dot{z}_i+O\left(e^{-\theta D}\right)\right)
\\
&=\sum_{i=1}^4\left(z_i-z_g\right)\cdot\left(-D_iV+O\left(\frac{V}{D}\right)\right)
\\
&=\mathcal{N}_V+O\left(\frac{\mathcal{I}^{\frac{1}{2}}V}{D}\right),
\end{aligned}
\end{align}
where
\begin{align}\label{mathcalNVdef}
\mathcal{N}_V=-\sum_{i=1}^4\rho_i\mathcal{F}^{\prime}(\rho_i)+\sum_{i=1}^2R_i\mathcal{F}^{\prime}(R_i).
\end{align}
By \eqref{mathcalFhyouka1}, we have
\begin{align}\label{mathcalNnikansuru}
\begin{aligned}
&\quad \left|\mathcal{N}_V-\mathcal{N}\right|
\\
&\lesssim \sum_{i=1}^4\rho_i\left|\mathcal{F}^{\prime}(\rho_i)+\mathcal{F}(\rho_i)\right|+\sum_{i=1}^2R_i\left|\mathcal{F}^{\prime}(R_i)+\mathcal{F}(R_i)\right|
\\
&\lesssim \sum_{i=1}^4\mathcal{F}(\rho_i)+\sum_{i=1}^2\mathcal{F}(R_i)
\\
&\lesssim V.
\end{aligned}
\end{align}
By \eqref{mathcalIkeisan1} and \eqref{mathcalNnikansuru}, we have
\begin{align*}
\frac{1}{2}\dot{\mathcal{I}}=\mathcal{N}+O\left(V+\frac{\mathcal{I}^{\frac{1}{2}}V}{D}\right),
\end{align*}
and we obtain \eqref{mathcalIbibun}.
\end{proof}

Next, we establish two-sided estimates for $\mathcal{I}$, $\mathcal{S}$, and $\mathcal{N}$.

\begin{lemma}\label{mathcalichiyou}
Let $\vec{u}$ be a $(2,2)$-sign soliton solution. Then, we have
\begin{align}
\mathcal{N}=DV+O(V),\label{Nichiyou}
\\
\mathcal{S}\sim V^2,\label{Sichiyou}
\\
\mathcal{I}\sim \mathcal{L}^2.\label{Lichiyou}
\end{align}
\end{lemma}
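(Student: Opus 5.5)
We treat the three estimates in turn, using Proposition \ref{samesolitonhanare} and Lemma \ref{2,2centerdynamics2} throughout.

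\emph{Proof of \eqref{Nichiyou}.} By \eqref{rho=D}, every $\rho_i\geq D$. Let $\delta>0$ be small. For the side indices with $\rho_i\leq D+\delta\log D$ we write $\rho_i=D+(\rho_i-D)$, so that
\begin{align*}
\rho_i\mathcal{F}(\rho_i)=D\mathcal{F}(\rho_i)+(\rho_i-D)\mathcal{F}(\rho_i).
\end{align*}
By \eqref{tukaeruF3}, $(\rho_i-D)\mathcal{F}(\rho_i)\lesssim (\rho_i-D)e^{-(\rho_i-D)}\mathcal{F}(D)\lesssim\mathcal{F}(D)$. For the remaining indices, \eqref{mathcalFhyouka1} gives $\rho_i\mathcal{F}(\rho_i)\lesssim \rho_i e^{-(\rho_i-D)}\mathcal{F}(D)$. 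Since $x e^{-x}$ is bounded, $\rho_i e^{-(\rho_i-D)}$ is bounded by $D e^{-(\rho_i-D)}+O(1)$. Hence $D e^{-(\rho_i-D)}\lesssim D^{1-\delta}$, which is not $O(1)$. A cleaner route is to use directly
\begin{align*}
(\rho_i-D)\mathcal{F}(\rho_i)\lesssim \mathcal{F}(D)\sup_{x\geq0}(x+1)e^{-x}\left(1+O\!\left(\tfrac{x}{D}\right)\right),
\end{align*}
which follows from \eqref{mathcalFhyouka1} via $\mathcal{F}(D+x)/\mathcal{F}(D)=(1+x/D)^{-\frac{d-1}{2}}e^{-x}(1+O(1/D))$. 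This bound is valid for all $x\geq0$, not only $x\ll D$. Summing over $i$ then gives $\sum_i\rho_i\mathcal{F}(\rho_i)=D\sum_i\mathcal{F}(\rho_i)+O(\mathcal{F}(D))$. The diagonal terms are handled the same way, with $R_i-D\to\infty$ by Proposition \ref{samesolitonhanare}: $R_i\mathcal{F}(R_i)=D\mathcal{F}(R_i)+O(\mathcal{F}(D))$. Subtracting, and using \eqref{VsimmathcalF} to replace $O(\mathcal{F}(D))$ by $O(V)$, yields \eqref{Nichiyou}.

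\emph{Proof of \eqref{Sichiyou}.} The upper bound $\mathcal{S}\lesssim V^2$ is immediate from \eqref{2,2centertaikakunasi} and \eqref{VsimmathcalF}. For the lower bound, we argue by contradiction. Suppose $\mathcal{S}(t_n)/\mathcal{F}(D(t_n))^2\to0$ along a sequence of times. Normalize by $\mathcal{F}(D)$. Passing to a subsequence, the weights $\mathcal{F}(\rho_i)/\mathcal{F}(D)$ converge to $a_i\in[0,1]$, at least one of which equals $1$, and the unit vectors $v_i$ converge. The dominant terms in \eqref{2,2centertaikakunasi} then force $a_iv_i=a_{i-1}v_{i-1}$ for every $i$. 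Consequently all $a_i$ are equal to $1$ and $v_1=v_2=v_3=v_4$. This contradicts the closure condition $\sum_i\rho_iv_i=0$, since its left-hand side would equal $(\sum\rho_i)v_1\neq0$. We note that closure holds exactly, so no limit of ratios of the $\rho_i$ is needed.

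\emph{Proof of \eqref{Lichiyou}.} The upper bound follows from $\mathcal{I}\lesssim \max_i\rho_i^2$ together with a bound of the form $\max\rho_i\lesssim \mathcal{L}$. We expect this to be the main obstacle, because the sides not realizing $D$ could a priori be much longer. To control them, we use $|\dot z_i|\lesssim V$ and $\int^t V\,ds\lesssim$ something logarithmic. Specifically, \eqref{Vbibun} combined with $\mathcal{S}\sim V^2$ gives $\dot V\lesssim -V^2$. Hence $V\lesssim 1/t$, and consequently $|z_i(t)|\lesssim\log t$. Meanwhile \eqref{Dupper} together with $V\gtrsim\mathcal{F}(D)\gtrsim 1/t$ gives $\mathcal{L}\sim\log t$ up to lower-order corrections (using \eqref{mathcalLsim}). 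Thus $\mathcal{I}\lesssim \log^2t\sim\mathcal{L}^2$. For the lower bound, $\mathcal{I}\gtrsim D^2\sim\mathcal{L}^2$ holds because two centers at distance $D$ force $\mathcal{I}\geq D^2/2$, and \eqref{mathcalLsim} gives $D\sim\mathcal{L}$.
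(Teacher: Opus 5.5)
Your proof is correct. For \eqref{Nichiyou} and \eqref{Sichiyou} you follow the paper.

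For \eqref{Nichiyou} the key bound is $(r-r')\mathcal{F}(r)\lesssim(r-r')e^{-(r-r')}\mathcal{F}(r')\lesssim\mathcal{F}(r')$ for all $r\geq r'\gg1$. Delete the abandoned detour through $De^{-(\rho_i-D)}$.

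For \eqref{Sichiyou} both arguments rest on the exact closure identity $\sum_i\rho_iv_i=0$. Your compactness version replaces the paper's quantitative bound $\max_i|\mathcal{F}(\rho_{i-1})v_{i-1}-\mathcal{F}(\rho_i)v_i|\geq V/10$. Phrase the contradiction as the paper does: dot the identity at time $t_n$ with the limiting common direction $w$, so that $\sum_i\rho_i(t_n)\,v_i(t_n)\cdot w>0$ for large $n$. Do not assert that the sum equals $(\sum_i\rho_i)v_1$, since the $v_i(t_n)$ only converge to $w$.

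The genuine difference is in \eqref{Lichiyou}.

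For the lower bound, the paper uses the Cauchy--Schwarz inequality $\mathcal{N}_V^2\leq\mathcal{I}\mathcal{S}_V$ together with \eqref{Nichiyou} and \eqref{Sichiyou}. Your bound $\mathcal{I}\geq|z_j-z_g|^2+|z_k-z_g|^2\geq\frac12|z_j-z_k|^2\geq\frac12D^2$ is purely geometric and needs neither estimate. The paper's detour costs it nothing, because that same inequality and its deficit are the core of Lemma \ref{tenretulemma1}.

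For the upper bound, the paper stays on the $\mathcal{L}$-clock: it shows $\frac{d}{dt}\mathcal{I}^{1/2}\lesssim\sqrt{\mathcal{S}}\lesssim V\lesssim\dot{\mathcal{L}}$ and integrates. You instead derive $V\lesssim1/t$ from \eqref{Vbibun} and \eqref{Sichiyou}, which is the argument of Lemma \ref{Fsimt-1lem}. There is no circularity, since that argument does not use \eqref{Lichiyou}. You then integrate $|\dot z_i|\lesssim V$ to get $|z_i|\lesssim\log t$.

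The comparison you then need is $\log t\lesssim\mathcal{L}$. This comes from $V\lesssim1/t$, via $\mathcal{L}=-\log V\geq\log t-O(1)$. By contrast, \eqref{Dupper} and $V\gtrsim\mathcal{F}(D)\gtrsim1/t$ only give the reverse inequality $\mathcal{L}\lesssim\log t$, so adjust that citation.

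Your route front-loads the time asymptotics, while the paper's avoids physical time altogether. You could do the same by integrating $|\dot z_i|\lesssim V\sim\dot{\mathcal{L}}$ directly.
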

\begin{proof}
First, we estimate $\mathcal{N}$. By \eqref{mathcalFhyouka1}, we have for $1\ll r^{\prime}\leq r$,
\begin{align}\label{N-DVnojunbi}
\left(r-r^{\prime}\right)\mathcal{F}(r)\lesssim \left(r-r^{\prime}\right)e^{-(r-r^{\prime})}\mathcal{F}(r^{\prime})\lesssim \mathcal{F}(r^{\prime}).
\end{align}
By \eqref{N-DVnojunbi}, we have for $t\gg 1$
\begin{align*}
\left|\mathcal{N}-DV\right|\leq\sum_{i=1}^4 (\rho_i-D)\mathcal{F}(\rho_i)+\sum_{i=1}^2\left(R_i-D\right)\mathcal{F}(R_i)\lesssim \mathcal{F}(D)\sim V,
\end{align*}
and we obtain \eqref{Nichiyou}.

Next, we estimate $\mathcal{S}$. First, we prove 
\begin{align}\label{Snotamehyouka}
\max_{i\in\mathbb{Z}/4\mathbb{Z}}\left|\mathcal{F}(\rho_{i-1})v_{i-1}-\mathcal{F}(\rho_i)v_i\right|\geq \frac{V}{10}.
\end{align}
We assume that for every $i\in\mathbb{Z}/4\mathbb{Z}$,
\begin{align}\label{V/10miman}
\left|\mathcal{F}(\rho_{i-1})v_{i-1}-\mathcal{F}(\rho_i)v_i\right|<\frac{V}{10}.
\end{align}
Since any two indices in $\mathbb{Z}/4\mathbb{Z}$ can be connected by at most two consecutive steps, \eqref{V/10miman} and the triangle inequality imply that, for every $i,j\in\mathbb{Z}/4\mathbb{Z}$,
\begin{align}\label{V/5miman}
\left|\mathcal{F}(\rho_{j})v_{j}-\mathcal{F}(\rho_i)v_i\right|<\frac{V}{5}.
\end{align}
Here, we choose $k\in\mathbb{Z}/4\mathbb{Z}$ such that $\rho_k=\rho$. Then, by Proposition \ref{samesolitonhanare}, we have
\begin{align}\label{V41/10miman}
V(t)=\sum_{i=1}^4\mathcal{F}(\rho_i)+o(V)\leq \frac{41}{10}\mathcal{F}(D).
\end{align}
Moreover, considering the four vectors $\{\mathcal{F}(\rho_i)v_i\}_{i\in\mathbb{Z}/4\mathbb{Z}}$, by \eqref{V/5miman} and \eqref{V41/10miman}, we have for every $i\in\mathbb{Z}/4\mathbb{Z}$,
\begin{align*}
\mathcal{F}(\rho_i)v_i\cdot v_k&=\left\{\mathcal{F}(\rho_i)v_i-\mathcal{F}(\rho_k)v_k+\mathcal{F}(\rho_k)v_k\right\}\cdot v_k
\\
&=\mathcal{F}(D)+v_k\cdot \left\{\mathcal{F}(\rho_i)v_i-\mathcal{F}(\rho_k)v_k\right\}
\\
&\geq \mathcal{F}(D)-\left|\mathcal{F}(\rho_i)v_i-\mathcal{F}(\rho_k)v_k\right|>\frac{10V}{41}-\frac{V}{5}=\frac{9V}{205}>0,
\end{align*}
and we obtain
\begin{align}\label{0gasei}
\sum_{i=1}^4\rho_iv_i\cdot v_k>0.
\end{align}
On the other hand, by \eqref{iroirodef}, we have
\begin{align*}
\sum_{i=1}^4\rho_iv_i\cdot v_k=\sum_{i=1}^4v_k\cdot \left(z_{i+1}-z_i\right)=0,
\end{align*}
which contradicts \eqref{0gasei}. Therefore we obtain \eqref{Snotamehyouka}. We also have
\begin{align*}
\max_{i\in\mathbb{Z}/4\mathbb{Z}}\left|\mathcal{F}(\rho_{i-1})v_{i-1}-\mathcal{F}(\rho_i)v_i\right|\lesssim \max_{i\in\mathbb{Z}/4\mathbb{Z}}\mathcal{F}(\rho_i)\lesssim V,
\end{align*}
and we obtain
\begin{align}\label{08011tukau1}
\sum_{i=1}^4\left|\mathcal{F}(\rho_{i-1})v_{i-1}-\mathcal{F}(\rho_i)v_i\right|^2\sim V^2.
\end{align}
By Lemma \ref{2,2centerdynamics2} and \eqref{08011tukau1}, we obtain \eqref{Sichiyou}.

Next, we estimate $\mathcal{I}$. First, we prove $\mathcal{I}\gtrsim D^2\sim \mathcal{L}^2$. We define
\begin{align*}
\mathcal{S}_V=\sum_{i=1}^4\left|D_iV\right|^2.
\end{align*}
Then, by the Cauchy--Schwarz inequality, we have
\begin{align}\label{cauchy1}
\mathcal{N}_V^2=\left|\sum_{i=1}^4\left(z_i-z_g\right)\cdot D_iV\right|^2\leq \mathcal{I}\mathcal{S}_V.
\end{align}
Furthermore, by Lemma \ref{2,2centerdynamics2} and \eqref{mathcalNnikansuru}, we have
\begin{align}\label{NVtoSVes}
\mathcal{N}_V=\mathcal{N}+O\left(V\right),\ \mathcal{S}_V=\mathcal{S}+O\left(\frac{V^2}{D}\right).
\end{align}
Thus, we obtain 
\begin{align}\label{cauchy2}
\left(\mathcal{N}+O(V)\right)^2\leq \mathcal{I}\left(\mathcal{S}+O\left(\frac{V^2}{D}\right)\right).
\end{align}
By \eqref{Nichiyou}, \eqref{Sichiyou}, and \eqref{cauchy2}, we obtain $\mathcal{I}\gtrsim D^2\sim \mathcal{L}^2$. Then, we rewrite \eqref{mathcalIbibun} as 
\begin{align}\label{Ibibun2}
\frac{1}{2}\dot{\mathcal{I}}&=\mathcal{N}+O\left(\frac{\mathcal{I}^{\frac{1}{2}}V}{D}\right).
\end{align}
Then, by \eqref{Sichiyou}, \eqref{cauchy2} and \eqref{Ibibun2}, we have
\begin{align*}
\frac{d}{dt}\left(\mathcal{I}^{\frac{1}{2}}\right)=\frac{\mathcal{N}}{\mathcal{I}^{\frac{1}{2}}}+O\left(\frac{V}{D}\right)\lesssim \sqrt{\mathcal{S}}\lesssim V\lesssim \dot{\mathcal{L}},
\end{align*}
which implies $\mathcal{I}\lesssim \mathcal{L}^2$. Therefore, we obtain \eqref{Lichiyou}.
\end{proof}

Next, we prove the following lemma.
\begin{lemma}\label{tenretulemma1}
Let $\vec{u}$ be a $(2,2)$-sign soliton solution. Then, for every sequence of times $t_n\to\infty$, there exists a sequence of times $t_n^{\prime}\to\infty$ such that
\begin{align}
\begin{aligned}\label{hodai0802}
\lim_{n\to\infty}
\log{\frac{\mathcal{L}(t_n^{\prime})}{\mathcal{L}(t_n)}}
&=0,
\\
\lim_{n\to\infty}
\frac{
\mathcal{I}(t_n^{\prime})\mathcal{S}_V(t_n^{\prime})
-\mathcal{N}_V^2(t_n^{\prime})
}{
\mathcal{L}^2(t_n^{\prime})V(t_n^{\prime})^2
}
&=0.
\end{aligned}
\end{align}
\end{lemma}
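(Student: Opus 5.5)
The plan is an almost-monotonicity argument for the normalized ratio $\mathcal{L}/\mathcal{I}^{1/2}$. The dissipation term will be the normalized Cauchy--Schwarz deficit
\begin{align*}
\delta(t)=\frac{\mathcal{I}\mathcal{S}_V-\mathcal{N}_V^2}{\mathcal{L}^2V^2}\geq 0 .
\end{align*}
I combine this with the divergence of $\int V/\mathcal{L}\,dt$ to locate times where $\delta$ is small. Throughout I use Lemma \ref{mathcalichiyou}: $\mathcal{S}\sim V^2$, $\mathcal{I}\sim\mathcal{L}^2$, and $\mathcal{N}=DV+O(V)$. I also use \eqref{NVtoSVes}, \eqref{mathcalLsim} and \eqref{VsimmathcalF}.

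First I compute the two logarithmic derivatives. From \eqref{mathcalLbibun} and \eqref{NVtoSVes},
\begin{align*}
\frac{d}{dt}\log\mathcal{L}=\frac{\mathcal{S}_V}{\mathcal{L}V}+O\Big(\frac{V}{\mathcal{L}D}\Big).
\end{align*}
From \eqref{mathcalIkeisan1},
\begin{align*}
\frac{d}{dt}\log\mathcal{I}^{1/2}=\frac{\mathcal{N}_V}{\mathcal{I}}+O\Big(\frac{V}{\mathcal{L}D}\Big).
\end{align*}
Next I replace $\mathcal{N}_V/\mathcal{I}$ by $\mathcal{N}_V^2/(\mathcal{I}\mathcal{L}V)$. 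This is allowed because $\mathcal{N}_V=\mathcal{L}V+O(V\log\mathcal{L})$ by \eqref{Nichiyou} and \eqref{mathcalLsim}, and $\mathcal{N}_V/\mathcal{I}\lesssim V/\mathcal{L}$, so the substitution costs $O(V\log\mathcal{L}/\mathcal{L}^2)$. Subtracting gives
\begin{align*}
\frac{d}{dt}\log\frac{\mathcal{L}}{\mathcal{I}^{1/2}}=\frac{\mathcal{L}V}{\mathcal{I}}\,\delta+O\Big(\frac{V\log\mathcal{L}}{\mathcal{L}^2}\Big)\geq c\,\frac{V}{\mathcal{L}}\,\delta-C\frac{V\log\mathcal{L}}{\mathcal{L}^2}.
\end{align*}

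Then I integrate. Since $\mathcal{S}\sim V^2$, we have $\dot{\mathcal{L}}\sim V$, so $\mathcal{L}$ is eventually increasing to $\infty$ and $V\,dt\sim d\mathcal{L}$. Hence the error term is integrable:
\begin{align*}
\int^\infty\frac{V\log\mathcal{L}}{\mathcal{L}^2}\,dt\lesssim\int^\infty\frac{\log\mathcal{L}}{\mathcal{L}^2}\,d\mathcal{L}<\infty.
\end{align*}
The left-hand side $\log(\mathcal{L}/\mathcal{I}^{1/2})$ is bounded because $\mathcal{I}\sim\mathcal{L}^2$. Therefore $\int^\infty\delta\,V/\mathcal{L}\,dt<\infty$, and the tails $\tau_n:=\int_{t_n}^\infty\delta\,V/\mathcal{L}\,dt$ tend to $0$.

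Finally I select $t_n'$. Choose $\eta_n\to0$ with $\tau_n/\eta_n\to0$, for instance $\eta_n=\sqrt{\tau_n}+1/n$. Let $s_n>t_n$ be the time at which $\log\mathcal{L}(s_n)=\log\mathcal{L}(t_n)+\eta_n$; it exists by monotonicity and continuity of $\mathcal{L}$. On $[t_n,s_n]$ we have $\int V/\mathcal{L}\,dt\gtrsim\int d\mathcal{L}/\mathcal{L}=\eta_n$. So there is $t_n'\in[t_n,s_n]$ with $\delta(t_n')\lesssim\tau_n/\eta_n\to0$, which is the second limit in \eqref{hodai0802}. The first limit holds because $0\le\log(\mathcal{L}(t_n')/\mathcal{L}(t_n))\le\eta_n\to 0$.

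The main obstacle is the first step: the error terms must be of order $V\log\mathcal{L}/\mathcal{L}^2$, so that they are integrable against $d\mathcal{L}$. The key input there is that $\mathcal{N}_V$ agrees with $\mathcal{L}V$ up to a lower-order relative error. If the $\log\mathcal{L}$ loss from \eqref{mathcalLsim} turned out too crude, I would instead compare with $D$ directly, using $\mathcal{N}=DV+O(V)$ and replacing $\mathcal{L}$ by $D$ in the normalization.
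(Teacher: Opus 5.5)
Your proposal is correct and is essentially the paper's argument. The paper tracks $\mathcal{P}=\mathcal{I}/\mathcal{L}^2$ directly, whereas you track $\log(\mathcal{L}/\mathcal{I}^{1/2})=-\tfrac12\log\mathcal{P}$. Both arrive at the same differential inequality: dissipation $\mathcal{J}=\mathcal{P}\mathcal{Q}-\mathcal{R}^2$ (your $\delta$) weighted by $\dot{\mathcal{L}}/\mathcal{L}$, plus an integrable error $\frac{\log\mathcal{L}}{\mathcal{L}^2}\dot{\mathcal{L}}$ coming from $\mathcal{R}=1+O(\log\mathcal{L}/\mathcal{L})$. Both then select $t_n'$ in a window where $\log\mathcal{L}$ grows by roughly the square root of the tail integral; your choice $\eta_n=\sqrt{\tau_n}+1/n$ even removes the need for the paper's separate treatment of a vanishing tail.
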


\begin{proof}
We define
\begin{align*}
\mathcal{P}=\frac{\mathcal{I}}{\mathcal{L}^2},\ \mathcal{Q}=\frac{\mathcal{S}_V}{V^2},\ \mathcal{R}=\frac{\mathcal{N}_V}{\mathcal{L}V}.
\end{align*}
First, Lemma \ref{mathcalbibunlem} and Lemma \ref{mathcalichiyou} imply that $\dot{\mathcal L}\sim V>0$. Moreover, we have
\begin{align}\label{Pode0811}
\dot{\mathcal{P}}&=\frac{\dot{\mathcal{I}}}{\mathcal{L}^2}-2\frac{\mathcal{I}\dot{\mathcal{L}}}{\mathcal{L}^3}=\frac{2\mathcal{N}_V}{\mathcal{L}^2}-2\frac{\mathcal{I}\dot{\mathcal{L}}}{\mathcal{L}^3}+O\left(\frac{V}{\mathcal{L}^2}\right).
\end{align}
For all sufficiently large $t$, multiplying both sides of \eqref{Pode0811} by $\frac{\mathcal{L}}{\dot{\mathcal{L}}}$, we obtain
\begin{align*}
\frac{\mathcal{L}}{\dot{\mathcal{L}}}\dot{\mathcal{P}}&=\frac{2\mathcal{N}_V}{\mathcal{L}\dot{\mathcal{L}}}-\frac{2\mathcal{I}}{\mathcal{L}^2}+O\left(\frac{1}{\mathcal{L}}\right)
\\
&=\frac{2\mathcal{N}_V}{\mathcal{L}\left(\frac{\mathcal{S}_V}{V}+O\left(\frac{V}{D}\right)\right)}-2\mathcal{P}+O\left(\frac{1}{\mathcal{L}}\right)
\\
&=\frac{2\mathcal{N}_VV}{\mathcal{L}\mathcal{S}_V}-2\mathcal{P}+O\left(\frac{1}{\mathcal{L}}\right)
\\
&=\frac{2\mathcal{R}}{\mathcal{Q}}-2\mathcal{P}+O\left(\frac{1}{\mathcal{L}}\right).
\end{align*}
Here, we define
\begin{align*}
\mathcal{J}=\mathcal{P}\mathcal{Q}-\mathcal{R}^2.
\end{align*}
Then, by \eqref{cauchy1}, we have $\mathcal{J}\geq 0$. Moreover, by direct computation, we have
\begin{align*}
\frac{\mathcal{L}}{\dot{\mathcal{L}}}\dot{\mathcal{P}}&=\frac{2\mathcal{R}}{\mathcal{Q}}-\frac{2\mathcal{R}^2+2\mathcal{J}}{\mathcal{Q}}+O\left(\frac{1}{\mathcal{L}}\right)
\\
&=\frac{2\mathcal{R}(1-\mathcal{R})}{\mathcal{Q}}-\frac{2\mathcal{J}}{\mathcal{Q}}+O\left(\frac{1}{\mathcal{L}}\right).
\end{align*}
We estimate $\mathcal{Q}$ and $\mathcal{R}$. By \eqref{mathcalLsim} and Lemma \ref{mathcalichiyou}, we have
\begin{align*}
&\mathcal{P}\sim 1,\ \mathcal{Q}\sim 1,
\\
&\mathcal{R}=\frac{\mathcal{N}_V}{\mathcal{L}V}=\frac{DV+O(V)}{\mathcal{L}V}=\frac{D}{\mathcal{L}}+O\left(\frac{1}{\mathcal{L}}\right)=1+O\left(\frac{\log{\mathcal{L}}}{\mathcal{L}}\right).
\end{align*}
Therefore, we obtain
\begin{align*}
\frac{\mathcal{L}}{\dot{\mathcal{L}}}\dot{\mathcal{P}}=-\frac{2\mathcal{J}}{\mathcal{Q}}+O\left(\frac{\log{\mathcal{L}}}{\mathcal{L}}\right).
\end{align*}
Thus, there exists $c>0$ such that, for all sufficiently large $t$,
\begin{align}\label{bibunhutoushiki}
\dot{\mathcal{P}}\leq -c\mathcal{J}\frac{\dot{\mathcal{L}}}{\mathcal{L}}+\frac{1}{c}\frac{\log{\mathcal{L}}}{\mathcal{L}^2}\dot{\mathcal{L}}.
\end{align}
Integrating \eqref{bibunhutoushiki} on $[t,T]$, where $1\ll t<T$, we have
\begin{align*}
\mathcal{P}(T)-\mathcal{P}(t)\leq -c\int_t^T\mathcal{J}(s)\frac{\dot{\mathcal{L}}(s)ds}{\mathcal{L}(s)}+\frac{1}{c}\left\{\left(\frac{\log{\mathcal{L}(t)}}{\mathcal{L}(t)}+\frac{1}{\mathcal{L}(t)}\right)-\left(\frac{\log{\mathcal{L}(T)}}{\mathcal{L}(T)}+\frac{1}{\mathcal{L}(T)}\right)\right\},
\end{align*}
and we obtain
\begin{align*}
\int_t^{\infty}\mathcal{J}(s)\frac{\dot{\mathcal{L}}(s)ds}{\mathcal{L}(s)}<\infty.
\end{align*}
Here, for any sequence of times $t_n\to\infty$, we define $\delta_{t_n}$ as 
\begin{align*}
\delta_{t_n}=\int_{t_n}^{\infty}\mathcal{J}(s)\frac{\dot{\mathcal{L}}(s)ds}{\mathcal{L}(s)}.
\end{align*}
Then $\delta_{t_n}\to 0$ as $n\to\infty$. For all sufficiently large $n$, we define $t_n^{\prime}$ as follows.
\begin{itemize}
\item When $\delta_{t_n}=0$, the continuity of $\mathcal{J}$ and the positivity of $\dot{\mathcal{L}}/\mathcal{L}$ imply that $\mathcal{J}(s)=0$ for every $s\geq t_n$. We therefore set $t_n^{\prime}=t_n$.

\item We consider $\delta_{t_n}>0$. Since we have
\begin{align*}
\int_{t_n}^{\infty}\frac{\dot{\mathcal{L}}(s)ds}{\mathcal{L}(s)}=\lim_{t\to\infty}\left(\log{\mathcal{L}}(t)-\log{\mathcal{L}(t_n)}\right)=\infty,
\end{align*}
there exists $T_n>t_n$ such that 
\begin{align*}
\log{\frac{\mathcal{L}(T_n)}{\mathcal{L}(t_n)}}=\int_{t_n}^{T_n}\frac{\dot{\mathcal{L}}(s)ds}{\mathcal{L}(s)}=\sqrt{\delta_{t_n}}.
\end{align*}
Then, we note that by the definition of $\delta_{t_n}$, we have
\begin{align*}
\int_{t_n}^{T_n}\mathcal{J}(s)\frac{\dot{\mathcal{L}}(s)ds}{\mathcal{L}(s)}\leq \delta_{t_n}.
\end{align*}
Then, by the weighted mean value theorem for integrals, there exists $t^{\prime}\in [t_n,T_n]$ such that 
\begin{align*}
\mathcal{J}(t^{\prime})\leq \sqrt{\delta_{t_n}}.
\end{align*}
Then, we define $t_n^{\prime}=t^{\prime}$.
\end{itemize}
By the choice of $t_n^{\prime}$, we obtain \eqref{hodai0802}. Therefore, we complete the proof.
\end{proof}

The estimates for $V$ and $\mathcal{S}$ also yield an estimate for $\mathcal{F}(D)$.

\begin{lemma}\label{Fsimt-1lem}
Let $\vec{u}$ be a $(2,2)$-sign soliton solution. Then, we have
\begin{align}\label{mathcalFsimt-1}
\begin{aligned}
\mathcal{F}(D(t))\sim \frac{1}{t},
\\
\left|D-\log{t}+\frac{d-1}{2}\log{(\log{t})}\right|\lesssim 1.
\end{aligned}
\end{align}
\end{lemma}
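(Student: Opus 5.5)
We already have the lower bound $\mathcal{F}(D)\gtrsim t^{-1}$ from \eqref{F(D)lower} in Lemma \ref{limeplem}, so the real task is the upper bound $\mathcal{F}(D)\lesssim t^{-1}$. The plan is to turn the dissipation identity \eqref{Vbibun} into a Riccati-type differential inequality for $V$.

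By Lemma \ref{mathcalbibunlem} and Lemma \ref{mathcalichiyou},
\begin{align*}
\dot{V}=-\mathcal{S}+O\left(\frac{V^2}{D}\right),\qquad \mathcal{S}\sim V^2 .
\end{align*}
Since $D\to\infty$, there is a constant $c>0$ such that, for $t\geq T_0$ with $T_0\gg1$,
\begin{align*}
-\frac{1}{c}V^2\leq \dot{V}\leq -cV^2 .
\end{align*}
Recall that $V>0$ for large $t$ by \eqref{Vrep}. Hence $\frac{d}{dt}\left(V^{-1}\right)=-\dot V/V^2$ lies in $[c,1/c]$. Integrating on $[T_0,t]$ gives
\begin{align*}
c(t-T_0)\leq V(t)^{-1}-V(T_0)^{-1}\leq c^{-1}(t-T_0).
\end{align*}
Therefore $V(t)\sim t^{-1}$ for $t\gg1$.

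Next we convert this into a statement about $D$. By \eqref{VsimmathcalF}, $\mathcal{F}(D(t))\sim V(t)\sim t^{-1}$, which is the first claim. The upper half of the second claim is exactly \eqref{Dupper}. For the lower half, take logarithms in $\mathcal{F}(D)\lesssim t^{-1}$ and use \eqref{mathcalFhyouka1}:
\begin{align*}
D+\frac{d-1}{2}\log D\geq \log t-C .
\end{align*}
Since $D\lesssim\log t$ by \eqref{Dupper}, we get $\log D\leq \log\log t+O(1)$. The inequality above then forces $D\geq \log t-C$, so $\log D=\log\log t+O(1)$. Substituting back yields
\begin{align*}
D\geq \log t-\frac{d-1}{2}\log(\log t)-C .
\end{align*}

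There is one alternative route for the $D$ step. Set $\tilde L=D$ along $\dot{\tilde L}\sim\mathcal F(\tilde L)$ and invoke Lemma \ref{mathcalFyouode}, then \eqref{Lnagasa}. However, $D$ is not $C^1$, so the direct logarithmic computation above is the cleaner choice.

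The main point needing care is the sign and size bookkeeping in the inequality $\dot V\leq -cV^2$. The error in \eqref{Vbibun} is $O(V^2/D)$, which is $o(V^2)$, and the lower bound $\mathcal S\gtrsim V^2$ in \eqref{Sichiyou} absorbs it. This works because \eqref{Sichiyou} is a genuine two-sided bound coming from \eqref{Snotamehyouka}. Everything else is routine.
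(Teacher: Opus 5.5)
Your proposal is correct and follows the paper's proof: \eqref{Vbibun} together with \eqref{Sichiyou} gives $\dot V\sim -V^2$, integrating $\frac{d}{dt}(V^{-1})\sim 1$ yields $V\sim t^{-1}$, and \eqref{VsimmathcalF} plus the asymptotics \eqref{mathcalFhyouka1} of $\mathcal{F}$ then give both claims. One harmless slip: $D+\frac{d-1}{2}\log D\geq \log t-C$ does not force $D\geq \log t-C$ (for $d\geq 2$ that would contradict \eqref{Dupper}); instead, inserting $\log D\leq \log(\log t)+O(1)$ into it directly gives $D\geq \log t-\frac{d-1}{2}\log(\log t)-C$, which is exactly the bound you need.
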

\begin{proof}
By \eqref{Vbibun} and \eqref{Sichiyou}, we have
\begin{align*}
\dot{V}\sim -V^2.
\end{align*}
Therefore, we have
\begin{align*}
\frac{d}{dt}\left(\frac{1}{V(t)}\right)\sim 1,
\end{align*}
and we obtain 
\begin{align}\label{Vsimt-1}
V(t)\sim \frac{1}{t}.
\end{align}
By \eqref{VsimmathcalF} and \eqref{Vsimt-1}, we obtain $\mathcal{F}(D)\sim t^{-1}$. Furthermore, by the form of $\mathcal{F}$, we have $|D-\log{t}+\frac{d-1}{2}\log{(\log{t})}|\lesssim 1$, and hence we obtain \eqref{mathcalFsimt-1}.
\end{proof}
Using Lemma \ref{Fsimt-1lem}, we now prove that $z_g$ converges.
\begin{lemma}\label{zgshusoku}
Let $\vec{u}$ be a $(2,2)$-sign soliton solution. Then, there exists $z_{\infty}\in \mathbb{R}^d$ such that for any $0<\theta<\min{(p-2,1)}$,
\begin{align}\label{jushines}
|z_g(t)-z_{\infty}|\lesssim t^{-\theta}.
\end{align}
\end{lemma}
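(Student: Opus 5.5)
The plan is to show that $\dot z_g$ is integrable with a polynomial tail. The starting point is the sharpened center dynamics \eqref{2,2centerrep} of Lemma \ref{2,2centerdynamics2}.

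\textbf{Step 1: cancellation of the leading terms.} Summing \eqref{2,2centerrep} over $i\in\mathbb{Z}/4\mathbb{Z}$ gives
\begin{align*}
\sum_{i}\left(-\mathcal{F}(\rho_i)v_i+\mathcal{F}(\rho_{i-1})v_{i-1}\right)=0
\end{align*}
by cyclic telescoping. For the diagonal terms, since $w_{i+2}=-w_i$ and $R_{i+2}=R_i$, we get
\begin{align*}
\sum_i \mathcal{F}(R_i)w_i=0.
\end{align*}
This is simply the antisymmetry of pairwise interactions, so $\dot z_g=O(e^{-\theta' D})$ for every $1<\theta'<\min(p-1,2)$. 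This was already noted in the proof of Lemma \ref{mathcalbibunlem}.

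\textbf{Step 2: decay in $t$.} Lemma \ref{Fsimt-1lem} gives $\mathcal{F}(D)\sim t^{-1}$, and hence, by \eqref{mathcalFhyouka1}, $e^{-D}\lesssim D^{\frac{d-1}{2}}t^{-1}$. Since $D\lesssim\log t$, this yields
\begin{align*}
e^{-\theta' D}\lesssim (\log t)^{\frac{(d-1)\theta'}{2}}\,t^{-\theta'}.
\end{align*}
Now set $\theta'=1+\theta$ with $0<\theta<\min(p-2,1)$, which is admissible because $1+\theta<\min(p-1,2)$. Absorbing the logarithm by choosing a slightly larger $\theta'$ still inside the admissible range, we obtain $|\dot z_g(t)|\lesssim t^{-1-\theta}$.

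\textbf{Step 3: convergence with rate.} Since $|\dot z_g(t)|\lesssim t^{-1-\theta}$ is integrable on $[T,\infty)$, the limit $z_\infty=\lim_{t\to\infty} z_g(t)$ exists. Moreover,
\begin{align*}
|z_g(t)-z_\infty|\le\int_t^\infty|\dot z_g(s)|\,ds\lesssim t^{-\theta}.
\end{align*}

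\textbf{Main difficulty.} The only subtle point is the bookkeeping of admissible exponents. For a target $\theta$ one must pick $\theta'\in(1+\theta,\min(p-1,2))$ so that the logarithmic loss $(\log t)^{C}$ is absorbed by $t^{-(\theta'-1-\theta)}$. This is possible because $\theta<\min(p-2,1)$ is a strict inequality.
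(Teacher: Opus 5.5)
Your proposal is correct and follows essentially the same route as the paper. Summing the refined center equations gives $\dot z_g=O(e^{-\theta' D})$ for $1+\theta<\theta'<\min(p-1,2)$, which becomes $O(t^{-1-\theta})$ via $\mathcal{F}(D)\sim t^{-1}$ and is then integrated; the paper does your logarithm absorption in one line as $e^{-\theta' D}\lesssim \mathcal{F}(D)^{1+\theta}$.
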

\begin{proof}
Fix $0<\theta<\min{(p-2,1)}$ and choose $\theta^{\prime}$ such that $1+\theta<\theta^{\prime}<\min{(2,p-1)}$. Then, by Lemma \ref{2,2centerdynamics2}, we have
\begin{align*}
|\dot{z}_g(t)|\lesssim e^{-\theta^{\prime}D}\lesssim \mathcal{F}(D)^{1+\theta}\sim t^{-\theta-1}.
\end{align*}
Therefore, there exists $z_{\infty}\in \mathbb{R}^d$ such that 
\begin{align*}
|z_g(t)-z_{\infty}|\lesssim \int_t^{\infty} s^{-\theta-1}ds\sim t^{-\theta},
\end{align*}
and we complete the proof.
\end{proof}

Here, we prove a subsequential rigidity result for $(2,2)$-sign soliton solutions.

\begin{lemma}\label{tenretujouken}
Let $\vec{u}$ be a $(2,2)$-sign soliton solution. Then, for every sequence of times $t_n\to\infty$, there exists a subsequence $(t_n^{\prime})$ of $(t_n)$ such that the following properties hold.
\begin{enumerate}
\item There exist $Y_1,Y_2,Y_3,Y_4\in\mathbb{R}^d$ such that, for every $i\in\mathbb{Z}/4\mathbb{Z}$,
\begin{align}\label{abouttenretu1}
\lim_{n\to\infty}\left\{\frac{1}{\mathcal{L}(t_n^{\prime})}\left(z_i(t_n^{\prime})-z_g(t_n^{\prime})\right)\right\}=Y_i.
\end{align}

\item There exist $\lambda>0$ and $\alpha_i\geq 0$ for $i\in\mathbb{Z}/4\mathbb{Z}$ such that
\begin{align}
\lambda Y_i=-\alpha_i\left(Y_{i+1}-Y_i\right)+\alpha_{i-1}\left(Y_i-Y_{i-1}\right),\label{odekarakuru0808}
\\
\sum_{i=1}^4Y_i=0,\ \sum_{i=1}^4\alpha_i=1,\label{wateisuu0808}
\\
\left|Y_i-Y_j\right|\geq 1\ (i\neq j).\label{nagasa1ijou0808}
\end{align}

\item For every $i\in\mathbb{Z}/4\mathbb{Z}$ such that $\alpha_i>0$, we have
\begin{align*}
|Y_i-Y_{i+1}|=1.
\end{align*}
\end{enumerate}
\end{lemma}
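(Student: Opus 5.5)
The plan is to first replace $(t_n)$ by the times $t_n^{\prime}$ supplied by Lemma \ref{tenretulemma1}. Along these times $\mathcal{L}(t_n^{\prime})/\mathcal{L}(t_n)\to1$ and the normalized Cauchy--Schwarz deficit $\mathcal{J}=\mathcal{P}\mathcal{Q}-\mathcal{R}^2$ tends to zero. I will then extract further subsequences by compactness. (The statement calls the resulting times a subsequence of $(t_n)$; strictly, they are a subsequence of the Lemma \ref{tenretulemma1} times, which is what the argument needs.)

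By \eqref{Lichiyou}, $\mathcal{I}\sim\mathcal{L}^2$, so the vectors $(z_i-z_g)/\mathcal{L}$ are bounded and a subsequence converges to some $Y_i$. This gives \eqref{abouttenretu1}, and $\sum_i Y_i=0$ holds by the definition of $z_g$. Since every pairwise distance is at least $D$, and $D/\mathcal{L}\to1$ by \eqref{mathcalLsim}, we get $|Y_i-Y_j|\geq1$ for $i\neq j$, which is \eqref{nagasa1ijou0808}.

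Next I set $\alpha_i=\lim \mathcal{F}(\rho_i)/V$ along a further subsequence. These ratios are bounded by \eqref{VsimmathcalF}. Since $\mathcal{F}(R)=o(\mathcal{F}(D))$ by Proposition \ref{samesolitonhanare}, we have $V=\sum_i\mathcal{F}(\rho_i)+o(V)$, hence $\sum_i\alpha_i=1$. If $\alpha_i>0$, then $\mathcal{F}(\rho_i)\gtrsim\mathcal{F}(D)$, so \eqref{tukaeruF1} gives $\rho_i-D\lesssim1$. Consequently $\rho_i/\mathcal{L}\to1$ and $|Y_{i+1}-Y_i|=1$, which is item (3). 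In that case $v_i\to Y_{i+1}-Y_i$, and I also pass to a subsequence along which every $v_i$ converges.

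For the force-balance relation, let $G_i=\lim(-D_iV)/V$. By Lemma \ref{2,2centerdynamics2} and the computation in Lemma \ref{mathcalbibunlem}, $-D_iV=\dot z_i+O(V/D)=-\mathcal{F}(\rho_i)v_i+\mathcal{F}(\rho_{i-1})v_{i-1}+o(V)$. Hence
\begin{align*}
G_i=-\alpha_i(Y_{i+1}-Y_i)+\alpha_{i-1}(Y_i-Y_{i-1}),
\end{align*}
where I used that the terms with $\alpha_i=0$ vanish, so the unknown limit of $v_i$ is irrelevant there. Along the subsequence we have the limits $\mathcal{P}\to\sum|Y_i|^2=:P_\infty$, $\mathcal{Q}\to\sum|G_i|^2$, and $\mathcal{R}\to\sum Y_i\cdot G_i$, with the sign fixed by $\frac12\dot{\mathcal{I}}=\sum(z_i-z_g)\cdot(-D_iV)+\dots$. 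From the proof of Lemma \ref{tenretulemma1}, $\mathcal{R}\to1$, and $\mathcal{P}\sim1$, $\mathcal{Q}\sim1$, so both vectors $Y=(Y_i)$ and $G=(G_i)$ are nonzero. The condition $\mathcal{J}\to0$ is exactly equality in Cauchy--Schwarz for $Y$ and $G$ in $(\mathbb{R}^d)^4$. Hence $G=\lambda Y$, and $\lambda=\mathcal{R}/\mathcal{P}\to1/P_\infty>0$. This is \eqref{odekarakuru0808}.

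The main care point is bookkeeping rather than depth. One must get the sign convention between $D_iV$, $\dot z_i$ and $\mathcal{N}_V$ right, so that $\lambda$ comes out positive. One must also justify replacing $v_i$ by $Y_{i+1}-Y_i$ only when $\alpha_i>0$. Finally, one must check that the error terms $O(V/D)$ and $\mathcal{F}(R)$ vanish after dividing by $V$.
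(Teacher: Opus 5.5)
Steps 2 through 4 of your argument are correct and are essentially the paper's. Your normalization $\alpha_i=\lim\mathcal{F}(\rho_i)/V$ agrees with the paper's $\lim\mathcal{L}\mathcal{F}(\rho_i)/(\rho_iV)$: when the limit is positive, $\rho_i/\mathcal{L}\to1$, and otherwise $\mathcal{L}/\rho_i\le\mathcal{L}/D\to1$.

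\textbf{The gap is the point you dismiss in your parenthetical remark.} The statement asserts that \eqref{abouttenretu1} holds along a subsequence of the \emph{given} sequence $(t_n)$. The times supplied by Lemma \ref{tenretulemma1} are in general not members of $(t_n)$; they are chosen inside intervals $[t_n,T_n]$ by a mean-value argument. So you only prove that \emph{some} sequence of times, close to $(t_n)$ on the $\log\mathcal{L}$ scale, has normalized configurations converging to a force-balanced limit. That is not ``what the argument needs.'' Lemma \ref{sequencerough} and Proposition \ref{2,2fullconv1} apply this lemma to an adversarially chosen sequence $(s_n)$ along which $\Gamma(\vec u)(s_n)$ stays at distance $\geq\delta$ from $\Omega_l\cup\Omega_r$. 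A convergence statement at different times gives no contradiction there.

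\textbf{The missing step is a transfer estimate.} This is exactly why Lemma \ref{tenretulemma1} records the first condition in \eqref{hodai0802}, which you quote but never use. Set $\mathtt{Y}_i=(z_i-z_g)/\mathcal{L}$. Then
\begin{align*}
\dot{\mathtt{Y}}_i=\frac{\dot z_i-\dot z_g}{\mathcal{L}}-\frac{\dot{\mathcal{L}}}{\mathcal{L}}\mathtt{Y}_i=O\left(\frac{\dot{\mathcal{L}}}{\mathcal{L}}\right).
\end{align*}
This holds because $|\dot z_i|\lesssim V$, $|\dot z_g|\lesssim e^{-\theta D}$, $\dot{\mathcal{L}}\sim V$ (Lemmas \ref{mathcalbibunlem} and \ref{mathcalichiyou}), and $|\mathtt{Y}_i|\lesssim1$. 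Integrating between $t_n$ and the corresponding Lemma \ref{tenretulemma1} time $t_n'$ gives
\begin{align*}
|\mathtt{Y}_i(t_n)-\mathtt{Y}_i(t_n')|\lesssim|\log(\mathcal{L}(t_n')/\mathcal{L}(t_n))|\to0.
\end{align*}
Extract a common index subsequence $n_k$; the limits of $\mathtt{Y}_i(t_{n_k})$ and $\mathtt{Y}_i(t_{n_k}')$ then coincide. You can derive $\lambda$, the $\alpha_i$, and the force balance along $(t_{n_k}')$ exactly as you do, while \eqref{abouttenretu1} holds along the genuine subsequence $(t_{n_k})$ of $(t_n)$. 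With this step added, your proof is complete and matches the paper's.
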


\begin{proof}
First, we note that by \eqref{Lichiyou}, we have for every $i\in\mathbb{Z}/4\mathbb{Z}$,
\begin{align}\label{zi-zglesssimL}
\left|z_i(t)-z_g(t)\right|\lesssim \mathcal{I}^{\frac{1}{2}}(t)\lesssim \mathcal{L}(t).
\end{align}
We define
\begin{align}
\mathtt{Y}_i(t)&=\frac{1}{\mathcal{L}(t)}\left(z_i(t)-z_g(t)\right),
\\
\mathtt{W}_i(t)&=\frac{-D_iV(t)}{V(t)}.
\end{align}
Then, by \eqref{Lichiyou}, \eqref{NVtoSVes}, and \eqref{zi-zglesssimL}, the functions $\mathtt{Y}_i(t)$ and $\mathtt{W}_i(t)$ are bounded. Therefore, we may assume that there exist $(t_n^{\prime})\subset (t_n)$, $Y_i$ and $W_i$ such that 
\begin{align*}
\lim_{n\to\infty} \mathtt{Y}_i(t_n^{\prime})&=Y_i,
\\
\lim_{n\to\infty}\mathtt{W}_i(t_n^{\prime})&=W_i.
\end{align*}
Therefore, by Lemma \ref{tenretulemma1}, there exists a sequence $s_n\to\infty$ such that 
\begin{align*}
\lim_{n\to\infty}\log{\frac{\mathcal{L}(s_n)}{\mathcal{L}(t_n^{\prime})}}=0,
\\
\lim_{n\to\infty}\frac{\mathcal{I}(s_n)\mathcal{S}_V(s_n)-\mathcal{N}_V^2(s_n)}{\mathcal{L}^2(s_n)V^2(s_n)}=0.
\end{align*}
We divide the proof into four steps.

Step 1: We prove $\lim_{n\to\infty} \left|\mathtt{Y}_i(t_n^{\prime})-\mathtt{Y}_i(s_n)\right|=0$. 

By Lemma \ref{2,2centerdynamics2} and direct computation, we have for all sufficiently large $t$
\begin{align*}
\dot{\mathtt{Y}}_i&=\frac{1}{\mathcal{L}}\left(\dot{z}_i-\dot{z}_g\right)-\frac{\dot{\mathcal{L}}}{\mathcal{L}}\mathtt{Y}_i
\\
&=\frac{\dot{\mathcal{L}}}{\mathcal{L}}\left(\frac{\dot{z}_i}{\dot{\mathcal{L}}}-\mathtt{Y}_i\right)+O\left(\frac{V}{\mathcal{L}}\right)
\\
&=\frac{\dot{\mathcal{L}}}{\mathcal{L}}\left(\frac{V^2}{\mathcal{S}}\mathtt{W}_i-\mathtt{Y}_i \right)+O\left(\frac{V}{\mathcal{L}}\right)=O\left(\frac{\dot{\mathcal{L}}}{\mathcal{L}}\right).
\end{align*}
%Since $\mathcal{L}$ is increasing for all sufficiently large $t$, for sufficiently large $n$, we define 
%\begin{align*}
%\mathfrak{s}_n=\log{\mathcal{L}(s_n)},\ \mathfrak{t}_n=\log{\mathcal{L}(t_n^{\prime})}.
%\end{align*}
Then, we have
\begin{align*}
\left|\mathtt{Y}_i(t_n^{\prime})-\mathtt{Y}_i(s_n)\right|&\lesssim  \left|\int_{s_n}^{t_n^{\prime}}\frac{\dot{\mathcal{L}}(t)dt}{\mathcal{L}(t)} \right|=\left|\log{\frac{\mathcal{L}(s_n)}{\mathcal{L}(t_n^{\prime})}}\right|,
\end{align*}
and we obtain $\lim_{n\to\infty}\left|\mathtt{Y}_i(t_n^{\prime})-\mathtt{Y}_i(s_n)\right|=0$. In particular, we obtain
\begin{align}\label{Ynokyokugen0808}
\lim_{n\to\infty}\mathtt{Y}_i(s_n)=Y_i.
\end{align}

Step 2: We prove that there exists $\lambda>0$ such that $\lim_{n\to\infty}\mathtt{W}_i(s_n)=\lambda Y_i$.

By assumption, we have
\begin{align*}
&\quad \lim_{n\to\infty}\left\{\left(\sum_{i=1}^4\left|\mathtt{Y}_i(s_n)\right|^2\right)\left(\sum_{i=1}^4\left|\mathtt{W}_i(s_n)\right|^2\right)-\left(\sum_{i=1}^4 \mathtt{Y}_i(s_n)\cdot\mathtt{W}_i(s_n)\right)^2\right\}
\\
&=\lim_{n\to\infty}\frac{\mathcal{I}(s_n)\mathcal{S}_V(s_n)-\mathcal{N}_V^2(s_n)}{\mathcal{L}^2(s_n)V^2(s_n)}=0.
\end{align*}
We define
\begin{align*}
\lambda_n=\frac{\sum_{i=1}^4\mathtt{Y}_i(s_n)\cdot \mathtt{W}_i(s_n)}{\sum_{i=1}^4\left|\mathtt{Y}_i(s_n)\right|^2}.
\end{align*}
Since we have
\begin{align*}
\lim_{n\to\infty}\left(\sum_{i=1}^4\mathtt{Y}_i(s_n)\cdot \mathtt{W}_i(s_n)\right)=\lim_{n\to\infty}\frac{\mathcal{N}_V(s_n)}{\mathcal{L}(s_n)V(s_n)}=1,
\end{align*}
there exists $\lambda>0$ such that 
\begin{align*}
\lim_{n\to\infty}\lambda_n=\lambda.
\end{align*}
By direct computation, we have
\begin{align*}
\sum_{i=1}^4\left|\mathtt{W}_i(s_n)-\lambda_n\mathtt{Y}_i(s_n)\right|^2&=\sum_{i=1}^4\left|\mathtt{W}_i(s_n)\right|^2-2\lambda_n\sum_{i=1}^4\mathtt{Y}_i(s_n)\cdot \mathtt{W}_i(s_n)+\lambda_n^2\sum_{i=1}^4\left|\mathtt{Y}_i(s_n)\right|^2
\\
&=\sum_{i=1}^4\left|\mathtt{W}_i(s_n)\right|^2-\frac{\left(\sum_{i=1}^4\mathtt{Y}_i(s_n)\cdot \mathtt{W}_i(s_n)\right)^2}{\sum_{i=1}^4\left|\mathtt{Y}_i(s_n)\right|^2}
\\
&=\frac{\left(\sum_{i=1}^4\left|\mathtt{Y}_i(s_n)\right|^2\right)\left(\sum_{i=1}^4\left|\mathtt{W}_i(s_n)\right|^2\right)-\left(\sum_{i=1}^4 \mathtt{Y}_i(s_n)\cdot\mathtt{W}_i(s_n)\right)^2}{\sum_{i=1}^4\left|\mathtt{Y}_i(s_n)\right|^2}
\\
&\to 0 \ (n\to\infty).
\end{align*}
Thus, we obtain
\begin{align}\label{Whyouka0808}
\lim_{n\to\infty}\mathtt{W}_i(s_n)=\lambda Y_i.
\end{align}

Step 3: We prove \eqref{odekarakuru0808}.

By \eqref{mathcalFhyouka1}, Proposition \ref{samesolitonhanare}, and \eqref{VsimmathcalF}, we have for every
$i\in\mathbb{Z}/4\mathbb{Z}$,
\begin{align}\label{Wibetaexpansion}
\mathtt{W}_i(t)&=-\frac{\mathcal{F}(\rho_i(t))}{V(t)}v_i(t)+\frac{\mathcal{F}(\rho_{i-1}(t))}{V(t)}v_{i-1}(t)+o(1).
\end{align}
We define
\begin{align*}
\beta_{i,n}=\frac{\mathcal{L}(s_n)\mathcal{F}(\rho_i(s_n))}{\rho_i(s_n)V(s_n)}.
\end{align*}
Since we have 
\begin{align*}
\mathtt{Y}_{i+1}(s_n)-\mathtt{Y}_i(s_n)=\frac{\rho_i(s_n)}{\mathcal{L}(s_n)}v_i(s_n),
\end{align*}
\eqref{Wibetaexpansion} gives
\begin{align}\label{Wibetatukau}
\begin{aligned}
\mathtt{W}_i(s_n)&=-\beta_{i,n}\left(\mathtt{Y}_{i+1}(s_n)-\mathtt{Y}_i(s_n)\right)
\\
&\quad+\beta_{i-1,n}\left(\mathtt{Y}_i(s_n)-\mathtt{Y}_{i-1}(s_n)\right)+o(1).
\end{aligned}
\end{align}
The sequences $(\beta_{i,n})_n$ are nonnegative and bounded. After passing to a common subsequence and relabeling it as $(s_n)$, we may assume that
\begin{align}\label{alphanoteigi}
\alpha_i=\lim_{n\to\infty}\beta_{i,n}=\lim_{n\to\infty}\frac{\mathcal{L}(s_n)\mathcal{F}(\rho_i(s_n))}{\rho_i(s_n)V(s_n)}\geq 0
\end{align}
for every $i\in\mathbb{Z}/4\mathbb{Z}$. By \eqref{Ynokyokugen0808}, \eqref{Whyouka0808}, \eqref{Wibetatukau}, and \eqref{alphanoteigi}, we obtain \eqref{odekarakuru0808}.

Step 4: We prove the remaining assertions.

First, summing \eqref{odekarakuru0808} over $i=1,2,3,4$ yields
\begin{align*}
\sum_{i=1}^4Y_i=0.
\end{align*}
As for \eqref{nagasa1ijou0808}, we have
\begin{align*}
\left|\mathtt{Y}_i-\mathtt{Y}_j\right|
=
\frac{1}{\mathcal{L}}\left|z_i-z_j\right|
\geq
\frac{D}{\mathcal{L}}.
\end{align*}
Passing to the limit along the sequence $s_n\to\infty$, we obtain \eqref{nagasa1ijou0808}. Moreover, if $\alpha_i>0$, then the definition of $\alpha_i$ in
\eqref{alphanoteigi} implies that $\mathcal{F}(\rho_i(s_n)) \gtrsim \mathcal{F}(D(s_n))$. Consequently,
\begin{align*}
\left|\rho_i(s_n)-\mathcal{L}(s_n)\right|
\lesssim \log{\mathcal{L}(s_n)},
\end{align*}
and hence
\begin{align*}
|Y_i-Y_{i+1}|
=
\lim_{n\to\infty}
\frac{\rho_i(s_n)}{\mathcal{L}(s_n)}
=1.
\end{align*}
It remains only to prove that $\sum_{i=1}^4\alpha_i=1$. We begin by proving that for every $i\in\mathbb{Z}/4\mathbb{Z}$, 
\begin{align}\label{0808hyouka1}
\frac{\mathcal{F}(\rho_i)}{\rho_i}=\frac{\mathcal{F}(\rho_i)}{D}+o\left(\frac{\mathcal{F}(D)}{D}\right).
\end{align}
When $\rho_i-D\leq \sqrt{D}$, we have
\begin{align*}
\left|\frac{\mathcal{F}(\rho_i)}{\rho_i}-\frac{\mathcal{F}(\rho_i)}{D}\right|\lesssim \frac{\mathcal{F}(D)}{D\sqrt{D}}.
\end{align*}
When $\rho_i-D>\sqrt{D}$, we have $\mathcal{F}(\rho_i)\lesssim \frac{\mathcal{F}(D)}{D}$, and therefore we obtain
\begin{align*}
\left|\frac{\mathcal{F}(\rho_i)}{\rho_i}-\frac{\mathcal{F}(\rho_i)}{D}\right|\lesssim \frac{\mathcal{F}(D)}{D^2}.
\end{align*}
Thus, in either case, we obtain \eqref{0808hyouka1}. We note that by Proposition \ref{samesolitonhanare}, we have
\begin{align}\label{0808hyouka2}
V(t)=\sum_{i=1}^4\mathcal{F}(\rho_i)+o\left(\mathcal{F}(D)\right).
\end{align}
By \eqref{0808hyouka1} and \eqref{0808hyouka2}, we obtain
\begin{align}\label{0808hyouka3}
\sum_{i=1}^4\frac{\mathcal{L}\mathcal{F}(\rho_i)}{\rho_iV}=\sum_{i=1}^4\frac{\mathcal{L}\mathcal{F}(\rho_i)}{DV}+o(1)=\frac{\mathcal{L}}{D}+o(1)=1+o(1).
\end{align}
By \eqref{alphanoteigi} and \eqref{0808hyouka3}, we complete the proof.

\end{proof}

\subsection{The center-configuration equation and geometric classification}

We next give a more explicit characterization of the conditions in Lemma \ref{tenretujouken}.

\begin{lemma}\label{Yhishi}
Let $Y_i\in\mathbb{R}^d$ and $\alpha_i\geq0$ for $i\in\mathbb{Z}/4\mathbb{Z}$, and let $\lambda>0$. We assume that we have for every $i\in\mathbb{Z}/4\mathbb{Z}$,
\begin{align}\label{Yichiass}
\begin{aligned}
\lambda Y_i=-\alpha_i\left(Y_{i+1}-Y_i\right)+\alpha_{i-1}\left(Y_i-Y_{i-1}\right),
\\
\sum_{i=1}^4Y_i=0,\ \sum_{i=1}^4\alpha_i=1,
\\
\left|Y_i-Y_j\right|\geq 1\ \ (i\neq j),
\\
\left|Y_{i+1}-Y_i\right|=1\ \ (\alpha_i>0).
\end{aligned}
\end{align}
Then one of the following two alternatives holds:
\begin{itemize}
\item There exist $u\in S^{d-1}$ and $k\in\mathbb{Z}/4\mathbb{Z}$ such that 
\begin{align}\label{tyokusengata}
\begin{aligned}
\left(Y_k,Y_{k+1},Y_{k+2},Y_{k+3}\right)&=\left(-\frac{3}{2}u,-\frac{1}{2}u,\frac{1}{2}u,\frac{3}{2}u\right),
\\
\left(\alpha_k,\alpha_{k+1},\alpha_{k+2},\alpha_{k+3},\lambda\right)&=\left(\frac{3}{10},\frac{2}{5},\frac{3}{10},0,\frac{1}{5}\right).
\end{aligned}
\end{align}

\item There exist $u,v\in S^{d-1}$ such that 
\begin{align}\label{hishigatagata}
\begin{aligned}
\left(Y_1,Y_2,Y_3,Y_4\right)&=\left(\frac{u+v}{2},\frac{-u+v}{2},\frac{-u-v}{2},\frac{u-v}{2}\right),
\\
\left(\alpha_1,\alpha_2,\alpha_3,\alpha_4,\lambda\right)&=\left(\frac{1}{4},\frac{1}{4},\frac{1}{4},\frac{1}{4},\frac{1}{2}\right).
\end{aligned}
\end{align}
Furthermore, $u$ and $v$ satisfy 
\begin{align}\label{hishigatanaisekijouken}
-\frac{1}{2}\leq u\cdot v\leq \frac{1}{2}.
\end{align}
\end{itemize}
\end{lemma}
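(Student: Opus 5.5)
We rewrite the force-balance system in terms of the edge vectors $E_i=Y_{i+1}-Y_i$, which satisfy $\sum_i E_i=0$ and $|E_i|=1$ whenever $\alpha_i>0$. Subtracting the equation for $Y_i$ from that for $Y_{i+1}$ gives
\begin{align*}
\lambda E_i=-\alpha_{i+1}E_{i+1}+2\alpha_iE_i-\alpha_{i-1}E_{i-1},
\end{align*}
that is, $(2\alpha_i-\lambda)E_i=\alpha_{i+1}E_{i+1}+\alpha_{i-1}E_{i-1}$. The plan is to first determine how many $\alpha_i$ vanish, and then solve the resulting small algebraic system in each case.

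\textbf{Step 1 (counting zeros).} At most one $\alpha_i$ can vanish. Suppose two adjacent ones vanish, say $\alpha_{i-1}=\alpha_i=0$. Then the equation at $i$ reads $\lambda Y_i=0$, so $Y_i=0$. Next, take the inner product of the system with $Y$. Summation by parts gives
\begin{align*}
\lambda\sum_i|Y_i|^2=\sum_i\alpha_i|E_i|^2=\sum_i\alpha_i=1,
\end{align*}
since $|E_i|=1$ whenever $\alpha_i>0$. This identity normalizes $\lambda=1/\sum|Y_i|^2$. In the adjacent case, the remaining equations form a path-graph Laplacian system on three points, and we check directly that it contradicts the separation $|Y_i-Y_j|\ge1$ combined with $\sum Y_i=0$.

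Now suppose two opposite ones vanish, say $\alpha_1=\alpha_3=0$. The equations decouple into the pairs $\{Y_2,Y_3\}$ (weight $\alpha_2$) and $\{Y_4,Y_1\}$ (weight $\alpha_4$). Each pair is then an eigenvector of a $2\times2$ Laplacian, which forces $Y_2=-Y_3$ with $\lambda=2\alpha_2$, and similarly $\lambda=2\alpha_4$. Together with $\sum Y_i=0$ and the unit side lengths, we expect this to give $\sum_i|Y_i|^2=1$, so $\lambda=1$. But $\lambda=2\alpha_2=2\alpha_4$ and $\alpha_2+\alpha_4=1$ give $\lambda=1$ only if $\alpha_2=\alpha_4=\tfrac12$, and then the remaining constraints (for instance $Y_3=Y_4$-type degeneracies) should contradict $|Y_i-Y_j|\ge1$. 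This case needs careful checking.

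\textbf{Step 2 (exactly one zero, say $\alpha_{k+3}=0$).} The three edges $E_k,E_{k+1},E_{k+2}$ are unit vectors summing to $-E_{k+3}$. The edge equations become
\begin{align*}
(2\alpha_k-\lambda)E_k&=\alpha_{k+1}E_{k+1},\\
(2\alpha_{k+2}-\lambda)E_{k+2}&=\alpha_{k+1}E_{k+1}.
\end{align*}
Hence all three edges are parallel. They must be equal rather than alternating, since alternating parallel unit edges would make two centers coincide. Consequently the configuration is the collinear one in \eqref{tyokusengata}. Solving the linear system with $Y=(-\tfrac32,-\tfrac12,\tfrac12,\tfrac32)u$ then yields $\alpha=(\tfrac3{10},\tfrac25,\tfrac3{10},0)$ and $\lambda=\tfrac15$.

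\textbf{Step 3 (all $\alpha_i>0$).} All four edges are unit vectors. We take the inner product of the edge equation with $E_i$, and also use the cyclic structure. The key claim is that the $\alpha_i$ are all equal. To see this, compare the equations at $i$ and $i+2$, and use that $Y$ is an eigenvector of the weighted cycle Laplacian with the explicit eigenvalue $\lambda=1/\sum|Y_i|^2$. An equilateral quadrilateral in $\mathbb R^d$ satisfies $Y_{i+2}-Y_i=E_i+E_{i+1}$, and its two diagonals have midpoints that coincide only for a planar rhombus. We aim to show that eigenvector compatibility forces $Y_{i+2}=-Y_i$: subtracting the equations for $Y_i$ and $Y_{i+2}$ and using equal side lengths should give $\alpha_i=\alpha_{i+2}$ and then $\alpha_i=\tfrac14$.

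Granting that, $Y_{i+2}=-Y_i$, and we may write $Y_1=\tfrac{u+v}2$, $Y_2=\tfrac{v-u}2$ with $|u|=|v|=1$ from the unit sides. Plugging in gives $\lambda=\tfrac12$. Finally, the diagonal separation $|Y_1-Y_3|\ge1$ and $|Y_2-Y_4|\ge1$ gives $|u+v|\ge1$ and $|u-v|\ge1$, which is exactly \eqref{hishigatanaisekijouken}.

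The main obstacle is this last symmetry step: proving equal weights and central symmetry without assuming planarity. As a first attempt, we would use the Lagrange-type identity that $\sum_i\alpha_i E_i$ weighted by $(\alpha_{i+1}-\alpha_{i-1})$ vanishes, followed by a case analysis on the signs of $2\alpha_i-\lambda$.
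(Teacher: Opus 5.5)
There is a genuine gap. Your Step 2 and the adjacent-zeros part of Step 1 are sound and match the paper's handling of those cases. The problem is Step 3, the case where all four weights are positive, which is the heart of the lemma. You explicitly grant central symmetry and equal weights. The tools you propose (the identity $\lambda\sum_i|Y_i|^2=1$, diagonal midpoints, a Lagrange-type identity, a sign analysis of $2\alpha_i-\lambda$) are not shown to produce them.

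The missing observation is already in your own edge equation. At $i$ and at $i+2$ the right-hand side is the same vector $\alpha_{i+1}E_{i+1}+\alpha_{i-1}E_{i-1}$, hence
\begin{align*}
(2\alpha_i-\lambda)E_i=(2\alpha_{i+2}-\lambda)E_{i+2}\qquad(i=1,2).
\end{align*}
For unit vectors this forces either $E_{i+2}=\pm E_i$ or $2\alpha_i=2\alpha_{i+2}=\lambda$.
\begin{itemize}
\item If $E_3=E_1$, then $\sum_iE_i=0$ gives $|E_2+E_4|=2$, so $E_2=E_4=-E_1$ and $Y_3=Y_1$. Likewise $E_4=E_2$ is impossible.
\item If both opposite pairs were linearly independent, then all $\alpha_i=\lambda/2$. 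The edge equation at $i=1$ then reads $0=\tfrac{\lambda}{2}(E_2+E_4)$, contradicting independence.
\item Hence $E_3=-E_1$ or $E_4=-E_2$, and $\sum_iE_i=0$ gives both. This is your $Y_{i+2}=-Y_i$.
\end{itemize}
Only now do the weights follow. The edge equation becomes $(2\alpha_i-\lambda)E_i=(\alpha_{i+1}-\alpha_{i-1})E_{i+1}$. Consecutive edges are linearly independent, since otherwise two centers coincide. So $\alpha_i=\lambda/2$ for every $i$, and $\sum_i\alpha_i=1$ gives $\alpha_i=\tfrac14$ and $\lambda=\tfrac12$. Your planned order (equal weights first, then symmetry) is the reverse of what the equations actually deliver.

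The opposite-zeros subcase of Step 1 is also left open, and the mechanism you guess is not the right one: no two centers are forced to coincide. With $\alpha_1=\alpha_3=0$ you correctly obtain $Y_3=-Y_2$, $Y_4=-Y_1$, $\alpha_2=\alpha_4=\tfrac12$ and $\lambda=1$. The unit side lengths $|Y_3-Y_2|=|Y_1-Y_4|=1$ then give $|Y_1|=|Y_2|=\tfrac12$. The contradiction is metric (Thales, or the parallelogram law):
\begin{align*}
|Y_2-Y_1|^2+|Y_2-Y_4|^2=|Y_2-Y_1|^2+|Y_2+Y_1|^2=2|Y_1|^2+2|Y_2|^2=1.
\end{align*}
So one of these two distances is at most $1/\sqrt2<1$, which violates $|Y_i-Y_j|\ge1$.
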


\begin{proof}
First, we define $x_i=Y_{i+1}-Y_i$ and $\mathcal{V}=\{ i\in\mathbb{Z}/4\mathbb{Z}:\ \alpha_i>0\}$. We note that by \eqref{Yichiass}, when $i\in\mathcal{V}$, we have $|x_i|=1$. 

We assume that $|\mathcal{V}|=1$ and $i\in\mathcal{V}$. Then, we have $\alpha_{i+1}=\alpha_{i+2}=\alpha_{i+3}=0$. Therefore, we obtain $Y_{i+2}=Y_{i+3}=0$ and we have $|x_{i+2}|=0$, which contradicts $|Y_{i+3}-Y_{i+2}|\geq 1$. Therefore, we obtain $|\mathcal{V}|\neq 1$.

Next, we assume that $|\mathcal{V}|=2$. Then, since $|\mathcal{V}|=2$, its two elements are either adjacent or opposite with respect to the cyclic ordering of $\mathbb{Z}/4\mathbb{Z}$. Therefore, after a cyclic relabeling of the indices, one of the following two cases occurs:

Case 1: There exists $i\in\mathbb{Z}/4\mathbb{Z}$ such that $\{i,i+1\}=\mathcal{V}$.

Then, we have $Y_{i+3}=0$ and $Y_i+Y_{i+1}+Y_{i+2}=0$. Furthermore, by \eqref{Yichiass}, we have
\begin{align}\label{tyokusen0808}
\begin{aligned}
\lambda Y_i&=-\alpha_i\left(Y_{i+1}-Y_i\right),
\\
\lambda Y_{i+1}&=-\alpha_{i+1}\left(Y_{i+2}-Y_{i+1}\right)+\alpha_i\left(Y_{i+1}-Y_i\right),
\\
\lambda Y_{i+2}&=\alpha_{i+1}\left(Y_{i+2}-Y_{i+1}\right).
\end{aligned}
\end{align}
By \eqref{tyokusen0808}, there exist $u\in S^{d-1}$ and $\lambda_i,\lambda_{i+1},\lambda_{i+2}\in \mathbb{R}$ such that 
\begin{align*}
Y_i=\lambda_iu,\ Y_{i+1}=\lambda_{i+1}u,\ Y_{i+2}=\lambda_{i+2}u.
\end{align*}
Then, we have
\begin{align*}
Y_i+Y_{i+1}+Y_{i+2}=\left(\lambda_i+\lambda_{i+1}+\lambda_{i+2}\right)u=0,
\end{align*}
and we obtain 
\begin{align}\label{2kocase1Yass}
\lambda_i+\lambda_{i+1}+\lambda_{i+2}=0.
\end{align}
By $|Y_{i+1}-Y_i|=|Y_{i+2}-Y_{i+1}|=1$, we have 
\begin{align*}
\left|\lambda_{i+1}-\lambda_i\right|=\left|\lambda_{i+2}-\lambda_{i+1}\right|=1,
\end{align*}
and we obtain
\begin{align}\label{2kocase1Yass2}
\lambda_{i+1}-\lambda_i=\lambda_{i+2}-\lambda_{i+1}\ \ \mbox{or}\ \ \lambda_{i+1}-\lambda_i=-\left(\lambda_{i+2}-\lambda_{i+1}\right). 
\end{align}
By \eqref{2kocase1Yass} and \eqref{2kocase1Yass2}, we have $\lambda_{i+1}=0$ or $\lambda_{i+2}=\lambda_i$, and therefore we have $Y_{i+1}=Y_{i+3}=0$ or  $Y_i=Y_{i+2}$, which contradicts.

Case 2: There exists $i\in\mathbb{Z}/4\mathbb{Z}$ such that $\{i,i+2\}=\mathcal{V}$.

Then, we have $\alpha_{i+1}=\alpha_{i+3}=0$. In addition, by \eqref{Yichiass}, we have
\begin{align}\label{koyuuhoutei}
\begin{aligned}
\lambda Y_i&=-\alpha_i\left(Y_{i+1}-Y_i\right),
\\
\lambda Y_{i+1}&=\alpha_i\left(Y_{i+1}-Y_i\right),
\\
\lambda Y_{i+2}&=-\alpha_{i+2}\left(Y_{i+3}-Y_{i+2}\right),
\\
\lambda Y_{i+3}&=\alpha_{i+2}\left(Y_{i+3}-Y_{i+2}\right),
\\
&\alpha_i+\alpha_{i+2}=1.
\end{aligned}
\end{align}
By \eqref{koyuuhoutei}, we obtain
\begin{align*}
Y_i+Y_{i+1}=0,\ Y_{i+2}+Y_{i+3}=0,\ \alpha_i=\alpha_{i+2}=\frac{1}{2},\ \lambda=1.
\end{align*}
Thus, there exist $x_i,x_{i+2}\in S^{d-1}$ such that 
\begin{align*}
\left(Y_i,Y_{i+1},Y_{i+2},Y_{i+3}\right)=\left(-\frac{x_i}{2},\frac{x_i}{2},-\frac{x_{i+2}}{2},\frac{x_{i+2}}{2}\right).
\end{align*}
Then, we obtain
\begin{align*}
2&\leq \left|Y_{i+2}-Y_i\right|+\left|Y_{i+3}-Y_i\right|
\\
&=\left|\frac{x_{i+2}}{2}-\frac{x_i}{2}\right|+\left|\frac{x_{i+2}}{2}+\frac{x_i}{2}\right|
\\
&\leq \sqrt{2}\sqrt{\left|\frac{x_{i+2}}{2}-\frac{x_i}{2}\right|^2+\left|\frac{x_{i+2}}{2}+\frac{x_i}{2}\right|^2}
\\
&=\sqrt{2},
\end{align*}
which is a contradiction. 

By Case 1 and Case 2, we have $|\mathcal{V}|\neq 2$.

Next, we assume that $|\mathcal{V}|=3$. Then, there exists $k\in\mathbb{Z}/4\mathbb{Z}$ such that 
\begin{align*}
\{k,k+1,k+2\}=\mathcal{V}.
\end{align*}
Then, we have $\alpha_{k+3}=0$. Therefore, by \eqref{Yichiass}, we obtain
\begin{align}\label{koyuuhoutei2}
\begin{aligned}
\lambda Y_k&=-\alpha_k\left(Y_{k+1}-Y_k\right),
\\
\lambda Y_{k+1}&=-\alpha_{k+1}\left(Y_{k+2}-Y_{k+1}\right)+\alpha_k\left(Y_{k+1}-Y_k\right),
\\
\lambda Y_{k+2}&=-\alpha_{k+2}\left(Y_{k+3}-Y_{k+2}\right)+\alpha_{k+1}\left(Y_{k+2}-Y_{k+1}\right),
\\
\lambda Y_{k+3}&=\alpha_{k+2}\left(Y_{k+3}-Y_{k+2}\right).
\end{aligned}
\end{align}
By \eqref{koyuuhoutei2}, there exist $\tilde{u}\in S^{d-1}$ and $(\lambda_i)_{i\in\mathbb{Z}/4\mathbb{Z}}\subset \mathbb{R}$ such that for every $i\in\mathbb{Z}/4\mathbb{Z}$
\begin{align}\label{Ylambdatildeu}
Y_i=\lambda_i \tilde{u}. 
\end{align}
Then, we have
\begin{align*}
\left|\lambda_{k+1}-\lambda_k\right|=\left|\lambda_{k+2}-\lambda_{k+1}\right|=\left|\lambda_{k+3}-\lambda_{k+2}\right|=1.
\end{align*}
Observe that the three differences $\lambda_{k+1}-\lambda_k$, $\lambda_{k+2}-\lambda_{k+1}$, and $\lambda_{k+3}-\lambda_{k+2}$ must all have the same sign. Indeed, if two of them had opposite signs, then either
$Y_k=Y_{k+2}$ or $Y_{k+1}=Y_{k+3}$ would hold, contradicting \eqref{Yichiass}. Moreover, since $\lambda_k+\lambda_{k+1}+\lambda_{k+2}+\lambda_{k+3}=0$, there exists $\varsigma=\pm 1$ such that
\begin{align}\label{lambdahugou}
\lambda_k=-\frac{3\varsigma}{2},\ \lambda_{k+1}=-\frac{\varsigma}{2},\ \lambda_{k+2}=\frac{\varsigma}{2},\ \lambda_{k+3}=\frac{3\varsigma}{2}. 
\end{align}
By \eqref{koyuuhoutei2}, \eqref{Ylambdatildeu}, and \eqref{lambdahugou}, we obtain
\begin{align*}
\alpha_k=\frac{3}{2}\lambda,\ \alpha_{k+1}=2\lambda,\ \alpha_{k+2}=\frac{3}{2}\lambda.
\end{align*}
By $\alpha_k+\alpha_{k+1}+\alpha_{k+2}=1$, when we define $u=\varsigma \tilde{u}$, we obtain \eqref{tyokusengata}.

Finally, we assume that $|\mathcal{V}|=4$. Then, we have for every $i\in \mathbb{Z}/4\mathbb{Z}$ $|x_i|=1$. Furthermore, by \eqref{Yichiass}, we have
\begin{align}
&x_1+x_2+x_3+x_4=0, \label{wa0}
\\
&\lambda x_1=2\alpha_1x_1-\alpha_2x_2-\alpha_4x_4, \label{x10809}
\\
&\lambda x_2=-\alpha_1x_1+2\alpha_2x_2-\alpha_3x_3, \label{x20809}
\\
&\lambda x_3=-\alpha_2x_2+2\alpha_3x_3-\alpha_4x_4, \label{x30809}
\\
&\lambda x_4=-\alpha_1 x_1-\alpha_3x_3+2\alpha_4x_4. \label{x40809}
\end{align}
By \eqref{wa0}--\eqref{x40809}, we have
\begin{align}\label{linearindependentass}
\begin{aligned}
(\lambda-2\alpha_1)x_1&=(\lambda-2\alpha_3)x_3,
\\
(\lambda-2\alpha_2)x_2&=(\lambda-2\alpha_4)x_4.
\end{aligned}
\end{align}
Here, we assume that $x_1\neq -x_3$ and $x_2\neq -x_4$. Since $x_1,x_2,x_3,x_4$ are unit vectors, either $x_1=x_3$ or $x_1$ and $x_3$ are linearly independent. Similarly, either $x_2=x_4$ or $x_2$ and $x_4$ are linearly independent.

We assume that $x_1=x_3$. By \eqref{wa0}, we obtain $x_2+x_4=-2x_1$. Then, we have
\begin{align*}
\left|x_2+x_4\right|^2=\left|-2x_1\right|^2=4,
\end{align*}
and we obtain $x_2\cdot x_4=1$. Therefore, we have $x_2=x_4=-x_1=-x_3$. Then, we obtain $Y_1=Y_3$, which contradicts \eqref{Yichiass}. Therefore, we obtain $x_1\neq x_3$.

Similarly, assuming that $x_2=x_4$ leads to a contradiction. Hence, we have $x_2\neq x_4$.

If $x_1$ and $x_3$ are linearly independent and $x_2$ and $x_4$ are linearly independent, by \eqref{linearindependentass}, we have $\alpha_1=\alpha_2=\alpha_3=\alpha_4=\frac{\lambda}{2}$. Then, by \eqref{x10809}, we obtain
\begin{align*}
\lambda x_1=\lambda\left(x_1-\frac{1}{2}x_2-\frac{1}{2}x_4\right),
\end{align*}
and we have $x_2+x_4=0$, which contradicts.

The preceding argument shows that either $x_1+x_3=0$ or $x_2+x_4=0$. Furthermore, by \eqref{wa0}, we obtain
\begin{align}\label{x1+x3=0andx2+x4=0}
x_1+x_3=0,\ x_2+x_4=0.
\end{align}
Here, we define $u=-x_1$ and $v=x_4$. Then, $u,v\in S^{d-1}$. Note that by \eqref{x1+x3=0andx2+x4=0} and $\sum_{i=1}^4Y_i=0$, we have $Y_1=-Y_3$ and $Y_2=-Y_4$. Then, by the definition of $x_i$, we obtain
\begin{align}\label{Y0809}
Y_1=\frac{u+v}{2},\ Y_2=\frac{-u+v}{2},\ Y_3=\frac{-u-v}{2},\ Y_4=\frac{u-v}{2}.
\end{align}
Furthermore, by \eqref{Y0809}, if $u$ and $v$ were parallel, two of the points $Y_1,Y_2,Y_3,Y_4$ would coincide,
contradicting \eqref{Yichiass}. Hence, $u$ and $v$ are linearly independent. Then, by \eqref{x10809}-\eqref{x40809}, we have for every $i\in\mathbb{Z}/4\mathbb{Z}$
\begin{align*}
\left(\lambda-2\alpha_i\right)x_i=\left(\alpha_{i-1}-\alpha_{i+1}\right)x_{i+1}.
\end{align*}
Since $x_i$ and $x_{i+1}$ are linearly independent, we have $\alpha_i=\frac{\lambda}{2}$. Furthermore, by $\alpha_1+\alpha_2+\alpha_3+\alpha_4=1$, we obtain
\begin{align*}
\alpha_1=\alpha_2=\alpha_3=\alpha_4=\frac{1}{4},\ \lambda=\frac{1}{2}.
\end{align*}
Therefore, we have \eqref{hishigatagata}. Then, since we have $|Y_3-Y_1|\geq 1$ and $|Y_4-Y_2|\geq 1$, we have
\begin{align*}
|u+v|\geq 1,\ |u-v|\geq 1,
\end{align*}
which implies \eqref{hishigatanaisekijouken}.
\end{proof}

We now use this result to establish the following subsequential classification of the asymptotic geometry of $(2,2)$-sign soliton solutions. Before we introduce the lemma, we define $\Omega_{l,i}$, $\Omega_l$, and $\Omega_r$ as 
\begin{align*}
&\Omega_{l,i}
=
\left\{
(x_1,x_2,x_3,x_4)\in \left(\mathbb{R}^d\right)^{\mathbb{Z}/4\mathbb{Z}}
\,\middle|\,
\begin{aligned}
&\mbox{there exists}\ u\in S^{d-1}\ \mbox{such that} \\
&x_i=-\frac{3}{2}u,\ x_{i+1}=-\frac{1}{2}u,\ x_{i+2}=\frac{1}{2}u,\ x_{i+3}=\frac{3}{2}u.
\end{aligned}
\right\},
\\
\\
&\Omega_l=\Omega_{l,1}\cup \Omega_{l,2}\cup \Omega_{l,3}\cup \Omega_{l,4},
\\
&\Omega_r=\left\{\left(\frac{u+v}{2},\frac{-u+v}{2},\frac{-u-v}{2},\frac{u-v}{2}\right)\in \left(\mathbb{R}^d\right)^{\mathbb{Z}/4\mathbb{Z}}: u,v\in S^{d-1},\ -\frac{1}{2}\leq u\cdot v\leq \frac{1}{2}\right\}.
\end{align*}

\begin{lemma}\label{sequencerough}
Let $\vec{u}$ be a $(2,2)$-sign soliton solution. Then, for every sequence of times $t_n\to\infty$, there exist a subsequence $(t_n^{\prime})$ of $(t_n)$ and $\left(\mathtt{z}_1,\mathtt{z}_2,\mathtt{z}_3,\mathtt{z}_4\right)\in \Omega_l\cup \Omega_r$ such that
\begin{align*}
&\quad\lim_{n\to\infty}\left(\frac{z_1(t_n^{\prime})-z_g(t_n^{\prime})}{\mathcal{L}(t_n^{\prime})},\frac{z_2(t_n^{\prime})-z_g(t_n^{\prime})}{\mathcal{L}(t_n^{\prime})},\frac{z_3(t_n^{\prime})-z_g(t_n^{\prime})}{\mathcal{L}(t_n^{\prime})},\frac{z_4(t_n^{\prime})-z_g(t_n^{\prime})}{\mathcal{L}(t_n^{\prime})}\right)
\\
&=\left(\mathtt{z}_1,\mathtt{z}_2,\mathtt{z}_3,\mathtt{z}_4\right).
\end{align*}
\end{lemma}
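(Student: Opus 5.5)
This is a direct combination of Lemma \ref{tenretujouken} and Lemma \ref{Yhishi}. Take an arbitrary sequence $t_n\to\infty$ and apply Lemma \ref{tenretujouken}. This gives a subsequence $(t_n^{\prime})$, limits $Y_1,\dots,Y_4$ of $\mathtt{Y}_i(t_n^{\prime})=(z_i(t_n^{\prime})-z_g(t_n^{\prime}))/\mathcal{L}(t_n^{\prime})$, and parameters $\lambda>0$ and $\alpha_i\geq0$. These satisfy \eqref{odekarakuru0808}, \eqref{wateisuu0808}, \eqref{nagasa1ijou0808}, and the unit-length condition $|Y_{i+1}-Y_i|=1$ whenever $\alpha_i>0$. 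These are exactly the hypotheses \eqref{Yichiass} of Lemma \ref{Yhishi}, so that lemma applies to $(Y_i,\alpha_i,\lambda)$.

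Lemma \ref{Yhishi} leaves two alternatives, and in each I just need to check membership in $\Omega_l\cup\Omega_r$.

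In the first alternative \eqref{tyokusengata}, there are $k$ and $u\in S^{d-1}$ with
\begin{align*}
(Y_k,Y_{k+1},Y_{k+2},Y_{k+3})=\left(-\tfrac32u,-\tfrac12u,\tfrac12u,\tfrac32u\right),
\end{align*}
which is precisely the defining condition of $\Omega_{l,k}\subset\Omega_l$.

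In the second alternative \eqref{hishigatagata}, $(Y_1,Y_2,Y_3,Y_4)=\bigl(\frac{u+v}{2},\frac{-u+v}{2},\frac{-u-v}{2},\frac{u-v}{2}\bigr)$ with $u,v\in S^{d-1}$. The constraint \eqref{hishigatanaisekijouken} gives $-\frac12\le u\cdot v\le\frac12$, so this tuple lies in $\Omega_r$.

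Setting $(\mathtt{z}_1,\mathtt{z}_2,\mathtt{z}_3,\mathtt{z}_4)=(Y_1,Y_2,Y_3,Y_4)$ then gives the claim along $(t_n^{\prime})$.

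The only point needing care is that the convergence in Lemma \ref{tenretujouken}(1) holds along the subsequence $(t_n^{\prime})$ itself, not only along the auxiliary times $s_n$ used inside its proof. This is stated in the lemma, since the $Y_i$ were defined as limits along $t_n^{\prime}$ and Step 1 transferred them to $s_n$. Apart from this, there is no real obstacle: the substance lies in the two cited lemmas.
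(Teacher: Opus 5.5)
Your proposal is correct and is essentially the paper's proof. You apply Lemma~\ref{tenretujouken} along the given sequence to get $(Y_i,\alpha_i,\lambda)$ satisfying the hypotheses of Lemma~\ref{Yhishi}, including the unit-length condition from part (3), which the paper uses only implicitly; the two alternatives of Lemma~\ref{Yhishi} are then exactly membership in some $\Omega_{l,k}$ or in $\Omega_r$, and your remark that the $Y_i$ are limits along $(t_n^{\prime})$ itself (not just along the auxiliary times $s_n$) is what makes the conclusion hold along a subsequence of $(t_n)$.
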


\begin{proof}
By Lemma \ref{tenretujouken}, there exist $(t_n^{\prime})\subset (t_n)$, $Y_i\in\mathbb{R}^d$, $\alpha_i\geq 0$ ($i\in\mathbb{Z}/4\mathbb{Z}$), and $\lambda>0$ such that \eqref{abouttenretu1}--\eqref{nagasa1ijou0808} hold. Then, by applying Lemma \ref{Yhishi}, we obtain
\begin{align}\label{Omegain}
(Y_1,Y_2,Y_3,Y_4)\in \Omega_l\cup \Omega_r.
\end{align}
By \eqref{abouttenretu1} and \eqref{Omegain}, we complete the proof.
\end{proof}

\subsection{Full time classification of the asymptotic geometry}

In this subsection, we establish full time asymptotic convergence. We first prove a basic property of the relevant sets.
\begin{lemma}\label{setlemma}
For every $i\in\mathbb{Z}/4\mathbb{Z}$, we have
\begin{align*}
\operatorname{dist}\left(\Omega_{l,i},\Omega_r\right)>0.
\end{align*}
For every $i,j\in\mathbb{Z}/4\mathbb{Z}$ satisfying $i\neq j$, we have
\begin{align*}
\operatorname{dist}\left(\Omega_{l,i},\Omega_{l,j}\right)>0.
\end{align*}
\end{lemma}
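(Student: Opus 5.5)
The plan is to argue by compactness. Each of $\Omega_{l,i}$ and $\Omega_r$ is the continuous image of a compact set: $\Omega_{l,i}$ is the image of $S^{d-1}$, and $\Omega_r$ is the image of $\{(u,v)\in S^{d-1}\times S^{d-1}: -\tfrac12\le u\cdot v\le\tfrac12\}$. Both sets are therefore compact. For compact sets $A,B$, we have $\operatorname{dist}(A,B)>0$ if and only if $A\cap B=\emptyset$, since the infimum of the continuous function $|x-y|$ over $A\times B$ is attained. The whole lemma thus reduces to showing disjointness, which I would check through an invariant that separates the configurations.

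For $\Omega_{l,i}\cap\Omega_r=\emptyset$, I would use the consecutive differences $x_j-x_{j+1}$ and the diagonals. In $\Omega_r$ the opposite diagonals are $x_3-x_1=-(u+v)$ and $x_4-x_2=v-u$, each of length at most $\sqrt{3}$ because $|u\pm v|^2=2\pm 2u\cdot v\le 3$. In $\Omega_{l,i}$, the pair $x_{i+3}-x_{i}$ is not a diagonal; instead, note that $|x_{i+3}-x_i|=3$. For any pair of indices $j,k$ in $\Omega_r$, however, $|x_j-x_k|\le\sqrt3$: adjacent pairs have distance $1$ and opposite pairs have distance at most $\sqrt3$. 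So every configuration in $\Omega_r$ has diameter at most $\sqrt3<3$, while every configuration in $\Omega_{l,i}$ has diameter $3$. The sets are therefore disjoint. Alternatively, the points of $\Omega_{l,i}$ are collinear while $u,v$ are linearly independent in $\Omega_r$, but the diameter argument is cleaner.

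For $\Omega_{l,i}\cap\Omega_{l,j}=\emptyset$ with $i\ne j$, I would identify which index carries the point at $-\tfrac32u$. In $\Omega_{l,i}$, the pair of indices at mutual distance $3$ is exactly $\{i,i+3\}=\{i,i-1\}$, with the other distances being $1$ or $2$. So a common configuration would force $\{i,i-1\}=\{j,j-1\}$, which gives $j=i$ unless the sets coincide in the other order. That alternative, $j=i-1$ and $j-1=i$, is impossible in $\mathbb{Z}/4\mathbb{Z}$ since it would require $2=0$. So this disjointness follows from the same distance bookkeeping.

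No step is a genuine obstacle. The only care needed is to justify compactness, since $\Omega_r$ is defined as a parametrized image and the constraint set of $(u,v)$ is closed, and to make sure the distance invariant is computed for all index pairs.
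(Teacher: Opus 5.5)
Your proof is correct and follows the paper's argument: compactness of $\Omega_{l,i}$ and $\Omega_r$ reduces the lemma to disjointness, and disjointness is detected by the unique pair $\{i,i+3\}$ at distance $3$ in $\Omega_{l,i}$. The only difference is that the paper separates $\Omega_{l,i}$ from $\Omega_r$ by observing that all four cyclically consecutive distances in $\Omega_r$ equal $1$, while $i$ and $i+3$ are consecutive; this avoids your diagonal bound $|u\pm v|\le\sqrt3$ (incidentally $x_4-x_2=u-v$, not $v-u$, which is harmless).
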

\begin{proof}
By definition, $\Omega_r$ and $\Omega_{l,i}$ are compact for every $i\in\mathbb{Z}/4\mathbb{Z}$. If $(x_1,x_2,x_3,x_4)\in\Omega_{l,i}$, then
\begin{align*}
|x_i-x_{i+3}|=3,
\end{align*}
whereas the other three cyclically consecutive distances are equal to $1$. Thus, the index $i$ is uniquely determined by the position of the cyclic gap of length $3$. Consequently,
\begin{align*}
\Omega_{l,i}\cap\Omega_{l,j}=\varnothing
\qquad (i\neq j).
\end{align*}
On the other hand, every $(x_1,x_2,x_3,x_4)\in\Omega_r$ satisfies
\begin{align*}
|x_{i+1}-x_i|=1
\end{align*}
for every $i\in\mathbb{Z}/4\mathbb{Z}$. Hence,
\begin{align*}
\Omega_{l,i}\cap\Omega_r=\varnothing
\end{align*}
for every $i\in\mathbb{Z}/4\mathbb{Z}$. Since disjoint compact subsets of a metric space have positive distance, the desired conclusions follow.
\end{proof}

Here, we define $\Gamma(\vec{u})$ as 
\begin{align}\label{Gammaudef}
\Gamma(\vec{u})(t)=\left(\frac{z_1(t)-z_g(t)}{\mathcal{L}(t)},\frac{z_2(t)-z_g(t)}{\mathcal{L}(t)},\frac{z_3(t)-z_g(t)}{\mathcal{L}(t)},\frac{z_4(t)-z_g(t)}{\mathcal{L}(t)}\right).
\end{align}

\begin{proposition}\label{2,2fullconv1}
Let $\vec{u}$ be a $(2,2)$-sign soliton solution. Then exactly one of the following two alternatives holds:
\begin{itemize}
\item There exists $i\in \mathbb{Z}/4\mathbb{Z}$ such that 
\begin{align}\label{shuugoukyoku1}
\lim_{t\to\infty}\operatorname{dist}(\Gamma(\vec{u})(t),\Omega_{l,i})=0.
\end{align}

\item We have
\begin{align}\label{shuugoukyoku2}
\lim_{t\to\infty}\operatorname{dist}(\Gamma(\vec{u})(t),\Omega_r)=0.
\end{align}
\end{itemize}
\end{proposition}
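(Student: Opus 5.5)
The plan is a continuity and topological separation argument. It combines the subsequential classification of Lemma \ref{sequencerough} with the positive separation of the five compact sets from Lemma \ref{setlemma}.

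First, I will show that
\begin{align*}
\lim_{t\to\infty}\operatorname{dist}\bigl(\Gamma(\vec{u})(t),\Omega_l\cup\Omega_r\bigr)=0.
\end{align*}
Suppose not. Then there are $\delta>0$ and $t_n\to\infty$ with $\operatorname{dist}(\Gamma(\vec{u})(t_n),\Omega_l\cup\Omega_r)\geq\delta$. Lemma \ref{sequencerough} yields a subsequence along which $\Gamma(\vec{u})(t_n')$ converges to a point of $\Omega_l\cup\Omega_r$, which is a contradiction.

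Second, I will control how fast $\Gamma(\vec{u})$ can move. Step 1 of the proof of Lemma \ref{tenretujouken} gives $|\dot{\mathtt{Y}}_i|\lesssim\dot{\mathcal{L}}/\mathcal{L}$. In addition, $\dot{\mathcal{L}}\sim V\lesssim t^{-1}$ by Lemma \ref{mathcalbibunlem}, Lemma \ref{mathcalichiyou}, and \eqref{Vsimt-1}, and $\mathcal{L}\to\infty$. Together these give $|\dot{\Gamma}(\vec{u})(t)|\lesssim 1/(t\log t)\to0$. The only property I need is continuity of $\Gamma(\vec{u})$ in $t$, which holds because the $z_i$ are $C^1$ and $\mathcal{L}=-\log V$ is $C^1$.

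Third, I will run the separation argument. Let $\delta_0>0$ be smaller than one third of the minimum of the pairwise distances among the five compact sets $\Omega_{l,1},\dots,\Omega_{l,4},\Omega_r$; this minimum is positive by Lemma \ref{setlemma}. For $t\geq T$ large, $\Gamma(\vec{u})(t)$ lies in the union $U$ of the open $\delta_0$-neighbourhoods of these five sets. These neighbourhoods are pairwise disjoint, so $U$ is a disjoint union of five open sets. Since $[T,\infty)$ is connected and $\Gamma(\vec{u})$ is continuous, the curve $\Gamma(\vec{u})([T,\infty))$ is connected and therefore lies in exactly one of the neighbourhoods, say that of $\Omega_\ast$. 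Repeating the argument with any smaller $\delta\le\delta_0$ keeps the curve eventually in the $\delta$-neighbourhood of the same $\Omega_\ast$, because the $\delta$-neighbourhood is contained in the $\delta_0$-neighbourhood. Hence $\operatorname{dist}(\Gamma(\vec{u})(t),\Omega_\ast)\to0$, which is \eqref{shuugoukyoku1} or \eqref{shuugoukyoku2}.

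The alternatives are mutually exclusive, so exactly one holds: $\operatorname{dist}(\Omega_{l,i},\Omega_r)>0$ and $\operatorname{dist}(\Omega_{l,i},\Omega_{l,j})>0$ for $i\neq j$ mean that two of the limits cannot both be zero.

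The only delicate point is the first step, where the subsequential statement must be upgraded to convergence of the distance along the full time. The contradiction argument above handles this, so the remaining work is routine bookkeeping.
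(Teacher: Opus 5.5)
Your proposal is correct and follows essentially the same route as the paper: both arguments combine the subsequential classification of Lemma \ref{sequencerough}, the positive separation of Lemma \ref{setlemma}, and the continuity of $\Gamma(\vec{u})$, and you handle mutual exclusivity the same way. The only difference is packaging. The paper interleaves two sequences and uses the intermediate value property to produce times at distance at least $\delta$ from $\Omega_l\cup\Omega_r$. You instead first show $\operatorname{dist}(\Gamma(\vec{u})(t),\Omega_l\cup\Omega_r)\to0$ and then use the connectedness of $\Gamma(\vec{u})([T,\infty))$, which is slightly cleaner, and you are right that the speed bound is not needed.
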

\begin{proof}
By Lemma \ref{sequencerough}, there exists a sequence $t_n\to\infty$ such that 
\begin{align*}
\lim_{n\to\infty}\operatorname{dist}\left(\Gamma(\vec{u})(t_n),\Omega_l\right)=0\ \mbox{or}\ \lim_{n\to\infty}\operatorname{dist}\left(\Gamma(\vec{u})(t_n),\Omega_r\right)=0. 
\end{align*}

We consider $\lim_{n\to\infty}\operatorname{dist}\left(\Gamma(\vec{u})(t_n),\Omega_l\right)=0$. Since
\begin{align*}
\Omega_l=\Omega_{l,1}\cup\Omega_{l,2}\cup\Omega_{l,3}\cup\Omega_{l,4},
\end{align*}
after passing to a further subsequence if necessary, we may assume that there exists $i\in\mathbb{Z}/4\mathbb{Z}$ such that
\begin{align*}
\lim_{n\to\infty}\operatorname{dist}\left(\Gamma(\vec{u})(t_n),\Omega_{l,i}\right)=0.
\end{align*}
Then, we show $\lim_{t\to\infty}\operatorname{dist}\left(\Gamma(\vec{u})(t),\Omega_{l,i}\right)=0$. We assume that there exist sufficiently small $0<\delta\ll 1$ and a sequence $t_n^{\prime}\to \infty$ such that, for all $n\in\mathbb{N}$,
\begin{align*}
\operatorname{dist}\left(\Gamma(\vec{u})(t_n^{\prime}),\Omega_{l,i}\right)>\delta.
\end{align*}
Then, by Lemma \ref{sequencerough}, there exists $(t_n^{\prime \prime})\subset (t_n^{\prime})$ such that 
\begin{align*}
\lim_{n\to\infty}\operatorname{dist}\left(\Gamma(\vec{u})(t_n^{\prime \prime}),\Omega_r\cup \Omega_{l,i+1}\cup \Omega_{l,i+2}\cup \Omega_{l,i+3} \right)=0.
\end{align*}
By decreasing $\delta>0$ if necessary, we may assume that
\begin{align*}
4\delta<\min_{\substack{A,B\in\{\Omega_r,\Omega_{l,1},\Omega_{l,2},\Omega_{l,3},\Omega_{l,4}\}
\\
A\neq B}}\operatorname{dist}(A,B).
\end{align*}
Moreover, after passing to further subsequences and relabeling, we may
assume that
\begin{align*}
t_n<t_n^{\prime\prime}<t_{n+1}
\end{align*}
for every $n\in\mathbb{N}$. Then, since $\Gamma(\vec{u})$ is a continuous function and Lemma \ref{setlemma} holds, there exists a sequence $s_n\to \infty$ such that for every $n\in\mathbb{N}$,
\begin{align}\label{tneretuuki}
\operatorname{dist}\left(\Gamma(\vec{u})(s_n),\Omega_r\cup \Omega_l\right)\geq \delta.
\end{align}
On the other hand, by Lemma \ref{sequencerough}, there exists $(s_n^{\prime})\subset (s_n)$ such that 
\begin{align*}
\lim_{n\to\infty}\operatorname{dist}\left(\Gamma(\vec{u})(s_n^{\prime}),\Omega_r\cup \Omega_l\right)=0,
\end{align*}
which contradicts \eqref{tneretuuki}. Therefore, we obtain \eqref{shuugoukyoku1}. The case
\begin{align*}
\lim_{n\to\infty}\operatorname{dist}\left(\Gamma(\vec{u})(t_n),\Omega_r\right)=0
\end{align*}
is treated in the same way and yields \eqref{shuugoukyoku2}. Finally, Lemma \ref{setlemma} shows that the two alternatives are mutually exclusive. This completes the proof.
\end{proof}

\section{Long-time behavior of line-type solutions}

By Proposition \ref{2,2fullconv1}, the asymptotic geometry of every $(2,2)$-sign soliton solution is either of line type or
of rhombus type. In this section, we determine the precise long-time behavior in the line-type case. We begin with the
following definitions.

\begin{definition}\label{typedef}
\begin{enumerate}
\item A $(2,2)$-sign soliton solution $\vec{u}$ is called a \textit{line-type solution} if there exists $i\in \mathbb{Z}/4\mathbb{Z}$ such that 
\begin{align}\label{linetypedefes}
\lim_{t\to\infty}\operatorname{dist}(\Gamma(\vec{u})(t),\Omega_{l,i})=0.
\end{align}

\item A $(2,2)$-sign soliton solution $\vec{u}$ is called an \textit{$i$-line-type solution} for $i\in\mathbb{Z}/4\mathbb{Z}$ if we have
\begin{align}\label{i-typedefes}
\lim_{t\to\infty}\operatorname{dist}(\Gamma(\vec{u})(t),\Omega_{l,i})=0.
\end{align}

\item A $(2,2)$-sign soliton solution $\vec{u}$ is called a \textit{rhombus-type solution} if we have
\begin{align}\label{hishidefes}
\lim_{t\to\infty}\operatorname{dist}(\Gamma(\vec{u})(t),\Omega_r)=0.
\end{align}
\end{enumerate}
\end{definition}

\subsection{ODEs for line-type solutions}

In this subsection, we begin the analysis of line-type solutions. By cyclic symmetry, it suffices to consider the long-time behavior of 1-line-type solutions. We first derive explicit equations governing their soliton centers.

\begin{lemma}\label{1-type-ode}
Let $\vec{u}$ be a 1-line-type solution. Then, for every $1<\theta<\min{(p-1,\frac{3}{2})}$, we have for $t\gg 1$,
\begin{align}\label{1-line-ode}
\begin{aligned}
\dot{z}_1&=-\mathcal{F}(\rho_1)v_1+O\left(t^{-\theta}\right),
\\
\dot{z}_2&=\mathcal{F}(\rho_1)v_1-\mathcal{F}(\rho_2)v_2+O\left(t^{-\theta}\right),
\\
\dot{z}_3&=\mathcal{F}(\rho_2)v_2-\mathcal{F}(\rho_3)v_3+O\left(t^{-\theta}\right),
\\
\dot{z}_4&=\mathcal{F}(\rho_3)v_3+O\left(t^{-\theta}\right),
\end{aligned}
\end{align}
\begin{align}\label{1-line-nagasaode}
\begin{aligned}
\dot{\rho}_1&=2\mathcal{F}(\rho_1)-\mathcal{F}(\rho_2)v_1\cdot v_2+O\left(t^{-\theta}\right),
\\
\dot{\rho}_2&=-\mathcal{F}(\rho_1)v_1\cdot v_2+2\mathcal{F}(\rho_2)-\mathcal{F}(\rho_3)v_2\cdot v_3+O\left(t^{-\theta}\right),
\\
\dot{\rho}_3&=-\mathcal{F}(\rho_2)v_2\cdot v_3+2\mathcal{F}(\rho_3)+O\left(t^{-\theta}\right).
\end{aligned}
\end{align}
\end{lemma}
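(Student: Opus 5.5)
We start from Lemma \ref{2,2centerdynamics2}, which gives, for every $i$ and every $1<\theta<\min(p-1,2)$,
\begin{align*}
\dot{z}_i=-\mathcal{F}(\rho_i)v_i+\mathcal{F}(\rho_{i-1})v_{i-1}+\mathcal{F}(R_i)w_i+O\left(e^{-\theta D}\right).
\end{align*}
Lemma \ref{Fsimt-1lem} gives $\mathcal{F}(D)\sim t^{-1}$, and hence $e^{-\theta D}\lesssim \mathcal{F}(D)^{\theta}\lesssim t^{-\theta}$, because $e^{-D}\lesssim D^{\frac{d-1}{2}}\mathcal{F}(D)$ and the logarithmic loss is absorbed by a slightly larger exponent. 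It therefore remains to show that the terms $\mathcal{F}(\rho_4)$, $\mathcal{F}(R_1)$ and $\mathcal{F}(R_2)$ are $O(t^{-\theta})$ for $\theta<\frac32$. Once this is done, dropping them in the equations for $\dot z_i$ gives \eqref{1-line-ode}. Then \eqref{1-line-nagasaode} follows from $\dot\rho_i=(\dot z_{i+1}-\dot z_i)\cdot v_i$ and $|v_i|=1$.

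The key step is a quantitative comparison of lengths. For a 1-line-type solution, \eqref{i-typedefes} says that
\begin{align*}
\frac{z_i-z_g}{\mathcal{L}}\to\left(-\tfrac32u,-\tfrac12u,\tfrac12u,\tfrac32u\right)\quad\text{up to an $o(1)$ error, with $u=u(t)$ possibly varying.}
\end{align*}
Consequently
\begin{align*}
\rho_4=|z_1-z_4|=(3+o(1))\mathcal{L},\qquad R_1=|z_3-z_1|=(2+o(1))\mathcal{L},\qquad R_2=|z_4-z_2|=(2+o(1))\mathcal{L}.
\end{align*}
By \eqref{mathcalLsim} and Lemma \ref{Fsimt-1lem}, $\mathcal{L}=D+O(\log D)$ and $D=\log t+O(\log\log t)$. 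Hence every one of these lengths is at least $(2-o(1))\log t$, and
\begin{align*}
\mathcal{F}(\rho_4)+\mathcal{F}(R_1)+\mathcal{F}(R_2)\lesssim e^{-(2-o(1))\log t}\lesssim t^{-\theta}
\end{align*}
for any $\theta<2$. The restriction $\theta<\frac32$ leaves ample room.

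The main obstacle is only bookkeeping: the error terms carry the factor $o(1)\mathcal{L}$, so the exponent is $2-o(1)$ rather than exactly $2$. This is harmless because we need only some exponent strictly larger than $\frac32$, so for $t\gg1$ the $o(1)$ is below $\frac12$. The $\log\log t$ corrections in $D$ and the constant in \eqref{mathcalFhyouka1} are absorbed in the same way. We also note that the error $O(e^{-\theta D})$ from Lemma \ref{2,2centerdynamics2} is used with the same $\theta$; this is allowed since $\theta<\min(p-1,\frac32)\le\min(p-1,2)$.
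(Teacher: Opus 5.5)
Your argument is essentially the paper's: the $1$-line geometry forces $\rho_4=(3+o(1))\mathcal{L}$ and $R_1,R_2=(2+o(1))\mathcal{L}$, hence $\mathcal{F}(\rho_4)+\mathcal{F}(R_1)+\mathcal{F}(R_2)=O(t^{-\theta})$, and then Lemma \ref{2,2centerdynamics2} with Lemma \ref{Fsimt-1lem} gives both systems. One small correction: $e^{-\theta D}\lesssim\mathcal{F}(D)^{\theta}$ is false by a factor $D^{\theta(d-1)/2}$, so you must apply Lemma \ref{2,2centerdynamics2} with some $\theta'\in(\theta,\min(p-1,2))$, as your first paragraph suggests, and not with the same $\theta$ as your last sentence claims.
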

\begin{proof}
By Definition \ref{typedef}, we have
\begin{align*}
\mathcal{F}(R_1)+\mathcal{F}(R_2)+\mathcal{F}(\rho_4)\lesssim t^{-\theta}.
\end{align*}
Combining this estimate with Lemma \ref{2,2centerdynamics2} and Lemma \ref{Fsimt-1lem}, we obtain \eqref{1-line-ode} and \eqref{1-line-nagasaode}.
\end{proof}

Here, we define 
\begin{align*}
\mathfrak{a}_{12}&=1-v_1\cdot v_2,
\\
\mathfrak{a}_{23}&=1-v_2\cdot v_3,
\\
\mathfrak{a}_{13}&=1-v_1\cdot v_3,
\\
|\mathfrak{a}|&=\sqrt{\mathfrak{a}_{12}^2+\mathfrak{a}_{23}^2+\mathfrak{a}_{13}^2}.
\end{align*}

Then, by \eqref{i-typedefes}, we have
\begin{align}\label{linenaisekiga1}
\begin{aligned}
0\leq \min{(\mathfrak{a}_{12},\mathfrak{a}_{23},\mathfrak{a}_{13})},
\\
\lim_{t\to\infty}|\mathfrak{a}(t)|=0.
\end{aligned}
\end{align}

\begin{lemma}\label{line-type-naisekinagasaode}
Let $\vec{u}$ be a 1-line-type solution. Then, for every $1<\theta<\min{(p-1,\frac{3}{2})}$, we have for $t\gg 1$,
\begin{align}\label{1-line-typenagasaode}
\begin{aligned}
\dot{\rho}_1&=2\mathcal{F}(\rho_1)-\mathcal{F}(\rho_2)(1-\mathfrak{a}_{12})+O\left(t^{-\theta}\right),
\\
\dot{\rho}_2&=-\mathcal{F}(\rho_1)(1-\mathfrak{a}_{12})+2\mathcal{F}(\rho_2)-\mathcal{F}(\rho_3)(1-\mathfrak{a}_{23})+O\left(t^{-\theta}\right),
\\
\dot{\rho}_3&=-\mathcal{F}(\rho_2)(1-\mathfrak{a}_{23})+2\mathcal{F}(\rho_3)+O\left(t^{-\theta}\right).
\end{aligned}
\end{align}
Furthermore, we have for $t\gg 1$,
\begin{align}\label{kinenaisekiode}
\begin{aligned}
\dot{\mathfrak{a}}_{12}&=\left(\frac{\mathcal{F}(\rho_2)}{\rho_1}+\frac{\mathcal{F}(\rho_1)}{\rho_2}\right)\mathfrak{a}_{12}\left(2-\mathfrak{a}_{12}\right)
\\
&\quad+\frac{\mathcal{F}(\rho_3)}{\rho_2}\left(\mathfrak{a}_{12}+\mathfrak{a}_{23}-\mathfrak{a}_{13}-\mathfrak{a}_{12}\mathfrak{a}_{23}\right)+O\left(\frac{t^{-\theta}}{\log{t}}\right),
\\
\dot{\mathfrak{a}}_{23}&=\left(\frac{\mathcal{F}(\rho_2)}{\rho_3}+\frac{\mathcal{F}(\rho_3)}{\rho_2}\right)\mathfrak{a}_{23}\left(2-\mathfrak{a}_{23}\right)
\\
&\quad+\frac{\mathcal{F}(\rho_1)}{\rho_2}\left(\mathfrak{a}_{12}+\mathfrak{a}_{23}-\mathfrak{a}_{13}-\mathfrak{a}_{12}\mathfrak{a}_{23}\right)+O\left(\frac{t^{-\theta}}{\log{t}}\right),
\\
\dot{\mathfrak{a}}_{13}&=\frac{\mathcal{F}(\rho_2)}{\rho_1}\left(\mathfrak{a}_{12}+\mathfrak{a}_{13}-\mathfrak{a}_{23}-\mathfrak{a}_{12}\mathfrak{a}_{13}\right)
\\
&\quad+\frac{\mathcal{F}(\rho_2)}{\rho_3}\left(\mathfrak{a}_{23}+\mathfrak{a}_{13}-\mathfrak{a}_{12}-\mathfrak{a}_{23}\mathfrak{a}_{13}\right)+O\left(\frac{t^{-\theta}}{\log{t}}\right).
\end{aligned}
\end{align}
\end{lemma}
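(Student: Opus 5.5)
The plan is to derive both sets of equations directly from Lemma \ref{1-type-ode}. The length equations \eqref{1-line-typenagasaode} follow immediately: in \eqref{1-line-nagasaode} I substitute $v_1\cdot v_2=1-\mathfrak{a}_{12}$ and $v_2\cdot v_3=1-\mathfrak{a}_{23}$, and nothing else changes. For the inner products, I will first compute the direction derivatives. From $Z=\rho_iv_i$ we have
\begin{align*}
\dot v_i=\frac{1}{\rho_i}\left(\dot z_{i+1}-\dot z_i-\left((\dot z_{i+1}-\dot z_i)\cdot v_i\right)v_i\right)=\frac{1}{\rho_i}P_i^{\perp}(\dot z_{i+1}-\dot z_i),
\end{align*}
where $P_i^\perp=I-v_i\otimes v_i$. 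From \eqref{1-line-ode} the relative velocities are
\begin{align*}
\dot z_2-\dot z_1&=2\mathcal{F}(\rho_1)v_1-\mathcal{F}(\rho_2)v_2,\\
\dot z_3-\dot z_2&=-\mathcal{F}(\rho_1)v_1+2\mathcal{F}(\rho_2)v_2-\mathcal{F}(\rho_3)v_3,\\
\dot z_4-\dot z_3&=-\mathcal{F}(\rho_2)v_2+2\mathcal{F}(\rho_3)v_3,
\end{align*}
each with error $O(t^{-\theta})$. The projection kills the $v_i$ components, which gives for example $\dot v_1=-\frac{\mathcal{F}(\rho_2)}{\rho_1}(v_2-(v_1\cdot v_2)v_1)+O(t^{-\theta}/\rho_1)$. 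Since $\rho_i\sim\log t$ by Lemma \ref{Fsimt-1lem} and the line-type asymptotics ($\rho_1,\rho_2,\rho_3\sim\mathcal{L}$), each error is $O(t^{-\theta}/\log t)$.

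Next I differentiate $\mathfrak{a}_{jk}=1-v_j\cdot v_k$, so that $\dot{\mathfrak{a}}_{jk}=-\dot v_j\cdot v_k-v_j\cdot\dot v_k$. I expand each term, writing every inner product through $v_j\cdot v_k=1-\mathfrak{a}_{jk}$. For $\mathfrak{a}_{12}$ the term $-\dot v_1\cdot v_2$ equals $\frac{\mathcal{F}(\rho_2)}{\rho_1}(1-(v_1\cdot v_2)^2)=\frac{\mathcal{F}(\rho_2)}{\rho_1}\mathfrak{a}_{12}(2-\mathfrak{a}_{12})$. The term $-v_1\cdot\dot v_2$ equals $\frac{1}{\rho_2}$ times $\mathcal{F}(\rho_1)\left(1-(v_1\cdot v_2)^2\right)+\mathcal{F}(\rho_3)\left(v_1\cdot v_3-(v_2\cdot v_3)(v_1\cdot v_2)\right)$, up to sign conventions. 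The last bracket equals $(1-\mathfrak{a}_{13})-(1-\mathfrak{a}_{23})(1-\mathfrak{a}_{12})=\mathfrak{a}_{12}+\mathfrak{a}_{23}-\mathfrak{a}_{13}-\mathfrak{a}_{12}\mathfrak{a}_{23}$, which matches the stated formula. The equation for $\mathfrak{a}_{23}$ follows in the same way, and by symmetry $1\leftrightarrow3$. For $\mathfrak{a}_{13}$, $\dot v_1$ involves only $v_2$ with coefficient $\mathcal{F}(\rho_2)/\rho_1$, and $\dot v_3$ only $v_2$ with coefficient $\mathcal{F}(\rho_2)/\rho_3$. Using $v_1\cdot v_2-(v_1\cdot v_3)(v_2\cdot v_3)$-type identities then gives the two brackets shown.

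The main thing to get right is the sign bookkeeping in the projections, specifically the coefficient signs in $\dot z_{i+1}-\dot z_i$, together with confirming that every error term is genuinely $O(t^{-\theta}/\log t)$. For the latter I need $\rho_i\gtrsim\log t$ for $i=1,2,3$, which follows from $D\ge$ const$\cdot\log t$ via \eqref{mathcalFsimt-1}. I also need the vectors $v_i$ to be bounded, which is trivial, so multiplying the $O(t^{-\theta})$ velocity errors by $1/\rho_i$ gives the claim. Beyond this, the proof is purely algebraic.
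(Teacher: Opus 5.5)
Your proposal is correct and follows essentially the same route as the paper. You substitute $v_1\cdot v_2=1-\mathfrak{a}_{12}$ and $v_2\cdot v_3=1-\mathfrak{a}_{23}$ into \eqref{1-line-nagasaode}, compute $\dot v_i=\rho_i^{-1}(I-v_i\otimes v_i)(\dot z_{i+1}-\dot z_i)$ from \eqref{1-line-ode} with error $O(t^{-\theta}/\log t)$ (using $\rho_i\geq D\gtrsim\log t$), and expand $\dot{\mathfrak{a}}_{jk}=-\dot v_j\cdot v_k-v_j\cdot\dot v_k$ using $1-(v_j\cdot v_k)^2=\mathfrak{a}_{jk}(2-\mathfrak{a}_{jk})$; your signs in $-v_1\cdot\dot v_2$ come out exactly as written, so the hedge ``up to sign conventions'' is unnecessary.
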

\begin{proof}
Equation \eqref{1-line-typenagasaode} follows directly from Lemma \ref{1-type-ode}. We prove \eqref{kinenaisekiode}. By direct computation, we have
\begin{align*}
\dot{\mathfrak{a}}_{12}&=-\dot{v}_1\cdot v_2-v_1\cdot \dot{v}_2,
\\
\dot{\mathfrak{a}}_{23}&=-\dot{v}_2\cdot v_3-v_2\cdot \dot{v}_3,
\\
\dot{\mathfrak{a}}_{13}&=-\dot{v}_1\cdot v_3-v_1\cdot \dot{v}_3,
\\
\dot{v}_1&=-\frac{\mathcal{F}(\rho_2)}{\rho_1}\left(v_2-(v_1\cdot v_2)v_1\right)+O\left(\frac{t^{-\theta}}{\log{t}}\right),
\\
\dot{v}_2&=-\frac{\mathcal{F}(\rho_1)}{\rho_2}\left(v_1-(v_1\cdot v_2)v_2\right)-\frac{\mathcal{F}(\rho_3)}{\rho_2}\left(v_3-(v_2\cdot v_3)v_2\right)+O\left(\frac{t^{-\theta}}{\log{t}}\right),
\\
\dot{v}_3&=-\frac{\mathcal{F}(\rho_2)}{\rho_3}\left(v_2-(v_2\cdot v_3)v_3\right)+O\left(\frac{t^{-\theta}}{\log{t}}\right),
\\
1-\left(v_1\cdot v_2\right)^2&=\mathfrak{a}_{12}(2-\mathfrak{a}_{12}),
\\
1-\left(v_2\cdot v_3\right)^2&=\mathfrak{a}_{23}(2-\mathfrak{a}_{23}),
\\
1-\left(v_1\cdot v_3\right)^2&=\mathfrak{a}_{13}(2-\mathfrak{a}_{13}).
\end{align*}
Combining the identities above, we obtain \eqref{kinenaisekiode}.

\end{proof}

\subsection{Rough comparison estimates for $\rho_1,\rho_2,\rho_3$}

In this subsection, we derive estimates for $\rho_1,\rho_2,\rho_3$. To obtain more precise comparison functions for $\rho_i$, we further define $L_1,L_2,L_3:[0,\infty)\to\mathbb{R}$ by
\begin{align}\label{L_iode}
\begin{aligned}
L_1(0)&=L_2(0)=L_3(0)\gg 1,
\\
\dot{L}_1&=2\mathcal{F}(L_1)-\mathcal{F}(L_2),
\\
\dot{L}_2&=-\mathcal{F}(L_1)+2\mathcal{F}(L_2)-\mathcal{F}(L_3),
\\
\dot{L}_3&=-\mathcal{F}(L_2)+2\mathcal{F}(L_3).
\end{aligned}
\end{align}
Then, $L_1,L_2,L_3$ satisfy the following estimate.
\begin{lemma}\label{Liasymptotics}
We have
\begin{align}\label{Linagasa}
\begin{aligned}
L_1(t)&=L(t)-\log{\frac{3}{2}}+O\left(\frac{1}{\log{t}}\right),
\\
L_2(t)&=L(t)-\log{2}+O\left(\frac{1}{\log{t}}\right),
\\
L_3(t)&=L(t)-\log{\frac{3}{2}}+O\left(\frac{1}{\log{t}}\right).
\end{aligned}
\end{align}
\end{lemma}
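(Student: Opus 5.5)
The plan is to compare $(L_1,L_2,L_3)$ with the ansatz $L_i\approx L+c_i$ and then linearize around it. I first identify the constants $c_i$ formally. By \eqref{tukaeruF1}, $\mathcal{F}(L+c)\approx e^{-c}\mathcal{F}(L)$. Set $x_i=e^{-c_i}$. Requiring $\dot L_i=\dot L=\mathcal{F}(L)$ to leading order turns \eqref{L_iode} into
\begin{align*}
2x_1-x_2=1,\qquad -x_1+2x_2-x_3=1,\qquad -x_2+2x_3=1.
\end{align*}
Its unique solution is $x_1=x_3=\frac32$, $x_2=2$, which gives the constants $-\log\frac32$, $-\log 2$, $-\log\frac32$ in \eqref{Linagasa}. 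The matrix here is the positive definite tridiagonal matrix $A=\begin{pmatrix}2&-1&0\\-1&2&-1\\0&-1&2\end{pmatrix}$.

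\textbf{Step 1 (symmetry and cooperativity).} The data in \eqref{L_iode} and the system are invariant under $L_1\leftrightarrow L_3$, so uniqueness gives $L_1=L_3$ for all $t$. The system is cooperative: since $\mathcal{F}$ is decreasing, $\partial\dot L_1/\partial L_2=-\mathcal{F}'(L_2)>0$, and similarly for every off-diagonal entry. The Kamke comparison principle therefore applies.

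\textbf{Step 2 (a priori bounds).} I would use comparison to show $|L_i-L|\lesssim 1$. I would build super- and subsolutions of the form $\overline L_i=L+c_i+\delta$ and $\underline L_i=L+c_i-\delta$, with a suitable time shift to order them against the initial data. By \eqref{tukaeruF1}, substituting $\overline L_i$ into the system gives a residual $\mathcal{F}(L)\bigl[(A x)_i-1\bigr]$ plus errors $O(\mathcal{F}(L)/L)$. At the exact $x$ the bracket vanishes, so it has no sign. The fix is to perturb $x$ slightly, to $x_i(1\pm\eta)$ with an $i$-dependent correction, so that the bracket becomes strictly signed. This beats the $O(\mathcal{F}(L)/L)$ errors for $L\gg1$. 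This yields $L_i=L+c_i+O(\eta)$ for every small $\eta$ after a large time, hence $L_i-L-c_i\to0$.

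\textbf{Step 3 (rate).} To get the $O(1/\log t)$ rate, I write $L_i=L+c_i+\eta_i$ and reparametrize time by $L$ itself, using $dL=\mathcal{F}(L)\,dt$. By \eqref{tukaeruF2} and \eqref{tukaeruF1},
\begin{align*}
\frac{d\eta}{dL}=-A\,\mathrm{diag}(x)\,\eta+O(|\eta|^2)+O\Bigl(\frac1L\Bigr).
\end{align*}
The matrix $A\,\mathrm{diag}(x)$ is similar to $\mathrm{diag}(x)^{1/2}A\,\mathrm{diag}(x)^{1/2}$, which is positive definite. Once $\eta$ is small (from Step 2), Duhamel's formula gives $|\eta|\lesssim e^{-cL}+\int^{L}e^{-c(L-s)}s^{-1}\,ds\lesssim 1/L$. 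Finally, \eqref{Lnagasa} gives $1/L\sim1/\log t$, which is \eqref{Linagasa}.

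\textbf{Main obstacle.} I expect the comparison in Step 2 to be the delicate step. The naive stationary ansatz has zero residual, and the errors coming from the polynomial prefactor in \eqref{mathcalFhyouka1} are only $O(1/L)$ relative to $\mathcal{F}(L)$. The perturbed profiles must therefore be chosen so that the signed residual dominates these errors uniformly. If this becomes cumbersome, an alternative is a Lyapunov argument: with $\Psi'=-\mathcal{F}$, the system reads $\dot L=-A\nabla G(L)$ with $G(L)=\sum_i\Psi(L_i)$, which gives global control of $L_i-L$ before linearizing.
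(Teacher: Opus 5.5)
Your proof is correct, but it is organized differently from the paper's.

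The paper's route:
\begin{itemize}
\item It uses $L_1=L_3$ to reduce to a planar system.
\item It proves $0\le L_1-L_2\lesssim1$ by two touching-time bootstraps.
\item It applies Lemma~\ref{mathcalFyouode}(1) to the average $\frac{L_1+L_2}{2}$, whose derivative is exactly $\frac12\mathcal{F}(L_2)$, to get $|L_i-L|\lesssim1$.
\item It closes with the weighted entropy functional $\mathfrak{L}_l=3(e^{-l_1}-1+l_1)+2(e^{-l_2}-1+l_2)$, where $l_i=L_i-L-c_i$ is your $\eta_i$. Its dissipation $-\{3(1-e^{-l_1})^2+2(1-e^{-l_2})^2\}\mathcal{F}(L)$ is coercive on every bounded set. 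A single Gronwall step then gives $\mathfrak{L}_l\lesssim(\log t)^{-2}$, without first knowing $l\to0$ and without linearizing.
\end{itemize}

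Your route is different. You use the cooperative structure (Kamke comparison) to get boundedness and convergence together. Only then do you linearize in the $L$-clock, using that $A\,\mathrm{diag}(x)$ (eigenvalues $1,3,6$) is conjugate to a positive definite matrix.

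What each approach buys:
\begin{itemize}
\item Yours is more systematic and would carry over to longer alternating chains without having to guess weights.
\item The price is an extra convergence step. The shifted subsolution only has a sign where $L\gg\delta^{-1}$, so for small $\delta$ you must restart the comparison at a later $t_0$ with $\min_iL_i(t_0)$ large. You should also say explicitly that a fixed time shift is harmless, since $L(t+s)-L(t)\lesssim s/t$. This is routine but should be written out.
\end{itemize}

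Two side remarks.
\begin{itemize}
\item The $i$-dependent correction in Step 2 is unnecessary. The uniform profiles $L+c_i\pm\delta$ already leave the strictly signed residual $(1-e^{\mp\delta})\mathcal{F}(L)+O(\mathcal{F}(L)/L)$.
\item The Lyapunov fallback that works globally is not $G$ but $\sum_i x_i(e^{-\eta_i}-1+\eta_i)$. Its $L$-derivative is $-X^{\mathsf T}AX+O(|X|/L)$ with $X_i=x_i(1-e^{-\eta_i})$. After imposing $L_1=L_3$ this is exactly the paper's $\mathfrak{L}_l$. With it, you would only need the boundedness part of Step 2, not the convergence part.
\end{itemize}
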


\begin{proof}
By the symmetry of \eqref{L_iode} and uniqueness, we have
\begin{align*}
L_1(t)=L_3(t).
\end{align*}
Then, we have
\begin{align}\label{L1L2ode}
\begin{aligned}
\dot{L}_1&=2\mathcal{F}(L_1)-\mathcal{F}(L_2),
\\
\dot{L}_2&=-2\mathcal{F}(L_1)+2\mathcal{F}(L_2).
\end{aligned}
\end{align}
Then, we have
\begin{align*}
\dot{L}_1-\dot{L}_2=4\mathcal{F}(L_1)-3\mathcal{F}(L_2).
\end{align*}
Here, we introduce the bootstrap estimate
\begin{align}\label{Linotamenobs}
L_1(t)-L_2(t)\geq 0.
\end{align}
We define $T_1\in [0,\infty]$ as 
\begin{align}\label{LibsnoT1}
T_1=\sup{\{t\in[0,\infty)\ \mbox{such that}\ \eqref{Linotamenobs}\ \mbox{holds on}\ [0,t] \}}.
\end{align}
We assume $T_1<\infty$. Then, we have at $t=T_1$ $L_1(T_1)=L_2(T_1)$, and we obtain $\dot{L}_1-\dot{L}_2>0$, which contradicts the maximality of $T_1$. Thus, we have for all $t\geq 0$ $L_1\geq L_2$. In addition, we have $\dot{L}_2\geq 0$, and we obtain $L_2\gg 1$. Next, fix a sufficiently large constant $C>0$ and impose the following bootstrap assumption:
\begin{align}\label{Linotamenobs2}
L_1(t)-L_2(t)\leq C.
\end{align}
We define $T_2\in [0,\infty]$ as 
\begin{align}\label{LibsnoT2}
T_2=\sup{\{t\in[0,\infty)\ \mbox{such that}\ \eqref{Linotamenobs2}\ \mbox{holds on}\ [0,t] \}}.
\end{align}
Furthermore, we assume $T_2<\infty$. Then, we have at $t=T_2$
\begin{align*}
\dot{L}_1-\dot{L}_2=4\mathcal{F}(L_2+C)-3\mathcal{F}(L_2)<0,
\end{align*}
since $C$ is sufficiently large so that for all $r\gg 1$
\begin{align*}
\mathcal{F}(r+C)<\frac{1}{2}\mathcal{F}(r).
\end{align*}
This contradicts the maximality of $T_2$ unless $T_2=\infty$. Hence, $T_2=\infty$. Therefore, we have
\begin{align}\label{L1L2sasim}
0\leq L_1(t)-L_2(t)\lesssim 1.
\end{align}
Then, by \eqref{L1L2ode}, we have
\begin{align*}
\frac{\dot{L}_1+\dot{L}_2}{2}=\frac{1}{2}\mathcal{F}(L_2).
\end{align*}
Furthermore, by \eqref{mathcalFhyouka1} and \eqref{L1L2sasim}, we have
\begin{align*}
\frac{\dot{L}_1+\dot{L}_2}{2}=\frac{1}{2}\mathcal{F}(L_2)\sim \mathcal{F}\left(\frac{L_1+L_2}{2}\right),
\end{align*}
and we obtain
\begin{align}\label{L1L2heikinsim}
\left|\frac{L_1+L_2}{2}-L\right|\lesssim 1.
\end{align}
Then, by \eqref{L1L2sasim} and \eqref{L1L2heikinsim}, we obtain
\begin{align}\label{L1L2Lsim}
\begin{aligned}
|L_1(t)-L(t)|&\leq \left|L_1(t)-\frac{L_1(t)+L_2(t)}{2}\right|+\left|\frac{L_1(t)+L_2(t)}{2}-L(t)\right|\lesssim 1,
\\
|L_2(t)-L(t)|&\leq \left|L_2(t)-\frac{L_1(t)+L_2(t)}{2}\right|+\left|\frac{L_1(t)+L_2(t)}{2}-L(t)\right|\lesssim 1.
\end{aligned}
\end{align}
Here, we define $l_1,l_2$ as 
\begin{align}\label{l1l2def}
\begin{aligned}
l_1(t)=L_1(t)-L(t)+\log{\frac{3}{2}},
\\
l_2(t)=L_2(t)-L(t)+\log{2}.
\end{aligned}
\end{align}
Then, by \eqref{L1L2Lsim}, $l_1,l_2$ are bounded. Furthermore, by \eqref{tukaeruF1}, we have
\begin{align}\label{lhyouka1}
\begin{aligned}
\left|\mathcal{F}(L_1)-\frac{3}{2}e^{-l_1}\mathcal{F}(L)\right|\lesssim \frac{\mathcal{F}(L)}{L}\sim \frac{1}{t\log{t}},
\\
\left|\mathcal{F}(L_2)-2e^{-l_2}\mathcal{F}(L)\right|\lesssim \frac{\mathcal{F}(L)}{L}\sim \frac{1}{t\log{t}}.
\end{aligned}
\end{align}
By \eqref{Lnoode}, \eqref{L1L2ode}, \eqref{l1l2def}, and \eqref{lhyouka1}, we obtain
\begin{align}\label{LinoLyapunov1}
\begin{aligned}
\dot{l}_1&=\left\{3\left(e^{-l_1}-1\right)-2\left(e^{-l_2}-1\right)\right\}\mathcal{F}(L)+O\left(\frac{1}{t\log{t}}\right),
\\
\dot{l}_2&=\left\{-3\left(e^{-l_1}-1\right)+4\left(e^{-l_2}-1\right)\right\}\mathcal{F}(L)+O\left(\frac{1}{t\log{t}}\right).
\end{aligned}
\end{align}
We define $\mathfrak{L}_l$ as 
\begin{align}\label{mathfrakLdef}
\mathfrak{L}_l=3\left(e^{-l_1}-1+l_1\right)+2\left(e^{-l_2}-1+l_2\right).
\end{align}
Since $l_1$ and $l_2$ are bounded, we have
\begin{align}\label{mathfrakLsim}
\mathfrak{L}_l\sim |l_1|^2+|l_2|^2.
\end{align}
By direct computation, we have
\begin{align*}
\dot{\mathfrak{L}}_l&=3\dot{l}_1\left(1-e^{-l_1}\right)+2\dot{l}_2\left(1-e^{-l_2}\right)
\\
&=\left\{-9\left(1-e^{-l_1}\right)^2+12\left(1-e^{-l_1}\right)\left(1-e^{-l_2}\right)-8\left(1-e^{-l_2}\right)^2\right\}\mathcal{F}(L)
\\
&\quad+O\left(\frac{|l_1|+|l_2|}{t\log{t}}\right)
\\
&\leq \left\{ -3\left(1-e^{-l_1}\right)^2-2\left(1-e^{-l_2}\right)^2\right\}\mathcal{F}(L)+O\left(\frac{|l_1|+|l_2|}{t\log{t}}\right).
\end{align*}
Therefore, there exists $c>0$ such that for sufficiently large $t$,
\begin{align}\label{mathfrakLlhutoushiki}
\dot{\mathfrak{L}}_l\leq-\frac{c}{t}\mathfrak{L}_l+\frac{\sqrt{\mathfrak{L}_l}}{ct\log{t}}\leq -\frac{c}{2t}\mathfrak{L}_l+\frac{1}{c^3t(\log{t})^2}.
\end{align}
By \eqref{mathfrakLlhutoushiki} and Gronwall's inequality, we obtain
\begin{align*}
\mathfrak{L}_l\lesssim \frac{1}{(\log{t})^2},
\end{align*}
and we obtain 
\begin{align}\label{l1l2es}
|l_1|+|l_2|\lesssim \frac{1}{\log{t}}.
\end{align}
By \eqref{l1l2def} and \eqref{l1l2es}, we complete the proof.
\end{proof}

We next use $L_1,L_2,L_3$ to estimate the differences $\rho_i-L_i$. We begin by showing that these differences remain bounded.

\begin{lemma}\label{rhoi-Liyuukailem}
Let $\vec{u}$ be a 1-line-type solution. Then, we have
\begin{align}\label{rhii-Liyuukai}
\left|\rho_1-L_1\right|+\left|\rho_2-L_2\right|+\left|\rho_3-L_3\right|\lesssim 1.
\end{align}
\end{lemma}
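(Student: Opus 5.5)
Since $\mathcal{F}(D)\sim t^{-1}$ (Lemma \ref{Fsimt-1lem}) and $L_i=L+O(1)$ with $|L-\log t+\frac{d-1}{2}\log\log t|\lesssim1$ (Lemma \ref{Liasymptotics} and \eqref{Lnagasa}), it suffices to show that $|\rho_i-L|\lesssim 1$ for $i=1,2,3$. The plan has two parts. First we obtain one-sided bounds from the known information. Then we close the remaining direction with a maximum-principle argument on the system \eqref{1-line-typenagasaode}.

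\textbf{Lower bound.} We have $\rho_i\geq D$ and $|D-L|\lesssim 1$ by Lemma \ref{Fsimt-1lem}. Hence $\rho_i\geq L-C$.

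\textbf{Upper bound.} Since $\vec{u}$ is 1-line-type, Lemma \ref{tenretujouken} and Lemma \ref{Yhishi} identify the limit weights along every sequence: $\alpha_1=\alpha_3=\frac{3}{10}$ and $\alpha_2=\frac{2}{5}$, all positive. The limit weights are the same for every subsequence, because $\Gamma(\vec{u})$ stays near $\Omega_{l,1}$, where the limits are forced. By \eqref{alphanoteigi}, this gives $\mathcal{L}\mathcal{F}(\rho_i)/(\rho_iV)\to\alpha_i>0$ along all sequences $t\to\infty$. I expect this to require rechecking the chosen times $s_n$ versus arbitrary times; if that fails, I fall back on the direct ODE argument described next. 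Granting it, $\mathcal{F}(\rho_i)\sim V\sim\mathcal{F}(D)$, so $\rho_i\leq D+C\leq L+C$.

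\textbf{Fallback ODE argument.} Suppose the weight identification only holds along special sequences. I would then argue directly with \eqref{1-line-typenagasaode}, using $\mathfrak{a}_{12},\mathfrak{a}_{23}\to0$ from \eqref{linenaisekiga1}. Take a large constant $C$ and suppose $\rho_j-\min_i\rho_i$ reaches $C$. Consider the index $j$ with the largest $\rho_j$. If $j=1$ and $\rho_1\geq\rho_2+C$, then
\begin{align*}
\dot\rho_1-\dot\rho_2\leq 3\mathcal{F}(\rho_1)-2\mathcal{F}(\rho_2)+\mathcal{F}(\rho_3)+o(\mathcal{F}(D)).
\end{align*}
This is negative, since $\mathcal{F}(\rho_1)\ll\mathcal{F}(\rho_2)$ and $\mathcal{F}(\rho_3)\leq\mathcal{F}(\rho_2)$ hold when $\rho_3\geq\rho_2$. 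If instead $\rho_3<\rho_2$, compare $\rho_1$ with $\rho_3$ in the same way. The case $j=2$ is similar: $\dot\rho_2\leq 2\mathcal{F}(\rho_2)-\mathcal{F}(\rho_1)-\mathcal{F}(\rho_3)+o$ is negative, while the smallest $\rho_k$ satisfies $\dot\rho_k\geq\mathcal{F}(\rho_k)(1-o(1))>0$ whenever $k$ is an endpoint index $1$ or $3$. The main obstacle is the case where the minimum is $\rho_2$ while $\rho_1$ is much larger. Then
\begin{align*}
\dot\rho_2\approx 2\mathcal{F}(\rho_2)-\mathcal{F}(\rho_3)\geq \mathcal{F}(\rho_2),
\end{align*}
while $\dot\rho_1\approx-\mathcal{F}(\rho_2)$. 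Thus the gap $\rho_1-\rho_2$ has derivative $\leq-2\mathcal{F}(\rho_2)+o(\mathcal{F}(D))<0$, which is good. With these cases, a bootstrap like the one in the proof of Lemma \ref{|R|-rholem} gives $\max_i\rho_i-\min_i\rho_i\lesssim1$. This also handles the errors $O(t^{-\theta})=o(\mathcal{F}(D))$.

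\textbf{Conclusion.} Combining the steps, $\rho_i=D+O(1)=L+O(1)=L_i+O(1)$, which is \eqref{rhii-Liyuukai}.
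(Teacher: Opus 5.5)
Your fallback ODE argument is correct and is essentially the paper's proof. The paper likewise uses \eqref{1-line-typenagasaode} with $\mathfrak{a}_{12}+\mathfrak{a}_{23}+\mathfrak{a}_{13}<\frac{1}{10}$. It compares the largest and smallest of $\rho_1,\rho_2,\rho_3$ in all six orderings to get $\frac{d}{dt}\bigl(\max_i\rho_i-D\bigr)<-\mathcal{F}(D)$ whenever $\max_i\rho_i-D\geq\log5$. It then concludes from $0\leq\rho_i-D\lesssim1$, $|D-L|\lesssim1$ and $L_i=L+O(1)$, exactly as you do. Your cases $j=1,2$, together with the $1\leftrightarrow3$ symmetry of the system, cover all six orderings; your ``main obstacle'' is already covered by your case $j=1$.

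One step should be made explicit. The computation at the first time the gap reaches $C$ only gives forward invariance of $\{\max_i\rho_i-D\leq C\}$. You must then do one of two things:
\begin{enumerate}
\item Start the bootstrap at a fixed large $T$ with $C\geq\max_i\rho_i(T)-D(T)$. This is legitimate, since your estimates hold whenever the gap is $\geq C$, and the implicit constant may depend on $\vec{u}$.
\item Argue as the paper does. The gap decreases at rate $\gtrsim\mathcal{F}(D)\sim t^{-1}$ as long as it exceeds the threshold, and this rate is not integrable, so the gap must eventually enter the bootstrap region.
\end{enumerate}

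Your primary route for the upper bound does fail, for the reason you suspected. In Lemma~\ref{tenretujouken}, the weights in \eqref{alphanoteigi} are limits of $\beta_{i,n}$ only along the special times $s_n$ from Lemma~\ref{tenretulemma1}, where the Cauchy--Schwarz deficit tends to zero. Only there does $\mathtt{W}_i\to\lambda Y_i$ hold, and hence the force-balance relation that forces $(\alpha_1,\alpha_2,\alpha_3)=(\frac{3}{10},\frac{2}{5},\frac{3}{10})$.

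At a general time, $\operatorname{dist}(\Gamma(\vec{u})(t),\Omega_{l,1})\to0$ only yields $\rho_i/\mathcal{L}\to1$, i.e.\ $\rho_i=D+o(D)$. This is compatible with, for example, $\rho_1-D\sim\sqrt{D}$, in which case $\mathcal{L}\mathcal{F}(\rho_1)/(\rho_1V)\to0$. Excluding such behaviour is exactly what the lemma asserts. So the claim that the weights are forced along all sequences is circular.

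The special-time information can at best supply times $s_n\to\infty$ with $\max_i\rho_i(s_n)-D(s_n)\lesssim1$. That is an entry point for the bootstrap, an alternative to the non-integrability step above. The forward-invariance computation of your fallback is still indispensable.
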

\begin{proof}
First, we note that by \eqref{rho=D} and \eqref{i-typedefes}, $D=\min_{i=1,2,3}\rho_i$ and by \eqref{linenaisekiga1}, we may assume that we have for $t\gg 1$
\begin{align*}
0\leq \mathfrak{a}_{12}+\mathfrak{a}_{23}+\mathfrak{a}_{13}<\frac{1}{10}.
\end{align*}
We assume that there exists $t^{\prime}\gg 1$ such that 
\begin{align*}
\max_{i=1,2,3}\rho_i(t^{\prime})\leq D(t^{\prime})+\log{5}.
\end{align*}
Then, we introduce the following bootstrap estimate
\begin{align}\label{rho-Lnotame1}
\max_{i=1,2,3}\rho_i(t)\leq D(t)+\log{5}.
\end{align}
We define $T_1\in [t^{\prime},\infty]$ by
\begin{align}\label{rho-Lyuukaibs}
T_1=\sup{\{t\in[t^{\prime},\infty)\ \mbox{such that}\ \eqref{rho-Lnotame1}\ \mbox{holds on}\ [t^{\prime},t] \}}.
\end{align}
Furthermore, we assume $T_1<\infty$. Then, we have at $t=T_1$ $\max_{i=1,2,3}\rho_i(t)=D(t)+\log{5}$. We distinguish the following six possible orderings.

Case 1: $\rho_1\leq \rho_2\leq \rho_3$. Then, we have at $t=T_1$
\begin{align*}
\dot{\rho}_3-\dot{\rho}_1&=-2\mathcal{F}(\rho_1)+2\mathcal{F}(\rho_3)+\left(\mathfrak{a}_{23}-\mathfrak{a}_{12}\right)\mathcal{F}(\rho_2)+o\left(\mathcal{F}(D)\right)
\\
&\leq -2\mathcal{F}(D)+\frac{1}{10}\mathcal{F}(\rho_2)+2\mathcal{F}(\rho_3)+o\left(\mathcal{F}(D)\right)
\\
&\leq  -2\mathcal{F}(D)+\frac{1}{10}\mathcal{F}(D)+\frac{1}{2}\mathcal{F}(D)+o\left(\mathcal{F}(D)\right)
\\
&<-\mathcal{F}(D).
\end{align*}

Case 2: $\rho_1\leq \rho_3\leq \rho_2$. Then, we have at $t=T_1$
\begin{align*}
\dot{\rho}_2-\dot{\rho}_1&=-\left(3-\mathfrak{a}_{12}\right)\mathcal{F}(\rho_1)+\left(3-\mathfrak{a}_{12}\right)\mathcal{F}(\rho_2)
\\
&\quad -\left(1-\mathfrak{a}_{23}\right)\mathcal{F}(\rho_3)+o\left(\mathcal{F}(D)\right)
\\
&\leq -\frac{29}{10}\mathcal{F}(D)+3\mathcal{F}(\rho_2)+o\left(\mathcal{F}(D)\right)
\\
&\leq\left(-\frac{29}{10}+\frac{3}{4}\right)\mathcal{F}(D)+o\left(\mathcal{F}(D)\right)
\\
&<-\mathcal{F}(D).
\end{align*}

Case 3: $\rho_2\leq \rho_1\leq \rho_3$. Then, we have at $t=T_1$
\begin{align*}
\dot{\rho}_3-\dot{\rho}_2&=-\left(3-\mathfrak{a}_{23}\right)\mathcal{F}(\rho_2)+\left(3-\mathfrak{a}_{23}\right)\mathcal{F}(\rho_3)
\\
&\quad+\left(1-\mathfrak{a}_{12}\right)\mathcal{F}(\rho_1)+o\left(\mathcal{F}(D)\right)
\\
&\leq -\frac{29}{10}\mathcal{F}(D)+3\mathcal{F}(\rho_3)+\mathcal{F}(\rho_1)+o\left(\mathcal{F}(D)\right)
\\
&\leq -\frac{29}{10}\mathcal{F}(D)+\frac{3}{4}\mathcal{F}(D)+\mathcal{F}(D)+o\left(\mathcal{F}(D)\right)
\\
&<-\mathcal{F}(D).
\end{align*}

Case 4: $\rho_2\leq \rho_3\leq \rho_1$. Then, we have at $t=T_1$
\begin{align*}
\dot{\rho}_1-\dot{\rho}_2&=-\left(3-\mathfrak{a}_{12}\right)\mathcal{F}(\rho_2)+\left(3-\mathfrak{a}_{12}\right)\mathcal{F}(\rho_1)
\\
&\quad+\left(1-\mathfrak{a}_{23}\right)\mathcal{F}(\rho_3)+o\left(\mathcal{F}(D)\right)
\\
&\leq -\frac{29}{10}\mathcal{F}(D)+3\mathcal{F}(\rho_1)+\mathcal{F}(\rho_3)+o\left(\mathcal{F}(D)\right)
\\
&\leq -\frac{29}{10}\mathcal{F}(D)+\frac{3}{4}\mathcal{F}(D)+\mathcal{F}(D)+o\left(\mathcal{F}(D)\right)
\\
&<-\mathcal{F}(D).
\end{align*}

Case 5: $\rho_3\leq \rho_1\leq \rho_2$. Then, we have at $t=T_1$
\begin{align*}
\dot{\rho}_2-\dot{\rho}_3&=-\left(3-\mathfrak{a}_{23}\right)\mathcal{F}(\rho_3)+\left(3-\mathfrak{a}_{23}\right)\mathcal{F}(\rho_2)
\\
&\quad -\left(1-\mathfrak{a}_{12}\right)\mathcal{F}(\rho_1)+o\left(\mathcal{F}(D)\right)
\\
&\leq -\frac{29}{10}\mathcal{F}(D)+3\mathcal{F}(\rho_2)+o\left(\mathcal{F}(D)\right)
\\
&\leq \left(-\frac{29}{10}+\frac{3}{4}\right)\mathcal{F}(D)+o\left(\mathcal{F}(D)\right)
\\
&<-\mathcal{F}(D).
\end{align*}
Case 6: $\rho_3\leq \rho_2\leq \rho_1$. Then, we have at $t=T_1$
\begin{align*}
\dot{\rho}_1-\dot{\rho}_3&=-2\mathcal{F}(\rho_3)+2\mathcal{F}(\rho_1)+\left(\mathfrak{a}_{12}-\mathfrak{a}_{23}\right)\mathcal{F}(\rho_2)+o\left(\mathcal{F}(D)\right)
\\
&\leq -2\mathcal{F}(D)+\frac{1}{10}\mathcal{F}(\rho_2)+2\mathcal{F}(\rho_1)+o\left(\mathcal{F}(D)\right)
\\
&\leq  -2\mathcal{F}(D)+\frac{1}{10}\mathcal{F}(D)+\frac{1}{2}\mathcal{F}(D)+o\left(\mathcal{F}(D)\right)
\\
&<-\mathcal{F}(D).
\end{align*}

The preceding estimates contradict the maximality of $T_1$ unless $T_1=\infty$. Hence, $T_1=\infty$. Therefore, if there exists $t^{\prime}\gg 1$ such that $\max_{i=1,2,3}\rho_i(t^{\prime})\leq D(t^{\prime})+\log{5}$, we have for $t\geq t^{\prime}$, $\max_{i=1,2,3}\rho_i(t)\leq D(t)+\log{5}$. Here, we assume that there exists $t_n\to\infty$ such that 
\begin{align*}
\max_{i=1,2,3}\rho_i(t_n)>D(t_n)+\log{5}.
\end{align*}
By the preceding argument, if $\max_{i=1,2,3}\rho_i(t)\leq D(t)+\log{5}$ for some $t>t_n$, then there would exist a sufficiently large $n^{\prime}$ such that $\max_{i=1,2,3}\rho_i(t_{n^{\prime}})\leq D(t_{n^{\prime}})+\log{5}$, which is a contradiction. Therefore, $\max_{i=1,2,3}\rho_i(t)>D(t)+\log{5}$ for every $t>t_n$. By the same computation as in the six cases above, for $T\gg t_n$, we obtain
\begin{align*}
-\left(\max_{i=1,2,3}\rho_i(T)-D(T)\right)+\left(\max_{i=1,2,3}\rho_i(t_n)-D(t_n)\right)\gtrsim \int_{t_n}^T\mathcal{F}(D(s))ds\sim \int_{t_n}^T\frac{ds}{s},
\end{align*}
which implies $\max_{i=1,2,3}\rho_i(T)<D(T)$. This is a contradiction. Therefore, we have for all $t\gg 1$ and $i=1,2,3$,
\begin{align}\label{bshakairho-L}
0\leq \rho_i-D\leq \log{5}.
\end{align}
By \eqref{Lnagasa}, \eqref{mathcalFsimt-1}, and \eqref{bshakairho-L}, we have 
\begin{align}\label{rhoi-Lbound0822}
|\rho_i-L|\lesssim 1.
\end{align}
By \eqref{Linagasa} and \eqref{rhoi-Lbound0822}, we obtain \eqref{rhii-Liyuukai}.
\end{proof}
We now use Lemma \ref{rhoi-Liyuukailem} to refine the estimates for $\rho_i-L_i$. In analyzing the long-time behavior of $i$-line-type solutions, it is essential to obtain an error term of order $t^{-\theta}$ for some $\theta>0$. Accordingly, from this point onward, we focus on establishing the existence of such a positive exponent $\theta$, rather than on determining its precise value.
\begin{lemma}\label{rhoinagasalem}
Let $\vec{u}$ be a 1-line-type solution. Then, there exists $0<\theta_1<\min{(p-2,\frac{1}{2})}$ such that, for $t\gg 1$
\begin{align}\label{rhoinagasalemes}
\begin{aligned}
\rho_1&=L_1+O\left(|\mathfrak{a}|+t^{-\theta_1}\right),
\\
\rho_2&=L_2+O\left(|\mathfrak{a}|+t^{-\theta_1}\right),
\\
\rho_3&=L_3+O\left(|\mathfrak{a}|+t^{-\theta_1}\right).
\end{aligned}
\end{align}
\end{lemma}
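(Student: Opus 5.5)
We linearize the system \eqref{1-line-typenagasaode} around the reference solution $(L_1,L_2,L_3)$ of \eqref{L_iode}, and close the argument with a Lyapunov functional of the same type as $\mathfrak{L}_l$ in the proof of Lemma \ref{Liasymptotics}.

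Set $h_i=\rho_i-L_i$ for $i=1,2,3$. By Lemma \ref{rhoi-Liyuukailem}, each $h_i$ is bounded. Together with \eqref{tukaeruF1} and Lemma \ref{Liasymptotics}, this gives
\begin{align*}
\mathcal{F}(\rho_i)=e^{-h_i}\mathcal{F}(L_i)+O\left(\frac{1}{t\log t}\right),\qquad \mathcal{F}(L_1)=\mathcal{F}(L_3)=\tfrac32\mathcal{F}(L)+O\left(\frac{1}{t\log t}\right),\qquad \mathcal{F}(L_2)=2\mathcal{F}(L)+O\left(\frac{1}{t\log t}\right).
\end{align*}
The error $O(1/(t\log t))$ is too weak: it would only produce $|h_i|\lesssim 1/\log t$. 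So we keep the exact quantities $\mathcal{F}(L_i)$ and expand $\mathcal{F}(\rho_i)=\mathcal{F}(L_i+h_i)$ directly with \eqref{mathcalFhyouka1}. This gives $\mathcal{F}(L_i+h_i)=\mathcal{F}(L_i)e^{-h_i}\bigl(1+O(|h_i|/L_i)\bigr)$, where the relative error is proportional to $h_i$.

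Subtracting \eqref{L_iode} from \eqref{1-line-typenagasaode}, and writing $\phi_i=e^{-h_i}-1$ and $F_i=\mathcal{F}(L_i)$, we get
\begin{align*}
\dot h_1&=2F_1\phi_1-F_2\phi_2+O\bigl(\mathcal{F}(D)(|\mathfrak{a}|+|h|/\log t)+t^{-\theta}\bigr),
\end{align*}
and analogously for $\dot h_2$ and $\dot h_3$. Here the $\mathfrak{a}$-terms come from $\mathcal{F}(\rho_2)\mathfrak{a}_{12}$ and similar products.

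We then take the Lyapunov functional
\begin{align*}
\mathfrak{M}=\sum_{i=1}^3 c_i\left(e^{-h_i}-1+h_i\right),
\end{align*}
with weights chosen to symmetrize the matrix $\begin{pmatrix}2F_1&-F_2&0\\-F_1&2F_2&-F_3\\0&-F_2&2F_3\end{pmatrix}$. The choice $c_i=1/F_i$ works, or, to leading order, $c=(1/\tfrac32,\,1/2,\,1/\tfrac32)$ times $1/\mathcal{F}(L)$. With these weights the symmetric matrix is the discrete Laplacian $\begin{pmatrix}2&-1&0\\-1&2&-1\\0&-1&2\end{pmatrix}$, which is positive definite. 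Then, exactly as in the proof of Lemma \ref{Liasymptotics},
\begin{align*}
\dot{\mathfrak{M}}\le -\frac{c}{t}\mathfrak{M}+C\sqrt{\mathfrak{M}}\left(\frac{|\mathfrak{a}|}{t}+t^{-\theta}+\frac{\mathfrak{M}^{1/2}}{t\log t}\right).
\end{align*}
For large $t$ the last term is absorbed into the dissipative one. The time-dependent weights $1/F_i$ contribute a derivative of size $O(\dot F_i/F_i^2)\,\mathfrak{M}$, which is $O(\mathfrak{M}/t)$ with a small constant after accounting for the ratio structure; this needs checking. It is safer to use the constant weights $1/\mathcal{F}(L)$ times the fixed coefficients, since their derivatives enter only through $\dot{\mathcal{F}}(L)/\mathcal{F}(L)\sim -1/t$ multiplied by an already-normalized quantity.

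Gronwall then gives
\begin{align*}
\sqrt{\mathfrak{M}(t)}\lesssim t^{-c/2}\int^t s^{c/2-1}|\mathfrak{a}(s)|\,ds+t^{-\theta_1}.
\end{align*}

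\textbf{Main obstacle.} The conclusion involves the pointwise value $|\mathfrak{a}(t)|$, not a time average, so the plan needs $|\mathfrak{a}|$ to vary slowly in a multiplicative sense. From \eqref{kinenaisekiode}, $|\dot{\mathfrak{a}}|\lesssim \frac{|\mathfrak{a}|}{t\log t}+\frac{t^{-\theta}}{\log t}$. Hence $|\mathfrak{a}(s)|\lesssim |\mathfrak{a}(t)|(\log t/\log s)^{C}+t^{-\theta'}$ for $s\in[\sqrt t,t]$, so on the bulk of the integral $|\mathfrak{a}(s)|\lesssim|\mathfrak{a}(t)|+t^{-\theta'}$. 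The range $s<\sqrt t$ contributes at most $t^{-c/4}$. Choosing $\theta_1<\min(c/4,\theta',p-2,\tfrac12)$ then yields \eqref{rhoinagasalemes}. The delicate points are making this slow-variation estimate rigorous and checking that the $\mathfrak{a}$-coefficients in $\dot h_i$ carry no hidden cancellation problem.
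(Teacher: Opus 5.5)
Your argument is essentially the paper's. The paper also sets $\zeta_i=\rho_i-L_i$ and uses \eqref{tukaeruF1}, so the error in $\mathcal F(\rho_i)\approx e^{-\zeta_i}\mathcal F(L_i)$ is $O(|\zeta|/(t\log t))$ rather than $O(1/(t\log t))$. It treats the $\mathfrak a$-terms simply as $O(|\mathfrak a|/t)$ errors, so no cancellation is needed, and it closes with a functional $\sum_i c_i(e^{-\zeta_i}+\zeta_i-1)$ and Gronwall.

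Your weights, however, are wrong as stated. Write $\dot h=AF\phi+\text{error}$ with $F=\operatorname{diag}(F_1,F_2,F_3)$ and $A$ the discrete Laplacian. The matrix $(c_iA_{ij}F_j)$ is symmetric if and only if $c_i\propto F_i$, and then the form is $\psi^{\mathsf T}A\psi$ with $\psi=F\phi$. So the symmetrizing weights are $\mathcal F(L_i)/\mathcal F(L_2)\to(\frac34,1,\frac34)$, which are the paper's constant weights in $\mathfrak L$, not $1/F_i$. With $c_i\propto 1/F_i$ you get $F^{-1}AF$, whose symmetric part has, to leading order, off-diagonal entries $-\frac{25}{24}$. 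It happens to remain positive definite (smallest eigenvalue $2-\frac{25\sqrt2}{24}>0$), but not for the reason you give.

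You should also not multiply the weights by $1/\mathcal F(L)$. Since $\frac{d}{dt}\mathcal F(L)^{-1}=-\mathcal F'(L)/\mathcal F(L)=1+O(1/L)$, that factor grows like $t$. Its derivative adds $+(1+o(1))\mathfrak M/t$ to $\dot{\mathfrak M}$, which is the same order as the dissipation, so it is not negligible. With constant weights, $\mathfrak M\sim|h|^2$ and your differential inequality is exactly \eqref{mathfrakLhyouka1}.

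The one real difference is the last step, where the time average of $\mathfrak a$ must be turned into $|\mathfrak a(t)|$. The paper first uses Young's inequality to reach $\dot{\mathfrak L}\le-\frac{c}{4t}\mathfrak L+C\frac{|\mathfrak a|^2}{t}+Ct^{1-2\theta'}$. It then integrates $\int r^{c/4-1}|\mathfrak a|^2\,dr$ by parts and absorbs the resulting $\frac{d}{dr}|\mathfrak a|^2$ term using \eqref{anoteiginadoed} together with $\log T'>8C_3/c$.

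You instead work linearly with $\sqrt{\mathfrak M}$ and $|\mathfrak a|$, split the integral at $\sqrt t$, and compare $|\mathfrak a(s)|$ with $|\mathfrak a(t)|$ by a backward Gronwall. That step, which you flag as delicate, is fine. The bound $|\dot{\mathfrak a}|\le C\bigl(\frac{|\mathfrak a|}{t\log t}+\frac{t^{-\theta}}{\log t}\bigr)$ gives $\frac{d}{d\tau}\bigl(|\mathfrak a(\tau)|(\log\tau)^C\bigr)\ge-C\tau^{-\theta}(\log\tau)^{C-1}$. Hence $|\mathfrak a(s)|\lesssim|\mathfrak a(t)|+t^{-(\theta-1)/2}$ for $s\in[\sqrt t,t]$, while the range $[T,\sqrt t]$ contributes only $O(t^{-c/4})$.

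Both routes rest on the same slow-variation bound for $\mathfrak a$. Yours avoids the absorption step and is slightly more transparent.
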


\begin{remark}
All constants denoted by $\theta_i$ with numerical subscripts, starting with $\theta_1$ introduced in this lemma, are fixed once and for all and retain the same meaning throughout the remainder of the paper.
\end{remark}

\begin{proof}
First, we note that by Lemma \ref{rhoi-Liyuukailem}, we have
\begin{align}\label{mathcalFrhoknosim}
\mathcal{F}(\rho_1)\sim \mathcal{F}(\rho_2)\sim \mathcal{F}(\rho_3)\sim \frac{1}{t}.
\end{align}
We define $\zeta_1,\zeta_2,\zeta_3$ as 
\begin{align}\label{zetaidef}
\begin{aligned}
\zeta_1&=\rho_1-L_1,
\\
\zeta_2&=\rho_2-L_2,
\\
\zeta_3&=\rho_3-L_3,
\\
|\zeta|&=\sqrt{\zeta_1^2+\zeta_2^2+\zeta_3^2}.
\end{aligned}
\end{align}
First, we calculate $\dot{\zeta}_1$. By Lemma \ref{line-type-naisekinagasaode} and \eqref{mathcalFrhoknosim} and direct computation, we have for $t\gg 1$
\begin{align}\label{dotzeta1es1}
\begin{aligned}
\dot{\zeta}_1&=\dot{\rho}_1-\dot{L}_1
\\
&=2\mathcal{F}(\rho_1)-\mathcal{F}(\rho_2)-2\mathcal{F}(L_1)+\mathcal{F}(L_2)+O\left(\frac{|\mathfrak{a}|}{t}+t^{-\theta^{\prime}}\right),
\end{aligned}
\end{align}
where $1<\theta^{\prime}<\min{(p-1,\frac{3}{2})}$ is a constant. Furthermore, by \eqref{tukaeruF1}, we have
\begin{align}\label{rho1rho2es1}
\begin{aligned}
\left|\mathcal{F}(\rho_1)-e^{-\zeta_1}\mathcal{F}(L_1)\right|&\lesssim \frac{|\zeta|}{t\log{t}},
\\
\left|\mathcal{F}(\rho_2)-e^{-\zeta_2}\mathcal{F}(L_2)\right|&\lesssim \frac{|\zeta|}{t\log{t}},
\\
\left|\mathcal{F}(\rho_3)-e^{-\zeta_3}\mathcal{F}(L_3)\right|&\lesssim \frac{|\zeta|}{t\log{t}}.
\end{aligned}
\end{align}
By \eqref{tukaeruF1}, \eqref{Linagasa}, \eqref{dotzeta1es1}, and \eqref{rho1rho2es1}, we obtain
\begin{align*}
\dot{\zeta}_1&=2\left(e^{-\zeta_1}-1\right)\mathcal{F}(L_1)-\left(e^{-\zeta_2}-1\right)\mathcal{F}(L_2)+O\left(\frac{|\mathfrak{a}|}{t}+\frac{|\zeta|}{t\log{t}}+t^{-\theta^{\prime}}\right)
\\
&=\left\{ \frac{3}{2}\left(e^{-\zeta_1}-1\right)-\left(e^{-\zeta_2}-1\right)\right\}\mathcal{F}(L_2)+O\left(\frac{|\mathfrak{a}|}{t}+\frac{|\zeta|}{t\log{t}}+t^{-\theta^{\prime}}\right).
\end{align*}
Using the same estimates together with the equations for $\dot{\rho}_2,\dot{\rho}_3$ in
\eqref{1-line-typenagasaode} and those for $\dot{L}_2,\dot{L}_3$ in \eqref{L_iode}, we obtain
\begin{align*}
\dot{\zeta}_2&=\left\{-\frac{3}{4}\left(e^{-\zeta_1}-1\right)+2\left(e^{-\zeta_2}-1\right)-\frac{3}{4}\left(e^{-\zeta_3}-1\right)\right\}\mathcal{F}(L_2)
\\
&\quad+O\left(\frac{|\mathfrak{a}|}{t}+\frac{|\zeta|}{t\log{t}}+t^{-\theta^{\prime}}\right),
\\
\dot{\zeta}_3&=\left\{-\left(e^{-\zeta_2}-1\right)+\frac{3}{2}\left(e^{-\zeta_3}-1\right) \right\}\mathcal{F}(L_2)+O\left(\frac{|\mathfrak{a}|}{t}+\frac{|\zeta|}{t\log{t}}+t^{-\theta^{\prime}}\right).
\end{align*}
Next, we define
\begin{align*}
\mathfrak{L}=\frac{3}{4}\left(e^{-\zeta_1}+\zeta_1-1\right)+\left(e^{-\zeta_2}+\zeta_2-1\right)+\frac{3}{4}\left(e^{-\zeta_3}+\zeta_3-1\right).
\end{align*}
Since $|\zeta|$ is bounded, there exist $C_1>0, C_2>0$ such that for $t\gg 1$
\begin{align}\label{mathfraksim1}
C_1|\zeta(t)|^2\leq \mathfrak{L}(t)\leq C_2|\zeta(t)|^2.
\end{align}
Furthermore, we have
\begin{align*}
\dot{\mathfrak{L}}&=\frac{3}{4}\dot{\zeta}_1\left(1-e^{-\zeta_1}\right)+\dot{\zeta}_2\left(1-e^{-\zeta_2}\right)+\frac{3}{4}\dot{\zeta}_3\left(1-e^{-\zeta_3}\right)
\\
&=-\left\{ \frac{9}{8}\left(1-e^{-\zeta_1}\right)^2+2\left(1-e^{-\zeta_2}\right)^2+\frac{9}{8}\left(1-e^{-\zeta_3}\right)^2\right\}\mathcal{F}(L_2)
\\
&\quad+\left\{\frac{3}{2}\left(1-e^{-\zeta_1}\right)\left(1-e^{-\zeta_2}\right)+\frac{3}{2}\left(1-e^{-\zeta_2}\right)\left(1-e^{-\zeta_3}\right) \right\}\mathcal{F}(L_2)
\\
&\quad+O\left(\frac{|\mathfrak{a}||\zeta|}{t}+\frac{|\zeta|^2}{t\log{t}}+|\zeta|t^{-\theta^{\prime}}\right).
\end{align*}
Thus, there exists $c>0$ such that, for all sufficiently large $t$,
\begin{align}\label{mathfrakLhyouka1}
\dot{\mathfrak{L}}\leq -\frac{c}{t}\mathfrak{L}+\frac{1}{c}\left(\frac{|\mathfrak{a}||\zeta|}{t}+\frac{|\zeta|^2}{t\log{t}}+|\zeta|t^{-\theta^{\prime}}\right).
\end{align}
Furthermore, by direct computation, we have
\begin{align}\label{keisan0818-1}
\begin{aligned}
|\mathfrak{a}||\zeta|&\leq \frac{C_1c^2}{4}|\zeta|^2+\frac{1}{C_1c^2}|\mathfrak{a}|^2\leq \frac{c^2}{4}\mathfrak{L}(t)+\frac{1}{C_1c^2}|\mathfrak{a}(t)|^2,
\\
|\zeta|t^{-\theta^{\prime}}&\leq \frac{C_1c^2}{4t}|\zeta |^2+\frac{1}{C_1c^2}t^{1-2\theta^{\prime}}\leq \frac{c^2}{4t}\mathfrak{L}(t)+\frac{1}{C_1c^2}t^{1-2\theta^{\prime}}.
\end{aligned}
\end{align}
Since $t$ is sufficiently large, we have for $t\gg 1$
\begin{align}\label{keisan0818-2}
\frac{|\zeta(t)|^2}{t\log{t}}\leq \frac{c^2}{4t}\mathfrak{L}(t).
\end{align}
By \eqref{mathfrakLhyouka1}, \eqref{keisan0818-1}, and \eqref{keisan0818-2}, we have for $t\gg 1$
\begin{align}\label{mathfrakLtukaubibunhutousiki}
\dot{\mathfrak{L}}\leq -\frac{c}{4t}\mathfrak{L}+\frac{|\mathfrak{a}(t)|^2}{C_1c^3t}+\frac{1}{C_1c^3}t^{1-2\theta^{\prime}}.
\end{align}
Here, we define
\begin{align}\label{mathfrakHdef}
\mathfrak{H}(t)=t^{\frac{c}{4}}\mathfrak{L}(t).
\end{align}
Then, by \eqref{mathfrakLtukaubibunhutousiki}, we have
\begin{align*}
\dot{\mathfrak{H}}\leq \frac{1}{C_1c^3}t^{\frac{c}{4}-1}|\mathfrak{a}|^2+\frac{1}{C_1c^3}t^{1+\frac{c}{4}-2\theta^{\prime}},
\end{align*}
and therefore we have for $1\ll s<t$,
\begin{align}\label{mathfrakLnojunbi}
t^{\frac{c}{4}}\mathfrak{L}(t)-s^{\frac{c}{4}}\mathfrak{L}(s)\leq \frac{1}{C_1c^3}\int_s^t r^{\frac{c}{4}-1}|\mathfrak{a}(r)|^2dr+\frac{1}{C_1c^3}\int_s^t r^{1+\frac{c}{4}-2\theta^{\prime}}dr.
\end{align}
We note that by \eqref{kinenaisekiode}, there exist $C_3$ and $T^{\prime}\gg 1$ and $1<\theta^{\prime \prime}<\frac{3}{2}$ such that for $t\geq T^{\prime}$
\begin{align}\label{anoteiginadoed}
\begin{aligned}
\left|\frac{d}{dt}|\mathfrak{a}|^2\right|&\leq \frac{C_3|\mathfrak{a}|^2}{t\log{t}}+C_3t^{-\theta^{\prime \prime}},
\\
\log{T^{\prime}}&>\frac{8C_3}{c}.
\end{aligned}
\end{align}
Then, by direct computation, we have for $T^{\prime}\leq s<t$,
\begin{align}\label{Lhyoukanotame0818}
\begin{aligned}
\frac{c}{4}\int_s^tr^{\frac{c}{4}-1}|\mathfrak{a}(r)|^2dr&=\left(t^{\frac{c}{4}}|\mathfrak{a}(t)|^2-s^{\frac{c}{4}}|\mathfrak{a}(s)|^2\right)-\int_s^tr^{\frac{c}{4}}\frac{d}{dr}|\mathfrak{a}(r)|^2dr
\\
&\leq \left(t^{\frac{c}{4}}|\mathfrak{a}(t)|^2-s^{\frac{c}{4}}|\mathfrak{a}(s)|^2\right)+\int_s^t \frac{C_3r^{\frac{c}{4}-1}|\mathfrak{a}(r)|^2}{\log{r}}dr
\\
&\quad+C_3\int_s^t r^{\frac{c}{4}-\theta^{\prime \prime}}dr.
\end{aligned}
\end{align}
We now apply \eqref{mathfrakLnojunbi}, \eqref{anoteiginadoed}, and \eqref{Lhyoukanotame0818} with $s=T^{\prime}$. Absorbing the terms depending only on $T^{\prime}$ into the implicit constant, we obtain
\begin{align*}
t^{\frac{c}{4}}\mathfrak{L}(t)\lesssim 1+t^{\frac{c}{4}}|\mathfrak{a}(t)|^2+\int_{T^{\prime}}^t r^{\frac{c}{4}-\theta^{\prime\prime}}dr+\int_{T^{\prime}}^t r^{1+\frac{c}{4}-2\theta^{\prime}}dr.
\end{align*}
Therefore, choosing 
\begin{align*}
0<\theta^{\prime\prime\prime}<\min{(\frac{c}{4},\theta^{\prime}-1,\theta^{\prime\prime}-1)},
\end{align*}
we obtain
\begin{align}\label{mathfrakLeskanryou}
\mathfrak{L}(t)\lesssim |\mathfrak{a}(t)|^2+t^{-\theta^{\prime \prime \prime}}.
\end{align}
By \eqref{mathfraksim1} and \eqref{mathfrakLeskanryou}, when we define $\theta_1=\frac{\theta^{\prime \prime \prime}}{2}$, we obtain \eqref{rhoinagasalemes}.
\end{proof}

\subsection{Inner-product estimates for 1-line-type solutions}

In this subsection, we use the estimates for $\rho_i$ to derive estimates for $\mathfrak{a}_{12}$, $\mathfrak{a}_{23}$, and $\mathfrak{a}_{13}$. We begin by rewriting the differential equations governing these quantities.

\begin{lemma}\label{naisekiodekantan}
Let $\vec{u}$ be a 1-line-type solution. Then, there exists $\theta_2>1$ such that 
\begin{align}\label{kinenaisekiode2}
\begin{aligned}
\dot{\mathfrak{a}}_{12}&=\frac{\mathcal{F}(L_2)}{L}\left(\frac{17}{4}\mathfrak{a}_{12}+\frac{3}{4}\mathfrak{a}_{23}-\frac{3}{4}\mathfrak{a}_{13}\right)+O\left(\frac{|\mathfrak{a}|^2}{t\log{t}}+\frac{|\mathfrak{a}|}{t(\log{t})^2}+t^{-\theta_2}\right),
\\
\dot{\mathfrak{a}}_{23}&=\frac{\mathcal{F}(L_2)}{L}\left(\frac{3}{4}\mathfrak{a}_{12}+\frac{17}{4}\mathfrak{a}_{23}-\frac{3}{4}\mathfrak{a}_{13}\right)+O\left(\frac{|\mathfrak{a}|^2}{t\log{t}}+\frac{|\mathfrak{a}|}{t(\log{t})^2}+t^{-\theta_2}\right),
\\
\dot{\mathfrak{a}}_{13}&=\frac{2\mathcal{F}(L_2)}{L}\mathfrak{a}_{13}+O\left(\frac{|\mathfrak{a}|^2}{t\log{t}}+\frac{|\mathfrak{a}|}{t(\log{t})^2}+t^{-\theta_2}\right).
\end{aligned}
\end{align}
\end{lemma}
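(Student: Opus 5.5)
The plan is to start from the exact-form equations \eqref{kinenaisekiode} of Lemma \ref{line-type-naisekinagasaode}. In each coefficient $\mathcal{F}(\rho_j)/\rho_k$ I will replace $\rho_j,\rho_k$ by the comparison functions, using Lemma \ref{rhoinagasalem} and Lemma \ref{Liasymptotics}; then I collect the linear part and send everything else into the error. All the needed bounds are already available: $\mathcal{F}(L_2)\sim t^{-1}$, $L\sim\log t$, and $|\mathfrak{a}|\to0$ by \eqref{linenaisekiga1}.

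\textbf{Step 1 (the coefficients).} By \eqref{Linagasa} and \eqref{tukaeruF1},
\begin{align*}
\mathcal{F}(L_1)=\mathcal{F}(L_3)=\tfrac{3}{4}\mathcal{F}(L_2)\Bigl(1+O\bigl(\tfrac{1}{\log t}\bigr)\Bigr).
\end{align*}
By Lemma \ref{rhoinagasalem} and \eqref{tukaeruF1},
\begin{align*}
\mathcal{F}(\rho_i)=\mathcal{F}(L_i)\bigl(1+O(|\mathfrak{a}|+t^{-\theta_1})\bigr).
\end{align*}
Since $|\rho_i-L|\lesssim1$ (Lemma \ref{rhoi-Liyuukailem}), we also have
\begin{align*}
\frac{1}{\rho_i}=\frac{1}{L}+O\Bigl(\frac{1}{(\log t)^2}\Bigr).
\end{align*}
Putting these together,
\begin{align*}
\frac{\mathcal{F}(\rho_j)}{\rho_k}=c_j\frac{\mathcal{F}(L_2)}{L}+O\Bigl(\frac{|\mathfrak{a}|+t^{-\theta_1}}{t\log t}+\frac{1}{t(\log t)^2}\Bigr),\qquad c_1=c_3=\tfrac34,\ c_2=1.
\end{align*}

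\textbf{Step 2 (the algebra).} I substitute these coefficients into \eqref{kinenaisekiode} and drop the quadratic terms $\mathfrak{a}_{12}^2,\mathfrak{a}_{12}\mathfrak{a}_{23},\dots$, which cost $O(|\mathfrak{a}|^2/(t\log t))$. The linear parts then become:
\begin{itemize}
\item For $\dot{\mathfrak{a}}_{12}$: $2(1+\tfrac34)\mathfrak{a}_{12}+\tfrac34(\mathfrak{a}_{12}+\mathfrak{a}_{23}-\mathfrak{a}_{13})$, which equals $\tfrac{17}{4}\mathfrak{a}_{12}+\tfrac34\mathfrak{a}_{23}-\tfrac34\mathfrak{a}_{13}$.
\item For $\dot{\mathfrak{a}}_{23}$: the same computation with $1\leftrightarrow3$ gives $\tfrac34\mathfrak{a}_{12}+\tfrac{17}{4}\mathfrak{a}_{23}-\tfrac34\mathfrak{a}_{13}$.
\item For $\dot{\mathfrak{a}}_{13}$: the two brackets add up to $2\mathfrak{a}_{13}$, since both coefficients are $\mathcal{F}(L_2)/L$ to leading order.
\end{itemize}
These match \eqref{kinenaisekiode2}.

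\textbf{Step 3 (error bookkeeping).} Each coefficient error from Step 1 multiplies a factor of size $O(|\mathfrak{a}|)$. This produces three kinds of terms:
\begin{itemize}
\item $|\mathfrak{a}|^2/(t\log t)$;
\item $|\mathfrak{a}|/(t(\log t)^2)$;
\item $|\mathfrak{a}|\,t^{-1-\theta_1}/\log t$.
\end{itemize}
The last one I split by Young's inequality:
\begin{align*}
\frac{|\mathfrak{a}|\,t^{-1-\theta_1}}{\log t}\le\frac{|\mathfrak{a}|^2}{t\log t}+t^{-1-2\theta_1}.
\end{align*}
The error $O(t^{-\theta}/\log t)$ already present in \eqref{kinenaisekiode} has $\theta>1$. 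So I can take $\theta_2=\min(\theta,1+2\theta_1)>1$.

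\textbf{Main obstacle.} I expect the only delicate point to be checking that every coefficient error comes multiplied by some $\mathfrak{a}$. The terms not multiplied by $\mathfrak{a}$ must be of size $t^{-\theta_2}$ with $\theta_2>1$, because an $O(1/(t\log t))$ forcing would destroy the later Gronwall-type decay of $\mathfrak{a}$. This holds because \eqref{kinenaisekiode} is homogeneous in $\mathfrak{a}$ apart from its $O(t^{-\theta}/\log t)$ remainder.
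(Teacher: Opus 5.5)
Your proposal is correct and follows essentially the paper's route: replace each coefficient $\mathcal{F}(\rho_j)/\rho_k$ in \eqref{kinenaisekiode} by $c_j\mathcal{F}(L_2)/L$ using Lemma \ref{rhoinagasalem} and Lemma \ref{Liasymptotics}, read off the linear parts $\tfrac{17}{4},\tfrac34,2$, and absorb everything else into the error. The differences are cosmetic. The paper bounds $|\mathfrak{a}|\,t^{-1-\theta_1}$ simply by $t^{-1-\theta_1}$, taking $\theta_2=\theta_1+1$, where you use Young's inequality. Your Step 1 also explicitly records the $O\bigl(1/(t(\log t)^2)\bigr)$ coefficient error coming from $\rho_k-L=O(1)$; the paper's intermediate display omits this term, but its final $|\mathfrak{a}|/(t(\log t)^2)$ error absorbs it.
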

\begin{proof}
By Lemma \ref{rhoinagasalem}, we have for $i,j\in\{1,2,3\}$,
\begin{align*}
\left|\frac{\mathcal{F}(\rho_i)}{\rho_j}-\frac{\mathcal{F}(L_i)}{L}\right|&\lesssim \left|\frac{\mathcal{F}(\rho_i)-\mathcal{F}(L_i)}{\rho_j}\right|+\left|\frac{\mathcal{F}(L_i)(\rho_j-L)}{\rho_jL}\right|
\\
&\lesssim \frac{|\mathfrak{a}|}{t\log{t}}+t^{-\theta_1-1}.
\end{align*}
Therefore, by \eqref{kinenaisekiode} and \eqref{Linagasa}, we have
\begin{align*}
\dot{\mathfrak{a}}_{12}&=\left(\frac{\mathcal{F}(\rho_2)}{\rho_1}+\frac{\mathcal{F}(\rho_1)}{\rho_2}\right)\mathfrak{a}_{12}\left(2-\mathfrak{a}_{12}\right)
\\
&\quad+\frac{\mathcal{F}(\rho_3)}{\rho_2}\left(\mathfrak{a}_{12}+\mathfrak{a}_{23}-\mathfrak{a}_{13}-\mathfrak{a}_{12}\mathfrak{a}_{23}\right)+O\left(\frac{t^{-\theta_1-1}}{\log{t}}\right)
\\
&=\frac{\mathcal{F}(L_1)+\mathcal{F}(L_2)}{L}2\mathfrak{a}_{12}+\frac{\mathcal{F}(L_3)}{L}\left(\mathfrak{a}_{12}+\mathfrak{a}_{23}-\mathfrak{a}_{13}\right)+O\left(\frac{|\mathfrak{a}|^2}{t\log{t}}+\frac{|\mathfrak{a}|}{t(\log{t})^2}+t^{-\theta_1-1}\right)
\\
&=\frac{\mathcal{F}(L_2)}{L}\left(\frac{17}{4}\mathfrak{a}_{12}+\frac{3}{4}\mathfrak{a}_{23}-\frac{3}{4}\mathfrak{a}_{13}\right)+O\left(\frac{|\mathfrak{a}|^2}{t\log{t}}+\frac{|\mathfrak{a}|}{t(\log{t})^2}+t^{-\theta_1-1}\right).
\end{align*}
The corresponding equations for $\dot{\mathfrak{a}}_{23}$ and $\dot{\mathfrak{a}}_{13}$ follow from the same computation. Setting $\theta_2=\theta_1+1$ completes the proof.
\end{proof}

We now use this estimate to control $\mathfrak{a}_{12}$, $\mathfrak{a}_{23}$, and $\mathfrak{a}_{13}$.
\begin{lemma}\label{naisekiosaerusharp}
Let $\vec{u}$ be a 1-line-type solution. Then, there exists $\theta_3>0$ such that 
\begin{align}\label{,athfrakasharpes}
|\mathfrak{a}|\lesssim t^{-\theta_3}.
\end{align} 
\end{lemma}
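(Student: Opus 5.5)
The plan is a backward-in-time energy argument for $|\mathfrak{a}|^2$ built on Lemma \ref{naisekiodekantan}. The leading linear part there is repelling, but $\mathfrak{a}(t)\to0$ by \eqref{linenaisekiga1}. So I will not fight the growth. I will discard the growing term as a favorable sign and integrate from $t$ to $\infty$, so that only the forcing $t^{-\theta_2}$ survives.

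First, write \eqref{kinenaisekiode2} as $\dot{\mathfrak{a}}=\frac{\mathcal{F}(L_2)}{L}M\mathfrak{a}+E$, with $\mathfrak{a}=(\mathfrak{a}_{12},\mathfrak{a}_{23},\mathfrak{a}_{13})$ and
\begin{align*}
M=\begin{pmatrix}\frac{17}{4}&\frac{3}{4}&-\frac{3}{4}\\ \frac{3}{4}&\frac{17}{4}&-\frac{3}{4}\\ 0&0&2\end{pmatrix}.
\end{align*}
The symmetric part $\frac12(M+M^{\mathsf T})$ is positive definite, since it is diagonally dominant: $\frac{17}{4}>\frac34+\frac38$ and $2>\frac38+\frac38$. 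Hence there is $c>0$ with $\mathfrak{a}\cdot M\mathfrak{a}\geq c|\mathfrak{a}|^2$. Here $|\mathfrak{a}|$ coincides with the Euclidean norm of this vector.

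Using $\mathcal{F}(L_2)/L\sim (t\log t)^{-1}$, which follows from \eqref{Linagasa} and \eqref{Lnagasa}, I get
\begin{align*}
\frac{d}{dt}|\mathfrak{a}|^2\geq \frac{2c|\mathfrak{a}|^2}{t\log t}-C\Bigl(\frac{|\mathfrak{a}|^3}{t\log t}+\frac{|\mathfrak{a}|^2}{t(\log t)^2}\Bigr)-C|\mathfrak{a}|t^{-\theta_2}.
\end{align*}
Since $|\mathfrak{a}|\to0$ and $\log t\to\infty$, the two middle error terms are absorbed into the positive term for $t\gg1$. This leaves $\frac{d}{dt}|\mathfrak{a}|^2\geq -C|\mathfrak{a}|t^{-\theta_2}$. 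Working with $|\mathfrak{a}|^2$ rather than $|\mathfrak{a}|$ avoids any differentiability issue where $\mathfrak{a}=0$.

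Next, integrate this inequality on $[t,T]$ and let $T\to\infty$, using $|\mathfrak{a}(T)|\to0$. This gives $|\mathfrak{a}(t)|^2\leq C\int_t^\infty|\mathfrak{a}(s)|s^{-\theta_2}\,ds$. Set $m(t)=\sup_{s\geq t}|\mathfrak{a}(s)|$, which is finite and nonincreasing. Since $\theta_2>1$, I obtain $|\mathfrak{a}(s)|^2\leq C' m(t)s^{1-\theta_2}\leq C'm(t)t^{1-\theta_2}$ for every $s\geq t$. Taking the supremum over $s\geq t$ yields $m(t)^2\leq C'm(t)t^{1-\theta_2}$, hence $|\mathfrak{a}(t)|\leq m(t)\lesssim t^{1-\theta_2}$. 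This proves \eqref{,athfrakasharpes} with $\theta_3=\theta_2-1=\theta_1>0$.

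The only delicate step is the coercivity of $M$, together with the absorption of the quadratic error and the $(\log t)^{-2}$ error. This is where the qualitative convergence $|\mathfrak{a}|\to0$ from the line-type assumption is essential. The remaining steps are elementary integrations.
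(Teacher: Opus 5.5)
Your proof is correct, and it shares the paper's basic strategy: treat the repelling leading term of Lemma \ref{naisekiodekantan} as having a favorable sign, then integrate from $t$ to $\infty$ using $\mathfrak{a}(t)\to0$, so that only the $t^{-\theta_2}$ forcing survives. Where you differ is the choice of functional.

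The paper works with the linear quantity $\mathfrak{a}_{12}+\mathfrak{a}_{23}+\mathfrak{a}_{13}$. It exploits the fact that each $\mathfrak{a}_{ij}=1-v_i\cdot v_j$ is automatically nonnegative. The column sums of your matrix $M$ are $(5,5,\frac12)$, so the leading part of the derivative of the sum is $\frac{\mathcal{F}(L_2)}{L}\bigl(5\mathfrak{a}_{12}+5\mathfrak{a}_{23}+\frac12\mathfrak{a}_{13}\bigr)\geq0$. Since the sum dominates $|\mathfrak{a}|$, this term also absorbs the quadratic and $(\log t)^{-2}$ errors. The result is $\frac{d}{dt}(\mathfrak{a}_{12}+\mathfrak{a}_{23}+\mathfrak{a}_{13})\geq -Ct^{-\theta_2}$. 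Here the forcing enters additively, so a single integration gives the bound.

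Your quadratic functional $|\mathfrak{a}|^2$ uses no sign information. Instead it needs positive definiteness of the symmetric part of $M$, which you verify correctly by diagonal dominance. The price is that the forcing appears as $|\mathfrak{a}|t^{-\theta_2}$, which is why you need the supremum argument with $m(t)$ to close.

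Both routes give $\theta_3=\theta_2-1$. The paper's is a little more economical here, because the nonnegativity of the $\mathfrak{a}_{ij}$ comes for free. Yours is more robust: it would still work if the components could change sign, provided the symmetric part of the linear coefficient matrix stayed coercive.
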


\begin{proof}
By Lemma \ref{naisekiodekantan}, there exist constants $0<c<1$ and $\theta_2>1$ such that for $t\gg 1$,
\begin{align}\label{mathfrakaodees2}
\dot{\mathfrak{a}}_{12}+\dot{\mathfrak{a}}_{23}+\dot{\mathfrak{a}}_{13}&\geq  \frac{c}{t\log{t}}\left(5\mathfrak{a}_{12}+5\mathfrak{a}_{23}+\frac{1}{2}\mathfrak{a}_{13}\right)-\frac{1}{c}\left(\frac{|\mathfrak{a}|^2}{t\log{t}}+\frac{|\mathfrak{a}|}{t(\log{t})^2}+t^{-\theta_2}\right).
\end{align}

Furthermore, by \eqref{linenaisekiga1}, there exists $T\gg 1$ such that, for all $t>T$,
\begin{align}\label{mathfrakalimosae}
\frac{|\mathfrak{a}|^2}{t\log{t}}+\frac{|\mathfrak{a}|}{t(\log{t})^2}\leq \frac{c^2}{2t\log{t}}\left(\mathfrak{a}_{12}+\mathfrak{a}_{23}+\mathfrak{a}_{13}\right).
\end{align}
By \eqref{mathfrakaodees2} and \eqref{mathfrakalimosae}, we obtain
\begin{align}\label{mathfrakawaes}
\begin{aligned}
\dot{\mathfrak{a}}_{12}+\dot{\mathfrak{a}}_{23}+\dot{\mathfrak{a}}_{13}&\geq  \frac{c}{t\log{t}}\left(5\mathfrak{a}_{12}+5\mathfrak{a}_{23}+\frac{1}{2}\mathfrak{a}_{13}\right)
\\
&\quad-\frac{c}{2t\log{t}}\left(\mathfrak{a}_{12}+\mathfrak{a}_{23}+\mathfrak{a}_{13}\right)-\frac{3}{c}t^{-\theta_2}
\\
&\geq -\frac{3}{c}t^{-\theta_2}.
\end{aligned}
\end{align}
Integrating \eqref{mathfrakawaes} on $[t,T^{\prime}]$, we obtain
\begin{align*}
\mathfrak{a}_{12}(t)+\mathfrak{a}_{23}(t)+\mathfrak{a}_{13}(t)&\leq \mathfrak{a}_{12}(T^{\prime})+\mathfrak{a}_{23}(T^{\prime})+\mathfrak{a}_{13}(T^{\prime})+\frac{3}{c}\int_t^{T^{\prime}}s^{-\theta_2}ds
\\
&\leq \mathfrak{a}_{12}(T^{\prime})+\mathfrak{a}_{23}(T^{\prime})+\mathfrak{a}_{13}(T^{\prime})+\frac{3}{c\left(\theta_2-1\right)}t^{1-\theta_2}.
\end{align*}
Letting $T^{\prime}\to\infty$ and setting $\theta_3=\theta_2-1$, we obtain \eqref{,athfrakasharpes}.

\end{proof}

By Lemma \ref{rhoinagasalem} and Lemma \ref{naisekiosaerusharp}, there exists $\theta_4>0$ such that 
\begin{align}\label{rhokaizen}
\begin{aligned}
\rho_1&=L_1+O\left(t^{-\theta_4}\right),
\\
\rho_2&=L_2+O\left(t^{-\theta_4}\right),
\\
\rho_3&=L_3+O\left(t^{-\theta_4}\right).
\end{aligned}
\end{align}

In particular, this estimate yields the following result.
\begin{lemma}
Let $\vec{u}$ be a 1-line-type solution. Then, there exists $\theta_5>0$ such that for $t\gg 1$,
\begin{align}\label{taishouesline}
\begin{aligned}
z_2(t)+z_3(t)&=2z_{\infty}+O\left(t^{-\theta_5}\right),
\\
z_1(t)+z_4(t)&=2z_{\infty}+O\left(t^{-\theta_5}\right).
\end{aligned}
\end{align}
\end{lemma}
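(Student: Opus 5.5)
The plan is to derive both relations in \eqref{taishouesline} by integrating a time derivative that is integrable at rate $t^{-1-\theta}$ from infinity. Since $z_g\to z_\infty$ with rate $t^{-\theta}$ by Lemma \ref{zgshusoku}, and $z_1+z_2+z_3+z_4=4z_g$, it suffices to prove one of the two relations. The other follows by subtraction. I will treat the quantity $S(t)=z_2(t)+z_3(t)-z_1(t)-z_4(t)$ and show $S(t)=O(t^{-\theta_5})$. Then
\begin{align*}
z_2+z_3=2z_g+\tfrac12 S,\qquad z_1+z_4=2z_g-\tfrac12 S,
\end{align*}
and both claims follow with $\theta_5$ taken as the minimum of the rates.

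First I compute $\dot S$ from \eqref{1-line-ode}:
\begin{align*}
\dot S=2\mathcal{F}(\rho_1)v_1-2\mathcal{F}(\rho_3)v_3+O(t^{-\theta}).
\end{align*}
Next I use \eqref{rhokaizen} together with $L_1=L_3$ (shown in the proof of Lemma \ref{Liasymptotics}). This gives $\rho_1-\rho_3=O(t^{-\theta_4})$, so by \eqref{tukaeruF1} and $\mathcal{F}(\rho_i)\sim t^{-1}$,
\begin{align*}
\mathcal{F}(\rho_1)-\mathcal{F}(\rho_3)=O(t^{-1-\theta_4}).
\end{align*}
For the directions, Lemma \ref{naisekiosaerusharp} gives $\mathfrak{a}_{13}=1-v_1\cdot v_3\lesssim t^{-\theta_3}$. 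Hence $|v_1-v_3|=\sqrt{2\mathfrak{a}_{13}}\lesssim t^{-\theta_3/2}$. Combining these,
\begin{align*}
|\dot S|\lesssim t^{-1-\theta_4}+t^{-1-\theta_3/2}+t^{-\theta},
\end{align*}
which is integrable, with tail $\int_t^\infty\lesssim t^{-\theta_5}$.

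Integrating, $S(t)$ converges to some limit $S_\infty$ with $|S(t)-S_\infty|\lesssim t^{-\theta_5}$. The main obstacle is to show $S_\infty=0$, since integrability alone only gives convergence. For this I use the normalized geometry. Write $z_i-z_g=\mathcal{L}\,\Gamma_i$; then
\begin{align*}
S=\mathcal{L}\,(\Gamma_2+\Gamma_3-\Gamma_1-\Gamma_4).
\end{align*}
In $\Omega_{l,1}$ the combination $x_2+x_3-x_1-x_4$ vanishes, so $\Gamma\to\Omega_{l,1}$ gives only $S=o(\mathcal{L})$, which is too weak. Instead I will exploit the line structure directly:
\begin{align*}
S=(z_2-z_1)-(z_4-z_3)=\rho_1v_1-\rho_3v_3.
\end{align*}
The estimates above give $|\rho_1v_1-\rho_3v_3|\le|\rho_1-\rho_3|+\rho_3|v_1-v_3|\lesssim t^{-\theta_4}+(\log t)\,t^{-\theta_3/2}$, which tends to zero. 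Hence $S_\infty=0$, and indeed this direct bound already gives $S=O(t^{-\theta_5})$ for any $\theta_5<\min(\theta_4,\theta_3/2)$, without the integration step. I would therefore present the direct bound as the proof. The integration argument serves only as a consistency check.

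Finally, combining $S$ with Lemma \ref{zgshusoku} finishes the proof. The only delicate point is confirming $L_1\equiv L_3$ exactly, which holds by the symmetry and uniqueness argument in Lemma \ref{Liasymptotics}.
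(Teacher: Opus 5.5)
Your direct bound
$|z_2+z_3-z_1-z_4|=|\rho_1v_1-\rho_3v_3|\le|\rho_1-\rho_3|+\rho_3|v_1-v_3|\lesssim t^{-\theta_4}+(\log t)\,t^{-\theta_3/2}$
is correct and is the same argument the paper uses: it relies on $L_1=L_3$, \eqref{rhokaizen}, and $|v_1-v_3|^2=2\mathfrak{a}_{13}$, and it is then combined with Lemma \ref{zgshusoku}. As you already observed, the integration of $\dot S$ is unnecessary and can be dropped.
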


\begin{proof}
First, we prove that there exists $\theta^{\prime}>0$ such that 
\begin{align}\label{taishouesline2}
\left|z_1(t)+z_4(t)-z_2(t)-z_3(t)\right|\lesssim t^{-\theta^{\prime}}.
\end{align}
We note that $L_1=L_3$. By direct computation, we have
\begin{align}\label{taishoues1}
\begin{aligned}
\left|z_1+z_4-z_2-z_3\right|&=\left|\left(z_4-z_3\right)-\left(z_2-z_1\right)\right|
\\
&=\left|\rho_3v_3-\rho_1v_1-\rho_1v_3+\rho_1v_3 \right|
\\
&\leq \left|\rho_3-\rho_1\right|+\rho_1\left|v_1-v_3\right|.
\end{aligned}
\end{align}
By \eqref{rhokaizen}, we have
\begin{align}\label{rho1-rho3}
\left|\rho_1-\rho_3\right|\lesssim t^{-\theta_4}.
\end{align}
Furthermore, by \eqref{,athfrakasharpes}, we have for $t\gg 1$
\begin{align}\label{mathfraka13es}
\left|v_1-v_3\right|^2=2\left(1-v_1\cdot v_3\right)=2\mathfrak{a}_{13}\lesssim t^{-\theta_3}.
\end{align}
Thus, when we define $0<\theta^{\prime}<\min{(\frac{\theta_3}{2},\theta_4)}$, by \eqref{taishoues1}, \eqref{rho1-rho3}, and \eqref{mathfraka13es}, we have
\begin{align*}
\left|z_1+z_4-z_2-z_3\right|&\lesssim t^{-\theta_4}+\left(\log{t}\right)t^{-\frac{\theta_3}{2}}\lesssim t^{-\theta^{\prime}},
\end{align*}
and we obtain \eqref{taishouesline2}. By \eqref{jushines} and \eqref{taishouesline2}, we complete the proof by choosing $\theta_5=\theta^{\prime}$.
\end{proof}

To describe the positions of $z_i$ relative to $z_{\infty}$, we define, for $i=1,2,3,4$,
\begin{align}\label{hatzdef}
\hat{z}_i(t)=z_i(t)-z_{\infty}.
\end{align}
Lemma \ref{1-type-ode} can then be reformulated as follows.
\begin{lemma}\label{1-type-odekaizen5}
Let $\vec{u}$ be a 1-line-type solution. Then, there exists $\theta_6>1$ such that for $t\gg 1$,
\begin{align}\label{1-line-hatode}
\begin{aligned}
\dot{\hat{z}}_1&=\frac{\mathcal{F}(L_1)}{L_1}\left(\hat{z}_1-\hat{z}_2\right)+O\left(t^{-\theta_6}\right),
\\
\dot{\hat{z}}_2&=-\frac{\mathcal{F}(L_1)}{L_1}\hat{z}_1+\left(\frac{\mathcal{F}(L_1)}{L_1}+\frac{2\mathcal{F}(L_2)}{L_2}\right)\hat{z}_2+O\left(t^{-\theta_6}\right).
\end{aligned}
\end{align}
\end{lemma}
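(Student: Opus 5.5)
Starting from \eqref{1-line-ode}, I will replace each force term $\mathcal{F}(\rho_i)v_i$ by an expression linear in the $\hat z_j$, with errors of order $t^{-\theta}$ for some $\theta>1$. Since $\rho_i v_i=z_{i+1}-z_i=\hat z_{i+1}-\hat z_i$, we have exactly
\begin{align*}
\mathcal{F}(\rho_i)v_i=\frac{\mathcal{F}(\rho_i)}{\rho_i}\left(\hat z_{i+1}-\hat z_i\right).
\end{align*}
So it suffices to compare $\mathcal{F}(\rho_i)/\rho_i$ with $\mathcal{F}(L_i)/L_i$. By \eqref{rhokaizen}, $\rho_i=L_i+O(t^{-\theta_4})$. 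Then \eqref{tukaeruF2} (with $a=\rho_i-L_i$) and the bound $\mathcal{F}(L_i)\sim t^{-1}$ give
\begin{align*}
\left|\frac{\mathcal{F}(\rho_i)}{\rho_i}-\frac{\mathcal{F}(L_i)}{L_i}\right|\lesssim \frac{t^{-1-\theta_4}}{\log t}.
\end{align*}
Since $|\hat z_{i+1}-\hat z_i|=\rho_i\lesssim\log t$, this yields
\begin{align*}
\mathcal{F}(\rho_i)v_i=\frac{\mathcal{F}(L_i)}{L_i}\left(\hat z_{i+1}-\hat z_i\right)+O\left(t^{-1-\theta_4}\right).
\end{align*}
The $O(t^{-\theta})$ terms in \eqref{1-line-ode} have $\theta>1$, so both errors are of the admissible form $t^{-\theta_6}$ with $\theta_6=\min(1+\theta_4,\theta)>1$.

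Substituting into the first line of \eqref{1-line-ode} gives
\begin{align*}
\dot{\hat z}_1=-\frac{\mathcal{F}(L_1)}{L_1}\left(\hat z_2-\hat z_1\right)+O(t^{-\theta_6}),
\end{align*}
which is the first claimed equation. For $\dot{\hat z}_2$ we get
\begin{align*}
\dot{\hat z}_2=\frac{\mathcal{F}(L_1)}{L_1}\left(\hat z_2-\hat z_1\right)-\frac{\mathcal{F}(L_2)}{L_2}\left(\hat z_3-\hat z_2\right)+O(t^{-\theta_6}).
\end{align*}
To eliminate $\hat z_3$, I use \eqref{taishouesline}: $\hat z_3=-\hat z_2+O(t^{-\theta_5})$. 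This error is multiplied by $\mathcal{F}(L_2)/L_2\lesssim (t\log t)^{-1}$, so it contributes only $O(t^{-1-\theta_5})$. After substitution,
\begin{align*}
-\frac{\mathcal{F}(L_2)}{L_2}\left(\hat z_3-\hat z_2\right)=\frac{2\mathcal{F}(L_2)}{L_2}\hat z_2+O\left(t^{-1-\theta_5}\right),
\end{align*}
which is exactly the coefficient claimed. I then shrink $\theta_6$ to $\min(1+\theta_4,1+\theta_5,\theta)>1$.

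The main point to check carefully is that every approximation error is multiplied by a factor of size $\mathcal{F}/L\sim (t\log t)^{-1}$, and that the $\log t$ growth of $|\hat z_i|$ is absorbed. This works because $|\hat z_i|\lesssim \log t$, which follows from \eqref{zi-zglesssimL} and Lemma \ref{zgshusoku}. The only other input to verify is the applicability of \eqref{tukaeruF2}, which holds since $\rho_i-L_i\to0$. Beyond that, the argument is a direct substitution.
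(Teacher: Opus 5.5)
Your proposal is correct and follows essentially the same route as the paper: compare $\mathcal{F}(\rho_i)/\rho_i$ with $\mathcal{F}(L_i)/L_i$ using \eqref{rhokaizen}, substitute into \eqref{1-line-ode}, and eliminate $\hat z_3$ via \eqref{taishouesline}. Your explicit inclusion of the exponent $\theta$ from \eqref{1-line-ode} in the minimum defining $\theta_6$ is slightly more careful than the paper, which silently absorbs that error into $O(t^{-1-\theta_4})$.
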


\begin{proof}
By \eqref{rhokaizen}, we have for $i=1,2,3$,
\begin{align}\label{fracFrhorhohyouka}
\left|\frac{\mathcal{F}(\rho_i)}{\rho_i}-\frac{\mathcal{F}(L_i)}{L_i}\right|\lesssim \left|\frac{\mathcal{F}(\rho_i)-\mathcal{F}(L_i)}{\rho_i}\right|+\left|\frac{\mathcal{F}(L_i)(\rho_i-L_i)}{\rho_iL_i}\right|\lesssim \frac{t^{-1-\theta_4}}{\log{t}}.
\end{align}

By \eqref{1-line-ode} and \eqref{fracFrhorhohyouka}, we have
\begin{align}\label{1-line-odever2}
\begin{aligned}
\dot{z}_1&=-\frac{\mathcal{F}(L_1)}{L_1}\left(z_2-z_1\right)+O\left(t^{-1-\theta_4}\right),
\\
\dot{z}_2&=\frac{\mathcal{F}(L_1)}{L_1}\left(z_2-z_1\right)-\frac{\mathcal{F}(L_2)}{L_2}\left(z_3-z_2\right)+O\left(t^{-1-\theta_4}\right).
\end{aligned}
\end{align}
By \eqref{taishouesline}, \eqref{1-line-odever2}, and $\dot{\hat{z}}_1=\dot{z}_1,\dot{\hat{z}}_2=\dot{z}_2$, we obtain \eqref{1-line-hatode} by choosing $\theta_6=\min{(1+\theta_4,1+\theta_5)}$.

\end{proof}

Using Lemma \ref{1-type-odekaizen5}, we now determine the long-time behavior of $z_1,z_2,z_3,z_4$.

\begin{lemma}\label{1-line-type-asymp}
Let $\vec{u}$ be a 1-line-type solution. Then, there exist $\omega_{\infty}\in S^{d-1}$ and $\theta_7>0$ such that for $t\gg 1$,
\begin{align}\label{1-typeasumpes}
\begin{aligned}
z_1(t)&=z_{\infty}-\left(L_1+\frac{1}{2}L_2\right)\omega_{\infty}+O\left(t^{-\theta_7}\right),
\\
z_2(t)&=z_{\infty}-\frac{L_2}{2}\omega_{\infty}+O\left(t^{-\theta_7}\right),
\\
z_3(t)&=z_{\infty}+\frac{L_2}{2}\omega_{\infty}+O\left(t^{-\theta_7}\right),
\\
z_4(t)&=z_{\infty}+\left(L_1+\frac{1}{2}L_2\right)\omega_{\infty}+O\left(t^{-\theta_7}\right).
\end{aligned}
\end{align}
\end{lemma}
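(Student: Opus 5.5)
Starting from Lemma \ref{1-type-odekaizen5}, I will work with the linear system \eqref{1-line-hatode} for $(\hat z_1,\hat z_2)$ and compare it with the explicit ansatz. Define
\begin{align*}
\mathfrak{p}_1(t)=\hat z_1(t)+\left(L_1+\tfrac12L_2\right)\omega,\qquad \mathfrak{p}_2(t)=\hat z_2(t)+\tfrac12L_2\,\omega
\end{align*}
for a unit vector $\omega$ to be chosen. The ansatz itself nearly solves the system. The vector $Y_1=-(L_1+\frac12L_2)\omega$, $Y_2=-\frac12L_2\omega$ satisfies $\dot Y_1=-(\dot L_1+\frac12\dot L_2)\omega$. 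Using \eqref{L_iode} with $L_1=L_3$, the right-hand side of the first equation of \eqref{1-line-hatode} is $\frac{\mathcal{F}(L_1)}{L_1}(Y_1-Y_2)=-\mathcal{F}(L_1)\omega$, while $\dot L_1+\frac12\dot L_2=\mathcal{F}(L_1)$. The second equation works out the same way, giving $\frac{\mathcal{F}(L_1)}{L_1}L_1\omega-\frac{2\mathcal{F}(L_2)}{L_2}\frac{L_2}{2}\omega=(\mathcal{F}(L_1)-\mathcal{F}(L_2))\omega=-\frac12\dot L_2\omega$. So the ansatz is an exact solution of the linear part for every fixed $\omega$. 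Consequently $\mathfrak p=(\mathfrak p_1,\mathfrak p_2)$ satisfies $\dot{\mathfrak p}=A(t)\mathfrak p+O(t^{-\theta_6})$, where $A(t)$ is the matrix in \eqref{1-line-hatode}, of size $\sim (t\log t)^{-1}$.

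The difficulty is that $A$ has a positive, expanding direction, and its eigenvalues over time are only $\int A\sim\log\log t$. Moreover, the direction $\omega$ itself is not yet known to converge; by Lemma \ref{naisekiosaerusharp}, only $|v_i-v_j|\lesssim t^{-\theta_3/2}$ is available. I will therefore first determine $\omega_\infty$ directly. Set $\omega(t)=v_2(t)$. By Lemma \ref{line-type-naisekinagasaode}, $|\dot v_2|\lesssim \frac{\mathcal F(D)}{\log t}|\mathfrak a|^{1/2}+t^{-\theta}$. This follows because $v_2$ moves only through the transverse components $v_1-(v_1\cdot v_2)v_2$, whose size is $\lesssim\sqrt{\mathfrak a_{12}}$. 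With Lemma \ref{naisekiosaerusharp} this gives $|\dot v_2|\lesssim t^{-1-\theta_3/2}$, which is integrable. Hence $v_2(t)\to\omega_\infty$ with $|v_2-\omega_\infty|\lesssim t^{-\theta_3/2}$, and likewise $v_1,v_3\to\omega_\infty$ at the same rate.

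Next I will write $z_2-z_1=\rho_1v_1$, $z_3-z_2=\rho_2v_2$, and $z_4-z_3=\rho_3v_3$. By \eqref{rhokaizen}, each $\rho_i=L_i+O(t^{-\theta_4})$. The error from the directions is $\rho_i|v_i-\omega_\infty|\lesssim \log t\cdot t^{-\theta_3/2}$, so $z_{i+1}-z_i=L_i\omega_\infty+O(t^{-\theta'})$. Combining this with \eqref{taishouesline}, $z_2+z_3=2z_\infty+O(t^{-\theta_5})$, gives $z_2=z_\infty-\frac12L_2\omega_\infty+O(t^{-\theta_7})$ and $z_3=z_\infty+\frac12L_2\omega_\infty+O(t^{-\theta_7})$. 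Adding and subtracting $L_1\omega_\infty$ then yields $z_1$ and $z_4$. This route bypasses \eqref{1-line-hatode} entirely, with Lemma \ref{1-type-odekaizen5} serving only as consistency.

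The main obstacle is the bound on $\dot v_2$, which needs the square-root gain $\sqrt{\mathfrak a}$ rather than $\mathfrak a$. That gain is exactly what Lemma \ref{naisekiosaerusharp} supplies. One point to check is that the $O(t^{-\theta}/\log t)$ errors in the formulas for $\dot v_i$ in the proof of Lemma \ref{line-type-naisekinagasaode} really have $\theta>1$, which they do. If the power-type decay were insufficient, I would fall back on the ODE comparison for $\mathfrak p$ sketched in the first paragraph.
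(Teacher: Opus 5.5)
Your argument is correct, and its main route differs from the paper's. The paper works with the effective linear system \eqref{1-line-hatode}. It sets $\hat u_1=\hat z_1/\mathfrak{D}_1$ and $\hat u_2=\hat z_2/\mathfrak{D}_2$, with $\mathfrak{D}_1=L_1+\frac12L_2$ and $\mathfrak{D}_2=\frac12L_2$; your check that the ansatz solves the linear part exactly is precisely why these normalizations work. Because $\hat u_1-\hat u_2$ is the expanding direction, $\mathcal{W}=|\hat u_1-\hat u_2|$ satisfies $\dot{\mathcal{W}}\geq -ct^{-\theta_6}$. The paper integrates this backward from $t=\infty$, using $\mathcal{W}\to0$ from the line-type hypothesis, to get $\mathcal{W}\lesssim t^{1-\theta_6}$; then $\dot{\hat u}_1=O(t^{-\theta_6})$ yields $\omega_\infty$. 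So the paper makes use of the expanding direction you were worried about rather than avoiding it.

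You instead bound the rotation of the edge direction directly. Projecting $\dot z_3-\dot z_2$ orthogonally to $v_2$ gives $|\dot v_2|\lesssim (t\log t)^{-1}\sqrt{|\mathfrak a|}+t^{-\theta}/\log t$ with $\theta>1$, and this is integrable by the power decay in Lemma \ref{naisekiosaerusharp}. Then $z_{i+1}-z_i=\rho_iv_i$, \eqref{rhokaizen}, $L_3=L_1$ and \eqref{taishouesline} finish the argument.

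Your route is shorter and makes Lemma \ref{1-type-odekaizen5} unnecessary. It costs a square-root loss in the exponent, which is harmless since $\theta_7$ is unspecified. It also leans more heavily on Lemma \ref{naisekiosaerusharp}, whose proof already uses the same backward-integration mechanism applied to $\mathfrak a$. The paper's version instead reads the limiting direction off the positions relative to $z_\infty$, parallel to the rhombus case (Lemmas \ref{rhombodekai}--\ref{houkourhombus}).

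One wording point: $\sqrt{\mathfrak a}$ is a loss relative to $\mathfrak a$, not a gain. What Lemma \ref{naisekiosaerusharp} really supplies is the power rate. Since $\int^{\infty}dt/(t\log t)=\infty$, mere convergence $|\mathfrak a|\to0$ would not make $\sqrt{|\mathfrak a|}/(t\log t)$ integrable.
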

\begin{proof}
We define 
\begin{align*}
\mathfrak{D}_1=L_1+\frac{1}{2}L_2, \mathfrak{D}_2=\frac{1}{2}L_2.
\end{align*}
Then, by \eqref{L_iode}, we have
\begin{align*}
\dot{\mathfrak{D}}_1=\mathcal{F}(L_1),\ \dot{\mathfrak{D}}_2=-\mathcal{F}(L_1)+\mathcal{F}(L_2).
\end{align*}
Here, we define
\begin{align*}
\hat{u}_1=\frac{\hat{z}_1}{\mathfrak{D}_1}, \hat{u}_2=\frac{\hat{z}_2}{\mathfrak{D}_2}.
\end{align*}
By \eqref{Lnagasa}, \eqref{mathcalLsim}, \eqref{mathcalFsimt-1}, and \eqref{jushines}, we have
\begin{align*}
\lim_{t\to\infty}\frac{\mathcal{L}(t)}{L(t)}=1,
\\
\lim_{t\to\infty}\frac{z_g(t)-z_{\infty}}{L(t)}=0.
\end{align*}
Moreover, we have
\begin{align*}
\lim_{t\to\infty}\frac{\mathfrak{D}_1(t)}{L(t)}=\frac{3}{2},\ \lim_{t\to\infty}\frac{\mathfrak{D}_2(t)}{L(t)}=\frac{1}{2}.
\end{align*}
Therefore, \eqref{i-typedefes} yields
\begin{align}\label{hatusalim}
\begin{aligned}
\lim_{t\to\infty}\left|\hat{u}_1(t)-\hat{u}_2(t)\right|=0,
\\
\lim_{t\to\infty}\left|\hat{u}_1(t)\right|=1,
\\
\lim_{t\to\infty}\left|\hat{u}_2(t)\right|=1.
\end{aligned}
\end{align}
Furthermore, we have for $t\gg 1$
\begin{align*}
\dot{\hat{u}}_1&=\frac{\dot{\hat{z}}_1}{\mathfrak{D}_1}-\frac{\dot{\mathfrak{D}}_1\hat{z}_1}{\mathfrak{D}_1^2}
\\
&=\frac{\mathcal{F}(L_1)}{L_1\mathfrak{D}_1}\left(\hat{z}_1-\hat{z}_2\right)-\frac{\mathcal{F}(L_1)}{\mathfrak{D}_1^2}\hat{z}_1+O\left(t^{-\theta_6}\right)
\\
&=\frac{\mathcal{F}(L_1)}{L_1}\hat{u}_1-\frac{\mathfrak{D}_2}{\mathfrak{D}_1}\frac{\mathcal{F}(L_1)}{L_1}\hat{u}_2-\frac{\mathcal{F}(L_1)}{\mathfrak{D}_1}\hat{u}_1+O\left(t^{-\theta_6}\right)
\\
&=\frac{\mathcal{F}(L_1)}{L_1}\left\{\left(1-\frac{L_1}{\mathfrak{D}_1}\right)\hat{u}_1-\frac{\mathfrak{D}_2}{\mathfrak{D}_1}\hat{u}_2\right\}+O\left(t^{-\theta_6}\right)
\\
&=\frac{\mathcal{F}(L_1)}{L_1}\frac{\mathfrak{D}_2}{\mathfrak{D}_1}\left(\hat{u}_1-\hat{u}_2\right)+O\left(t^{-\theta_6}\right),
\\
\dot{\hat{u}}_2&=\frac{\dot{\hat{z}}_2}{\mathfrak{D}_2}-\frac{\dot{\mathfrak{D}}_2\hat{z}_2}{\mathfrak{D}_2^2}
\\
&=-\frac{\mathcal{F}(L_1)}{L_1\mathfrak{D}_2}\hat{z}_1+\frac{1}{\mathfrak{D}_2}\left(\frac{\mathcal{F}(L_1)}{L_1}+\frac{2\mathcal{F}(L_2)}{L_2}\right)\hat{z}_2+\frac{\mathcal{F}(L_1)-\mathcal{F}(L_2)}{\mathfrak{D}_2^2}\hat{z}_2+O\left(t^{-\theta_6}\right)
\\
&=-\frac{\mathcal{F}(L_1)}{L_1}\frac{\mathfrak{D}_1}{\mathfrak{D}_2}\hat{u}_1+\frac{\mathcal{F}(L_1)}{L_1}\frac{\mathfrak{D}_1}{\mathfrak{D}_2}\hat{u}_2+O\left(t^{-\theta_6}\right)
\\
&=\frac{\mathcal{F}(L_1)}{L_1}\frac{\mathfrak{D}_1}{\mathfrak{D}_2}\left(\hat{u}_2-\hat{u}_1\right)+O\left(t^{-\theta_6}\right).
\end{align*}
Therefore, when we define $\mathcal{W}=\left|\hat{u}_1-\hat{u}_2\right|$, we have
\begin{align*}
\dot{\mathcal{W}}=\frac{\mathcal{F}(L_1)}{L_1}\left(\frac{\mathfrak{D}_1}{\mathfrak{D}_2}+\frac{\mathfrak{D}_2}{\mathfrak{D}_1}\right)\mathcal{W}+O\left(t^{-\theta_6}\right).
\end{align*}
Therefore, there exists $c>0$ such that for $t\gg 1$
\begin{align}\label{wlinees}
\dot{\mathcal{W}}\geq -ct^{-\theta_6}.
\end{align}
By \eqref{wlinees}, we have for $1\ll t<T$
\begin{align}\label{Wesline2}
\mathcal{W}(t)\leq \mathcal{W}(T)+c\int_t^Ts^{-\theta_6}ds<\mathcal{W}(T)+\frac{c}{\theta_6-1}t^{-\theta_6+1}.
\end{align}
By \eqref{hatusalim} and \eqref{Wesline2}, letting $T\to\infty$, we obtain
\begin{align}\label{Wlinees3}
\left|\hat{u}_1(t)-\hat{u}_2(t)\right|\lesssim t^{-\theta_6+1}.
\end{align}
By \eqref{Wlinees3}, we obtain
\begin{align*}
\dot{\hat{u}}_1=O\left(t^{-\theta_6}\right).
\end{align*}
Therefore, there exists $-\omega_{\infty}\in\mathbb{R}^d$ such that 
\begin{align}\label{hatues}
\left|\hat{u}_1(t)+\omega_{\infty}\right|\lesssim t^{-\theta_6+1}.
\end{align}
We note that by $\lim_{t\to\infty}|\hat{u}_1(t)|=1$, we obtain $\omega_{\infty}\in S^{d-1}$. Therefore, when we define $0<\theta_7<\theta_6-1$, by \eqref{taishouesline}, \eqref{Wlinees3}, and \eqref{hatues}, we obtain \eqref{1-typeasumpes}.
\end{proof}

We now use Lemma \ref{1-line-type-asymp} to describe the long-time behavior of line-type solutions.
\begin{lemma}\label{1-linekansei}
Let $\vec{u}$ be a line-type solution. Then, there exist $\sigma=\pm 1$, $z_1,z_2,z_3,z_4:[0,\infty)\to\mathbb{R}^d$ and $z_{\infty}\in\mathbb{R}^d$ and $\omega_{\infty}\in S^{d-1}$ such that 
\begin{align}\label{4linesoli2}
\left\| u(t)-\sigma \sum_{k=1}^4 (-1)^kQ(\cdot-z_k(t))\right\|_{H^1}+\|{\partial}_tu(t)\|_{L^2}\lesssim t^{-1},
\end{align}
\begin{align}\label{zestheorem}
\begin{aligned}
z_1(t)&=z_{\infty}-\left\{\frac{3}{2}\left(\log{t}-\frac{d-1}{2}\log{(\log{t})}+c_{\star}\right)-\log{3}+\frac{1}{2}\log{2} \right\}\omega_{\infty}+O\left(\frac{\log{(\log{t})}}{\log{t}}\right),
\\
z_2(t)&=z_{\infty}-\left\{\frac{1}{2}\left(\log{t}-\frac{d-1}{2}\log{(\log{t})}+c_{\star}\right)-\frac{1}{2}\log{2} \right\}\omega_{\infty}+O\left(\frac{\log{(\log{t})}}{\log{t}}\right),
\\
z_3(t)&=z_{\infty}+\left\{\frac{1}{2}\left(\log{t}-\frac{d-1}{2}\log{(\log{t})}+c_{\star}\right)-\frac{1}{2}\log{2} \right\}\omega_{\infty}+O\left(\frac{\log{(\log{t})}}{\log{t}}\right),
\\
z_4(t)&=z_{\infty}+\left\{\frac{3}{2}\left(\log{t}-\frac{d-1}{2}\log{(\log{t})}+c_{\star}\right)-\log{3}+\frac{1}{2}\log{2} \right\}\omega_{\infty}+O\left(\frac{\log{(\log{t})}}{\log{t}}\right).
\end{aligned}
\end{align}
\end{lemma}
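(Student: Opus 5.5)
We first reduce to the case of a $1$-line-type solution. If $\vec{u}$ is an $i$-line-type solution, we relabel the centers cyclically by $\tilde{z}_k=z_{k+i-1}$. This relabeling preserves the cyclic structure and the alternation of signs. Indeed, $(-1)^{k+i-1}=(-1)^{i-1}(-1)^k$, so the decomposition \eqref{modlim1} becomes $u-\sigma\sum_{k}(-1)^kQ(\cdot-\tilde{z}_k)\to0$ with $\sigma=(-1)^{i-1}$. The orthogonality conditions \eqref{modeq} are unchanged. Moreover, $\Gamma(\vec{u})$ is mapped from $\Omega_{l,i}$ to $\Omega_{l,1}$. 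All results of Sections~3--5 used above depend only on the cyclic structure and on the modulation equations, so they apply to the relabeled centers. Hence it suffices to treat $1$-line-type solutions, with the sign $\sigma$ absorbing the relabeling.

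For the remainder estimate \eqref{4linesoli2}, we use Proposition~\ref{samesolitonhanare}, which gives \eqref{Vrep}. Lemma~\ref{epzes} then yields $\|\vec{\varepsilon}(t)\|_{\mathcal{H}}\lesssim\mathcal{F}(D(t))$. By Lemma~\ref{Fsimt-1lem}, $\mathcal{F}(D(t))\sim t^{-1}$. Since $\vec{\varepsilon}=\vec{u}-\vec{Q}_{\sum}$, the norm $\|\vec{\varepsilon}\|_{\mathcal{H}}$ controls the left-hand side of \eqref{4linesoli2}, which gives the $t^{-1}$ bound.

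For the center asymptotics \eqref{zestheorem}, we start from Lemma~\ref{1-line-type-asymp}. It gives $z_1=z_\infty-(L_1+\tfrac12L_2)\omega_\infty+O(t^{-\theta_7})$, $z_2=z_\infty-\tfrac12L_2\omega_\infty+O(t^{-\theta_7})$, and the symmetric formulas for $z_3$ and $z_4$. Substituting Lemma~\ref{Liasymptotics}, we obtain
\begin{align*}
L_1+\tfrac12L_2&=\tfrac32L-\log\tfrac32-\tfrac12\log2+O\bigl((\log t)^{-1}\bigr)=\tfrac32L-\log3+\tfrac12\log2+O\bigl((\log t)^{-1}\bigr),\\
\tfrac12L_2&=\tfrac12L-\tfrac12\log2+O\bigl((\log t)^{-1}\bigr).
\end{align*}
Inserting \eqref{Lnagasa} for $L$ produces exactly the brackets in \eqref{zestheorem}. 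The error terms $O(t^{-\theta_7})$, $O((\log t)^{-1})$ and the $O(\log\log t/\log t)$ from \eqref{Lnagasa} are all absorbed into $O(\log(\log t)/\log t)$.

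The only point needing care is bookkeeping rather than analysis. One must check that the cyclic relabeling commutes with every earlier lemma, in particular with the choice of $z_\infty$ from Lemma~\ref{zgshusoku} and of $\omega_\infty$ from Lemma~\ref{1-line-type-asymp}. One must also confirm that the relabeled sign vector is exactly $\sigma(-1)^k$ with a single global $\sigma$. Both facts are immediate because $z_g$ is invariant under permutations of the centers and the sign pattern is alternating.
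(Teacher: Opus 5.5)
Your proposal is correct and follows the paper's proof step for step. You reduce to a $1$-line-type solution by cyclic relabeling, absorbing the parity into $\sigma$. You get the remainder bound from Lemma~\ref{epzes} together with $\mathcal{F}(D)\sim t^{-1}$. You get the center asymptotics from Lemma~\ref{1-line-type-asymp} combined with Lemma~\ref{Liasymptotics} and \eqref{Lnagasa}.

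Your identity $L_1+\tfrac12L_2=\tfrac32L-\log3+\tfrac12\log2+O((\log t)^{-1})$ is right. One small correction to your bookkeeping: when $i$ is even, the orthogonality conditions \eqref{modeq} are not literally unchanged. They hold for the relabeled centers of the solution $(-1)^{i-1}\vec{u}$, which is again a $(2,2)$-sign soliton solution, so the argument goes through. Your other bookkeeping point, that $z_g$ is invariant under permutations of the centers, is exactly what the paper leaves implicit.
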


\begin{proof}
By definition, $\vec{u}$ is an $i$-line-type solution for some $i\in\mathbb{Z}/4\mathbb{Z}$. After cyclically relabeling the centers and absorbing the resulting parity change into the global sign $\sigma$, we may assume that $\vec{u}$ is a 1-line-type solution. By Lemma \ref{epzes} and $\mathcal{F}(D(t))\sim t^{-1}$, we obtain \eqref{4linesoli2}. Moreover, Lemma \ref{1-line-type-asymp} gives, for some $\omega_\infty\in S^{d-1}$,

\begin{align*}
z_1&=z_\infty-\left(L_1+\frac{1}{2}L_2\right)\omega_\infty+O(t^{-\theta_7}),
\\
z_2&=z_\infty-\frac{1}{2}L_2\omega_\infty+O(t^{-\theta_7}),
\\
z_3&=z_\infty+\frac{1}{2}L_2\omega_\infty+O(t^{-\theta_7}),
\\
z_4&=z_\infty+\left(L_1+\frac{1}{2}L_2\right)\omega_\infty+O(t^{-\theta_7}).
\end{align*}
In addition, by Lemma \ref{Liasymptotics} and \eqref{Lnagasa}, we obtain \eqref{zestheorem}.

\end{proof}

\section{Long-time behavior of rhombus-type solutions}

In this section,  we determine the long-time behavior of rhombus-type solutions.

\subsection{Estimates arising from the rhombus geometry}

In this subsection, we collect several geometric estimates that follow directly from the assumption that $\vec{u}$ is a rhombus-type solution. First, we define
\begin{align}\label{sadef}
\Delta_{\rho}(t)=\max_{i=1,2,3,4}\rho_i(t)-\min_{i=1,2,3,4}\rho_i(t).
\end{align}

\begin{lemma}\label{hishinaisekilem1}
Let $\vec{u}$ be a rhombus-type solution. Then, we have for $t\gg 1$
\begin{align}\label{naisekirhombes}
\left|v_1\cdot v_2-v_3\cdot v_4\right|+\left|v_1\cdot v_4-v_2\cdot v_3\right|\lesssim \frac{\Delta_{\rho}}{\log{t}}.
\end{align}
Furthermore, we have for $t\gg 1$ and $i\in\mathbb{Z}/4\mathbb{Z}$,
\begin{align}\label{naisekirhombes2}
\begin{aligned}
\left|w_i\cdot v_i-w_i\cdot v_{i+1}\right|+\left|w_i\cdot v_i+w_i\cdot v_{i+2}\right|+\left|w_i\cdot v_i+w_i\cdot v_{i+3}\right|&\lesssim \frac{\Delta_{\rho}}{\log{t}},
\\
\left|w_i\cdot v_{i+1}+w_i\cdot v_{i+2}\right|+\left|w_i\cdot v_{i+1}+w_i\cdot v_{i+3}\right|+\left|w_i\cdot v_{i+2}-w_i\cdot v_{i+3}\right|&\lesssim \frac{\Delta_{\rho}}{\log{t}}.
\end{aligned}
\end{align}
\end{lemma}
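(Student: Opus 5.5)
The estimates are purely geometric: they should follow from the closed polygon identity $\sum_{i=1}^4\rho_iv_i=0$ together with the fact, from \eqref{hishidefes}, that the configuration is close to a rhombus at scale $\mathcal{L}\sim\log t$. The key observation is this. Write $\bar\rho=\min_i\rho_i$ and $\rho_i=\bar\rho+\delta_i$ with $0\le\delta_i\le\Delta_\rho$. Then
\begin{align*}
\bar\rho\sum_{i=1}^4 v_i=-\sum_{i=1}^4\delta_iv_i,
\end{align*}
so that $|v_1+v_2+v_3+v_4|\lesssim\Delta_\rho/\bar\rho$. By \eqref{rho=D} and Lemma \ref{Fsimt-1lem}, $\bar\rho=D\sim\log t$, hence
\begin{align*}
|v_1+v_2+v_3+v_4|\lesssim\frac{\Delta_\rho}{\log t}.
\end{align*}

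For \eqref{naisekirhombes}, set $\mathfrak{e}=v_1+v_2+v_3+v_4$. Then $v_1+v_2=-(v_3+v_4)+\mathfrak{e}$. Squaring and using $|v_i|=1$ gives
\begin{align*}
2+2v_1\cdot v_2=2+2v_3\cdot v_4-2(v_3+v_4)\cdot\mathfrak{e}+|\mathfrak{e}|^2 .
\end{align*}
Hence $|v_1\cdot v_2-v_3\cdot v_4|\lesssim|\mathfrak{e}|+|\mathfrak{e}|^2\lesssim\Delta_\rho/\log t$. I would note that $\Delta_\rho/\log t$ is bounded: $\Delta_\rho\lesssim\mathcal{L}$ by \eqref{Lichiyou}, so the quadratic term is harmless. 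Grouping as $v_1+v_4=-(v_2+v_3)+\mathfrak{e}$ gives the second inner-product estimate in the same way.

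For \eqref{naisekirhombes2}, I would use $R_iw_i=z_{i+2}-z_i=\rho_iv_i+\rho_{i+1}v_{i+1}=-(\rho_{i+2}v_{i+2}+\rho_{i+3}v_{i+3})$. Replacing each $\rho_j$ by $\bar\rho$ costs $O(\Delta_\rho)$, so
\begin{align*}
w_i=\frac{\bar\rho}{R_i}(v_i+v_{i+1})+O\left(\frac{\Delta_\rho}{R_i}\right)=-\frac{\bar\rho}{R_i}(v_{i+2}+v_{i+3})+O\left(\frac{\Delta_\rho}{R_i}\right).
\end{align*}
By Proposition \ref{samesolitonhanare} and \eqref{hishidefes}, $R_i\ge D\gtrsim\log t$; in the limit rhombus the diagonals have length $|u\pm v|\ge1$ relative to $\mathcal{L}$. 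The ratio $\bar\rho/R_i$ is therefore bounded, and the error is $O(\Delta_\rho/\log t)$. Taking inner products with $w_i$:
\begin{enumerate}
\item $w_i\cdot v_i-w_i\cdot v_{i+1}=\frac{\bar\rho}{R_i}(1-1)+O(\Delta_\rho/\log t)$, because $(v_i+v_{i+1})\cdot(v_i-v_{i+1})=0$. This is the first term.
\item $w_i\cdot v_{i+2}-w_i\cdot v_{i+3}$ is handled the same way, using the second representation of $w_i$.
\item The mixed terms such as $w_i\cdot v_i+w_i\cdot v_{i+2}$ reduce, via the first representation of $w_i$, to $\frac{\bar\rho}{R_i}(v_i+v_{i+1})\cdot(v_i+v_{i+2})$. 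I would rewrite $v_i+v_{i+2}$ using $\mathfrak{e}$ as $-(v_{i+1}+v_{i+3})+\mathfrak{e}$, which turns the quantity into a combination of $v_i\cdot v_{i+1}-v_{i+2}\cdot v_{i+3}$-type and $v_{i+1}\cdot v_{i+2}-v_{i+3}\cdot v_i$-type differences, plus $O(|\mathfrak{e}|)$. These differences are already controlled by \eqref{naisekirhombes}.
\end{enumerate}

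The remaining combinations follow by the same substitutions, or by subtracting the already-established relations. For example, $w_i\cdot v_{i+1}+w_i\cdot v_{i+2}$ equals $(w_i\cdot v_i+w_i\cdot v_{i+2})-(w_i\cdot v_i-w_i\cdot v_{i+1})$.

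The main obstacle is bookkeeping. I need a lower bound $R_i\gtrsim\log t$ (taken from \eqref{hishidefes} and \eqref{Lichiyou}), and I need to check that each of the six combinations really reduces, after the $\mathfrak{e}$-substitution, to a difference bounded by \eqref{naisekirhombes}. The one I expect to need care is $w_i\cdot v_i+w_i\cdot v_{i+2}$. Expanding directly gives $\frac{\bar\rho}{R_i}\left(1+v_i\cdot v_{i+1}+v_i\cdot v_{i+2}+v_{i+1}\cdot v_{i+2}\right)$, and I would show that $1+v_i\cdot v_{i+2}+v_i\cdot v_{i+1}+v_{i+1}\cdot v_{i+2}=O(|\mathfrak{e}|)$ by dotting $\mathfrak{e}$ with $v_i+v_{i+1}$ and using \eqref{naisekirhombes}.
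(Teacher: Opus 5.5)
Your proof is correct, and it follows a different route from the paper's. The paper argues metrically. It writes each inner product through the law of cosines as an explicit function of side and diagonal lengths: $v_1\cdot v_2$ and $v_3\cdot v_4$ are both solved from $|z_1-z_3|^2=|\rho_1v_1+\rho_2v_2|^2=|\rho_3v_3+\rho_4v_4|^2$, and, for example, $w_i\cdot v_{i+2}=-\frac{R_i^2+\rho_{i+2}^2-\rho_{i+3}^2}{2R_i\rho_{i+2}}$. It then takes differences of these formulas, using $|\rho_j-\rho_k|\le\Delta_\rho$ and the two-sided comparability $\rho_j\sim R_k\sim\log t$.

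You instead reduce everything to the closure defect $\mathfrak e=\sum_i v_i$, which the polygon identity bounds by $4\Delta_\rho/D$, together with the two representations of $R_iw_i$. Your route needs only the lower bounds $\rho_j,R_k\ge D\gtrsim\log t$. The bound $\bar\rho/R_i\le 1$ is automatic, since $\bar\rho=D\le R_i$, so your appeal to the limiting diagonal lengths is unnecessary. The paper's explicit formulas, in exchange, make each of the six combinations an independent one-line check, with no chaining of estimates.

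One justification does not work as written, although the claim it supports is true. Set $X=1+v_i\cdot v_{i+1}+v_i\cdot v_{i+2}+v_{i+1}\cdot v_{i+2}$. Both the substitution in your item (3) and dotting $\mathfrak e$ with $v_i+v_{i+1}$ give only $X=-(v_i+v_{i+1})\cdot(v_{i+1}+v_{i+3})+O(|\mathfrak e|)$. This trades $X$ for a quantity of the same kind, which contains the diagonal product $v_{i+1}\cdot v_{i+3}$, rather than for the differences controlled by \eqref{naisekirhombes}.

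The clean fix is to expand $|v_i+v_{i+1}+v_{i+2}|^2=|\mathfrak e-v_{i+3}|^2$. This gives the exact identity
\[
1+v_i\cdot v_{i+1}+v_i\cdot v_{i+2}+v_{i+1}\cdot v_{i+2}=\tfrac12|\mathfrak e|^2-\mathfrak e\cdot v_{i+3},
\]
so $|X|\lesssim|\mathfrak e|$ without even invoking \eqref{naisekirhombes}.

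Alternatively, evaluate $w_i\cdot v_i$ with your first representation of $w_i$ and $w_i\cdot v_{i+2}$ with the second. This gives $w_i\cdot v_i+w_i\cdot v_{i+2}=\frac{\bar\rho}{R_i}\left(v_i\cdot v_{i+1}-v_{i+2}\cdot v_{i+3}\right)+O(\Delta_\rho/\log t)$, which \eqref{naisekirhombes} controls.

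With either fix, the remaining three combinations follow from the subtractions you indicate.
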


\begin{proof}
By direct computation, we have
\begin{align*}
\left|z_1-z_3\right|^2&=\left|\rho_1v_1+\rho_2v_2\right|^2=\rho_1^2+\rho_2^2+2\rho_1\rho_2v_1\cdot v_2
\\
&=\left|\rho_3v_3+\rho_4v_4\right|^2=\rho_3^2+\rho_4^2+2\rho_3\rho_4v_3\cdot v_4.
\end{align*}
Therefore, we obtain
\begin{align*}
\left|v_1\cdot v_2-v_3\cdot v_4\right|&=\left|\frac{|z_1-z_3|^2-\rho_1^2-\rho_2^2}{2\rho_1\rho_2}-\frac{|z_1-z_3|^2-\rho_3^2-\rho_4^2}{2\rho_3\rho_4}\right|
\\
&\lesssim \frac{\left|\rho_3\rho_4-\rho_1\rho_2\right|\left|z_1-z_3\right|^2}{\rho_1\rho_2\rho_3\rho_4}+\left|\frac{\rho_3}{\rho_4}+\frac{\rho_4}{\rho_3}-\frac{\rho_1}{\rho_2}-\frac{\rho_2}{\rho_1}\right|
\\
&\lesssim \frac{\Delta_\rho}{\log{t}}.
\end{align*}
By the same argument, we obtain $\left|v_1\cdot v_4-v_2\cdot v_3\right|\lesssim \frac{\Delta_{\rho}}{\log{t}}$, and we obtain \eqref{naisekirhombes}. To prove \eqref{naisekirhombes2}, a direct computation gives, for every
$i\in\mathbb{Z}/4\mathbb{Z}$,
\begin{align*}
w_i\cdot v_i&=\frac{R_i^2+\rho_i^2-\rho_{i+1}^2}{2R_i\rho_i},
\\
w_i\cdot v_{i+1}&=\frac{R_i^2+\rho_{i+1}^2-\rho_i^2}{2R_i\rho_{i+1}},
\\
w_i\cdot v_{i+2}&=-\frac{R_i^2+\rho_{i+2}^2-\rho_{i+3}^2}{2R_i\rho_{i+2}},
\\
w_i\cdot v_{i+3}&=-\frac{R_i^2+\rho_{i+3}^2-\rho_{i+2}^2}{2R_i\rho_{i+3}}.
\end{align*}
Since $\rho_j\sim R_k\sim\log t$ and $|\rho_j-\rho_k|\leq\Delta_{\rho}$ for all $j,k\in\mathbb{Z}/4\mathbb{Z}$, estimate \eqref{naisekirhombes2} follows from these identities.
\end{proof}

\subsection{Asymptotic coplanarity of the four centers}

In this subsection, we establish the asymptotic coplanarity of the four centers $z_1,z_2,z_3,z_4$. We begin by estimating $\Delta_{\rho}$.

\begin{lemma}\label{henchourhomblem}
Let $\vec{u}$ be a rhombus-type solution. Then, there exists $\theta_8>0$ such that 
\begin{align}\label{deltarhoes}
\Delta_{\rho}(t)\lesssim t^{-\theta_8}.
\end{align}
\end{lemma}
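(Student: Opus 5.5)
The plan is to run a maximum–minimum argument on the four side lengths, using the cyclic structure of \eqref{2,2henchousono2} together with the rhombus inner-product relations of Lemma \ref{hishinaisekilem1}. Since $\Gamma(\vec u)(t)$ approaches $\Omega_r$, the relevant inner products are asymptotically those of
\begin{align*}
\left(\tfrac{u+v}{2},\tfrac{-u+v}{2},\tfrac{-u-v}{2},\tfrac{u-v}{2}\right).
\end{align*}
Set $c(t)=v_1\cdot v_2$. Then $v_2\cdot v_3$, $v_3\cdot v_4$, $v_4\cdot v_1$ equal $-c,c,-c$ up to $o(1)$. Lemma \ref{hishinaisekilem1} sharpens this error to $O(\Delta_\rho/\log t)$, and \eqref{hishigatanaisekijouken} gives $|c|\leq \tfrac12+o(1)$.

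First, for the side equations, I would insert these relations into \eqref{2,2henchousono2}, keeping the diagonal terms $-\mathcal F(R_i)v_i\cdot w_i+\mathcal F(R_{i+1})v_i\cdot w_{i+1}$ explicitly, with error $O(e^{-\theta D})=O(t^{-\theta})$ for some $\theta>1$ by Lemma \ref{Fsimt-1lem}. The point of \eqref{naisekirhombes2} is that these diagonal contributions take the same value for every $i$ up to $O(\mathcal F(R)\Delta_\rho/\log t)$, because $R_1,R_2$ enter symmetrically and all relevant cosines agree up to sign. Hence they cancel in any difference $\dot\rho_i-\dot\rho_j$. This is essential, since in the endpoint regime $|c|=\tfrac12$ the quantity $\mathcal F(R)$ need not be polynomially smaller than $\mathcal F(D)$.

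Second, for the differential inequality, let $i$ be an index with $\rho_i=\max_k\rho_k$ and $j$ one with $\rho_j=\min_k\rho_k$, and bound the upper Dini derivative of $\Delta_\rho$ by $\dot\rho_i-\dot\rho_j$. Up to the errors above,
\begin{align*}
\dot\rho_i-\dot\rho_j=2\bigl(\mathcal F(\rho_i)-\mathcal F(\rho_j)\bigr)\pm c\bigl(\mathcal F(\rho_{i+1})-\mathcal F(\rho_{i-1})\bigr)\mp' c\bigl(\mathcal F(\rho_{j+1})-\mathcal F(\rho_{j-1})\bigr).
\end{align*}
Each bracketed difference of neighbouring $\mathcal F$'s has absolute value at most $\mathcal F(\rho_j)-\mathcal F(\rho_i)$. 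So the $c$-terms are bounded by $2|c|(\mathcal F(\rho_j)-\mathcal F(\rho_i))$, and since $|c|\leq\tfrac12+o(1)$,
\begin{align*}
\dot\rho_i-\dot\rho_j\leq -(1-o(1))\bigl(\mathcal F(\rho_j)-\mathcal F(\rho_i)\bigr)+O\Bigl(\frac{\Delta_\rho\,\mathcal F(D)}{\log t}+t^{-\theta}\Bigr).
\end{align*}
By monotonicity of $\mathcal F$ and \eqref{tukaeruF3}, $\mathcal F(\rho_j)-\mathcal F(\rho_i)\gtrsim \min(\Delta_\rho,1)\,\mathcal F(D)\sim \min(\Delta_\rho,1)/t$. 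The case-by-case verification of the signs here, in the same style as Lemma \ref{rhoi-Liyuukailem}, is where I expect the main obstacle. One must check for every relative position of $i$ and $j$ (adjacent or opposite) that the combined $c$-coefficient never exceeds $2|c|$, and that the borderline $|c|=\tfrac12$ still leaves a margin of exactly $1$ rather than $0$.

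Third, the inequality first forces $\Delta_\rho\lesssim1$. If $\Delta_\rho\geq1$ along a tail, it decreases at rate $\gtrsim 1/t$, whose integral diverges, exactly as in the proof of Proposition \ref{samesolitonhanare}. Once $\Delta_\rho$ is bounded, it yields
\begin{align*}
D^+\Delta_\rho\leq -\frac{\kappa}{t}\Delta_\rho+Ct^{-\theta},
\end{align*}
and Gronwall's inequality gives $\Delta_\rho\lesssim t^{-\theta_8}$ for any $0<\theta_8<\min(\kappa,\theta-1)$.
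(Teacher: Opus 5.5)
Your strategy is the paper's: a max--min argument on $\rho_1,\dots,\rho_4$, cancellation of the diagonal terms via \eqref{naisekirhombes2}, bounding the neighbouring $\mathcal F$-differences by the extreme one, a bootstrap for $\Delta_\rho\lesssim1$, then Gronwall. Your uniform remark that every $|\mathcal F(\rho_k)-\mathcal F(\rho_l)|$ is at most $\mathcal F(\rho_j)-\mathcal F(\rho_i)$ replaces the paper's three cases. Your bound $|v_k\cdot v_{k+1}|\le\frac12+o(1)$ gives margin $1$ rather than the paper's $\frac23$.

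There is, however, one step that is wrong as justified. Lemma \ref{hishinaisekilem1} controls only the \emph{opposite} pairs, $v_1\cdot v_2-v_3\cdot v_4$ and $v_1\cdot v_4-v_2\cdot v_3$, by $O(\Delta_\rho/\log t)$. It gives nothing of this kind for adjacent pairs.

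Indeed, when all sides are equal, $v_1\cdot v_2+v_2\cdot v_3=-(1+v_2\cdot v_4)=-\frac12|v_1+v_3|^2$, and $\rho|v_1+v_3|=|z_1+z_3-z_2-z_4|$ measures non-planarity. A skew equilateral quadrilateral has $\Delta_\rho=0$ but $v_2\cdot v_3\neq-v_1\cdot v_2$. At this stage you only know $|z_1+z_3-z_2-z_4|=o(\log t)$; a rate is Lemma \ref{naisekirhomblem2}, which is proved \emph{using} the present lemma.

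This matters. If you insert $v_{k-1}\cdot v_k=-v_k\cdot v_{k+1}+o(1)$ into each $\dot\rho_k$ separately, you get an error $o(\mathcal F(D))$ that does not vanish with $\Delta_\rho$. Your Gronwall step would then give only $\Delta_\rho=o(1)$, not a power of $t$.

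The repair is to substitute only inside the difference, and only through the opposite-pair relations. The coefficients of the neighbouring $\mathcal F(\rho_k)$ in $\dot\rho_i-\dot\rho_j$ sum to $O(\Delta_\rho/\log t)$, so they regroup into differences, e.g.
\begin{align*}
\dot\rho_2-\dot\rho_1&=(2+v_1\cdot v_2)\left(\mathcal F(\rho_2)-\mathcal F(\rho_1)\right)+v_1\cdot v_4\left(\mathcal F(\rho_4)-\mathcal F(\rho_3)\right)\\
&\quad+\mathcal F(\rho_3)\left(v_1\cdot v_4-v_2\cdot v_3\right)+(\text{diagonal terms})+O(t^{-\theta}).
\end{align*}
In the opposite case the corresponding term is $(v_1\cdot v_2-v_1\cdot v_4)(\mathcal F(\rho_2)-\mathcal F(\rho_4))$. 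Non-planarity then enters only as bounded coefficients multiplying differences of $\mathcal F$-values. Your inequality $\dot\rho_i-\dot\rho_j\le-(1-o(1))(\mathcal F(\rho_j)-\mathcal F(\rho_i))+O(\Delta_\rho\mathcal F(D)/\log t+t^{-\theta})$ then holds, and the rest of your argument goes through. This regrouping is exactly what the paper does in \eqref{rho2-rho1es1}--\eqref{rho4-rho1es1}.
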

\begin{proof}
We note that by Proposition \ref{samesolitonhanare}, we have for $t\gg 1$
\begin{align}\label{naisekidouhugoujouken}
\max{(|v_1\cdot v_2|,|v_2\cdot v_3|, |v_3\cdot v_4|, |v_4\cdot v_1|)}<\frac{2}{3}.
\end{align}
Fix $1<\theta<\min\{p-1,2\}$. In view of  Lemma \ref{2,2centerdynamics2}, Lemma \ref{Fsimt-1lem}, and
\eqref{naisekidouhugoujouken}, a direct computation yields
\begin{align}\label{rho2-rho1es1}
\begin{aligned}
\dot{\rho}_2-\dot{\rho}_1&=\left(2+v_1\cdot v_2\right)\left(\mathcal{F}(\rho_2)-\mathcal{F}(\rho_1)\right)-\mathcal{F}(\rho_3)v_2\cdot v_3+\mathcal{F}(\rho_4)v_1\cdot v_4
\\
&\quad+\mathcal{F}(R_1)\left(v_2\cdot w_3+v_1\cdot w_1\right)-\mathcal{F}(R_2)\left(w_2\cdot v_2+v_1\cdot w_2\right)+O\left(t^{-\theta}\right)
\\
&=\left(2+v_1\cdot v_2\right)\left(\mathcal{F}(\rho_2)-\mathcal{F}(\rho_1)\right)+v_1\cdot v_4\left(\mathcal{F}(\rho_4)-\mathcal{F}(\rho_3)\right)
\\
&\quad+\mathcal{F}(\rho_3)(v_1\cdot v_4-v_2\cdot v_3)+\mathcal{F}(R_1)\left(w_1\cdot v_1-w_1\cdot v_2\right)
\\
&\quad -\mathcal{F}(R_2)\left(w_2\cdot v_1+w_2\cdot v_2\right)+O\left(t^{-\theta}\right),
\end{aligned}
\end{align}
\begin{align}\label{rho3-rho1es1}
\begin{aligned}
\dot{\rho}_3-\dot{\rho}_1&=2\left(\mathcal{F}(\rho_3)-\mathcal{F}(\rho_1)\right)+\left(\mathcal{F}(\rho_2)v_1\cdot v_2-\mathcal{F}(\rho_4)v_3\cdot v_4\right)
\\
&\quad+\left(\mathcal{F}(\rho_4)v_1\cdot v_4-\mathcal{F}(\rho_2)v_2\cdot v_3\right)+\mathcal{F}(R_1)\left(v_1\cdot w_1-v_3\cdot w_3\right)
\\
&\quad +\mathcal{F}(R_2)\left(v_3\cdot w_4-v_1\cdot w_2\right)+O\left(t^{-\theta}\right)
\\
&=2\left(\mathcal{F}(\rho_3)-\mathcal{F}(\rho_1)\right)+v_1\cdot v_2\left(\mathcal{F}(\rho_2)-\mathcal{F}(\rho_4)\right)
\\
&\quad+v_1\cdot v_4\left(\mathcal{F}(\rho_4)-\mathcal{F}(\rho_2)\right)+\mathcal{F}(\rho_4)\left(v_1\cdot v_2-v_3\cdot v_4\right)
\\
&\quad+\mathcal{F}(\rho_2)\left(v_1\cdot v_4-v_2\cdot v_3\right)+\mathcal{F}(R_1)\left(w_1\cdot v_1+w_1\cdot v_3\right)
\\
&\quad-\mathcal{F}(R_2)\left(w_2\cdot v_1+w_2\cdot v_3\right)+O\left(t^{-\theta}\right),
\end{aligned}
\end{align}
\begin{align}\label{rho4-rho1es1}
\begin{aligned}
\dot{\rho}_4-\dot{\rho}_1&=\left(2+v_1\cdot v_4\right)\left(\mathcal{F}(\rho_4)-\mathcal{F}(\rho_1)\right)-\mathcal{F}(\rho_3)v_3\cdot v_4+\mathcal{F}(\rho_2)v_1\cdot v_2
\\
&\quad+\mathcal{F}(R_1)\left(v_4\cdot w_1+v_1\cdot w_1\right)-\mathcal{F}(R_2)\left(w_4\cdot v_4+v_1\cdot w_2\right)+O\left(t^{-\theta}\right)
\\
&=\left(2+v_1\cdot v_4\right)\left(\mathcal{F}(\rho_4)-\mathcal{F}(\rho_1)\right)+v_1\cdot v_2\left(\mathcal{F}(\rho_2)-\mathcal{F}(\rho_3)\right)
\\
&\quad+\mathcal{F}(\rho_3)\left(v_1\cdot v_2-v_3\cdot v_4\right)+\mathcal{F}(R_1)\left(w_1\cdot v_1+w_1\cdot v_4\right)
\\
&\quad-\mathcal{F}(R_2)\left(w_2\cdot v_1-w_2\cdot v_4\right)+O\left(t^{-\theta}\right).
\end{aligned}
\end{align}

First, we prove $|\Delta_{\rho}|\lesssim 1$. We choose $T\gg 1$ and we introduce the bootstrap estimate 
\begin{align}\label{Deltarhonobs}
\Delta_{\rho}(t)\leq \max{(\Delta_{\rho}(T),\log{2})}.
\end{align}
We define $T_1\in [T,\infty]$ as 
\begin{align}\label{rho-Lyuukaibs2}
T_1=\sup{\{t\in[T,\infty)\ \mbox{such that}\ \eqref{Deltarhonobs}\ \mbox{holds on}\ [T,t] \}}.
\end{align}
In addition, we assume $T_1<\infty$. Then, we have at $t=T_1$, $\Delta_{\rho}(T_1)=\max{(\Delta_{\rho}(T),\log{2})}$. First, assume that
\begin{align*}
\rho_1=\min_{1\leq i\leq4}\rho_i.
\end{align*}
In view of Lemma \ref{hishinaisekilem1}, we distinguish the following three cases according to which of $\rho_2,\rho_3,\rho_4$ attains the maximum.

Case 1: $\rho_2=\max_{i=1,2,3,4}\rho_i$.

Then, by \eqref{rho2-rho1es1}, we have at $t=T_1$,
\begin{align*}
\dot{\rho}_2-\dot{\rho}_1\leq \left(2-\frac{2}{3}\right)\left(\mathcal{F}(\rho_2)-\mathcal{F}(\rho_1)\right)+\frac{2}{3}\left(\mathcal{F}(\rho_1)-\mathcal{F}(\rho_2)\right)+o\left(\mathcal{F}(D)\right)<0.
\end{align*}

Case 2: $\rho_3=\max_{i=1,2,3,4}\rho_i$.

Then, by \eqref{rho3-rho1es1}, we have at $t=T_1$,
\begin{align*}
\dot{\rho}_3-\dot{\rho}_1\leq 2\left(\mathcal{F}(\rho_3)-\mathcal{F}(\rho_1)\right)+\frac{4}{3}\left(\mathcal{F}(\rho_1)-\mathcal{F}(\rho_3)\right)+o\left(\mathcal{F}(D)\right)<0.
\end{align*}

Case 3: $\rho_4=\max_{i=1,2,3,4}\rho_i$.

Then, by \eqref{rho4-rho1es1}, we have at $t=T_1$,
\begin{align*}
\dot{\rho}_4-\dot{\rho}_1\leq \left(2-\frac{2}{3}\right)\left(\mathcal{F}(\rho_4)-\mathcal{F}(\rho_1)\right)+\frac{2}{3}\left(\mathcal{F}(\rho_1)-\mathcal{F}(\rho_4)\right)+o\left(\mathcal{F}(D)\right)<0.
\end{align*}

Consequently, in each of the three cases, if
\begin{align*}
\rho_1(T_1)=\min_{1\leq i\leq4}\rho_i(T_1),
\end{align*}
then we obtain
\begin{align*}
\dot{\Delta}_{\rho}(T_1)<0.
\end{align*}
By cyclic symmetry, the same conclusion holds when the minimum is attained by any of $\rho_2,\rho_3,\rho_4$. This contradicts the maximality of $T_1$. Hence, $T_1=\infty$, and therefore we have
\begin{align*}
\Delta_{\rho}(t)\lesssim1.
\end{align*}
Moreover, the same computations as in \eqref{rho2-rho1es1}--\eqref{rho4-rho1es1}, together with Lemma \ref{hishinaisekilem1}, show that whenever
\begin{align*}
\rho_j(t)=\min_{1\leq i\leq4}\rho_i(t),\ \rho_k(t)=\max_{1\leq i\leq4}\rho_i(t),
\end{align*}
we have
\begin{align*}
\dot{\rho}_k-\dot{\rho}_j\leq\frac{1}{3}\left(\mathcal{F}(\rho_k)-\mathcal{F}(\rho_j)\right)+O\left(t^{-\theta}\right).
\end{align*}
Thus, there exists $c>0$ such that 
\begin{align*}
\dot{\Delta}_{\rho}\leq -\frac{c\Delta_{\rho}}{t}+\frac{1}{c}t^{-\theta}.
\end{align*}
Choosing $0<\theta_8<\min{(\theta-1,c)}$ and applying Gronwall's inequality, we obtain
\begin{align*}
\Delta_{\rho}(t)\lesssim t^{-\theta_8}.
\end{align*}
Therefore, we complete the proof.

\end{proof}

We next use Lemma \ref{henchourhomblem} to obtain a sharper estimate for $D$.

\begin{lemma}\label{Dsharprhomblem}
Let $\vec{u}$ be a rhombus-type solution. Then, we have
\begin{align}\label{Dsharpes1}
\begin{aligned}
\mathcal{F}(D(t))&=\frac{1}{2t}+o\left(\frac{1}{t}\right),
\\
D(t)&=\log{t}-\frac{d-1}{2}\log{(\log{t})}+c_{\star}+\log{2}+o(1).
\end{aligned}
\end{align}
\end{lemma}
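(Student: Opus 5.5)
We will read off the asymptotics of $V$ from the dissipation identity \eqref{Vbibun}, namely $\dot V=-\mathcal S+O(V^2/D)$. The key input is that, for a rhombus-type solution, the ratio $\mathcal S/V^2$ tends to the universal value $\frac12$. Integrating $\frac{d}{dt}(1/V)$ then gives $V=\frac{2}{t}(1+o(1))$. Since all four sides are asymptotically equal by Lemma \ref{henchourhomblem}, this yields $\mathcal F(D)=\frac{1}{2t}+o(\frac1t)$.

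\emph{Step 1 (expressing $V$ through $\mathcal F(D)$).} By Proposition \ref{samesolitonhanare}, $V=\sum_{i=1}^4\mathcal F(\rho_i)+o(\mathcal F(D))$. By Lemma \ref{henchourhomblem}, $0\le\rho_i-D\le\Delta_\rho\lesssim t^{-\theta_8}$. Then \eqref{tukaeruF2} gives $\mathcal F(\rho_i)=\mathcal F(D)(1+o(1))$, and hence
\begin{align*}
V=4\mathcal F(D)\,(1+o(1)).
\end{align*}

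\emph{Step 2 (expressing $\mathcal S$ through $\mathcal F(D)$).} By \eqref{2,2centertaikakunasi} and Step 1, $\dot z_i=\mathcal F(D)(v_{i-1}-v_i)+o(\mathcal F(D))$. Summing the squares,
\begin{align*}
\mathcal S=\mathcal F(D)^2\Bigl(8-2\sum_{i=1}^4 v_{i-1}\cdot v_i\Bigr)+o(\mathcal F(D)^2).
\end{align*}
The claim is that $\sum_{i} v_{i-1}\cdot v_i\to0$; this is where the rhombus geometry enters.

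\begin{itemize}
\item Since $\mathcal L/D\to1$ and $\rho_i/D\to1$, we have $v_i=(\Gamma(\vec u)_{i+1}-\Gamma(\vec u)_i)+o(1)$.
\item For a point $(Y_1,\dots,Y_4)$ of $\Omega_r$, the consecutive differences are $x_1=-u$, $x_2=-v$, $x_3=u$, $x_4=v$.
\item For these, $x_4\cdot x_1+x_1\cdot x_2+x_2\cdot x_3+x_3\cdot x_4=-u\cdot v+u\cdot v-u\cdot v+u\cdot v=0$.
\end{itemize}

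The map $(Y_i)\mapsto\sum_i (Y_i-Y_{i-1})\cdot(Y_{i+1}-Y_i)$ is Lipschitz on bounded sets, and it vanishes identically on $\Omega_r$. Together with \eqref{hishidefes}, this gives $\sum_i v_{i-1}\cdot v_i\to0$. Hence $\mathcal S=8\mathcal F(D)^2(1+o(1))=\frac12V^2(1+o(1))$. This is the step that needs the most care: one must check that the $o(1)$ passage from $\Gamma(\vec u)$ to the unit vectors $v_i$ is legitimate, i.e. that $\rho_i/\mathcal L\to1$. This follows from \eqref{mathcalLsim} and Lemma \ref{henchourhomblem}.

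\emph{Step 3 (integration).} From \eqref{Vbibun} and Step 2,
\begin{align*}
\frac{d}{dt}\frac1V=\frac{\mathcal S}{V^2}+O\Bigl(\frac1D\Bigr)=\frac12+o(1).
\end{align*}
Integrating gives $1/V=\frac t2(1+o(1))$. Combined with Step 1, this yields $\mathcal F(D)=\frac{1}{2t}+o(\frac1t)$.

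For the expansion of $D$, take logarithms in \eqref{mathcalFhyouka1}:
\begin{align*}
-D-\tfrac{d-1}{2}\log D+c_\star+O(1/D)=-\log t-\log2+o(1).
\end{align*}
By Lemma \ref{Fsimt-1lem}, $D=\log t+O(\log\log t)$, so $\log D=\log\log t+o(1)$. Solving for $D$ gives the second line of \eqref{Dsharpes1}.

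Alternatively, $\tilde L=D$ can be fed into Lemma \ref{mathcalFyouode}(2), using $\dot D$ computed from \eqref{2,2henchousono2}. In the present setting that computation yields $\dot\rho_i=\mathcal F(D)\bigl(2-v_i\cdot v_{i+1}-v_i\cdot v_{i-1}\bigr)+o(\mathcal F(D))$, where the bracket need not tend to a constant. For this reason the direct route through $V$ is preferred.
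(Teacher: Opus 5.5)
Your proof is correct, but it takes a different route from the paper's.

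\textbf{The paper's route.} The paper never uses $V$ or $\mathcal S$ here. It takes from the rhombus limit only $v_1\cdot v_2+v_1\cdot v_4=o(1)$. Together with $\Delta_\rho\to0$ in \eqref{2,2henchousono2}, this gives $\dot\rho_1=(2+o(1))\mathcal F(\rho_1)$. It then applies Lemma \ref{mathcalFyouode}(2) to the $C^1$ function $\rho_1$, obtaining $\rho_1=L+\log 2+o(1)$. This transfers to $D$ via $|D-\rho_1|\le\Delta_\rho=o(1)$, and the statement for $\mathcal F(D)$ follows from \eqref{Lnagasa} and \eqref{mathcalFhyouka1}. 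So the paper gets $D$ first and $\mathcal F(D)$second.

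\textbf{Your route.} You exploit the gradient-flow identity $\dot V=-\mathcal S+O(V^2/D)$, show $\mathcal S/V^2\to\frac12$ via $V=4\mathcal F(D)(1+o(1))$ and $\mathcal S=8\mathcal F(D)^2(1+o(1))$, and integrate $1/V$. This gives $\mathcal F(D)$ first and $D$ afterwards by inverting the logarithm by hand.

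\textbf{Comparison.} Your version avoids the comparison lemma and treats all four sides symmetrically, at the cost of a somewhat longer computation. The paper's version is shorter and reuses machinery already in place. The geometric input is the same in both: $\Delta_\rho\to0$ and the cancellation of inner products on $\Omega_r$.

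\textbf{Your closing remark is mistaken.} On $\Omega_r$ opposite sides are antiparallel, i.e.\ $x_{i+1}=-x_{i-1}$. Hence $v_i\cdot v_{i+1}+v_i\cdot v_{i-1}=v_i\cdot(v_{i+1}+v_{i-1})\to0$, and the bracket $2-v_i\cdot v_{i+1}-v_i\cdot v_{i-1}$ does tend to $2$ for every $i$. Your own identity $\sum_i v_{i-1}\cdot v_i\to0$ is just half the sum of these four limits. The ``alternative'' you dismissed is exactly the paper's proof. The only care it needs is to apply Lemma \ref{mathcalFyouode} to $\rho_1$ rather than to the possibly non-differentiable $D=\min_i\rho_i$.
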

\begin{proof}
By \eqref{hishidefes}, we have
\begin{align*}
v_1\cdot v_2+v_1\cdot v_4=o(1).
\end{align*}
Furthermore, by Lemma \ref{2,2centerdynamics2} and Lemma \ref{henchourhomblem}, we obtain
\begin{align}\label{rho1odesec6ver}
\dot{\rho}_1=\left(2+o(1)\right)\mathcal{F}(\rho_1).
\end{align}
Since $|D-\rho_1|\leq\Delta_\rho=o(1)$ by Lemma \ref{henchourhomblem}, Lemma \ref{mathcalFyouode} applied to \eqref{rho1odesec6ver} yields
\begin{align*}
D(t)=\log{t}-\frac{d-1}{2}\log{(\log{t})}+c_{\star}+\log{2}+o(1).
\end{align*}
Then, we obtain 
\begin{align*}
\mathcal{F}(D(t))&=\frac{1}{2t}+o\left(\frac{1}{t}\right),
\end{align*}
and hence we obtain \eqref{Dsharpes1}.
\end{proof}

Next, using Lemma \ref{henchourhomblem} and Lemma \ref{Dsharprhomblem}, we derive a decay estimate for $z_1+z_3-z_2-z_4$.

\begin{lemma}\label{naisekirhomblem2}
Let $\vec{u}$ be a rhombus-type solution. Then, there exists $\theta_9>0$ such that 
\begin{align}\label{heimenseies1}
\left|z_1(t)+z_3(t)-z_2(t)-z_4(t)\right|\lesssim t^{-\theta_9}.
\end{align}
\end{lemma}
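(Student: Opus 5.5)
The plan is to derive a closed linear differential inequality for the vector
\begin{align*}
P=z_1+z_3-z_2-z_4=-(\rho_1v_1+\rho_3v_3),
\end{align*}
whose leading part is \emph{expanding}, and then to integrate it backward from $t=\infty$. The boundary information at infinity is that $P/\mathcal{L}\to0$: every configuration in $\Omega_r$ satisfies $\mathtt{z}_1+\mathtt{z}_3-\mathtt{z}_2-\mathtt{z}_4=0$, so this follows from \eqref{hishidefes}.

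\textbf{Step 1: cancellations in $\dot P$.} Take the alternating sum $\dot z_1+\dot z_3-\dot z_2-\dot z_4$ of the center equations \eqref{2,2centerrep}. The diagonal terms cancel exactly: $\mathcal{F}(R_1)w_1$ from $z_1$ cancels against $\mathcal{F}(R_1)w_3=-\mathcal{F}(R_1)w_1$ from $z_3$, and likewise for $R_2$ between $z_2$ and $z_4$. What remains is
\begin{align*}
\dot P=2\bigl(-\mathcal{F}(\rho_1)v_1+\mathcal{F}(\rho_2)v_2-\mathcal{F}(\rho_3)v_3+\mathcal{F}(\rho_4)v_4\bigr)+O(e^{-\theta D}).
\end{align*}
By Lemma \ref{Fsimt-1lem} the error is $O(t^{-\theta})$ with $\theta>1$. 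Next, Lemma \ref{henchourhomblem} and \eqref{tukaeruF1} give
\begin{align*}
\mathcal{F}(\rho_i)=\mathcal{F}(D)\bigl(1+O(t^{-\theta_8})\bigr).
\end{align*}
Writing $v_i=(z_{i+1}-z_i)/\rho_i$ with $\rho_i=D+O(t^{-\theta_8})$, and noting $|z_{i+1}-z_i|/D\lesssim1$, we get
\begin{align*}
-v_1+v_2-v_3+v_4=\frac{1}{D}\bigl(-(z_2-z_1)+(z_3-z_2)-(z_4-z_3)+(z_1-z_4)\bigr)+O(t^{-\theta_8})=\frac{2P}{D}+O(t^{-\theta_8}).
\end{align*}
Since $\mathcal{F}(D)\sim t^{-1}$, this yields, after shrinking $\theta_8$ if needed,
\begin{align*}
\dot P=\frac{4\mathcal{F}(D)}{D}P+O\bigl(t^{-1-\theta_8}\bigr).
\end{align*}

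\textbf{Step 2: backward integration.} Set $\mu(t)=\exp\bigl(-\int_{T_0}^t 4\mathcal{F}(D(s))/D(s)\,ds\bigr)$. By Lemma \ref{Dsharprhomblem}, $4\mathcal{F}(D)/D=\frac{2+o(1)}{t\log t}$, so $\mu(t)=(\log t)^{-2+o(1)}$. In particular $\mu$ is slowly varying and $\mu(T)\,|P(T)|\to0$ as $T\to\infty$, because $|P|=o(\mathcal{L})=o(\log T)$. From Step 1,
\begin{align*}
\frac{d}{dt}\bigl(\mu|P|\bigr)\geq -C\mu(t)\,t^{-1-\theta_8}.
\end{align*}
Integrate on $[t,T]$ and let $T\to\infty$:
\begin{align*}
\mu(t)|P(t)|\leq C\int_t^\infty\mu(s)s^{-1-\theta_8}\,ds.
\end{align*}
Since $\mu(s)/\mu(t)\leq (\log s/\log t)^{C'}\lesssim s^{\epsilon}t^{-\epsilon}$ for $s\geq t$, the right side is $\lesssim\mu(t)t^{-\theta_8+\epsilon}$. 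This gives \eqref{heimenseies1} with any $\theta_9<\theta_8$.

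The main obstacle is that the linear term in $\dot P$ is repulsive, so no forward Gronwall argument can work. Everything hinges on running the argument backward from infinity, with the rhombus convergence \eqref{hishidefes} supplying the vanishing boundary condition $\mu|P|\to0$. The logarithmic growth rate of the weight is harmless against the polynomial decay $t^{-1-\theta_8}$ of the forcing; the remaining care is bookkeeping that the $\Delta_\rho$-errors in $v_i$ and $\mathcal{F}(\rho_i)$ are genuinely $O(t^{-\theta_8})$, which Lemma \ref{henchourhomblem} provides.
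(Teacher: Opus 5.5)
Your proposal is correct and is essentially the paper's proof: the paper derives the same repulsive equation $\dot{\mathcal{W}}=\frac{4\mathcal{F}(D)}{D}\mathcal{W}+O(t^{-\theta_\circ})$ for $\mathcal{W}=|z_1+z_3-z_2-z_4|$, uses the same exact cancellation of the diagonal terms, and integrates backward from infinity with the boundary condition $\mathcal{W}/\log t\to0$ coming from \eqref{hishidefes}. The only difference is the weight: the paper uses the cruder $1/\log t$, which suffices since $\frac{4\mathcal{F}(D)}{D}\geq\frac{3}{2t\log t}$ and costs a logarithmic factor, whereas your exact integrating factor $\mu$ is decreasing, so you can simply use $\mu(s)\leq\mu(t)$ instead of the slow-variation bound and get the decay rate of the forcing with no logarithmic loss.
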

\begin{proof}
First, by Lemma \ref{henchourhomblem}, we have for $i=1,2,3,4$,
\begin{align}\label{henchouchousei1}
\left|\frac{\mathcal{F}(\rho_i)}{\rho_i}-\frac{\mathcal{F}(D)}{D}\right|\lesssim t^{-\theta_8-1}.
\end{align}
We define $\mathcal{W}$ as 
\begin{align*}
\mathcal{W}(t)=\left|z_1(t)+z_3(t)-z_2(t)-z_4(t)\right|.
\end{align*}
Then, by Lemma \ref{2,2centerdynamics2} and \eqref{henchouchousei1}, we have
\begin{align*}
\dot{z}_1&=\frac{\mathcal{F}(D)}{D}\left(2z_1-z_2-z_4\right)+\mathcal{F}(R_1)w_1+O\left(t^{-\theta_{\circ}}\right),
\\
\dot{z}_2&=\frac{\mathcal{F}(D)}{D}\left(2z_2-z_1-z_3\right)+\mathcal{F}(R_2)w_2+O\left(t^{-\theta_{\circ}}\right),
\\
\dot{z}_3&=\frac{\mathcal{F}(D)}{D}\left(2z_3-z_2-z_4\right)-\mathcal{F}(R_1)w_1+O\left(t^{-\theta_{\circ}}\right),
\\
\dot{z}_4&=\frac{\mathcal{F}(D)}{D}\left(2z_4-z_1-z_3\right)-\mathcal{F}(R_2)w_2+O\left(t^{-\theta_{\circ}}\right),
\end{align*}
where $\theta_{\circ}=1+\frac{\theta_8}{2}$. Then, we have
\begin{align}\label{mathcalWbibun}
\dot{\mathcal{W}}&=\frac{4\mathcal{F}(D)}{D}\mathcal{W}+O\left(t^{-\theta_{\circ}}\right).
\end{align}
Furthermore, by \eqref{Dsharpes1}, there exist $T\gg 1$ and $C>0$ such that for $t>T$
\begin{align}\label{Wodeupper}
\dot{\mathcal{W}}\geq \frac{3}{2t\log{t}}\mathcal{W}-Ct^{-\theta_{\circ}}.
\end{align}
Here, we define 
\begin{align*}
\mathcal{U}(t)=\frac{\mathcal{W}(t)}{\log{t}}.
\end{align*}
By \eqref{mathcalLsim},  \eqref{hishidefes} and \eqref{Dsharpes1}, we have $\lim_{t\to\infty} \mathcal{U}(t)=0$. Furthermore, by \eqref{Wodeupper}, we have for $t>T$
\begin{align}\label{Vupper}
\dot{\mathcal{U}}=\frac{1}{\log{t}}\left(\dot{\mathcal{W}}-\frac{1}{t\log{t}}\mathcal{W}\right)\geq \frac{1}{2t(\log{t})^2}\mathcal{W}-\frac{Ct^{-\theta_{\circ}}}{\log{t}}>-Ct^{-\theta_{\circ}}.
\end{align}
Then, we have for $T<t<t^{\prime}$,
\begin{align}\label{mathcalVhyoukayou}
\mathcal{U}(t)\leq \mathcal{U}(t^{\prime})+C\int_t^{t^{\prime}}s^{-\theta_{\circ}}ds<\mathcal{U}(t^{\prime})+\frac{C}{\theta_{\circ}-1}t^{-\theta_{\circ}+1}.
\end{align}
Letting $t^{\prime}\to\infty$ in \eqref{mathcalVhyoukayou}, we obtain
\begin{align*}
\mathcal{U}(t)\lesssim t^{-\theta_{\circ}+1},
\end{align*}
and therefore we obtain
\begin{align*}
\mathcal{W}(t)\lesssim (\log{t})t^{-\theta_{\circ}+1}\lesssim  t^{-\frac{\theta_{\circ}-1}{2}}.
\end{align*}
Therefore, we obtain \eqref{heimenseies1} by choosing $\theta_9=\frac{\theta_{\circ}-1}{2}$.
\end{proof}

\subsection{Effective equations for the centers of rhombus-type solutions}

In this subsection, we refine the equations governing the soliton centers. We define $\hat{z}_i, \hat{\rho}_i, \hat{u}_i$ for $i=1,2,3,4$, as
\begin{align*}
\hat{z}_i&=z_i-z_{\infty},
\\
\hat{\rho}_i&=\left|\hat{z}_i\right|,
\\
\hat{u}_i&=\frac{\hat{z}_i}{\hat{\rho}_i}.
\end{align*}
These quantities satisfy the following estimates.

\begin{lemma}\label{rhombodekai}
Let $\vec{u}$ be a rhombus-type solution. Then, there exist $\theta_{10}>0$ and  $\theta_{11}>1$ such that, for $i=1,2,3,4$ and $t\gg 1$,
\begin{align}\label{hayzyoujunbi}
\begin{aligned}
\left|\hat{z}_1+\hat{z}_3\right|&\lesssim t^{-\theta_{10}},
\\
\left|\hat{z}_2+\hat{z}_4\right|&\lesssim t^{-\theta_{10}},
\end{aligned}
\end{align}
\begin{align}\label{hatzioderhomb}
\begin{aligned}
\dot{\hat{z}}_i&=\left(\frac{2\mathcal{F}(D)}{D}-\frac{\mathcal{F}(2\hat{\rho}_i)}{\hat{\rho}_i}\right)\hat{z}_i+O\left(t^{-\theta_{11}}\right),
\\
\dot{\hat{\rho}}_i&=\left(\frac{2\mathcal{F}(D)}{D}-\frac{\mathcal{F}(2\hat{\rho}_i)}{\hat{\rho}_i}\right)\hat{\rho}_i+O\left(t^{-\theta_{11}}\right),
\end{aligned}
\end{align}
\end{lemma}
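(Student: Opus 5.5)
The plan has two parts: first the symmetry estimates \eqref{hayzyoujunbi}, then the center equations \eqref{hatzioderhomb}.

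\textbf{Symmetry estimates.} The starting point is that $z_1+z_3=2z_g+(z_1+z_3-z_2-z_4)/2$, and similarly for $z_2+z_4$. Combining Lemma \ref{zgshusoku} with Lemma \ref{naisekirhomblem2} then gives
\begin{align*}
|\hat z_1+\hat z_3|+|\hat z_2+\hat z_4|\lesssim |z_g-z_\infty|+|z_1+z_3-z_2-z_4|\lesssim t^{-\theta_{10}},\qquad \theta_{10}=\min(\theta,\theta_9)/2 .
\end{align*}

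\textbf{The $\hat z_i$ equation.} I start from \eqref{2,2centerrep}:
\begin{align*}
\dot z_i=-\mathcal F(\rho_i)v_i+\mathcal F(\rho_{i-1})v_{i-1}+\mathcal F(R_i)w_i+O(e^{-\theta D}).
\end{align*}
The side terms are handled by \eqref{henchouchousei1}, which lets me write them as
\begin{align*}
\frac{\mathcal F(D)}{D}\left(2z_i-z_{i+1}-z_{i-1}\right)+O\left(t^{-1-\theta_8}\right).
\end{align*}
Since $z_{i+1}+z_{i-1}=2z_\infty+O(t^{-\theta_{10}})$ by \eqref{hayzyoujunbi}, and $\mathcal F(D)/D\lesssim 1/(t\log t)$, this becomes
\begin{align*}
\frac{2\mathcal F(D)}{D}\hat z_i+O\left(t^{-1-\theta_{10}}\right).
\end{align*}

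For the diagonal term, $z_{i+2}-z_i=-2\hat z_i+O(t^{-\theta_{10}})$. Hence $R_i=2\hat\rho_i+O(t^{-\theta_{10}})$ and $w_i=-\hat u_i+O(t^{-\theta_{10}}/\log t)$. By \eqref{tukaeruF1}, $\mathcal F(R_i)=\mathcal F(2\hat\rho_i)(1+O(t^{-\theta_{10}}))$, and $\mathcal F(R_i)\lesssim 1/t$. Together these give
\begin{align*}
\mathcal F(R_i)w_i=-\frac{\mathcal F(2\hat\rho_i)}{\hat\rho_i}\hat z_i+O\left(t^{-1-\theta_{10}}\right).
\end{align*}
Since $\hat\rho_i\sim\log t$, the $1/\log t$ factors cause no harm. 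Setting $\theta_{11}=1+\min(\theta_8,\theta_{10},\theta-1)$ then yields the first equation in \eqref{hatzioderhomb}.

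\textbf{The $\hat\rho_i$ equation.} This follows by dotting the first equation with $\hat u_i$, since $\dot{\hat\rho}_i=\hat u_i\cdot\dot{\hat z}_i$.

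\textbf{Main obstacle.} The delicate step is the bookkeeping that every error term is $O(t^{-1-\epsilon})$ and not merely $o(1/t)$. In particular, the errors from replacing $\rho_i$ by $D$ and $R_i$ by $2\hat\rho_i$ carry only polynomial gains $t^{-\theta_8}$ and $t^{-\theta_{10}}$. Because $\mathcal F(R_i)$ is itself only $O(1/t)$, with no a priori smallness relative to $\mathcal F(D)$ beyond $o(1)$, these gains are exactly what is needed.
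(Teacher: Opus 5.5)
Your proposal is correct and follows essentially the same route as the paper. It obtains the symmetry estimates from Lemma \ref{zgshusoku} and Lemma \ref{naisekirhomblem2}, substitutes \eqref{henchouchousei1} and $R_iw_i=-2\hat z_i+O(t^{-\theta_{10}})$ into \eqref{2,2centerrep}, and then dots with $\hat u_i$. One bookkeeping point needs fixing: $e^{-\theta D}$ and the replacement error $\rho_i\, t^{-1-\theta_8}$ both carry powers of $\log t$, so $\theta_{11}$ must be taken strictly below $1+\min(\theta_8,\theta_{10},\theta-1)$. The paper does exactly this with its intermediate exponent $\theta_\bullet$.
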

\begin{proof}
By Lemma \ref{zgshusoku}, Lemma \ref{henchourhomblem} and Lemma \ref{naisekirhomblem2}, there exists $\theta_{\circ}>1$ such that
\begin{align}\label{hayzyoujunbiproveyou}
\begin{aligned}
\left|\frac{\mathcal{F}(\rho_i)}{\rho_i}-\frac{\mathcal{F}(D)}{D}\right|&\lesssim t^{-\theta_{\circ}},
\\
\left|\hat{z}_1+\hat{z}_3\right|&\lesssim t^{-\theta_{\circ}+1},
\\
\left|\hat{z}_2+\hat{z}_4\right|&\lesssim t^{-\theta_{\circ}+1}.
\end{aligned}
\end{align}
By the last two estimates in \eqref{hayzyoujunbiproveyou}, the definition of $R_iw_i$, we have
\begin{align}\label{a}
\begin{aligned}
\left|2\hat{z}_i+R_iw_i\right|\lesssim t^{-\theta_{\circ}+1},
\\
\left|\frac{\mathcal{F}(2\hat{\rho}_i)}{2\hat{\rho}_i}-\frac{\mathcal{F}(R_i)}{R_i}\right|\lesssim t^{-\theta_{\circ}}.
\end{aligned}
\end{align}
Fix $1<\theta_{11}<\theta_{\bullet}<\theta_{\circ}$. Then, by Lemma \ref{2,2centerdynamics2} we have for $t\gg1$ and $i=1,2,3,4$,
\begin{align*}
\dot{\hat{z}}_i=\dot{z}_i&=-\mathcal{F}(\rho_i)v_i+\mathcal{F}(\rho_{i-1})v_{i-1}+\mathcal{F}(R_i)w_i+O\left(t^{-\theta_{\circ}}\right)
\\
&=\frac{\mathcal{F}(D)}{D}\left(2z_i-z_{i+1}-z_{i-1}\right)-\frac{2\mathcal{F}(R_i)}{R_i}\hat{z}_i+O\left(t^{-\theta_{\bullet}}\right)
\\
&=\left(\frac{2\mathcal{F}(D)}{D}-\frac{\mathcal{F}(2\hat{\rho}_i)}{\hat{\rho}_i}\right)\hat{z}_i+O\left(t^{-\theta_{11}}\right),
\\
\dot{\hat{\rho}}_i&=\dot{\hat{z}}_i\cdot \hat{u}_i
\\
&=\left(\frac{2\mathcal{F}(D)}{D}-\frac{\mathcal{F}(2\hat{\rho}_i)}{\hat{\rho}_i}\right)\hat{\rho}_i+O\left(t^{-\theta_{11}}\right).
\end{align*}
Thus, when we define $\theta_{10}=\theta_{11}-1$, we complete the proof.
\end{proof}

We next use Lemma \ref{rhombodekai} to derive estimates for the four centers $z_1,z_2,z_3,z_4$.

\begin{lemma}\label{houkourhombus}
Let $\vec{u}$ be a rhombus-type solution. Then, there exist $\theta_{12}>0$ and  $\omega_1,\omega_2\in S^{d-1}$ such that 
\begin{align}\label{houkoues1}
\begin{aligned}
&\omega_1\cdot \omega_2=0,
\\
&\hat{z}_1=\omega_1\hat{\rho}_1+O\left(t^{-\theta_{12}}\right),
\\
&\hat{z}_2=\omega_2\hat{\rho}_2+O\left(t^{-\theta_{12}}\right),
\\
&\hat{z}_3=-\omega_1\hat{\rho}_1+O\left(t^{-\theta_{12}}\right),
\\
&\hat{z}_4=-\omega_2\hat{\rho}_2+O\left(t^{-\theta_{12}}\right).
\end{aligned}
\end{align}
\end{lemma}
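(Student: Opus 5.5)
We will show that each unit vector $\hat u_i$ converges at a polynomial rate, and then identify the four limits using the rhombus geometry. The starting point is \eqref{hatzioderhomb}, which says that $\dot{\hat z}_i$ is parallel to $\hat z_i$ up to an error $O(t^{-\theta_{11}})$ with $\theta_{11}>1$. Since $\hat\rho_i\gtrsim\log t$ (by \eqref{hayzyoujunbi}, $\hat\rho_i$ is essentially $R_i/2\sim\log t$), we get
\begin{align*}
\dot{\hat u}_i=\frac{\dot{\hat z}_i-(\dot{\hat z}_i\cdot\hat u_i)\hat u_i}{\hat\rho_i}=O\left(\frac{t^{-\theta_{11}}}{\log t}\right).
\end{align*}
The right-hand side is integrable. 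Hence there exist $\omega_i\in S^{d-1}$ with $|\hat u_i(t)-\omega_i|\lesssim t^{1-\theta_{11}}$. In particular the whole configuration cannot keep rotating.

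Next we convert this angular control into control of $\hat z_i$ itself. A naive bound gives $\hat z_i-\omega_i\hat\rho_i=\hat\rho_i(\hat u_i-\omega_i)=O(t^{1-\theta_{11}}\log t)$. This is still $O(t^{-\theta_{12}})$ for any $0<\theta_{12}<\theta_{11}-1$, since the $\log t$ factor is absorbed by a slight loss in the exponent. Then \eqref{hayzyoujunbi} gives $\hat z_3=-\hat z_1+O(t^{-\theta_{10}})$. It also shows $\hat\rho_3-\hat\rho_1=O(t^{-\theta_{10}})$ and, by the same reasoning, $\omega_3=-\omega_1$ (the difference $\hat u_3+\hat u_1$ is $O(t^{-\theta_{10}}/\log t)$). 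Therefore $\hat z_3=-\omega_1\hat\rho_1+O(t^{-\theta_{12}})$ once $\theta_{12}\le\theta_{10}$. The same argument applies to $\hat z_4$ with $\omega_2$ and $\hat\rho_2$.

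It remains to prove $\omega_1\cdot\omega_2=0$. By the previous step, $\rho_1^2=|\hat z_2-\hat z_1|^2=\hat\rho_1^2+\hat\rho_2^2-2\hat\rho_1\hat\rho_2\,\omega_1\cdot\omega_2+O(t^{-\theta_{12}}\log t)$, and $\rho_2^2=|\hat z_3-\hat z_2|^2=\hat\rho_1^2+\hat\rho_2^2+2\hat\rho_1\hat\rho_2\,\omega_1\cdot\omega_2+O(t^{-\theta_{12}}\log t)$. Subtracting these gives
\begin{align*}
4\hat\rho_1\hat\rho_2\,|\omega_1\cdot\omega_2|\le|\rho_1^2-\rho_2^2|+O(t^{-\theta_{12}}\log t)\lesssim \Delta_\rho\log t+t^{-\theta_{12}}\log t.
\end{align*}
By Lemma \ref{henchourhomblem} the right-hand side is $o(1)$, while $\hat\rho_1\hat\rho_2\sim(\log t)^2$. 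Since $\omega_1\cdot\omega_2$ is a constant, it must vanish.

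The main obstacle is the first step. The argument depends on the error in \eqref{hatzioderhomb} being integrable, that is, on $\theta_{11}>1$. This is exactly why the symmetry estimates \eqref{hayzyoujunbi} and the convergence of $z_g$ with a power rate were needed first. Once the direction ODE is in hand, the remaining steps are bookkeeping of the exponents, shrinking $\theta_{12}$ as necessary.
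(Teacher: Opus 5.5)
Your argument is correct. Its first part matches the paper exactly: the direction equation obtained from \eqref{hatzioderhomb}, integrability from $\theta_{11}>1$, existence of the limits $\omega_i$, and the use of \eqref{hayzyoujunbi} to identify $\omega_3=-\omega_1$ and $\omega_4=-\omega_2$. Keeping the factor $1/\hat\rho_i$ in $\dot{\hat u}_i$ only gains an inessential $1/\log t$.

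The genuine difference is in how you prove $\omega_1\cdot\omega_2=0$.

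The paper reads orthogonality directly off the rhombus-type hypothesis \eqref{hishidefes}. Every point of $\Omega_r$ satisfies $Y_i\cdot Y_{i+1}=\frac14\left(|v|^2-|u|^2\right)=0$ and $|Y_i|\geq\frac12$. Combined with $\mathcal{L}\sim\log t$ and $z_g\to z_\infty$, this gives $\hat u_i\cdot\hat u_{i+1}\to0$, hence $\omega_i\cdot\omega_{i+1}=0$.

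You instead use the polarization identity $\rho_2^2-\rho_1^2=4\hat\rho_1\hat\rho_2\,\omega_1\cdot\omega_2+O(t^{-\theta_{12}}\log t)$ together with the side-length equalization of Lemma \ref{henchourhomblem}. In geometric terms, you use the fact that a centrally symmetric quadrilateral with equal adjacent sides has perpendicular diagonals.

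The paper's route is a one-line consequence of the structure already encoded in $\Omega_r$. Yours isolates the geometric mechanism and is more robust. It only needs $\Delta_\rho=o(\log t)$, which already follows from $\rho_i/\mathcal{L}\to1$ in the rhombus regime. So the quantitative decay $\Delta_\rho\lesssim t^{-\theta_8}$ you invoke is more than what this step requires.
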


\begin{proof}
By Lemma \ref{rhombodekai}, we have for $i=1,2,3,4$,
\begin{align*}
\dot{\hat{u}}_i&=\frac{\dot{\hat{z}}_i}{\hat{\rho}_i}-\frac{\dot{\hat{\rho}}_i\hat{z}_i}{\hat{\rho}_i^2}=O\left(t^{-\theta_{11}}\right).
\end{align*}
Therefore, there exists $\hat{u}_{\infty,i}\in S^{d-1}$ such that
\begin{align*}
\left|\hat{u}_i-\hat{u}_{\infty,i}\right|\lesssim  \int_t^{\infty}s^{-\theta_{11}}ds\lesssim t^{-\theta_{11}+1},
\end{align*}
and we obtain
\begin{align}\label{hatzes1}
\hat{z}_i=\hat{u}_{\infty,i}\hat{\rho}_i+O\left(t^{-\theta_{\circ}}\right),
\end{align}
where $0<\theta_{\circ}<\theta_{11}-1$. Then, by \eqref{mathcalLsim}, \eqref{hishidefes}, and \eqref{Dsharpes1}, we have for $i\in\mathbb{Z}/4\mathbb{Z}$, 
\begin{align}\label{hishijushinkara}
\hat{u}_{\infty,i}\cdot \hat{u}_{\infty,i+1}=\lim_{t\to\infty} \hat{u}_i(t)\cdot \hat{u}_{i+1}(t)=0.
\end{align}
By \eqref{hayzyoujunbi}, \eqref{hatzes1}, and \eqref{hishijushinkara}, when we define
\begin{align*}
0<\theta_{12}<\min{(\theta_{\circ},\theta_{10})},\ \omega_1=\hat{u}_{\infty,1},\ \omega_2=\hat{u}_{\infty,2},
\end{align*}
we obtain \eqref{houkoues1}. Thus, we complete the proof.
\end{proof}

\subsection{Precise asymptotic geometry of the four centers}

In this subsection, we determine the precise asymptotic geometry of the four centers $z_1,z_2,z_3,z_4$. We first show that the ratio $\hat{\rho}_2/\hat{\rho}_1$ converges as $t\to\infty$.

\begin{lemma}\label{hishinagasalem2}
Let $\vec{u}$ be a rhombus-type solution. Then, there exists $\frac{\pi}{6}\leq \varphi\leq \frac{\pi}{3}$ such that
\begin{align}\label{hoshinohiws1}
\lim_{t\to\infty}\frac{\hat{\rho}_2(t)}{\hat{\rho}_1(t)}=\tan{\varphi}.
\end{align}
\end{lemma}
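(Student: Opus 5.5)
The plan is to treat the logarithm of the ratio of the two half-diagonals as a scalar quantity. Its derivative, by Lemma \ref{rhombodekai}, has the same sign as the quantity itself up to an integrable error. This forces convergence by a one-sided trapping argument. The range of the limit will then come from the rhombus constraint built into $\Omega_r$.

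\emph{Step 1 (equation for the ratio).} Set $r(t)=\log\hat{\rho}_2(t)-\log\hat{\rho}_1(t)$. By Lemma \ref{houkourhombus} and Lemma \ref{Dsharprhomblem}, $\hat{\rho}_1,\hat{\rho}_2\sim\log t$. Dividing the $\dot{\hat{\rho}}_i$ equation in \eqref{hatzioderhomb} by $\hat{\rho}_i$, the common term $2\mathcal{F}(D)/D$ cancels, and we get
\begin{align*}
\dot r=G(t)+O\left(\frac{t^{-\theta_{11}}}{\log t}\right),\qquad G(t)=\frac{\mathcal{F}(2\hat{\rho}_1)}{\hat{\rho}_1}-\frac{\mathcal{F}(2\hat{\rho}_2)}{\hat{\rho}_2}.
\end{align*}
Since $s\mapsto\mathcal{F}(2s)/s$ is strictly decreasing for large $s$, $G$ has the same sign as $r$: it is positive when $\hat{\rho}_2>\hat{\rho}_1$ and negative when $\hat{\rho}_2<\hat{\rho}_1$. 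The error is integrable because $\theta_{11}>1$. Hence the tail $\epsilon(t)=\int_t^\infty Cs^{-\theta_{11}}ds$ tends to $0$.

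\emph{Step 2 (boundedness and identification of the range).} From $\lim_{t\to\infty}\operatorname{dist}(\Gamma(\vec u)(t),\Omega_r)=0$, together with $\mathcal{L}/L\to1$, $z_g-z_\infty=o(1)$ and \eqref{hayzyoujunbi}, I will show that $\hat{\rho}_1/\mathcal{L}$ and $\hat{\rho}_2/\mathcal{L}$ are asymptotically equal to $|u+v|/2$ and $|u-v|/2$ for some $u,v\in S^{d-1}$ with $|u\cdot v|\le\frac12$. These $u,v$ may depend on $t$. Because $|u+v|^2+|u-v|^2=4$ and both $|u\pm v|^2$ lie in $[1,3]$, the ratio $e^{r(t)}$ has all of its accumulation points in $[1/\sqrt3,\sqrt3]$. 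Therefore $r$ is bounded, and any limit of $e^{r}$ equals $\tan\varphi$ for some $\varphi\in[\frac{\pi}{6},\frac{\pi}{3}]$.

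\emph{Step 3 (convergence, the main point).} I argue by contradiction: suppose $\liminf r<\limsup r$. Choose a level $c\neq 0$ strictly between them, say $c>0$ with $c<\limsup r$; the case $c<0$ is symmetric. Take $t_0$ large with $r(t_0)>c+\eta$, where $\eta$ is so small that $c+\eta<\limsup r$ still holds and $\epsilon(t_0)<\eta$. While $r\ge c>0$ we have $G>0$, so $r(t)\ge r(t_0)-\epsilon(t_0)>c$ for all later $t$. This gives $\liminf r\ge c$, a contradiction. Such a $c$ always exists unless $\liminf r=\limsup r$, because the interval $(\liminf r,\limsup r)$ contains nonzero points. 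Hence $r$ converges and $\lim\hat\rho_2/\hat\rho_1=\tan\varphi$.

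The only delicate point I anticipate is Step 2. There I must transfer the set-convergence $\operatorname{dist}(\Gamma,\Omega_r)\to0$, which allows $u,v$ to vary in time, into the pointwise bound $e^{r}\in[1/\sqrt3-o(1),\sqrt3+o(1)]$. This should follow directly from $|\hat z_1-\hat z_3|=2\hat\rho_1+o(\log t)$ and the corresponding identity for $\hat z_2,\hat z_4$.
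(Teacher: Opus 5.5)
Your proposal is correct and follows the paper's proof: the paper also works with $\log(\hat{\rho}_2/\hat{\rho}_1)$, and it uses Lemma \ref{rhombodekai} and the monotonicity of $\mathcal{F}(2s)/s$ to show that this quantity's derivative has its own sign up to an integrable $O(t^{-\theta_{11}})$ error. The differences are only in packaging. For convergence, the paper shows that the bounded function $\sqrt{(\log(\hat{\rho}_2/\hat{\rho}_1))^2+1}-\frac{C}{\theta_{11}-1}t^{1-\theta_{11}}$ is nondecreasing and then fixes the sign by continuity, where you use a level-crossing contradiction. For the range $[1/\sqrt3,\sqrt3]$, the paper uses $R-\rho\to\infty$ together with the orthogonality in Lemma \ref{houkourhombus}, where you use the constraint $|u\cdot v|\le\frac12$ built into $\Omega_r$.
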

\begin{proof}
We define
\begin{align}\label{mathcalVdef}
\mathcal{V}(t)=\sqrt{ \left(\log{\frac{\hat{\rho}_2(t)}{\hat{\rho}_1(t)}}\right)^2+1}.
\end{align}
We note that by Proposition \ref{samesolitonhanare} and \eqref{houkoues1}, we have
\begin{align}\label{mathcalVyuukai}
\mathcal{V}(t)\lesssim 1.
\end{align}
Furthermore, by Lemma \ref{rhombodekai}, we have
\begin{align*}
\dot{\mathcal{V}}=\frac{1}{\sqrt{ \left(\log{\frac{\hat{\rho}_2(t)}{\hat{\rho}_1(t)}}\right)^2+1}}\left(\log{\frac{\hat{\rho}_2(t)}{\hat{\rho}_1(t)}} \right)\left(\frac{\mathcal{F}(2\hat{\rho}_1)}{\hat{\rho}_1}-\frac{\mathcal{F}(2\hat{\rho}_2)}{\hat{\rho}_2}\right)+O\left(t^{-\theta_{11}}\right).
\end{align*}
Moreover, since the function $\frac{\mathcal{F}(2r)}{r}$ is decreasing for all sufficiently large $r$, we have
\begin{align*}
\left(\log{\frac{\hat{\rho}_2(t)}{\hat{\rho}_1(t)}} \right)\left(\frac{\mathcal{F}(2\hat{\rho}_1)}{\hat{\rho}_1}-\frac{\mathcal{F}(2\hat{\rho}_2)}{\hat{\rho}_2}\right)\geq 0.
\end{align*}
Thus, there exists $C>0$ such that
\begin{align}\label{mathcalVbibunhutoushiki}
\dot{\mathcal{V}}\geq -Ct^{-\theta_{11}}.
\end{align}
Here, we define 
\begin{align*}
\tilde{\mathcal{V}}(t)=\mathcal{V}(t)-\frac{C}{\theta_{11}-1}t^{-\theta_{11}+1}.
\end{align*}
Then, by \eqref{mathcalVyuukai}, we have
\begin{align}\label{tildeVyuukai}
\left|\tilde{\mathcal{V}}(t)\right|\lesssim 1.
\end{align}
In addition, by \eqref{mathcalVbibunhutoushiki}, we obtain
\begin{align}\label{tildeVtanchou}
\dot{\tilde{\mathcal{V}}}=\dot{\mathcal{V}}+Ct^{-\theta_{11}}\geq 0. 
\end{align}
By \eqref{tildeVyuukai} and \eqref{tildeVtanchou}, $\tilde{\mathcal{V}}$ converges as $t\to\infty$, and hence $\mathcal{V}$ converges too. Therefore, there exists $\mathcal{V}_{\infty}$ such that
\begin{align*}
\lim_{t\to\infty}\mathcal{V}(t)=\lim_{t\to\infty}\sqrt{ \left(\log{\frac{\hat{\rho}_2(t)}{\hat{\rho}_1(t)}}\right)^2+1}=\mathcal{V}_{\infty}.
\end{align*}
Since $\hat{\rho}_1$ and $\hat{\rho}_2$ are continuous, there exists $\ell_{\infty}\in \mathbb{R}$ such that 
\begin{align}
\lim_{t\to\infty}\frac{\hat{\rho}_2(t)}{\hat{\rho}_1(t)}=\ell_{\infty}.
\end{align}
We note that by \eqref{taikakusengomi} and \eqref{houkoues1}, we have $\frac{1}{\sqrt{3}}\leq \ell_{\infty}\leq \sqrt{3}$. Define $\varphi\in[\frac{\pi}{6},\frac{\pi}{3}]$ as
\begin{align*}
\tan{\varphi}=\ell_{\infty}.
\end{align*}
Then, we obtain \eqref{hoshinohiws1}.
\end{proof}

\begin{remark}
For each rhombus-type solution $\vec{u}$, we denote by $\varphi=\varphi(\vec{u})\in[\frac{\pi}{6},\frac{\pi}{3}]$
the unique parameter determined by \eqref{hoshinohiws1}. We use this notation throughout the remainder of the paper.
\end{remark}

\subsection{Long-time behavior of rhombus-type solutions in the interior regime}

By Lemma \ref{houkourhombus} and Lemma \ref{hishinagasalem2}, the centers of every rhombus-type solution move away from a fixed point while asymptotically forming a rhombus. In this subsection, we determine their precise long-time behavior when the limiting ratio $\frac{\hat{\rho}_2}{\hat{\rho}_1}$ lies in the interior of the admissible range.

\begin{lemma}\label{hitantennagasaharplem}
Let $\vec{u}$ be a rhombus-type solution. Furthermore, we assume $\frac{\pi}{6}<\varphi<\frac{\pi}{3}$. Then, we have for $t\gg 1$
\begin{align}\label{hitantensharp}
\begin{aligned}
\hat{\rho}_1&=\cos{\varphi}\left\{\log{t}-\frac{d-1}{2}\log{(\log{t})}+c_{\star}+\log{2} \right\}+O\left(\frac{\log{(\log{t})}}{\log{t}}\right),
\\
\hat{\rho}_2&=\sin{\varphi}\left\{\log{t}-\frac{d-1}{2}\log{(\log{t})}+c_{\star}+\log{2} \right\}+O\left(\frac{\log{(\log{t})}}{\log{t}}\right).
\end{aligned}
\end{align}
\end{lemma}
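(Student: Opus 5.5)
The plan is to reduce everything to a single scalar equation for the side length of the rhombus, using that in the interior regime the like-signed (diagonal) interactions are polynomially smaller than the side interactions. Set $s(t)=\sqrt{\hat\rho_1(t)^2+\hat\rho_2(t)^2}$.

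\textbf{Step 1: $s$ matches the side lengths.} By Lemma \ref{houkourhombus}, $\hat z_{i+1}-\hat z_i$ is, up to $O(t^{-\theta_{12}})$, equal to $\pm\omega_2\hat\rho_2\mp\omega_1\hat\rho_1$. Since $\omega_1\cdot\omega_2=0$, this gives $\rho_i=s+O(t^{-\theta_{12}})$ for every $i$. Hence $D=s+O(t^{-\theta_{12}})$, and \eqref{tukaeruF1} yields $\mathcal F(D)/D=\mathcal F(s)/s\,(1+O(t^{-\theta_{12}}))$.

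\textbf{Step 2: the diagonal interaction is negligible.} Since $\hat\rho_1/s\to\cos\varphi$ and $\hat\rho_2/s\to\sin\varphi$, we have
\begin{align*}
2\hat\rho_1-D\geq(2\cos\varphi-1-o(1))D,\qquad 2\hat\rho_2-D\geq(2\sin\varphi-1-o(1))D.
\end{align*}
The hypothesis $\frac{\pi}{6}<\varphi<\frac{\pi}{3}$ is used exactly here: it makes $\kappa:=\min(2\cos\varphi,2\sin\varphi)-1>0$. By \eqref{mathcalFhyouka1} and Lemma \ref{Dsharprhomblem}, $\mathcal F(2\hat\rho_i)\lesssim e^{-\kappa D/2}\mathcal F(D)\lesssim t^{-1-\delta}$ for some $\delta>0$. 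Plugging this into \eqref{hatzioderhomb} gives
\begin{align*}
\dot{\hat\rho}_i=\frac{2\mathcal F(D)}{D}\hat\rho_i+O(t^{-1-\delta'}),\qquad i=1,2,
\end{align*}
for some $\delta'>0$, using $\hat\rho_i\lesssim\log t$.

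\textbf{Step 3: the scalar equation for $s$ and its integration.} Differentiating and using Step 1 gives
\begin{align*}
\dot s=\frac{2\mathcal F(D)}{D}s+O(t^{-1-\delta'})=2\mathcal F(s)+O(t^{-1-\delta''}).
\end{align*}
Since $\mathcal F(s)\sim t^{-1}$, this reads $\frac{d}{dt}\Phi(s)=2+O(t^{-\delta''})$, so $\Phi(s(t))=2t+O(t^{1-\delta''})$. This is a quantitative form of Lemma \ref{mathcalFyouode}(2). With \eqref{logPhiasymptotics} it gives $s(t)=L(2t)+O(t^{-\delta''})$. Expanding $L(2t)$ by \eqref{Lnagasa} then yields
\begin{align*}
s(t)=\log t-\frac{d-1}{2}\log(\log t)+c_\star+\log2+O\left(\frac{\log(\log t)}{\log t}\right).
\end{align*}

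\textbf{Step 4: a rate for the ratio.} From Step 2, $\frac{d}{dt}\log(\hat\rho_2/\hat\rho_1)=O(t^{-1-\delta'}/\log t)$. Integrating from $t$ to $\infty$ against the limit $\tan\varphi$ of Lemma \ref{hishinagasalem2} gives $\hat\rho_2/\hat\rho_1=\tan\varphi+O(t^{-\delta'})$. Therefore $\hat\rho_1=\cos\varphi\,s+O(t^{-\delta'}\log t)$ and $\hat\rho_2=\sin\varphi\,s+O(t^{-\delta'}\log t)$. Combining with Step 3 gives \eqref{hitantensharp}, since the polynomial errors are absorbed into $O(\log(\log t)/\log t)$.

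\textbf{Main obstacle.} The key step is Step 2, together with tracking the error exponents. Everything hinges on the strict gap $\kappa>0$ converting the diagonal interaction $\mathcal F(2\hat\rho_i)/\hat\rho_i$ into an $O(t^{-1-\delta})$ error. At the endpoints $\varphi=\frac{\pi}{6},\frac{\pi}{3}$ this gap closes, and this term enters at leading order instead.
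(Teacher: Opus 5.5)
Your proof is correct and follows essentially the paper's route. The strict inequality $\frac{\pi}{6}<\varphi<\frac{\pi}{3}$ makes the like-signed (diagonal) interaction $O(t^{-\theta})$ with $\theta>1$; a side-type length then satisfies $\dot{(\cdot)}=2\mathcal{F}(\cdot)+O(t^{-\theta})$ and is integrated against the reference equation; finally, polynomial-rate convergence of $\log(\hat\rho_2/\hat\rho_1)$ together with the Pythagorean relation splits that length into its $\cos\varphi$ and $\sin\varphi$ parts. The differences are cosmetic: you run the scalar ODE for $s=\sqrt{\hat\rho_1^2+\hat\rho_2^2}$ via \eqref{hatzioderhomb} rather than for $\rho_1$ via Lemma \ref{2,2centerdynamics2}, and you make the integration step quantitative through $\Phi$, which the paper leaves implicit.
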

\begin{proof}
When $\frac{\pi}{6}<\varphi<\frac{\pi}{3}$, there exists $\theta_{\circ}>1$ such that for $t\gg 1$,
\begin{align*}
\mathcal{F}(R_1)+\mathcal{F}(R_2)\lesssim t^{-\theta_{\circ}}.
\end{align*}
Then, by Lemma \ref{2,2centerdynamics2}, Lemma \ref{henchourhomblem}, and Lemma \ref{houkourhombus}, there exists $1<\theta_{\bullet}<\theta_{\circ}$ such that 
\begin{align}\label{rho1iiode}
\dot{\rho}_1=2\mathcal{F}(\rho_1)+O\left(t^{-\theta_{\bullet}}\right).
\end{align}
By \eqref{rho1iiode}, we have
\begin{align}\label{rho1nagasasharp}
\rho_1(t)=\log{t}-\frac{d-1}{2}\log{(\log{t})}+c_{\star}+\log{2}+O\left(\frac{\log{(\log{t})}}{\log{t}}\right).
\end{align}
By Lemma \ref{rhombodekai}, when we define $\mathfrak{V}=\log{\frac{\hat{\rho}_2}{\hat{\rho}_1}}$, there exists $1<\theta_{\heartsuit}$ such that 
\begin{align*}
\dot{\mathfrak{V}}=O\left(t^{-\theta_{\heartsuit}}\right),
\end{align*}
and hence we obtain
\begin{align}\label{kakudosharpconv}
\left|\frac{\hat{\rho}_2(t)}{\hat{\rho}_1(t)}-\tan{\varphi}\right|\lesssim t^{-\theta_{\heartsuit}+1}.
\end{align}
Furthermore, by Lemma \ref{houkourhombus}, there exists $\theta_{\diamond}>0$ such that 
\begin{align}\label{rhokeies1}
\left(\hat{\rho}_1\right)^2+\left(\hat{\rho}_2\right)^2=\rho_1^2+O\left(t^{-\theta_{\diamond}}\right).
\end{align}
By \eqref{kakudosharpconv} and \eqref{rhokeies1}, we obtain
\begin{align}\label{hatrho1keisan}
\begin{aligned}
\left(\hat{\rho}_1\right)^2-\left(\cos^2{\varphi}\right)\left(\rho_1\right)^2&=\left(1-\cos^2{\varphi}\right)\left(\hat{\rho}_1\right)^2-\cos^2\varphi\left(\hat{\rho}_2\right)^2+O\left(t^{-\theta_{\diamond}}\right)
\\
&=\left(\cos^2{\varphi}\right)\left(\hat{\rho}_1\right)^2\left\{\tan^2{\varphi}-\left(\frac{\hat{\rho}_2(t)}{\hat{\rho}_1(t)}\right)^2\right\}+O\left(t^{-\theta_{\diamond}}\right)
\\
&=O\left((\log{t})^2t^{-\theta_{\heartsuit}+1}+t^{-\theta_{\diamond}}\right).
\end{aligned}
\end{align}
By \eqref{hatrho1keisan}, we obtain
\begin{align}\label{hatrho1chantomotomeru}
\begin{aligned}
\left|\hat{\rho}_1-\left(\cos{\varphi}\right)\rho_1\right|&=\frac{\left|\left(\hat{\rho}_1\right)^2-\left(\cos^2{\varphi}\right)\left(\rho_1\right)^2\right|}{\hat{\rho}_1+\left(\cos{\varphi}\right)\rho_1}\lesssim \frac{(\log{t})^2t^{-\theta_{\heartsuit}+1}+t^{-\theta_{\diamond}}}{\log{t}}\lesssim t^{-\theta^{\prime}},
\end{aligned}
\end{align}
where $0<\theta^{\prime}<\min{(\theta_{\heartsuit}-1,\theta_{\diamond})}$. By the same argument, we have
\begin{align}\label{hatrho2chantomotomeru}
\left|\hat{\rho}_2-\left(\sin{\varphi}\right)\rho_1\right|\lesssim t^{-\theta^{\prime}}.
\end{align}
By \eqref{rho1nagasasharp}, \eqref{hatrho1chantomotomeru}, and \eqref{hatrho2chantomotomeru}, we obtain \eqref{hitantensharp}.

\end{proof}

\subsection{Asymptotic behavior in the endpoint regimes}

In this subsection, we determine the asymptotic behavior of $\hat{\rho}_1$ and $\hat{\rho}_2$ in the endpoint regimes. By symmetry,
it suffices to consider the case $\varphi=\frac{\pi}{3}$, since the case $\varphi=\frac{\pi}{6}$ follows by interchanging $\hat{\rho}_1$ and $\hat{\rho}_2$. To analyze the resulting two-dimensional system, we define
\begin{align}\label{abrdef}
\begin{aligned}
a(t)&=\hat{\rho}_1(t),
\\
b(t)&=\hat{\rho}_2(t),
\\
r(t)&=\sqrt{\hat{\rho}_1(t)^2+\hat{\rho}_2(t)^2}.
\end{aligned}
\end{align}
The effective equations for $a$, $b$, and $r$ can then be written as follows.
\begin{lemma}\label{endpointlem1}
Let $\vec{u}$ be a rhombus-type solution. Furthermore, we assume $\varphi=\frac{\pi}{3}$. Then,  there exists $\theta_{13}>1$ such that
\begin{align}\label{endpointode}
\begin{aligned}
\dot{a}&=\left(\frac{2\mathcal{F}(r)}{r}-\frac{\mathcal{F}(2a)}{a}\right)a+O\left(t^{-\theta_{13}}\right),
\\
\dot{b}&=\frac{2\mathcal{F}(r)}{r}b+O\left(t^{-\theta_{13}}\right),
\\
\dot{r}&=2\mathcal{F}(r)-\frac{a}{r}\mathcal{F}(2a)+O\left(t^{-\theta_{13}}\right).
\end{aligned}
\end{align}
\end{lemma}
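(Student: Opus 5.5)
The plan is to start from Lemma \ref{rhombodekai}, which already gives
\begin{align*}
\dot{\hat{\rho}}_i=\left(\frac{2\mathcal{F}(D)}{D}-\frac{\mathcal{F}(2\hat{\rho}_i)}{\hat{\rho}_i}\right)\hat{\rho}_i+O\left(t^{-\theta_{11}}\right),
\end{align*}
and then do two replacements: $D$ by $r$, and the diagonal term $\mathcal{F}(2\hat\rho_2)/\hat\rho_2$ by zero.

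\textbf{Replacing $D$ by $r$.} By Lemma \ref{houkourhombus}, $z_2-z_1=\hat z_2-\hat z_1=\omega_2 b-\omega_1 a+O(t^{-\theta_{12}})$ with $\omega_1\perp\omega_2$. Hence $\rho_1=r+O(t^{-\theta_{12}})$, and similarly every $\rho_i$ equals $r$ up to $O(t^{-\theta_{12}})$. Since also $D=\min_i\rho_i$, we get $|D-r|\lesssim t^{-\theta_{12}}$. Using \eqref{mathcalFhyouka1}, this yields
\begin{align*}
\left|\frac{\mathcal{F}(D)}{D}-\frac{\mathcal{F}(r)}{r}\right|\lesssim \frac{\mathcal{F}(D)}{D}\,t^{-\theta_{12}}\lesssim t^{-1-\theta_{12}}.
\end{align*}
Multiplying by $a,b\lesssim\log t$ gives an error $O(t^{-1-\theta_{12}}\log t)$, which is of the form $O(t^{-\theta})$ for any $1<\theta<1+\theta_{12}$.

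\textbf{Discarding the $b$-diagonal term.} Since $\varphi=\pi/3$, we have $b/a\to\sqrt3$ and $r\sim 2a$. Hence $2b=\sqrt3\,r+o(\log t)$. The key point is that this $o(\log t)$ must be a positive multiple of $\log t$ less than $2b-r$. Indeed, $2b-r=(\sqrt3-1)r+o(r)$, so
\begin{align*}
\mathcal{F}(2b)\lesssim e^{-(\sqrt3-1)r/2}\,\mathcal{F}(r)\lesssim t^{-1-\kappa}
\end{align*}
for some $\kappa>0$, using $r=\log t+O(\log\log t)$ from Lemma \ref{Dsharprhomblem}. The term $\mathcal{F}(2a)$ cannot be treated this way, since $2a\approx r$ at the endpoint; it must be kept, which is the whole point of the lemma.

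\textbf{The equation for $r$.} Write $r\dot r=a\dot a+b\dot b$. This gives
\begin{align*}
r\dot r=\frac{2\mathcal{F}(r)}{r}\left(a^2+b^2\right)-a\,\mathcal{F}(2a)+O\left(t^{-\theta}\log t\right),
\end{align*}
and dividing by $r\gtrsim\log t$ yields the third equation. Finally, take $\theta_{13}$ slightly less than the minimum of the exponents obtained above, so that the $\log t$ factors are absorbed.

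The main, though modest, obstacle is the uniform bookkeeping of the $\log t$ losses in each step. This is handled by losing a little in the exponent each time, so that everything stays strictly above $1$.
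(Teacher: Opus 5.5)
Your proposal is correct and follows essentially the same route as the paper: start from Lemma \ref{rhombodekai}, replace $D$ by $r$ using $|\rho_i-r|\lesssim t^{-\theta}$ from Lemma \ref{houkourhombus}, drop $\mathcal{F}(2\hat{\rho}_2)$ because $2b\approx\sqrt{3}\,r$, and obtain the $r$-equation from $r\dot{r}=a\dot{a}+b\dot{b}$; the paper records $\mathcal{F}(2\hat{\rho}_2)\lesssim t^{-3/2}$, and your weaker bound $t^{-1-\kappa}$ works equally well. One sentence is garbled (an $o(\log t)$ term cannot be ``a positive multiple of $\log t$''), but the fact you actually use is correct: $2b-r\geq\frac{\sqrt{3}-1}{2}\,r$ for large $t$.
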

\begin{proof}
By Lemma \ref{henchourhomblem} and Lemma \ref{houkourhombus}, there exists $\theta_{\circ}>0$ such that for $i=1,2,3,4$,
\begin{align*}
\left|r(t)-\rho_i(t)\right|\lesssim t^{-\theta_{\circ}}.
\end{align*}
By $\varphi=\frac{\pi}{3}$, we have for $t\gg 1$
\begin{align}\label{hatrhodekai}
\mathcal{F}(2\hat{\rho}_2)\lesssim t^{-\frac{3}{2}}.
\end{align}
Therefore, by Lemma \ref{rhombodekai} and \eqref{hatrhodekai}, choosing $\theta_{13}$ so that $1<\theta_{13}<\min{(\theta_{\circ}+1,\frac{3}{2})}$, 
we obtain the first two equations in \eqref{endpointode}. Then, by direct computation, we have
\begin{align*}
\dot{r}=\frac{a\dot{a}+b\dot{b}}{r}=2\mathcal{F}(r)-\frac{a}{r}\mathcal{F}(2a)+O\left(t^{-\theta_{13}}\right),
\end{align*}
and we complete the proof.
\end{proof}
We now analyze the system derived in Lemma \ref{endpointlem1}. We define 
\begin{align*}
x(t)&=2a(t)-r(t).
\end{align*}

\begin{lemma}\label{xodelem}
Let $\vec{u}$ be a rhombus-type solution. Furthermore, we assume $\varphi=\frac{\pi}{3}$. Then, there exists $\theta_{14}>1$ such that we have
\begin{align}\label{xode}
\dot{x}=\mathcal{F}(r)\left(\frac{2x}{r}-\frac{3}{2}e^{-x}\right)+O\left(\frac{xe^{-x}\mathcal{F}(r)}{r}+t^{-\theta_{14}}\right).
\end{align}
\end{lemma}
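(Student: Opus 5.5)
We differentiate $x=2a-r$ directly, using the equations of Lemma \ref{endpointlem1}:
\begin{align*}
\dot{x}=2\dot{a}-\dot{r}
=\frac{4\mathcal{F}(r)}{r}a-2\mathcal{F}(2a)-2\mathcal{F}(r)+\frac{a}{r}\mathcal{F}(2a)+O\left(t^{-\theta_{13}}\right).
\end{align*}
Substituting $a=\frac{r+x}{2}$ gives $\frac{4\mathcal{F}(r)}{r}a-2\mathcal{F}(r)=\frac{2x}{r}\mathcal{F}(r)$, which is exactly the first term in \eqref{xode}. The remaining part is
\begin{align*}
\left(\frac{a}{r}-2\right)\mathcal{F}(2a)=-\left(\frac{3}{2}-\frac{x}{2r}\right)\mathcal{F}(r+x).
\end{align*}
So everything reduces to comparing $\mathcal{F}(r+x)$ with $e^{-x}\mathcal{F}(r)$.

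For this comparison we first record the size of $x$. Since $\varphi=\frac{\pi}{3}$, Lemma \ref{hishinagasalem2} gives $a/r\to\cos\frac{\pi}{3}=\frac12$, hence $x=o(r)$. By Lemmas \ref{henchourhomblem} and \ref{houkourhombus}, $r=\rho_i+O(t^{-\theta_\circ})$, and Proposition \ref{samesolitonhanare} gives $R_1-\rho\to\infty$. Because $R_1=2a+O(t^{-\theta_{10}})$ by \eqref{hayzyoujunbi}, this means $x\to+\infty$; in particular $x>0$ for $t\gg1$. We also have $r\sim\log t$ and $\mathcal{F}(r)\sim t^{-1}$ by Lemma \ref{Dsharprhomblem}.

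We then split $\mathcal{F}(r+x)$ into two regimes.
\begin{itemize}
\item If $0<x\ll r$, we apply \eqref{tukaeruF3} with $R=r$ and $a=x$. This gives $\mathcal{F}(r+x)=e^{-x}\mathcal{F}(r)+O\left(\frac{(x+1)e^{-x}\mathcal{F}(r)}{r}\right)$. Since $x\geq1$ eventually, the error is $O\left(\frac{xe^{-x}\mathcal{F}(r)}{r}\right)$.
\item If instead $x\gtrsim r$ along some times, we do not need the fine comparison. Both $e^{-x}\mathcal{F}(r)$ and $\mathcal{F}(r+x)$ are then $O(e^{-cr}\mathcal{F}(r))=O(t^{-1-c'})$, so they are absorbed into $t^{-\theta_{14}}$.
\end{itemize}
In fact $x=o(r)$, so the first regime already suffices if we take the smallness constant of \eqref{tukaeruF3} to be $o(1)$.

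The cross term $\frac{x}{2r}\mathcal{F}(r+x)$ is bounded by $\frac{x}{r}e^{-x}\mathcal{F}(r)$ up to a constant, so it also goes into the error term. Collecting the pieces yields \eqref{xode}, with $\theta_{14}=\theta_{13}$ or slightly smaller.

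The main obstacle is the sign and growth of $x$. The estimate \eqref{tukaeruF3} requires $x>0$, and the absorption of the cross terms and of the $+1$ in $(x+1)$ requires $x\gtrsim1$. The plan is to obtain both from Proposition \ref{samesolitonhanare} transferred through \eqref{hayzyoujunbi}, as above. If only boundedness below is available, we would instead use \eqref{tukaeruF1}, which handles $|x|\lesssim1$ with an error $O\left(\frac{|x|\mathcal{F}(r)}{r}\right)$, and we would need to check that this is still compatible with the stated error term.
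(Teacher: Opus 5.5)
Your proposal is correct and follows the paper's proof essentially verbatim. Like the paper, you differentiate $x=2a-r$ using Lemma \ref{endpointlem1}, rewrite the result as $\frac{2x}{r}\mathcal{F}(r)-\left(\frac32-\frac{x}{2r}\right)\mathcal{F}(2a)$, and then apply \eqref{tukaeruF3} with $\theta_{14}=\theta_{13}$, using $x\to\infty$ and $x/r\to0$ (from Proposition \ref{samesolitonhanare} and the rhombus geometry). Your explicit derivation of $x\to\infty$ via $R_1=2a+O(t^{-\theta_{10}})$ only spells out what the paper cites. The second regime you consider is unnecessary, since $x=o(r)$.
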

\begin{proof}
First, by Lemma \ref{endpointlem1}, we have \eqref{endpointode}. By Proposition \ref{samesolitonhanare} and Lemma \ref{houkourhombus}, $\varphi=\frac{\pi}{3}$, we have
\begin{align}\label{xpcondi}
\lim_{t\to\infty}x(t)=\infty,\ \lim_{t\to\infty}\frac{x(t)}{r(t)}=0.
\end{align}
Then, by \eqref{tukaeruF3}, we have for $t\gg 1$
\begin{align*}
\mathcal{F}(2a)=e^{-x}\mathcal{F}(r)+O\left(\frac{xe^{-x}}{r}\mathcal{F}(r)\right).
\end{align*}
Thus, we have
\begin{align*}
\dot{x}&=2\dot{a}-\dot{r}
\\
&=4\mathcal{F}(r)\frac{a}{r}-2\mathcal{F}(2a)-\left(2\mathcal{F}(r)-\mathcal{F}(2a)\frac{a}{r}\right)+O\left(t^{-\theta_{13}}\right)
\\
&=\frac{2x}{r}\mathcal{F}(r)-\left(\frac{3}{2}-\frac{x}{2r}\right)\mathcal{F}(2a)+O\left(t^{-\theta_{13}}\right)
\\
&=\mathcal{F}(r)\left(\frac{2x}{r}-\frac{3}{2}e^{-x}\right)+O\left(\frac{xe^{-x}\mathcal{F}(r)}{r}+t^{-\theta_{13}}\right).
\end{align*}
Therefore, when we define $\theta_{14}=\theta_{13}$, we obtain \eqref{xode}.

\end{proof}

To determine the asymptotic behavior of $a$, we first use Lemma \ref{xodelem} to derive a sharp estimate for $x$.

\begin{lemma}\label{xsharpeslem}
Let $\vec{u}$ be a rhombus-type solution. Furthermore, we assume $\varphi=\frac{\pi}{3}$. Then, we have
\begin{align}\label{xnagasa}
x(t)=\log{r(t)}-\log{(\log{r(t)})}+\log{\frac{3}{4}}+o(1).
\end{align}
\end{lemma}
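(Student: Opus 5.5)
The ODE \eqref{xode} has the quasi-equilibrium $x_*(r)$ defined by $\frac{2x_*}{r}=\frac{3}{2}e^{-x_*}$, i.e. $x_*e^{x_*}=\frac{3}{4}r$. This gives $x_*=\log r-\log(\log r)+\log\frac34+o(1)$. The plan is to show that $x$ tracks $x_*(r)$ to within $o(1)$, using the fact that the equilibrium is attracting.

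\textbf{Step 1: reparametrize by $r$.} By \eqref{endpointode} together with $\frac{a}{r}\mathcal{F}(2a)=\frac12 e^{-x}\mathcal{F}(r)(1+o(1))=o(\mathcal{F}(r))$, we have $\dot r=(2+o(1))\mathcal{F}(r)$. Combined with Lemma \ref{mathcalFyouode}, this gives $\mathcal{F}(r)\sim t^{-1}$ and $r=\log t+O(\log\log t)$. Consequently $t^{-\theta_{14}}=o\big(\mathcal{F}(r)/r^{N}\big)$ for every $N$. Dividing \eqref{xode} by $\dot r$ yields
\begin{align*}
\frac{dx}{dr}=\frac{1+o(1)}{2}\left(\frac{2x}{r}-\frac32e^{-x}\right)+O\left(\frac{xe^{-x}}{r}\right)+o(r^{-N}).
\end{align*}
The $o(1)$ factor multiplies the bracket, so it does not spoil the sign structure.

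\textbf{Step 2: write $x=x_*(r)+y$.} Here $x_*$ is slowly varying, with $dx_*/dr=O(1/r)$. The drift then becomes
\begin{align*}
\frac{x}{r}-\frac34e^{-x}=\frac{x_*+y}{r}-\frac{x_*}{r}e^{-y}=\frac{x_*}{r}\big(1-e^{-y}\big)+\frac{y}{r}.
\end{align*}
This has the sign of $y$, which makes it \emph{repelling} in forward time, since $x$ grows when $x>x_*$. So the equilibrium is attracting only backward in time, exactly as with $\mathcal{W}$ in Lemma \ref{1-line-type-asymp}. For this reason the argument must use the a priori information $x\to\infty$ and $x/r\to0$ from \eqref{xpcondi}, integrating from $+\infty$ as in Lemmas \ref{naisekiosaerusharp} and \ref{naisekirhomblem2}. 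The goal is to show $y\to0$.

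\textbf{Step 3: rule out $y\ge\epsilon$ for large $r$.} Suppose $y\ge\epsilon$ at some large $r_0$. Then $\frac{dy}{dr}\ge c_\epsilon\frac{x_*}{r}-O(1/r)$, and since $x_*\to\infty$ this is $\ge \frac{\log r}{2r}c_\epsilon$. So $y$ stays above $\epsilon$ and grows like $(\log r)^2$. Once $y$ is large, the drift is at least $y/r$, which forces $x\gtrsim r^{1/2}$ or so. Comparing with $\frac{dx}{dr}\ge\frac{x}{2r}$ shows that $x/r$ cannot tend to $0$ in a way compatible with \eqref{xpcondi}. Since the estimate $x\ge x_*+\epsilon$ propagates, I expect to conclude via a bootstrap that $x\gtrsim \sqrt r$ grows without bound relative to $\log r$. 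It then remains to rule out that $x/r\to0$ while $x\gg\log r$. In that regime $dx/dr\approx x/r$ up to $O(e^{-x})$, so $x\approx Cr$, contradicting $x/r\to0$ unless $C=0$. Integrating $\frac{d}{dr}(x/r)\ge -\frac{3}{4r}e^{-x}$, the right-hand side is integrable, so $x/r$ has a limit; a zero limit together with $x/r$ nondecreasing up to an integrable error would force $x/r\le o(1)$ everywhere. This needs care; the cleaner route is to look at $\frac{d}{dr}\frac{x}{r}$ directly.

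\textbf{Step 4: rule out $y\le-\epsilon$.} Suppose $y\le-\epsilon$. Then $\frac{dy}{dr}\le -c\frac{x_*}{r}e^{\epsilon}\cdot$, and in fact when $y$ is very negative the term $-\frac34e^{-x}$ dominates, so $dx/dr\le -c\,x_*e^{-y}/r$. This drives $x$ to $-\infty$ in finite $r$-time, contradicting $x\to\infty$. The bound therefore propagates backward: $y\le-\epsilon$ at $r_0$ implies it for all $r>r_0$, with $y$ decreasing at rate $\gtrsim \log r/r$, which is not integrable, so $y\to-\infty$. This contradicts $x>0$.

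\textbf{Main obstacle.} Steps 3–4 show that $|y|<\epsilon$ eventually for every $\epsilon$, which is \eqref{xnagasa}. The main obstacle is making the key cancellation in Step 3 precise. The sign-preserving term $\frac{x_*}{r}(1-e^{-y})$ has size $\frac{\log r}{r}$, which dominates both the errors $O(xe^{-x}/r)=O(\frac{\log r}{r}\cdot\frac{1}{r})$ and the drift of $x_*$, of order $1/r$. Only because $x_*\to\infty$ does the repulsion beat the $O(1/r)$ motion of the equilibrium.
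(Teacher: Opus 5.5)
Your architecture is the paper's. You split $x$ into a slowly moving quasi-equilibrium plus a deviation $y$, observe that $y$ is repelled in forward time, and exclude escape upward using $x/r\to0$ and escape downward using $x\to\infty$. The paper works in $t$ with the explicit approximant $\tilde x=\log r-\log(\log r)+\log\frac34$; you work in $r$ with the exact root $x_*e^{x_*}=\frac34 r$. This is a harmless variation: it makes the bracket exactly $x_*(1-e^{-y})+y$ instead of $(\log r)(1-e^{-y})+y-\log(\log r)+\log\frac34-1$, at the price of expanding $x_*$ at the end. Your Step 4 is the paper's lower-bound argument. Note, though, that the contradiction comes from $dx/dr<0$ for all large $r$ (equivalently, $y$ drops by $\gtrsim(\log r)^2\gg x_*$), not from $y\to-\infty$ alone.

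The gap is the end of Step 3, which you flag yourself. The cancellation you call the main obstacle is in fact immediate; the unresolved point is turning persistence of $y\ge\epsilon$ into a contradiction. Knowing only that $y\ge\epsilon$ persists, the additive bound $\frac{d}{dr}(x/r)\ge-\frac{3}{4r}e^{-x}(1+o(1))$ gives merely $x(r_0)/r_0\le\int_{r_0}^{\infty}\frac{3}{4r}e^{-x}(1+o(1))\,dr$. Both sides are of order $\frac{\log r_0}{r_0}$, so ``nondecreasing up to an integrable error'' is not yet a contradiction, and the claims $x\gtrsim\sqrt r$ and $x\approx Cr$ are unjustified as written.

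The missing observation is that the multiplicative correction is integrable against $dr/r$. While $y\ge\epsilon$ and $(1-e^{-\epsilon})x_*$ is large, the term $\frac{x_*}{r}(1-e^{-y})$ absorbs $dx_*/dr$ and all additive errors. This leaves $\frac{dy}{dr}\ge\bigl(1-C(e^{-x}+r^{-N})\bigr)\frac{y}{r}$, hence $\frac{d}{dr}\log\frac{y}{r}\ge-\frac{C}{r}(e^{-x}+r^{-N})$. The right-hand side is integrable because $e^{-x}\le e^{-x_*}=\frac{4x_*}{3r}\lesssim\frac{\log r}{r}$. Thus $y/r$, and a fortiori $x/r$, stays bounded below by a positive constant, contradicting $x/r\to0$. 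This is exactly the paper's inequality $\frac{d}{dt}\log\frac{y}{r}\ge-C_1t^{-\theta_{13}}$ written in the variable $r$; with it inserted, your proof goes through.
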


\begin{proof}
By $\varphi=\frac{\pi}{3}$, we have
\begin{align}\label{xkatei}
\lim_{t\to\infty}x(t)=\infty,\ \lim_{t\to\infty}\frac{x(t)}{r(t)}=0.
\end{align}
Furthermore, by \eqref{tukaeruF3} and \eqref{xkatei}, we obtain
\begin{align}\label{F2aendpoint}
\mathcal{F}(2a)=e^{-x}\mathcal{F}(r)+O\left(\frac{xe^{-x}\mathcal{F}(r)}{r}\right).
\end{align}
Then, by \eqref{endpointode} and \eqref{F2aendpoint}, we obtain
\begin{align}\label{rdotendpoint}
\begin{aligned}
\dot{r}&=2\mathcal{F}(r)-\frac{a}{r}\mathcal{F}(2a)+O\left(t^{-\theta_{13}}\right)
\\
&=2\mathcal{F}(r)-\left(\frac{1}{2}+\frac{x}{2r}\right)\left(e^{-x}\mathcal{F}(r)+O\left(\frac{xe^{-x}\mathcal{F}(r)}{r}\right)\right)+O\left(t^{-\theta_{13}}\right)
\\
&=\left(2-\frac{e^{-x}}{2}\right)\mathcal{F}(r)+O\left(\frac{xe^{-x}\mathcal{F}(r)}{r}+t^{-\theta_{13}}\right).
\end{aligned}
\end{align}
Moreover, the estimates obtained above imply
\begin{align}\label{Frendpoint}
\mathcal{F}(r(t))\sim\frac{1}{t},\ r(t)\sim\log{t}.
\end{align}
Here, we define
\begin{align*}
\tilde{x}(t)&=\log{r(t)}-\log{(\log{r(t)})}+\log{\frac{3}{4}},
\\
y(t)&=x(t)-\tilde{x}(t).
\end{align*}
We note that by the definitions of $\tilde{x}$ and $y$, we have
\begin{align}\label{xtildeidentities}
\begin{aligned}
x&=\log{r}-\log{(\log{r})}+\log{\frac{3}{4}}+y,
\\
e^{-x}&=e^{-\tilde{x}}e^{-y}=\frac{4\log{r}}{3r}e^{-y}.
\end{aligned}
\end{align}
Then, by \eqref{rdotendpoint} and \eqref{Frendpoint}, we have
\begin{align}\label{tildexnobibun}
\dot{\tilde{x}}=\left(1-\frac{1}{\log{r}}\right)\frac{\dot{r}}{r}=\frac{2\mathcal{F}(r)}{r}+o\left(\frac{\mathcal{F}(r)}{r}\right).
\end{align}
By \eqref{xode}, \eqref{xtildeidentities}, and \eqref{tildexnobibun}, we obtain
\begin{align}\label{ydotendpoint}
\begin{aligned}
\dot{y}&=\frac{\mathcal{F}(r)}{r}\left(2x-\frac{3}{2}re^{-x}\right)-\frac{2\mathcal{F}(r)}{r}+o\left(\frac{\mathcal{F}(r)}{r}\right)
\\
&=\frac{2\mathcal{F}(r)}{r}\left\{\left(\log{r}-\log{(\log{r})}+\log{\frac{3}{4}}+y\right)-\frac{3}{4}r\left(\frac{4\log{r}}{3r}e^{-y}\right) \right\}
\\
&\quad-\frac{2\mathcal{F}(r)}{r}+o\left(\frac{\mathcal{F}(r)}{r}\right)
\\
&=\frac{2\mathcal{F}(r)}{r}\left\{\left(\log{r}\right)\left(1-e^{-y}\right)+y-\log{(\log{r})}+\log{\frac{3}{4}}-1\right\}+o\left(\frac{\mathcal{F}(r)}{r}\right).
\end{aligned}
\end{align}
Fix $0<\epsilon<1$. We first prove the upper bound for $y$. Suppose
that there exists $t^{\prime}\gg1$ such that
\begin{align*}
y(t^{\prime})\geq\epsilon.
\end{align*}
Then, we introduce the following bootstrap estimate
\begin{align}\label{yupperbootstrap}
y(t)\geq \epsilon.
\end{align}
We define $T_1\in [t^{\prime},\infty]$ by
\begin{align}\label{yupperexit}
T_1=\sup{\{t\in[t^{\prime},\infty)\ \mbox{such that}\ \eqref{yupperbootstrap}\ \mbox{holds on}\ [t^{\prime},t] \}}.
\end{align}
Furthermore, we assume $T_1<\infty$. Then, we have at $t=T_1$,
\begin{align*}
\dot{y}\geq\frac{2\mathcal{F}(r)}{r}\left\{\left(1-e^{-\epsilon}\right)\log r+y-\log(\log r)+O(1)\right\}\geq\frac{2\mathcal{F}(r)}{r}y.
\end{align*}
This contradicts the maximality of $T_1$. Hence, we conclude that $T_1=\infty$, and we have for $t>t^{\prime}$,
\begin{align}\label{ybibunhutoushikiue}
\dot{y}\geq\frac{2\mathcal{F}(r)}{r}y.
\end{align}
Then, by \eqref{rdotendpoint} and \eqref{ybibunhutoushikiue}, there exists $C_1>0$ such that, for every $t>t^{\prime}$,
\begin{align*}
\frac{d}{dt}\log\frac{y}{r}=\frac{\dot{y}}{y}-\frac{\dot{r}}{r}\geq-C_1t^{-\theta_{13}}.
\end{align*}
Since $\theta_{13}>1$, we have for $t^{\prime}<t$,
\begin{align*}
\log{\frac{y(t)}{r(t)}}-\log{\frac{y(t^{\prime})}{r(t^{\prime})}}\geq -C_1\int_{t^{\prime}}^ts^{-\theta_{13}}ds\geq -\frac{C_1}{\theta_{13}-1}\left(t^{\prime}\right)^{1-\theta_{13}},
\end{align*}
and there exists $C_2>0$ such that for all $t>t^{\prime}$
\begin{align}\label{yuemujun}
\frac{y(t)}{r(t)}>C_2.
\end{align}
By \eqref{yuemujun} and $\lim_{t\to\infty}\frac{\tilde{x}(t)}{r(t)}=0$, we have
\begin{align*}
\liminf_{t\to\infty}\frac{x(t)}{r(t)}>0,
\end{align*}
which contradicts \eqref{xkatei}. Therefore, we have for $t\gg 1$
\begin{align}\label{y<epsilon}
y(t)<\epsilon.
\end{align}
Next, we assume that there exists 
$t^{\prime}\gg1$ such that
\begin{align*}
y(t^{\prime})\leq-\epsilon.
\end{align*}
Then, we introduce the following bootstrap estimate
\begin{align}\label{rho-Lnotame2}
y(t)\leq -\epsilon.
\end{align}
We define $T_2\in [t^{\prime},\infty]$ by
\begin{align}\label{ylowerexit}
T_2=\sup{\{t\in[t^{\prime},\infty)\ \mbox{such that}\ \eqref{rho-Lnotame2}\ \mbox{holds on}\ [t^{\prime},t] \}}.
\end{align}
Furthermore, we assume $T_2<\infty$. Then, by \eqref{ydotendpoint}, we have at $t=T_2\gg 1$
\begin{align*}
\dot{y}&=\frac{2\mathcal{F}(r)}{r}\left\{\left(\log{r}\right)\left(1-e^{-y}\right)+y-\log{(\log{r})}+\log{\frac{3}{4}}-1\right\}+o\left(\frac{\mathcal{F}(r)}{r}\right)
\\
&\leq \frac{2\mathcal{F}(r)}{r}\left\{\left(\log{r}\right)\left(1-e^{\epsilon}\right)-\log{(\log{r})}+1 \right\}<0.
\end{align*}
This contradicts the maximality of $T_2$. Hence, we conclude that $T_2=\infty$. Then, we have for $t\gg t^{\prime}$,
\begin{align*}
\dot{x}&=\mathcal{F}(r)\left(\frac{2x}{r}-\frac{3}{2}e^{-x}\right)+o\left(\frac{\mathcal{F}(r)}{r}\right)
\\
&=\frac{2\mathcal{F}(r)}{r}\left\{\log{r}\left(1-e^{-y}\right)+y-\log{(\log{r})}+\log{\frac{3}{4}}\right\}+o\left(\frac{\mathcal{F}(r)}{r}\right)
\\
&<\frac{2\mathcal{F}(r)}{r}\left\{\left(\log{r}\right)\left(1-e^{\epsilon}\right)-\log{(\log{r})}+1 \right\}<0,
\end{align*}
which contradicts \eqref{xkatei}. Thus, we have for $t\gg 1$
\begin{align}\label{y>epsilon}
y(t)>-\epsilon.
\end{align}
By \eqref{y<epsilon} and \eqref{y>epsilon}, we have for $t\gg 1$
\begin{align*}
|y(t)|<\epsilon,
\end{align*}
and we obtain \eqref{xnagasa}.
\end{proof}

Next, we estimate $r$.

\begin{lemma}\label{reslem}
Let $\vec{u}$ be a rhombus-type solution. Furthermore, we assume $\varphi=\frac{\pi}{3}$. Then, we have
\begin{align}\label{res1}
r(t)=\log{t}-\frac{d-1}{2}\log{(\log{t})}+c_{\star}+\log{2}+o(1).
\end{align}
\end{lemma}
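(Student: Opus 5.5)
The plan is to start from the radial equation \eqref{rdotendpoint},
\begin{align*}
\dot r=\Bigl(2-\frac{e^{-x}}{2}\Bigr)\mathcal{F}(r)+O\Bigl(\frac{xe^{-x}\mathcal{F}(r)}{r}+t^{-\theta_{13}}\Bigr),
\end{align*}
and show that the correction $e^{-x}$ is negligible. This turns the equation into $\dot r=(2+o(1))\mathcal{F}(r)$. Lemma \ref{mathcalFyouode}(2) with $c=2$ then gives $r(t)-L(t)\to\log 2$, and combining this with \eqref{Lnagasa} yields \eqref{res1}.

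First, Lemma \ref{xsharpeslem} gives $x=\log r-\log(\log r)+\log\frac34+o(1)$. Hence
\begin{align*}
e^{-x}=\frac{4\log r}{3r}\bigl(1+o(1)\bigr)=o(1),
\qquad
\frac{xe^{-x}}{r}\lesssim \frac{(\log r)^2}{r^2}=o(1),
\end{align*}
since $r\to\infty$.

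Second, the term $t^{-\theta_{13}}$ must be written as $o(\mathcal{F}(r))$. For this I use \eqref{Frendpoint}, which gives $\mathcal{F}(r)\sim t^{-1}$; because $\theta_{13}>1$, we get $t^{-\theta_{13}}=o(\mathcal{F}(r))$. Should \eqref{Frendpoint} need an independent justification, it follows from $|r-\rho_1|\lesssim t^{-\theta_\circ}$ (proof of Lemma \ref{endpointlem1}) together with $\rho_1\geq D$, $\rho_1-D\leq\Delta_\rho\lesssim t^{-\theta_8}$ (Lemma \ref{henchourhomblem}) and Lemma \ref{Fsimt-1lem}.

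Putting these together gives $\dot r=(2+o(1))\mathcal{F}(r)$. We also have $r\geq L_0$ for large $t$ and $r$ is $C^1$, so Lemma \ref{mathcalFyouode}(2) applies after a shift of the time origin. A shift in time changes $L$ only by $o(1)$, since $\dot L=\mathcal{F}(L)\to0$. This gives $r-L\to\log2$.

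There is a simpler alternative. By Lemma \ref{Dsharprhomblem}, $D=\log t-\frac{d-1}{2}\log(\log t)+c_\star+\log 2+o(1)$ already holds for every rhombus-type solution. Since $|r-\rho_i|\lesssim t^{-\theta_\circ}$ and $|\rho_i-D|\leq\Delta_\rho=o(1)$, \eqref{res1} follows immediately. I would present this as the main argument and keep the ODE route above as a consistency check.

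The only point needing care is the identification $r\approx\rho_i$. This requires Lemma \ref{houkourhombus}, which gives $|\hat z_1-\hat z_2|^2=\hat\rho_1^2+\hat\rho_2^2+o(1)$ by orthogonality of $\omega_1,\omega_2$. I do not expect a real obstacle here.
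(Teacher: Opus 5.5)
Your proposal is correct. The ODE route is essentially the paper's proof: the paper reads off $\dot r=(2+o(1))\mathcal{F}(r)$ from Lemma \ref{endpointlem1} and applies Lemma \ref{mathcalFyouode}(2) with $c=2$. Two remarks on that route. Invoking Lemma \ref{xsharpeslem} is more than you need, since $x\to\infty$ from \eqref{xkatei} already gives $\mathcal{F}(2a)=o(\mathcal{F}(r))$. No time shift is needed either, because Lemma \ref{mathcalFyouode} is stated for functions on $[T,\infty)$.

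The argument you would make primary is genuinely different from the paper's. You note that Lemma \ref{Dsharprhomblem} already holds for every rhombus-type solution, since there the diagonal terms $\mathcal{F}(R_i)$ are absorbed into $o(\mathcal{F}(D))$ whatever $\varphi$ is. Then \eqref{res1} follows from $r=\rho_i+O(t^{-\theta_\circ})$ and $0\le\rho_i-D\le\Delta_\rho\lesssim t^{-\theta_8}$, with no further ODE analysis. This is shorter, and it shows that Lemma \ref{reslem} is really a corollary of the $D$-asymptotics plus the identification $r\approx\rho_i$, which comes from the orthogonality in Lemma \ref{houkourhombus}.

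The paper instead stays inside the reduced endpoint system for $(a,b,r)$. That costs a second application of the comparison lemma, but it shows directly that the radial variable $r$ obeys the same effective law $\dot r=(2+o(1))\mathcal{F}(r)$ as $\rho_1$. Both routes rest on the same estimate $|r-\rho_i|\lesssim t^{-\theta_\circ}$, which is already used in the proof of Lemma \ref{endpointlem1}.
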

\begin{proof}
By Lemma \ref{endpointlem1}, we have
\begin{align}\label{rnoode}
\dot{r}=\left(2+o(1)\right)\mathcal{F}(r).
\end{align}
By Lemma \ref{mathcalFyouode} and \eqref{rnoode}, we obtain \eqref{res1}.
\end{proof}

Combining the estimates obtained above, we derive the asymptotic behavior of $a$ and $b$.

\begin{lemma}\label{abes}
Let $\vec{u}$ be a rhombus-type solution. Furthermore, we assume $\varphi=\frac{\pi}{3}$. Then, we have
\begin{align}\label{abessharp}
\begin{aligned}
a(t)&=\frac{1}{2}\log{t}-\frac{d-3}{4}\log{(\log{t})}-\frac{1}{2}\log{(\log{(\log{t})})}+\frac{c_{\star}}{2}+\frac{1}{2}\log{\frac{3}{2}}+o(1),
\\
b(t)&=\frac{\sqrt{3}}{2}\log{t}-\frac{(3d-1)\sqrt{3}}{12}\log{(\log{t})}+\frac{\sqrt{3}}{6}\log{(\log{(\log{t})})}+\frac{\sqrt{3}}{2}c_{\star}
\\
&\quad+\frac{\sqrt{3}}{6}\log{\frac{32}{3}}+o(1).
\end{aligned}
\end{align}
\end{lemma}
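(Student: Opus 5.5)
Combine Lemma \ref{xsharpeslem} and Lemma \ref{reslem}: since $x=2a-r$, we have $a=\frac{1}{2}(r+x)$, and $b=\sqrt{r^2-a^2}$ by \eqref{abrdef}. The plan is to expand $x$ in terms of $\log t$ via $r$, then obtain $a$ and $b$ by elementary Taylor expansions.

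First, by \eqref{res1}, write $r=\log t-\frac{d-1}{2}\log(\log t)+c_\star+\log 2+o(1)$. Then
\begin{align*}
\log r=\log(\log t)+O\left(\frac{\log(\log t)}{\log t}\right),\qquad \log(\log r)=\log(\log(\log t))+o(1).
\end{align*}
Substituting into \eqref{xnagasa} gives
\begin{align*}
x=\log(\log t)-\log(\log(\log t))+\log\frac{3}{4}+o(1).
\end{align*}
Hence
\begin{align*}
a=\frac{r+x}{2}=\frac{1}{2}\log t-\frac{d-3}{4}\log(\log t)-\frac{1}{2}\log(\log(\log t))+\frac{c_\star}{2}+\frac{1}{2}\log 2+\frac{1}{2}\log\frac{3}{4}+o(1).
\end{align*}
Since $\log 2+\log\frac{3}{4}=\log\frac{3}{2}$, this is the claimed formula for $a$.

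For $b$, write
\begin{align*}
b=\sqrt{(r-a)(r+a)},\qquad r-a=\frac{r-x}{2},\qquad r+a=\frac{3r+x}{2},
\end{align*}
so that $b=\frac{1}{2}\sqrt{3r^2-2rx-x^2}$. Since $x/r\to0$ and $x^2/r=o(1)$ (because $x\sim\log\log t$ and $r\sim\log t$), a first-order expansion gives
\begin{align*}
b=\frac{\sqrt3}{2}r\left(1-\frac{x}{3r}+O\left(\frac{x^2}{r^2}\right)\right)=\frac{\sqrt3}{2}r-\frac{\sqrt3}{6}x+o(1).
\end{align*}
Inserting the expansions of $r$ and $x$ gives the following coefficients:
\begin{itemize}
\item $\log t$: $\frac{\sqrt3}{2}$.
\item $\log(\log t)$: $-\frac{\sqrt3(d-1)}{4}-\frac{\sqrt3}{6}=-\frac{(3d-1)\sqrt3}{12}$.
\item $\log(\log(\log t))$: $+\frac{\sqrt3}{6}$.
\item Constant: $\frac{\sqrt3}{2}(c_\star+\log2)-\frac{\sqrt3}{6}\log\frac34=\frac{\sqrt3}{2}c_\star+\frac{\sqrt3}{6}\log\frac{32}{3}$, since $3\log2-\log\frac34=\log\frac{32}{3}$.
\end{itemize}
This matches \eqref{abessharp}.

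There is no real obstacle, since the analytic work is already done in Lemma \ref{xsharpeslem} and Lemma \ref{reslem}. The only care needed is bookkeeping of the error terms. The $o(1)$ error in $r$ enters $\log r$ only at relative order $1/\log t$. The $o(1)$ error in $x$ passes through to $a$ and $b$ with bounded coefficients. The discarded quadratic term $O(x^2/r)$ tends to zero because $x^2\lesssim(\log\log t)^2\ll\log t$. All of these are therefore $o(1)$.
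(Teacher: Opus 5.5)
Your proposal is correct and is essentially the paper's argument. The paper also combines $x=2a-r$ from Lemma \ref{xsharpeslem} with $a^2+b^2=r^2$ to express $a$ and $b=\frac{\sqrt3}{2}r-\frac{\sqrt3}{6}x+o(1)$ in terms of $r$, $\log r$, $\log(\log r)$, and then substitutes Lemma \ref{reslem} together with $\log r=\log(\log t)+o(1)$ and $\log(\log r)=\log(\log(\log t))+o(1)$; the only difference is that you substitute into $x$ before expanding $b$ rather than after, and your coefficient and error bookkeeping checks out.
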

\begin{proof}
First, we prove
\begin{align}\label{abessjunbi}
\begin{aligned}
a(t)&=\frac{1}{2}r(t)+\frac{\log{r(t)}}{2}-\frac{\log{(\log{r(t)})}}{2}+\frac{1}{2}\log{\frac{3}{4}}+o(1),
\\
b(t)&=\frac{\sqrt{3}}{2}r(t)-\frac{\log{r(t)}}{2\sqrt{3}}+\frac{\log{(\log{r(t)})}}{2\sqrt{3}}-\frac{1}{2\sqrt{3}}\log{\frac{3}{4}}+o(1).
\end{aligned}
\end{align}
By Lemma \ref{xsharpeslem} and the identity $2a-r=x$, we obtain the estimate for $a$ in \eqref{abessjunbi}. Furthermore, using $a^2+b^2=r^2$, we obtain the corresponding estimate for $b$. We also have
\begin{align}\label{rnotaisuhyouka}
\log{r}=\log{(\log{t})}+o(1),\ \log{(\log{r(t)})}=\log{(\log{(\log{t})})}+o(1).
\end{align}
By \eqref{res1}, \eqref{abessjunbi}, and \eqref{rnotaisuhyouka}, we obtain \eqref{abessharp}.

\end{proof}

\begin{corollary}\label{abescoro}
Let $\vec{u}$ be a rhombus-type solution. Furthermore, we assume $\varphi=\frac{\pi}{6}$. Then, we have
\begin{align}\label{abessharpgyaku}
\begin{aligned}
a(t)&=\frac{\sqrt{3}}{2}\log{t}-\frac{(3d-1)\sqrt{3}}{12}\log{(\log{t})}+\frac{\sqrt{3}}{6}\log{(\log{(\log{t})})}+\frac{\sqrt{3}}{2}c_{\star}
\\
&\quad+\frac{\sqrt{3}}{6}\log{\frac{32}{3}}+o(1),
\\
b(t)&=\frac{1}{2}\log{t}-\frac{d-3}{4}\log{(\log{t})}-\frac{1}{2}\log{(\log{(\log{t})})}+\frac{c_{\star}}{2}+\frac{1}{2}\log{\frac{3}{2}}+o(1).
\end{aligned}
\end{align}

\end{corollary}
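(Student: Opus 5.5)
The corollary follows from Lemma \ref{abes} by the index symmetry that swaps the roles of $\hat{\rho}_1$ and $\hat{\rho}_2$; I will set this up so the earlier endpoint analysis applies verbatim. Assume $\varphi=\frac{\pi}{6}$, so Lemma \ref{hishinagasalem2} gives $\hat{\rho}_2/\hat{\rho}_1\to\frac{1}{\sqrt3}$. Equivalently, $\hat{\rho}_1/\hat{\rho}_2\to\sqrt3=\tan\frac{\pi}{3}$. Relabel cyclically by $\tilde z_i=z_{i+1}$. The centers then keep alternating signs, the global sign flip being absorbed into $\sigma$ via the invariance $u\mapsto -u$. Under this relabeling $\tilde{\hat\rho}_1=\hat\rho_2$ and $\tilde{\hat\rho}_2=\hat\rho_3$. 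By \eqref{hayzyoujunbi}, $\hat\rho_3=\hat\rho_1+O(t^{-\theta_{10}})$, so $\tilde{\hat\rho}_2/\tilde{\hat\rho}_1\to\sqrt3$. The relabeled solution is therefore a rhombus-type solution with parameter $\frac{\pi}{3}$.

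All lemmas of Sections 3--6 are stated for an arbitrary $(2,2)$-sign solution with the cyclic labeling. They are invariant under this relabeling because Lemma \ref{2,2centerlem} and $\Omega_r$ are cyclically symmetric. Applying Lemma \ref{abes} to the relabeled configuration gives the asymptotics of $\tilde a=\hat\rho_2=b$ and $\tilde b=\hat\rho_3$. Finally, $\hat\rho_3-\hat\rho_1=O(t^{-\theta_{10}})=o(1)$, which transfers the formula for $\tilde b$ to $a$ and yields \eqref{abessharpgyaku}.

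An alternative route avoids relabeling altogether. Rerun the proofs of Lemmas \ref{endpointlem1}--\ref{abes} with $a$ and $b$ interchanged, so that $x=2b-r$ and $\mathcal{F}(2\hat\rho_1)\lesssim t^{-3/2}$. Equations \eqref{hatzioderhomb} are symmetric in the indices, so each step goes through identically.

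The only point needing care is checking that the relabeled object satisfies the hypotheses exactly. This means confirming that the cyclic shift preserves the modulation conditions \eqref{modeq} and the fixed point $z_\infty$, and that $\varphi$ is defined intrinsically as the limit of the ratio. The error $\hat\rho_3-\hat\rho_1$ must also be $o(1)$, which the polynomial decay in \eqref{hayzyoujunbi} already provides. No new estimates are needed.
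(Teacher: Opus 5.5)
Your argument is correct and is essentially the paper's: the paper obtains this corollary from Lemma \ref{abes} by the symmetry that interchanges $\hat\rho_1$ and $\hat\rho_2$. It implements that symmetry exactly as you do, via the cyclic relabeling $(z_2,z_3,z_4,z_1)$ with the sign change absorbed by $u\mapsto -u$ (see Case 3 of the proof of Theorem \ref{maintheorem}). Your explicit checks supply the details the paper leaves implicit: the cyclic invariance of $\Omega_r$ and of the modulation conditions, and the estimate $\hat\rho_3=\hat\rho_1+O(t^{-\theta_{10}})$ from \eqref{hayzyoujunbi} used to transfer the formula for $\tilde b$ to $a$.
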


\subsection{Proof of Theorem \ref{maintheorem}}

In this subsection, we combine the preceding results to prove Theorem \ref{maintheorem}.

\begin{proof}[Proof of Theorem \ref{maintheorem}]

We first establish the remainder estimate common to all possible asymptotic configurations. By Proposition \ref{samesolitonhanare}, we have \eqref{Vrep}. Therefore, by Lemma \ref{epzes} and Lemma \ref{Fsimt-1lem}, we obtain
\begin{align}\label{commonremainderfinal}
\left\|u(t)-\sum_{k=1}^4(-1)^kQ(\cdot-z_k(t))\right\|_{H^1}+\|{\partial}_tu(t)\|_{L^2}\lesssim \mathcal{F}(D(t)) \lesssim t^{-1}.
\end{align}
Thus, \eqref{4linetheorem1}, \eqref{4rhombustheorem1}, and \eqref{4rhombustheorem3} hold with $\sigma=1$ for the current labeling. A cyclic relabeling of the centers changes the alternating sign pattern at most by a global sign, which can be absorbed into $\sigma=\pm1$.

We now classify the asymptotic behavior of the centers. By Proposition \ref{2,2fullconv1}, every $(2,2)$-sign soliton solution is either a line-type solution or a rhombus-type solution.

First, we assume that $\vec{u}$ is a line-type solution. After a cyclic relabeling if necessary, the center asymptotics \eqref{zestheorem} in Lemma \ref{1-linekansei} give \eqref{4linetheorem2}. Together with \eqref{commonremainderfinal}, this proves the first alternative.

Next, we assume that $\vec{u}$ is a rhombus-type solution. By Lemma \ref{zgshusoku} and Lemma \ref{houkourhombus}, there exist $z_{\infty}\in\mathbb{R}^d$, $\theta_{12}>0$, and $\omega_1,\omega_2\in S^{d-1}$ such that
\begin{align}\label{rhombusdirectionfinal}
\begin{aligned}
\omega_1\cdot\omega_2&=0,
\\
z_1(t)&=z_{\infty}+\hat{\rho}_1(t)\omega_1+O\left(t^{-\theta_{12}}\right),
\\
z_2(t)&=z_{\infty}+\hat{\rho}_2(t)\omega_2+O\left(t^{-\theta_{12}}\right),
\\
z_3(t)&=z_{\infty}-\hat{\rho}_1(t)\omega_1+O\left(t^{-\theta_{12}}\right),
\\
z_4(t)&=z_{\infty}-\hat{\rho}_2(t)\omega_2+O\left(t^{-\theta_{12}}\right).
\end{aligned}
\end{align}
Moreover, by Lemma \ref{hishinagasalem2}, there exists $\varphi\in\left[\frac{\pi}{6},\frac{\pi}{3}\right]$ such that
\begin{align*}
\lim_{t\to\infty}\frac{\hat{\rho}_2(t)}{\hat{\rho}_1(t)}=\tan\varphi.
\end{align*}

We distinguish the following three cases.

Case 1: $\frac{\pi}{6}<\varphi<\frac{\pi}{3}$.

We define 
\begin{align*}
\Lambda(t)=\log{t}-\frac{d-1}{2}\log{(\log{t})}+c_{\star}+\log{2}.
\end{align*}
Then, by Lemma \ref{hitantennagasaharplem}, we obtain
\begin{align*}
\hat{\rho}_1(t)&=\left(\cos\varphi\right)\Lambda(t)+O\left(\frac{\log(\log t)}{\log t}\right),
\\
\hat{\rho}_2(t)&=\left(\sin\varphi\right)\Lambda(t)+O\left(\frac{\log(\log t)}{\log t}\right).
\end{align*}
Combining these estimates with \eqref{rhombusdirectionfinal}, we obtain \eqref{4rhombustheorem2} with
\begin{align*}
u_{\infty}=\omega_1,\ v_{\infty}=\omega_2.
\end{align*}
Together with \eqref{commonremainderfinal}, which gives \eqref{4rhombustheorem1}, this proves the second alternative.

Case 2: $\varphi=\frac{\pi}{3}$.

With the notation in \eqref{abrdef}, we define
\begin{align*}
d_1(t)=a(t)=\hat{\rho}_1(t),\ d_2(t)=b(t)=\hat{\rho}_2(t).
\end{align*}
By Lemma \ref{abes}, we have the last two asymptotic formulas in \eqref{4rhombustheorem4}. On the other hand,
by \eqref{rhombusdirectionfinal}, we have
\begin{align*}
z_1(t)&=z_{\infty}+d_1(t)\omega_1+o(1),
\\
 z_2(t)&=z_{\infty}+d_2(t)\omega_2+o(1),
\\
z_3(t)&=z_{\infty}-d_1(t)\omega_1+o(1),
\\
z_4(t)&=z_{\infty}-d_2(t)\omega_2+o(1).
\end{align*}
Thus, \eqref{4rhombustheorem4} follows with
\begin{align*}
u_{\infty}=\omega_1,\ v_{\infty}=\omega_2.
\end{align*}
Together with \eqref{commonremainderfinal}, which gives \eqref{4rhombustheorem3}, this proves the third alternative in the
case $\varphi=\frac{\pi}{3}$.

Case 3: $\varphi=\frac{\pi}{6}$.

In this case, Corollary \ref{abescoro} gives the same two asymptotic formulas after interchanging $a$ and $b$. More precisely, cyclically relabel the centers by
\begin{align*}
(\tilde{z}_1,\tilde{z}_2,\tilde{z}_3,\tilde{z}_4)=(z_2,z_3,z_4,z_1),
\end{align*}
and set
\begin{align*}
\tilde{u}_{\infty}&=\omega_2,\ \tilde{v}_{\infty}=-\omega_1,
\\
d_1(t)&=b(t),\ d_2(t)=a(t).
\end{align*}
Then $\tilde{u}_{\infty}$ and $\tilde{v}_{\infty}$ form an orthogonal pair, and \eqref{rhombusdirectionfinal} becomes
\begin{align*}
\tilde{z}_1(t)
&=z_{\infty}
+d_1(t)\tilde{u}_{\infty}+o(1),
\\
\tilde{z}_2(t)
&=z_{\infty}
+d_2(t)\tilde{v}_{\infty}+o(1),
\\
\tilde{z}_3(t)
&=z_{\infty}
-d_1(t)\tilde{u}_{\infty}+o(1),
\\
\tilde{z}_4(t)
&=z_{\infty}
-d_2(t)\tilde{v}_{\infty}+o(1).
\end{align*}
The cyclic relabeling changes the alternating sign pattern only by a global sign, which can be absorbed into $\sigma$. After dropping the tildes, the preceding identities and Corollary \ref{abescoro} yield \eqref{4rhombustheorem4}, while
\eqref{commonremainderfinal} yields \eqref{4rhombustheorem3}. Hence, the third alternative also holds
in this endpoint case.

The above cases exhaust all possibilities, and the proof is complete.

\end{proof}

\section*{Statements and Declarations}

\textbf{Funding} No funds, grants, or other support was received.

\textbf{Competing interests} The author has no relevant financial or non-financial interests to disclose.

\textbf{Data availability} No datasets were generated or analyzed during the current study.

\textbf{Use of generative AI} During the preparation of this manuscript, the author used ChatGPT with the GPT-5.6 model (OpenAI) for English-language editing, typographical checks, and assistance in checking calculations and logical consistency. All mathematical arguments and AI-assisted suggestions were independently verified by the author. The author takes full responsibility for the content of the manuscript.

\end{document}